\newcounter{bean}
\newtheorem{thm}{Theorem}[section]
\newtheorem{cor}[thm]{Corollary}
\newtheorem{lem}[thm]{Lemma}
\newtheorem{prop}[thm]{Proposition}
\newtheorem{defn}[thm]{Definition}
\newtheorem{exas}[thm]{Examples}
\newtheorem{rem}[thm]{Remark}
\newtheorem{algorithm}[thm]{Algorithm}
\numberwithin{equation}{section}
\newcommand\Field{\mathbb F}
\newcommand\Dual{\mathcal D}
\newcommand\Duality\Dual
\newcommand\ws{\mathbf w}
\newcommand\x{\mathbf x}
\newcommand\w{\mathbf w}
\newcommand\z{\mathbf z}
\newcommand\y{\mathbf y}
\newcommand\ModSphere{\ModFlow\left({\mathbb S}\longrightarrow 
\Sym^{g-1}(\Sigma_{1})\times \Sym^2(\Sigma_{2})\right)}
\newcommand\ModSpheres\ModSphere
\newcommand\Mas{\mu}
\newcommand\UnparModSp{\widehat \ModSp}
\newcommand\UnparModFlow\UnparModSp
\newcommand\Mod\ModSp
\newcommand\ModMaps{\mathcal M}
\newcommand\ModSp\ModMaps
\newcommand\Ta{{\mathbb T}_{\alpha}}
\newcommand\Tb{{\mathbb T}_{\beta}}
\newcommand\alphas{\mbox{\boldmath$\alpha$}}
\newcommand\betas{\mbox{\boldmath$\beta$}}
\newcommand{\bfz}{{\mathbb {Z}}}
\newcommand{\bfq}{{\mathbb {Q}}}
\newcommand{\Zmod}[1]{{\mathbb Z}/{#1}{\mathbb Z}}
\newcommand{\mxy}{{\mathfrak{M}}_{\x, \y}}
\newcommand\whelm{}
\newcommand{\kaa}{\mathbf k}
\newcommand{\laa}{\mathbf l}
\newcommand{\Sym}{{\mathrm {Sym}}}
\newcommand{\DD}{\mathfrak D}
\newcommand{\EE}{\mathcal E} 
 \newcommand{\FF}{\mathcal F}
 \newcommand{\Z}{\mathbb Z}   
\newcommand{\bfc}{\mathbb C}
\newcommand{\partiala}{\widehat{\partial}}
\newcommand{\HFa}{{\widehat {\rm {HF}}}}
\newcommand{\HFaa}{{\widetilde {\rm {HF}}}}
\newcommand{\HFmi}{{\rm {HF}}^-}
\newcommand{\CFa}{{\widehat {\rm {CF}}}}
\newcommand{\CFaa}{{\widetilde {\rm {CF}}}}
\newcommand{\Gen}{\mathcal{S}}
\newcommand{\partialaa}{{\widetilde\partial}}
\newcommand\alphak{\mbox{\boldmath$\alpha$}}
\newcommand\betak{\mbox{\boldmath$\beta$}}
\newcommand\gammak{\mbox{\boldmath$\gamma$}}
\newcommand{\HFast}{{\widehat {\rm {HF}}}_{{\rm {st}}}}
\DeclareMathOperator{\SpinC}{Spin^c}
\begin{document}

\title{Combinatorial Heegaard Floer homology and nice Heegaard diagrams}

\author{Peter Ozsv\'ath}
\address{Department of Mathematics, Princeton University,\\
Princeton, NJ, 08544}
\email{petero@math.princeton.edu}

\author{Andr\'{a}s I. Stipsicz}
\address{R{\'e}nyi Institute of Mathematics\\
Budapest, Hungary and\\
Institute for Advanced Study, Princeton, NJ}
\email{stipsicz@math-inst.hu}

\author{Zolt\'an Szab\'o}
\address{Department of Mathematics, Princeton University\\
Princeton, NJ, 08544}
\email{szabo@math.princeton.edu}

\subjclass{57R, 57M} \keywords{Heegaard decompositions, Floer
  homology, pair-of-pants decompositions}

\begin{abstract}
  We consider a stabilized version of $\HFa$ of a 3--manifold $Y$
  (i.e. the $U=0$ variant of Heegaard Floer homology for closed
  3--manifolds).  We give a combinatorial algorithm for
  constructing this invariant, starting from a Heegaard decomposition
  for $Y$, and give a topological proof of its invariance
  properties.
\end{abstract}

\maketitle

\section{Introduction}
\label{sec:intro}
Heegaard Floer homology is an invariant for
$3$--manifolds~\cite{OSzF1, OSzF2}, defined using a Heegaard diagram
for the $3$--manifold. Its definition rests on a suitable adaptation of
Lagrangian Floer homology in a symmetric product of the Heegaard
surface, relative to embedded tori which are associated to the
attaching circles.  These Floer homology groups have several
versions. The simplest version $\HFa (Y)$ is a finitely generated
Abelian group, while $\HFmi (Y)$ admits the algebraic structure of a
finitely generated $\Z [U]$--module. Building on these constructions,
one can define invariants of knots \cite{OSzknot, Rasmussen}
and links \cite{OSzlinks} in $3$--manifolds, invariants of smooth
$4$-manifolds~\cite{OSzFour}, contact structures~\cite{Contact},
sutured $3$--manifolds \cite{Juh}, and 3--manifolds with parameterized
boundary~\cite{Bordered}.

The invariants are computed as homology groups of certain chain
complexes. The definition of these chain complexes uses a choice of a
Heegaard diagram of the given 3--manifold, and various further choices
(e.g., an almost complex structure on the symmetric power of the
Heegaard surface). Both the definition of the boundary map and the
proof of independence of the homology from these choices involves
analytic methods. In \cite{SW} Sarkar and Wang discovered that by
choosing an appropriate class of Heegaard diagrams for $Y$ (which they
called \emph{nice}), the chain complex computing the simplest version
$\HFa (Y)$ can be explicitly computed.  
In addition, Sarkar and Wang also showed that every closed 3-manifold
admits a nice Heegaard diagram.
In a similar spirit, in
\cite{MOS} it was shown that all versions of the link Floer homology
groups for links in $S^3$ admit combinatorial descriptions using grid
diagrams. Indeed, in~\cite{MOST}, the topological invariance of this
combinatorial description of link Floer homology is verified using
direct combinatorial methods (and, in particular, avoiding analysis).

The aim of the present work is to develop a version of Heegaard Floer
homology which uses only combinatorial/topological methods, and in
particular is independent of the theory of pseudo-holomorphic
disks. As part of this, we construct a class of Heegaard diagrams for
closed, oriented 3--manifolds which are naturally associated to
pair-of-pants decompositions. The bulk of this paper is devoted to a
direct, topological proof of the topological invariance of the
resulting Heegaard Floer invariants.  In order to precisely state the
main result of the paper, we first introduce the concept of
\emph{stable Heegaard Floer homology groups}.
\begin{defn}\label{def:equiv}
{\whelm Suppose that $V_1, V_2$ are two finite dimensional vector
  spaces over the field $\Field =\Z /2\Z$ and $b_1\geq b_2$ are
  nonnegative integers. The pair $(V_1, b_1)$ is \emph{equivalent} to
  $(V_2, b_2)$ if $V_1\cong V_2\otimes (\Field \oplus \Field
  )^{(b_1-b_2)}$ as vector spaces. This relation generates an
  equivalence relation on pairs of finite dimensional vector spaces
  and nonnegative integers; the equivalence class represented by the
  pair $(V_1,b_1)$ will be denoted by $[V_1, b_1]$.}
\end{defn}

Suppose now that $Y$ is a closed, oriented 3--manifold, which
decomposes as $Y=Y_1\# n (S^1\times S^2)$ (and $Y_1$ contains no
$(S^1\times S^2)$--summand).  Let $\DD = (\Sigma , \alphak , \betak ,
\w )$ denote a \emph{convenient} Heegaard diagram (a special,
multi-pointed nice Heegaard diagram with basepoint set $\w$, to be defined in
Definition~\ref{def:conv}) for $Y_1$ with $ b(\DD )=
\vert \w \vert$ basepoints. Consider the homology $\HFaa (\DD )$ of the chain complex $(\CFaa
(\DD ), \partialaa_{\DD})$ combinatorially defined from the diagram
(cf. Section~\ref{sec:hfhom} for the definition). Furthermore, 
let $\Field$ denote the field $\Z /2\Z$ with two elements.

\begin{defn}
{\whelm With notations as above, let $\HFaa (\DD , n )$ denote $\HFaa (\DD
  ) \otimes (\Field \oplus \Field )^n$ and define the stable Heegaard
  Floer homology $\HFast (Y)$ of $Y$ as $[\HFaa (\DD , n ), b(\DD )]$.}
\end{defn}

\begin{thm}\label{thm:main}
  The stable Heegaard Floer homology $\HFast (Y)$ is a 3--manifold
  invariant.
\end{thm}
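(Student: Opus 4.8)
The plan is to follow the classical template for proving invariance of Heegaard Floer homology (as in \cite{OSzF1, OSzF2}), but to replace every analytic input by a combinatorial one. First I would establish that $\HFaa(\DD)$ (hence the stabilized invariant $[\HFaa(\DD,n),b(\DD)]$) is independent of the choice of convenient Heegaard diagram for a fixed $Y_1$. By the Reidemeister--Singer theorem, any two Heegaard diagrams for $Y_1$ are related by a sequence of isotopies, handleslides, and (de)stabilizations; however, to stay within the class of \emph{convenient} diagrams one must show (a) that every $Y_1$ admits such a diagram --- this is where the Sarkar--Wang construction, adapted to pair-of-pants decompositions, enters --- and (b) that any two convenient diagrams for $Y_1$ are connected through a sequence of moves, each step of which can be taken to be convenient (or at least nice, with basepoints inserted so that the chain complex remains combinatorially defined). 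The key point is that in a nice diagram the differential counts only embedded bigons and rectangles, so each elementary move induces an \emph{explicit} chain homotopy equivalence that can be written down and verified combinatorially, without reference to holomorphic disks. I expect the bulk of the work --- and the main obstacle --- to be precisely this: producing, for each Reidemeister--Singer move, a combinatorial chain map and a combinatorial chain homotopy, and checking the homotopy identities by hand, in the spirit of \cite{MOST}.

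Second, I would handle the effect of varying the number $b(\DD)$ of basepoints. Adding a basepoint (together with a small pair of new attaching circles, i.e.\ an index-zero/index-three stabilization of the pointed diagram) multiplies $\HFaa(\DD)$ by a factor of $\Field\oplus\Field$ while increasing $b(\DD)$ by one; this is exactly the move that Definition~\ref{def:equiv} is designed to quotient out, so the equivalence class $[\HFaa(\DD,n),b(\DD)]$ is unchanged. Again the relevant quasi-isomorphism (a Künneth-type splitting of the chain complex of the stabilized diagram) must be exhibited combinatorially; in a nice diagram this is straightforward because the new generators and the new differentials sit in an explicitly identifiable direct summand.

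Third, I would address the decomposition $Y = Y_1 \# n(S^1\times S^2)$. I must check that the splitting of $Y$ into its $S^1\times S^2$-summands and a remainder $Y_1$ with no such summand is itself well defined up to the data that $\HFast$ records --- i.e.\ that $Y_1$ is determined by $Y$ (by the prime decomposition theorem, the number $n$ and the manifold $Y_1$ are uniquely determined). Then connect-summing with a copy of $S^1\times S^2$ is realized at the level of convenient diagrams by a standard local modification, and it has the effect of tensoring $\HFaa$ with $\Field\oplus\Field$; in the definition of $\HFast(Y)$ this is absorbed into the explicit factor $(\Field\oplus\Field)^n$. Assembling these three independence statements --- independence of the convenient diagram for fixed $Y_1$, independence of the basepoint number, and compatibility with the $S^1\times S^2$-decomposition --- shows that $\HFast(Y)$ depends only on the oriented homeomorphism type of $Y$, which is the assertion of Theorem~\ref{thm:main}. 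The one genuinely delicate point throughout, as already flagged, is that every chain-level equivalence invoked here must be given by an honest combinatorial formula and its homotopy identities verified combinatorially, since the whole purpose of the construction is to avoid pseudo-holomorphic curve counts; this is the part of the argument to which the remainder of the paper will be devoted.
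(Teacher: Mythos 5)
Your proposal correctly identifies the paper's overall strategy: show that convenient diagrams for a fixed $Y_1$ (with no $S^1\times S^2$-summand) are connected by combinatorial moves each of which is shown to be a chain homotopy equivalence, observe that the basepoint-adding stabilization is exactly the move that Definition~\ref{def:equiv} quotients out, and then invoke the Kneser--Milnor prime decomposition to handle general $Y$. This matches the logic of the paper, where the first two parts are Theorem~\ref{thm:MainConv} together with Theorem~\ref{thm:nicemoves} and Corollary~\ref{c:nochange}, and the third is the short argument in Theorem~\ref{thm:invariance}.

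One point in your third step would not survive contact with the machinery as developed. You propose to ``realize connect-summing with $S^1\times S^2$ at the level of convenient diagrams by a standard local modification,'' but convenient diagrams are by construction only defined for manifolds with no $S^1\times S^2$-summand: the uniqueness of the bigon-free model (Theorem~\ref{thm:nobigon}) and the structure results like Proposition~\ref{p:convdomains} rely on Lemma~\ref{l:noiso}, which fails once an $\alphak$-curve is isotopic to a $\betak$-curve --- precisely the situation created by an $S^1\times S^2$-summand. The paper avoids this by \emph{not} trying to produce a convenient diagram for $Y$; instead it defines $\HFast(Y)$ directly as $[\HFaa(\DD)\otimes(\Field\oplus\Field)^n, b(\DD)]$ with $\DD$ a convenient diagram for $Y_1$, and then invariance reduces (via Kneser--Milnor uniqueness of $n$ and $Y_1$) to invariance for $Y_1$. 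Your proposal would still succeed if you replaced the ``local modification of a convenient diagram'' step by this purely formal argument; as written, that particular sentence points at a construction that is not available.
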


The information encoded in $\HFast(Y)$ and $\HFa(Y)$ are equivalent.
Indeed, 
one can prove Theorem~\ref{thm:main} by identifying $\HFaa(\DD)$ with
a stabilized version of $\HFa(Y)$, i.e.  $\HFaa(\DD)\cong
\HFa(Y)\otimes (\Field\oplus\Field)^{b(\DD)-1}$, and then appealing to
the pseudo-holomorphic proof of invariance (see Theorem~\ref{thm:uaz}
in the Appendix below). By contrast, the bulk of the present paper is
devoted to giving a purely topological proof of the invariance of
$\HFast(Y)$. 

The three primary objectives of this paper are the following:
\begin{enumerate}
\item to give an effective construction of Heegaard diagrams for
  3--manifolds for which a chain complex computing $\HFaa(\DD)$ can be explicitly described
  (compare~\cite{SW});
\item to give some relationship between Heegaard Floer homology with
  more classical objects in 3--manifold topology (specifically,
  pair-of-pants decompositions for Heegaard splittings). We hope that
  further investigations along these lines may shed light on
  topological properties of Heegaard Floer homology;
\item to give a self-contained, topological description of some
  version of Heegaard Floer homology. One might hope that the outlines
  of this approach could be applied to studying other Floer-homological 3--manifold
  invariants.
\end{enumerate}

In a similar manner, we will define Heegaard Floer
homology groups $\HFa _T(Y)$ with twisted coefficients, and verify
their invariance as well. Since for a rational homology sphere $Y$
this group is isomorphic to $\HFa(Y)$ of \cite{OSzF1}, this construction
directly gives a 
purely topological definition of the hat-theory for 3-manifolds with
$b_1(Y)=0$.

The outline of the proof is the following. 
We introduce a special class of Heegaard diagrams which we call
\emph{convenient} (multi-pointed) Heegaard diagrams. These diagrams
are constructed by
augmenting pair-of-pants decompositions compatible with a given
Heegaard splitting. These diagrams have the same combinatorial 
properties as those introduced in~\cite{SW}:
for convenient  diagrams, the boundary map in the chain
complex computing $\HFa (Y)$ can be described by counting empty
rectangles and bigons (see Definition~\ref{def:EmptyPolygon} below). Next
we show that any two convenient diagrams for the same 3--manifold
can be connected by a sequence of elementary moves (which we call
\emph{nice isotopies, handle slides} and \emph{stabilizations})
through nice diagrams. By showing that the above nice moves do not
change the stable Floer homology $\HFast (Y)$, we arrive to the
verification of Theorem~\ref{thm:main}. A simple adaptation of the
same method proves the invariance of the twisted invariant $\HFa _T(Y)$.

In this paper, we treat the simplest version of Heegaard Floer
homology -- $\HFa(Y)$ with coefficients in $\Zmod{2}$, for closed
3--manifolds. In the follow-up articles~\cite{signs, spinc, knots}, we extend this approach 
to some
of the finer structures: $\SpinC$ structures, the corresponding
results for knots and links, and signs.

The paper is organized as follows.  In Sections~\ref{sec:first}
through \ref{sec:convdia} we discuss results concerning certain types
of Heegaard diagrams and moves between them. More specifically,
Section~\ref{sec:first} concentrates on pair-of-pants decompositions,
Section~\ref{sec:second} deals with nice diagrams and nice moves,
Section~\ref{sec:cdiagdef} introduces the concept of convenient
diagrams, and Section~\ref{sec:convdia} shows that convenient diagrams
can be connected by nice moves.  This lengthy discussion in
Section~\ref{sec:convdia} --- relying exclusively on simple
topological considerations related to surfaces and Heegaard diagrams
on them --- will be used later in the proof that our invariants are
indeed independent of the choices made.  In Section~\ref{sec:hfhom} we
introduce the chain complex computing the invariant $\HFaa (\DD)$, and
in Section~\ref{sec:cpx} we show that the homology does not change
under nice isotopies and handle slides, and changes in a simple way
under nice stabilization. This result then leads to the proof of
Theorem~\ref{thm:main}, presented in Section~\ref{sec:hom}.  In
Section~\ref{s:twist} we discuss the twisted version of Heegaard Floer
homologies. For completeness, in an Appendix we identify the homology
group $\HFaa (\DD )$ with an appropriately stabilized version of the
Heegaard Floer homology group $\HFa (Y)$ (as it is defined in
\cite{OSzF1}). In addition, for the sake of completeness, in a further
Appendix we verify a version of the result of Luo
(Theorem~\ref{t:luo}) used in the independence proof. The alert reader
will notice that besides the classical Reidemeister--Singer theorem
(on Heegaard splittings of 3-manifolds) and the Kneser-Milnor theorem
we only refer to a result of \cite{SW} (in the proof of
Proposition~\ref{p:atfog}) and a theorem from \cite{sarka} (given in
Theorem~\ref{thm:sark}), hence the paper is rather self-contained.

The convenient diagrams we consider here are multiply-pointed Heegaard
diagrams, which are closely related to pair-of-pants decompositions.
Although this approach uses more curves and more basepoints (than, for
example,~\cite{SW}), we find these diagrams easier to work with.  In
particular, when trying to connect convenient diagrams the problem localizes
inside three- and four-punctured spheres (see for example Proposition~\ref{p:convdomains} and
Theorem~\ref{t:convconn} below), where the problem of connecting diagrams
reduces to examining finitely many cases.

Of course, it is natural to consider nice diagrams with single
basepoints, as provided by the Sarkar-Wang construction.  It would be
very interesting to give a topological invariance proof from this
point of view. Such an approach has been announced by
Wang~\cite{Wang}.

{\bf Acknowledgements}: PSO was supported by NSF grant number
DMS-0804121.  AS was supported by OTKA NK81203, ZSz was supported by
NSF grant number DMS-0704053. We would like to thank the Mathematical
Sciences Research Institute, Berkeley for providing a productive
research enviroment. We would like to thank Jean-Mathieu Magot and an
anonymous referee for many useful comments and suggestions for
improving the presentation of our results.

\section{Heegaard diagrams}
\label{sec:first}
Suppose that $Y$ is a closed, oriented 3--manifold.  It is a standard fact
(and follows, for example, from the existence of a triangulation or from
simple Morse theory) that $Y$ admits a \emph{Heegaard decomposition}
${\mathfrak {U}}=(\Sigma,U_0,U_1)$; i.e.,
\[
Y =U_0\cup _{\Sigma } U_1 ,
\]
where $U_0$ and $U_1$ are handlebodies whose boundary $\Sigma$ is a closed,
connected, oriented surface of genus $g$, called the {\em Heegaard surface} of
the decomposition. 
(We orient $\Sigma $ as $\partial U_0$, hence $\partial U_1=-\Sigma$.)
 By forming the connected sum of a given Heegaard
decomposition with the standard toroidal Heegaard decomposition of $S^3$ we
get the \emph{stabilization} of the given Heegaard decomposition.  By a
classical result of Reidemeister and Singer \cite{R, S}, any two Heegaard
decompositions of a given 3--manifold become isotopic after suitably many
stabilizations, cf. also \cite{saul}.

A genus--$g$ handlebody $U$ can be described by specifying a
collection $\alphak = \{ \alpha _1, \ldots , \alpha _k \}$
of $k$ disjoint, embedded, simple closed curves in
$\partial U=\Sigma$, chosen so that these curves span a
$g$--dimensional subspace of $H_1 (\Sigma ; \bfz )$, and they bound
disjoint disks (usually called \emph{compressing disks})
in $U$.  Attaching 3--dimensional 2--handles to $\Sigma
\times [-1,1]$ along the curves (when viewed them as subsets of
$\Sigma \times \{ 1\}$), we get a cobordism from the surface to a
disjoint union of $k-g+1$ spheres, and by capping these spherical
boundaries with 3--disks, we get the handlebody $U$ back. We will 
also say that $U$ is \emph{determined by} $\alphak $.

A {\em generalized Heegaard diagram} for a closed three manifold
is a triple $(\Sigma,\alphak,\betak)$ where $\alphak$ and
$\betak$ are $k$-tuples of simple closed curves as above,
specifying a Heegaard decomposition ${\mathfrak {U}}$ for $Y$. We
will always assume that in our generalized Heegaard diagrams the
curves $\alpha_i \in \alphak $ and $\beta_j\in \betak $ intersect
each other transversally, and that the Heegaard diagrams are
\emph{balanced}, that is, $\vert \alphak \vert =\vert \betak
\vert$.

\begin{defn}
{\whelm The components of $\Sigma - \alphak -\betak$ are called
\emph{elementary domains}.}
\end{defn}

Notice that an elementary domain --- as part of 
$\Sigma - \alphak $ (or $\Sigma -\betak$) --- is a planar surface.
Let  $D$ be a simply connected elementary domian (i.e.,
$D$ is homeomorphic to the disk). Let $2m$ denote the number of intersection
points of the $\alphak$-- and $\betak$--curves the closure of $D$
(inside $\Sigma$) contains on its boundary. In this case we say that
$D$ is a $2m$--gon; for $m=1$ it will be also called a \emph{bigon}
and for $m=2$ a \emph{rectangle}.

Next we will describe some specific generalized Heegaard diagrams,
called pair-of-pants diagrams. These diagrams have the advantage that
they have a preferred isotopic model (see
Theorem~\ref{thm:nobigon}). In Subsection~\ref{subsec:StabPairOfPants}
we show how they can be stabilized.

\subsection{Pair-of-pants diagrams}
\label{subsec:PairOfPants}

A system of disjoint curves ${\alphak }= \{ \alpha_i \} _{i=1}^k$ in a
closed surface $\Sigma$ is called a \emph{pair-of-pants decomposition}
if every component of $\Sigma-\alphak$ is diffeomorphic to the
2-dimensional sphere with three disjoint disks removed (the so--called
{\em pair-of-pants}).  A pair-of-pants decomposition of $\Sigma$ is
called a \emph{marking} if all curves in the system are homologically
essential in $H_1 (\Sigma; \bfz /2\bfz)$. If the genus $g$ of the
closed surface $\Sigma$ is at least 2 (i.e., the surface is
hyperbolic), such a marking always exists, and the number $k$ of
curves appearing in the system is equal to $3g-3$.  It is easy to see
that a system of curves determining a pair-of-pants decomposition
spans a $g$--dimensional subspace in homology, and hence determines a
handlebody.  We say that two markings on the surface $\Sigma$
\emph{determine the same handlebody} if the identity map id$_{\Sigma}$
extends to a homeomorphism of the handlebodies determined by the
markings.  (Note that any two markings determine diffeomorphic
handlebodies, but two handlebodies built on a surface $\Sigma$ are
equivalent only if the diffeomorphism between them is isotopic to the
identity on the boundary.)  Alternatively, two markings $\alphak $ and
$\alphak '$ determine the same handlebody if, in the handlebody
determined by $\alphak$, the curves $\alpha_i '\in \alphak'$ bound
disjoint embedded disks. The following theorem describes a method to
transform markings determining the same handlebody into each other.
To state the result, we need a definition.
\begin{defn} \label{def:flip}
{\whelm The pair-of-pants decompositions $\alphak =\alphak _0\cup \{
  \alpha_1 \}$ and $\alphak '=\alphak _0 \cup \{ \alpha_1'\}$ of
  $\Sigma $ differ by a \emph{flip} (called a \emph{Type II move} in
  \cite{Luo}) if $\alpha_1, \alpha_1'$ in the 4--punctured sphere
  component of $\Sigma - \alphak _0$ intersect each other
  transversally in two points (with opposite signs);
  cf. Figure~\ref{f:flip}.  We say that $\alphak =\alphak _0\cup \{
  \alpha_1 \}$ and $\alphak '=\alphak _0 \cup \{ \alpha_1'\}$ of
  $\Sigma $ differ by a \emph{generalized flip} (or \emph{g-flip}) if
  $\alpha _1$ and $\alpha _1'$ are contained by the 4--punctured
  sphere component of $\Sigma -\alphak _0$, i.e., we do not require
  the curves $\alpha _1$ and $\alpha _1'$ to intersect in two points.
For an example of a g-flip, see Figure~\ref{f:gflip}.}
\end{defn}
\begin{figure}[ht]
\begin{center}
\epsfig{file=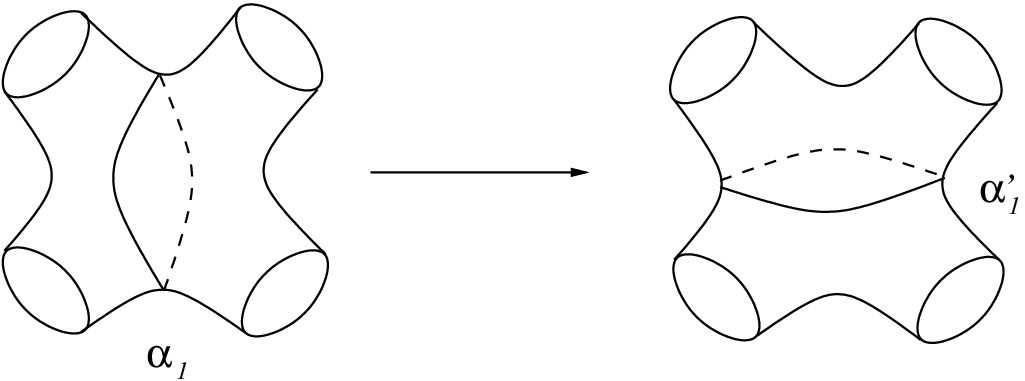, height=4.2cm}
\end{center}
\caption{{\bf The flip (Type II move).}}
\label{f:flip}
\end{figure}

\begin{figure}[ht]
\begin{center}
\epsfig{file=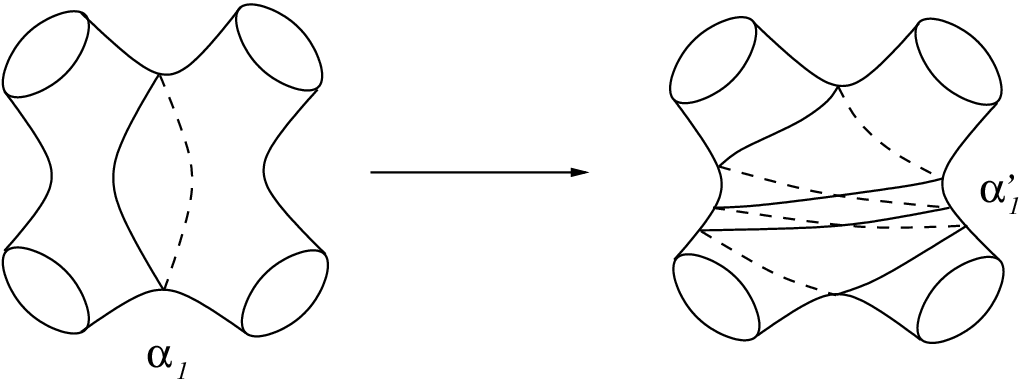, height=4.2cm}
\end{center}
\caption{{\bf An example of a g-flip.}}
\label{f:gflip}
\end{figure}

\begin{thm}(Luo, \cite[Corollary~1]{Luo})\label{t:luo}
Suppose that $\alphak , \alphak '$ are two markings of a given
genus--$g$ surface $\Sigma$.  The two markings determine the same
handlebody if and only if there is a sequence $\{ \alphak _i\}
_{i=1}^n$ of markings such that $\alphak = \alphak _1$, $\alphak
  '=\alphak _n$ and consecutive terms in the sequence $\{ \alphak _i\}
  _{i=1}^n$ differ by a flip or an isotopy. \qed
\end{thm}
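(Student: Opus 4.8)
The ``if'' direction is the easy one, and it suffices to check that a single flip preserves the handlebody (isotopies obviously do). So suppose $\alphak = \alphak_0 \cup \{\alpha_1\}$ and $\alphak' = \alphak_0 \cup \{\alpha_1'\}$ differ by a flip, supported in the $4$--punctured sphere component $P$ of $\Sigma - \alphak_0$, and let $U$ be the handlebody determined by $\alphak$. The starting point is the standard remark that a marking induces a ball decomposition of the handlebody it determines: cutting $U$ along the disks bounded by $\alphak$ produces one $3$--ball for each pair of pants of $\Sigma - \alphak$. Deleting $\alpha_1$ merges two of these pairs of pants into $P$, and these two are necessarily distinct (otherwise the resulting component would be a one-holed torus rather than a $4$--punctured sphere); so, on the handlebody side, cutting $U$ only along the disks bounded by $\alphak_0$ joins the two corresponding balls along the disk bounded by $\alpha_1$, and the component $B$ of the cut-open handlebody whose boundary contains $P$ is again a $3$--ball. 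Hence $\alpha_1' \subset P \subset \partial B$ bounds a disk in $B$, disjoint from the disks bounded by $\alphak_0$, so the curves of $\alphak'$ bound disjoint embedded disks in $U$. By the characterization recorded above (two markings determine the same handlebody precisely when the curves of one bound disjoint disks in the handlebody determined by the other), $\alphak$ and $\alphak'$ determine the same handlebody.

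For the ``only if'' direction I would put $\alphak$ and $\alphak'$ into minimal position by an isotopy and induct on the total geometric intersection number $\iota(\alphak, \alphak') = \sum_{i,j}|\alpha_i \cap \alpha_j'|$. The base case $\iota(\alphak,\alphak') = 0$ reduces to the statement that two disjoint markings are isotopic. Indeed, each $\alpha_i'$ is an essential simple closed curve lying in some pair-of-pants component of $\Sigma - \alphak$, hence is isotopic to one of that component's boundary curves, i.e.\ to some $\alpha_j$; at most one curve of $\alphak'$ can be isotopic to a given $\alpha_j$, since otherwise two of them would cobound an annular complementary region of $\alphak'$, contradicting that all such regions are pairs of pants; and since $|\alphak'| = |\alphak|$, the assignment $\alpha_i'\mapsto\alpha_j$ is a bijection. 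Thus $\alphak'$ is isotopic to $\alphak$, and in particular the two are joined by an isotopy.

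The inductive step is where the hypothesis that $\alphak$ and $\alphak'$ determine the same handlebody $U$ is used, and where the real work lies. Assuming $\iota(\alphak,\alphak')>0$, the goal is to produce a flip applied to a curve of $\alphak'$ --- possibly after an allowable isotopy --- that strictly decreases $\iota$ (or a suitable refinement of it) while keeping the system a marking; the theorem then follows by induction. To locate such a flip, I would pick $\alpha_i' \in \alphak'$ meeting $\alphak$, use the common-handlebody hypothesis to choose a compressing disk $D_i' \subset U$ with $\partial D_i' = \alpha_i'$, and isotope $D_i'$ into minimal position with respect to the system of compressing disks bounded by $\alphak$. An innermost/outermost-arc analysis of $\alpha_i' = \partial D_i'$ against the pairs of pants of $\Sigma - \alphak$ --- the part I would import essentially from \cite{Luo} --- then isolates a $4$--punctured sphere region, bounded by curves of $\alphak$, inside which $\alpha_i'$ can be pulled off a compressing disk by a flip; because this modification is supported in that region and replaces $\alpha_i'$ by another curve splitting the four boundary circles into pairs, the output is again a marking, and the region is chosen so that the complexity strictly decreases.

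I expect this last step, and in particular the bookkeeping just described, to be the main obstacle. The difficulty is that an intersection-reducing modification --- a naive handle slide, say --- readily violates the marking condition, creating a null-homologous curve, two isotopic curves, or an annular complementary region. Organizing the simplification so that every intermediate system is a genuine marking and every elementary move is an honest \emph{flip} (not an arbitrary handle slide) is the technical heart of Luo's theorem, which I would reproduce. In the generality actually needed here one can exploit the fact that the relevant moves localize inside three- and four-punctured spheres, where the possible curve configurations fall into finitely many cases governed by Farey-graph combinatorics, so that only a bounded case analysis is required.
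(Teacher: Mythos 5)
The hard (``only if'') direction of your proposal is a plan, not a proof: after the base case you explicitly defer the inductive step to an argument you ``would import essentially from Luo'' and whose bookkeeping you ``would reproduce,'' while yourself flagging it as ``the technical heart of Luo's theorem.'' Converting a compressing-disk surgery into a sequence of honest two-point flips while keeping every intermediate system a marking is the entire content of the statement, so as written the proposal does not establish it. (Your ``if'' direction and your base case for disjoint markings are correct and cleanly argued.)

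It is also worth knowing that the paper itself does not prove Theorem~\ref{t:luo}: that statement is cited to Luo, and the Appendix proves only the weaker Theorem~\ref{thm:luohely}, which allows \emph{generalized} flips (replacing one curve by any other essential curve supported in the same $4$-punctured sphere) rather than genuine flips, since g-flip equivalence is all the paper ever uses. The Appendix proof also takes a genuinely different route from yours. Rather than inducting on the geometric intersection number $\iota(\alphak,\alphak')$ and peeling $\alpha_i'$ off a compressing disk, it extracts from each marking a ``spanning $g$-tuple'' of homologically independent curves, invokes the classical fact that two such $g$-tuples for the same handlebody are related by handle slides and isotopies, realizes each handle slide inside a common marking containing both $g$-tuples (the slide triple $\alpha_1,\alpha_2,\alpha_1'$ cobounds a pair of pants that one refines), and then proves Proposition~\ref{prop:common} --- that two markings sharing a spanning $g$-tuple are g-flip equivalent --- by induction on the number of isotopic curves the two markings already share, together with a normalization lemma for the arc defining a minimal curve. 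Passing to g-flips is exactly what dissolves the obstacle you flagged (forcing every move to be a bona fide two-point flip), and the spanning-$g$-tuple reduction replaces your descent on $\iota$ with a descent on ``number of curves not yet shared,'' for which the base cases are transparent. If you want a self-contained argument for the purposes of this paper, proving the g-flip version by that route is the pragmatic path; reproducing Luo's flip-only argument is a separate, harder project that your sketch does not carry out.
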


\begin{rem}
{\whelm Although the statement of \cite[Corollary~1]{Luo} does not
  state it explicitly, the proof of the main Theorem of \cite{Luo}
  shows that the sequence of flips connecting the two markings
  $\alphak $ and $\alphak '$ can be chosen in such a manner that all
  intermediate curve systems are markings (that is, all curves
  appearing in this sequence are homologically essential). In order to
  make the paper self-contained, we provide a proof of a slightly
  weaker result (namely that the markings determine the same
  handlebody if and only if they can be connected by g-flips) in the
  Appendix, cf. Theorem~\ref{thm:luohely}. In our subsequent
  applications, in fact, the g-flip equivalence is the property that we
  will use.}
\end{rem}

\begin{defn}
{\whelm Let $Y$ be a 3--manifold given by a Heegaard decomposition
  ${\mathfrak {U}}$. Suppose that the two handlebodies are specified
  by pair-of-pants decompositions $\alphak$ and $\betak$ of the
  Heegaard surface $\Sigma$.  Then the triple $(\Sigma , \alphak ,
  \betak )$ is called a \emph{pair-of-pants generalized Heegaard
    diagram}, or simply a \emph{pair-of-pants diagram}, for $Y$.  If
  moreover each of the curves $\alpha_i$ and $\beta_j$ in the systems
  are homologically essential (i.e. $\alphak$ and $\betak$ are both
  markings), then we call the pair-of-pants diagram an {\em essential
    pair-of-pants diagram} for $Y$.}  
\end{defn}

\begin{lem}\label{lem:conn}
Suppose that $(\Sigma , \alphak , \betak )$ and $(\Sigma , \alphak ',
\betak ')$ are two essential pair-of-pants diagrams corresponding to
the Heegaard decomposition ${\mathfrak {U}}=(\Sigma , U_0, U_1)$. Then
there is a sequence $\{ (\Sigma , \alphak _i, \betak _i )\}_{i=1}^m$
of essential pair-of-pants diagrams of ${\mathfrak {U}}$
connecting $(\Sigma , \alphak , \betak )$ and $(\Sigma , \alphak ',
\betak ') $ such that consecutive terms of the sequence differ by a
flip (either on $\alphak $ or on $\betak$).
\end{lem}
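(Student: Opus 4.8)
The plan is to reduce the statement to Luo's theorem (Theorem~\ref{t:luo}) by treating the two handlebodies separately. The hypothesis is that $(\Sigma, \alphak, \betak)$ and $(\Sigma, \alphak', \betak')$ are essential pair-of-pants diagrams for the \emph{same} Heegaard decomposition $\mathfrak{U} = (\Sigma, U_0, U_1)$. This means that $\alphak$ and $\alphak'$ are both markings of $\Sigma$ determining the handlebody $U_0$, and similarly $\betak$ and $\betak'$ are both markings determining $U_1$. First I would apply Theorem~\ref{t:luo} to the pair $(\alphak, \alphak')$: since both are markings cutting out $U_0$, there is a sequence $\alphak = \alphak^{(0)}, \alphak^{(1)}, \ldots, \alphak^{(p)} = \alphak'$ in which consecutive terms differ by a flip or an isotopy, and (by the Remark following Theorem~\ref{t:luo}) all intermediate curve systems are again markings. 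Then I would separately apply Theorem~\ref{t:luo} to the pair $(\betak, \betak')$, obtaining an analogous sequence $\betak = \betak^{(0)}, \ldots, \betak^{(q)} = \betak'$ of markings determining $U_1$, with consecutive terms differing by a flip or isotopy.

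Next I would concatenate: perform the $\alphak$-moves one at a time while holding $\betak$ fixed, producing diagrams $(\Sigma, \alphak^{(i)}, \betak)$ for $i = 0, \ldots, p$; then perform the $\betak$-moves one at a time while holding $\alphak' = \alphak^{(p)}$ fixed, producing $(\Sigma, \alphak', \betak^{(j)})$ for $j = 0, \ldots, q$. Every term in this combined sequence is an essential pair-of-pants diagram for $\mathfrak{U}$, because at each stage the $\alphak$-side is a marking determining $U_0$ and the $\betak$-side is a marking determining $U_1$ — so the triple genuinely specifies $\mathfrak{U}$ — and consecutive terms differ by a flip (or an isotopy) on one side only. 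Finally, to match the precise wording of the lemma, which asks for consecutive terms to differ by a flip, I would absorb the isotopy steps: an isotopy of one of the curve systems can be realized as an ambient isotopy of $\Sigma$, and composing two consecutive flips across such an isotopy still yields a flip, so after relabeling one obtains a sequence in which each consecutive pair differs by exactly a flip on $\alphak$ or on $\betak$. (Alternatively, one simply notes that "differ by a flip or an isotopy" is the genuinely needed statement and carries the isotopies along explicitly.)

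The main obstacle, such as it is, lies in verifying that during the $\alphak$-side manipulations one does not lose transversality or essentiality relative to the \emph{fixed} $\betak$-curves — that is, that each $(\Sigma, \alphak^{(i)}, \betak)$ is a legitimate generalized Heegaard diagram (balanced, with the curves intersecting transversally). Transversality is arranged by a small perturbation, which does not affect the isotopy class of the move or of the diagram; and balancedness is automatic, since a pair-of-pants marking of a genus-$g$ surface always has exactly $3g-3$ curves, so $|\alphak^{(i)}| = |\betak| = 3g-3$ throughout. The essentiality of each intermediate $\alphak^{(i)}$ is exactly the content of the strengthened form of Luo's theorem recorded in the Remark (and reproved as Theorem~\ref{thm:luohely} in the Appendix), which is why that strengthening is needed here. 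With these points in place, the concatenated sequence is the required chain of essential pair-of-pants diagrams of $\mathfrak{U}$ connecting the two given ones.
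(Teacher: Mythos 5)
Your proposal is correct and follows essentially the same route as the paper's proof: apply Luo's theorem separately to the $\alphak$- and $\betak$-sides to get sequences of flips (through markings), then concatenate by first varying $\alphak$ with $\betak$ held fixed, followed by varying $\betak$ with $\alphak'$ held fixed. The paper states this very tersely; your version adds the correct (if minor) bookkeeping about transversality, balancedness, and absorbing isotopy steps, but the underlying argument is the same.
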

\begin{proof}
  Suppose that $\{ \alphak _i \} _{i=1}^{m_1}$ and $\{ \betak _j \}
  _{j=1}^{m_2}$ are sequences of flips connecting $\alphak$ to
  $\alphak '$ and $\betak $ to $\betak '$. Then $\{ (\Sigma , \alphak
  _i , \betak )\} _{i=1}^{m_1}\cup \{ (\Sigma , \alphak ', \betak
  _{i-m_1} )\}_{i=m_1+1}^{m_1+m_2}$ is an appropriate sequence of
  essential diagrams.
\end{proof}

We say that a 3--manifold $Y$ contains no $S^1\times S^2$--summand if
for any connected sum decomposition $Y\cong Y_1 \# n(S^1\times S^2)$ we
have $n=0$.

\begin{lem}\label{l:noiso}
  Suppose that $Y$ contains no $S^1\times S^2$--summand, and
  $(\Sigma , \alphak , \betak )$ is an essential pair-of-pants diagram
  for $Y$.  Then there is no pair $\alpha_i \in \alphak$ and $\beta
  _j \in \betak$ such that $\alpha_i$ is isotopic to $\beta _j$.
\end{lem}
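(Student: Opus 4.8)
The plan is to argue by contradiction: assume $\alpha_i$ is isotopic to some $\beta_j$, and extract an $S^1\times S^2$--summand of $Y$, contradicting the hypothesis. First I would observe that since $\alpha_i$ and $\beta_j$ are isotopic in $\Sigma$, after an isotopy of the diagram (which does not change the underlying handlebodies, only the curve systems as isotopy representatives) we may assume $\alpha_i=\beta_j$ as a single embedded curve $c\subset\Sigma$. The curve $c$ bounds a compressing disk $D_\alpha$ in $U_0$ and a compressing disk $D_\beta$ in $U_1$; gluing these two disks along their common boundary $c$ produces an embedded $2$--sphere $S=D_\alpha\cup_c D_\beta$ in $Y$.

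The heart of the argument is to show that this sphere $S$ is non-separating, so that surgering $Y$ along it (or equivalently recognizing a neighborhood of $S$ together with a dual circle) exhibits an $S^1\times S^2$--summand. To see that $S$ is non-separating, I would work on the Heegaard surface: because $\alphak$ is a marking, the curve $c=\alpha_i$ is homologically essential in $H_1(\Sigma;\Z/2\Z)$, so there is a simple closed curve $c'\subset\Sigma$ meeting $c$ transversally in a single point. Pushing $c'$ slightly into $U_0$ on one side of $c$ and into $U_1$ on the other side, and joining the two arcs across $D_\alpha$ and $D_\beta$, yields a circle $\gamma\subset Y$ with geometric intersection number one with $S$. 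A circle meeting an embedded $2$--sphere transversally in exactly one point forces the sphere to be non-separating, and a standard argument (a regular neighborhood of $S\cup\gamma$ is $S^1\times S^2$ minus a ball, whose complement in $Y$ then furnishes the connected-sum decomposition $Y\cong Y'\#(S^1\times S^2)$) produces the contradiction.

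A small technical point to handle carefully is that $\alpha_i$ may coincide with $\beta_j$ only up to isotopy rather than literally; I would either isotope one of the two pair-of-pants systems so that the curves agree — noting that the resulting (possibly non-transverse, possibly no-longer-pair-of-pants) configuration is still enough to build the sphere $S$ and the dual circle $\gamma$ — or, more cleanly, work directly with the isotopy: an isotopy from $\alpha_i$ to $\beta_j$ sweeps out an annulus $A\subset\Sigma$, and $D_\alpha\cup A\cup D_\beta$ is still an embedded sphere in $Y$ after a small perturbation. The main obstacle is thus the verification that $S$ is non-separating, i.e. the construction of the dual circle $\gamma$; everything else is routine three-manifold topology. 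I expect that the only input genuinely needed beyond elementary cut-and-paste is the fact that $\alpha_i$ being homologically essential over $\Z/2\Z$ supplies a geometric dual curve on $\Sigma$, which is standard.
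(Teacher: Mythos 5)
Your approach is essentially the paper's: glue the two compressing disks along the isotopic curve to obtain an embedded sphere $S$, use the homological essentialness of $\alpha_i$ in $\Sigma$ to exhibit a loop meeting $S$ once and hence show $S$ is non-separating, and conclude $Y$ has an $S^1\times S^2$--summand, contradicting the hypothesis. One small simplification: the geometric dual $c'\subset\Sigma$ already meets $S=D_\alpha\cup_c D_\beta$ in exactly one transverse point (namely $c'\cap c$), since $D_\alpha$ and $D_\beta$ lie in the interiors of the two handlebodies, so $\gamma=c'$ works directly; the described pushing-off-and-joining through both $D_\alpha$ and $D_\beta$ would instead produce a loop meeting $S$ twice.
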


\begin{proof}
  Such an isotopic pair $\alpha_i$ and $\beta_j$ gives an embedded
  sphere $S$ in $Y$, which is homologically nontrivial in $Y$ since
  $\alpha_i$ (as well as  $\beta _j$) is homologically
  essential in the Heegaard surface. Surgery on $Y$ along the sphere
  $S$ results a manifold $Y_1$ with the property that $Y_1\# (S^1\times
  S^2)$ is homeomorphic to $Y$. Therefore by our assumption the
  isotopic pair $\alpha_i$ and $\beta _j$ cannot exist.
\end{proof}
\begin{cor}
Suppose that $Y$ contains no $S^1\times S^2$--summand, and
  $(\Sigma , \alphak , \betak )$ is an essential pair-of-pants diagram
  for $Y$.  Then any $\alphak$--curve is intersected by some $\betak$--curve
(and symmetrically, any
$\betak$--curve is intersected by some $\alphak$--curve).
\end{cor}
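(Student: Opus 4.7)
The plan is to argue by contradiction, reducing to the classification of simple closed curves on a pair-of-pants and then invoking Lemma~\ref{l:noiso}.

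Suppose some $\alpha_i \in \alphak$ is disjoint from every curve in $\betak$. Then $\alpha_i$ lies entirely inside a single connected component $P$ of $\Sigma - \betak$. Since $(\Sigma,\alphak,\betak)$ is a pair-of-pants diagram, $P$ is homeomorphic to a three-punctured sphere. It is a classical fact that any essential simple closed curve on a pair-of-pants is isotopic (inside $P$) to one of its three boundary components; any non-essential such curve bounds a disk in $P$. Therefore, up to isotopy in $P$, either $\alpha_i$ bounds a disk in $P$, or $\alpha_i$ is isotopic in $P$ (hence in $\Sigma$) to one of the $\beta_j$'s appearing as a boundary component of $P$.

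In the first case, $\alpha_i$ bounds a disk in $P \subset \Sigma$, so $\alpha_i$ is null-homotopic (and in particular null-homologous) in $\Sigma$. This contradicts the assumption that $\alphak$ is a marking, i.e., that each $\alpha_i$ is homologically essential in $H_1(\Sigma;\Field)$. In the second case, $\alpha_i$ is isotopic to some $\beta_j \in \betak$, which directly contradicts Lemma~\ref{l:noiso} under our hypothesis that $Y$ contains no $S^1\times S^2$--summand. Either way we reach a contradiction, so $\alpha_i$ must intersect some $\betak$--curve. The symmetric statement, that every $\beta_j$ is intersected by some $\alphak$--curve, follows by interchanging the roles of $\alphak$ and $\betak$ in the argument above.

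The only mild subtlety is the first step of the reduction, namely that the entire curve $\alpha_i$ lies in one component $P$ of $\Sigma - \betak$; but this is immediate since $\alpha_i$ is connected and disjoint from $\betak$. No step here presents a serious obstacle; the argument is essentially a one-line application of the topology of the three-holed sphere combined with the two hypotheses (the marking property and Lemma~\ref{l:noiso}).
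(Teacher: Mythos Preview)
Your proof is correct and follows essentially the same approach as the paper: both argue by contradiction, place the disjoint $\alpha_i$ in a pair-of-pants component of $\Sigma-\betak$, and invoke Lemma~\ref{l:noiso} after noting that $\alpha_i$ must be parallel to a boundary curve. Your version is slightly more explicit in separately ruling out the case where $\alpha_i$ bounds a disk in $P$ (using the marking hypothesis), which the paper's one-line proof leaves implicit.
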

\begin{proof}
Suppose that $\alpha_i$ is disjoint from all $\beta _j$. Then $\alpha
_i$ is part of a pair-of-pants component of $\Sigma - \betak$, hence
is parallel to one of the boundary components of the pair-of-pants,
which contradicts the conclusion of Lemma~\ref{l:noiso}. The
symmetric statement follows in the same way.
\end{proof}
\begin{defn}
{\whelm Suppose that $(\Sigma,\alphak, \betak)$ is a Heegaard diagram for
  the $3$--manifold $Y$. We say that the diagram is \emph{bigon--free}
  if there are no elementary domains which are bigons or,
  equivalently, if each $\alpha_i$ intersects each $\beta_j$ a minimal
  number of times.}
\end{defn}
Our aim in this subsection is to prove the following:
\begin{thm}
  \label{thm:nobigon}
  Suppose that $Y$ is a given 3--manifold and $(\Sigma , \alphak ,
  \betak )$ is an essential pair-of-pants diagram for $Y$. Then there is a
  Heegaard diagram $(\Sigma , \alphak ' , \betak ')$ such that
  \begin{itemize}
  \item $\alphak $ and $\alphak '$ (and similarly $\betak $ and $\betak '$)
    are isotopic and
  \item $(\Sigma  , \alphak ' , \betak ')$ is bigon--free.
  \end{itemize}
  If $Y$ contains no $S^1\times S^2$--summand, then the bigon--free
  model is unique up to homeomorphism.  More precisely, if
  $(\Sigma,\alphak',\betak')$ and $(\Sigma,\alphak'',\betak'')$ are
  two bigon--free diagrams for $Y$ for which $\alphak'$ and $\alphak''$
  are isotopic, and $\betak'$ and $\betak''$ are isotopic, then there
  is a homeomorphism $f\colon \Sigma\longrightarrow \Sigma$ isotopic
  to id$_{\Sigma }$ which carries $\alphak'$ to $\alphak''$ and
  $\betak'$ to $\betak''$.
\end{thm}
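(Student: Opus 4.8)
The plan is to separate the theorem into its two assertions — existence of a bigon-free model, and uniqueness when $Y$ has no $S^1\times S^2$ summand — and treat them in that order. For existence, the idea is purely local and two-dimensional: a bigon is a disk $D$ whose boundary consists of one arc on some $\alpha_i$ and one arc on some $\beta_j$, and whose interior is disjoint from all the curves. Such a disk gives an \emph{innermost} finger-move isotopy: push the $\beta_j$-arc across $D$ past the $\alpha_i$-arc, reducing $|\alpha_i\cap\beta_j|$ by two and leaving all other intersection numbers unchanged. (One must check that pushing $\beta_j$ across $D$ does not create new intersections with other $\alpha$- or $\beta$-curves; this is where "innermost" is used — choose $D$ to contain no other curves in its interior, which is possible since the elementary domains are finitely many.) Iterating, after finitely many such isotopies of the $\betak$-curves (leaving $\alphak$ fixed) we reach a diagram with no bigons, i.e. where each $\alpha_i$ meets each $\beta_j$ minimally; this establishes the equivalence stated in the definition of bigon-free and gives $(\Sigma,\alphak',\betak')$ with $\alphak'=\alphak$ and $\betak'$ isotopic to $\betak$. (Only the $\betak$-curves move, which is already enough for the statement, but one could symmetrize.)

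For uniqueness, suppose $(\Sigma,\alphak',\betak')$ and $(\Sigma,\alphak'',\betak'')$ are two bigon-free diagrams with $\alphak'\simeq\alphak''$ and $\betak'\simeq\betak''$. First I would straighten the $\alpha$-system: since $\alphak'$ and $\alphak''$ are isotopic systems of disjoint simple closed curves, there is a self-homeomorphism of $\Sigma$ isotopic to the identity carrying $\alphak'$ to $\alphak''$; so without loss of generality $\alphak'=\alphak''=:\alphak$. Now $\betak'$ and $\betak''$ are isotopic, and each is in minimal position with respect to $\alphak$. The key point is that two minimal-position representatives of an isotopy class differ by an ambient isotopy \emph{through minimal-position diagrams}, and moreover such an isotopy can be realized so that it carries the $\betak'$-configuration to the $\betak''$-configuration rel $\alphak$. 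This is the standard "minimal position is unique up to isotopy" fact for curves on surfaces, applied to the full multicurve $\betak$ inside the surface cut along $\alphak$ (a disjoint union of pairs of pants, by the pair-of-pants hypothesis). Here is where the no-$S^1\times S^2$ hypothesis enters: by Lemma~\ref{l:noiso} no $\alpha_i$ is isotopic to any $\beta_j$, so no $\beta_j$ can be isotoped off of $\alphak$, and the combinatorial type of the intersection pattern is rigid — there are no annular regions between an $\alpha$ and a $\beta$ that would allow a nontrivial rearrangement. Assembling the $\alpha$-straightening homeomorphism with the $\beta$-matching isotopy produces the desired $f\colon\Sigma\to\Sigma$, isotopic to $\mathrm{id}_\Sigma$, carrying $\alphak'\mapsto\alphak''$ and $\betak'\mapsto\betak''$.

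The main obstacle I expect is the uniqueness half, specifically the claim that two minimal-position representatives of the same multicurve, inside a fixed surface (or one cut along $\alphak$), are carried to one another by an ambient isotopy \emph{preserving the cut locus} $\alphak$. For a single pair of curves this is classical (e.g. via the bigon criterion: no bigons plus isotopic implies ambient-isotopic through minimal position), but for the full systems $\betak$ and $\alphak$ simultaneously one needs to argue component by component and control the interaction. The clean way is to work in the planar pieces: cut $\Sigma$ along $\alphak$ to get pairs of pants; each $\beta_j$ becomes a collection of properly embedded arcs, and in a planar surface properly embedded arcs with the same endpoints and no bigons (with the boundary) are isotopic rel boundary — a case-check on the finitely many arc types in a pair of pants, exactly the kind of localization the introduction advertises. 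Reassembling these arc isotopies across the $\alphak$-curves, using that the diagram is bigon-free so no new bigons appear, yields the global isotopy. The bookkeeping — checking that the arc isotopies in adjacent pairs of pants match along $\alphak$ and can be performed simultaneously — is the part that requires care, but it is topological and finite, not analytic.
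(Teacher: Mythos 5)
Your existence argument agrees with the paper's: iteratively cancel innermost bigons by finger moves on $\betak$, terminating since each step decreases $\sum_{i,j}|\alpha_i\cap\beta_j|$ by two. Your uniqueness argument, however, takes a genuinely different route, and the step you yourself flag as ``the part that requires care'' is precisely the point where the two approaches diverge. You reduce to the claim that two isotopic multicurves, both in minimal position with the fixed $\alphak$, are carried to one another by an ambient isotopy preserving $\alphak$, and propose to verify this by cutting along $\alphak$ and matching arc systems in the resulting pairs of pants. That reduction is sound, but gluing the local arc isotopies consistently across the $\alphak$-curves is exactly the statement that the Dehn--Thurston twist parameters of $\betak'$ and $\betak''$ with respect to $\alphak$ agree --- and establishing that (or the equivalent ``uniqueness of geodesic representatives'' fact) is the actual content of the claim, not bookkeeping. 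Note also that your local model is slightly off: the arcs of $\betak'\cap P$ and $\betak''\cap P$ will generally \emph{not} have the same endpoints on $\partial P$, so one must classify arc systems up to isotopy with endpoints free to slide, and then show the twist data matches. As written, your uniqueness half names but does not supply this input.

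The paper avoids all of this by a combinatorial confluence argument. Lemma~\ref{l:simpseq} shows the bigon-elimination process has a unique terminal diagram: given two candidate bigons $B$ and $B'$ to cancel first, either they are disjoint (swap the order), or they share exactly one corner (after two steps the results coincide, Figure~\ref{f:nobigg}), or they would have to share two corners or a side --- the last possibilities being excluded by essentiality of the markings and by Lemma~\ref{l:noiso}, which is where the no-$S^1\times S^2$ hypothesis actually enters the paper's proof. (You invoke the same lemma, but to ensure each $\beta_j$ meets $\alphak$ --- a different consequence of it.) Uniqueness of the bigon-free model then follows by decomposing a generic isotopy between two bigon-free diagrams into elementary bigon creations and annihilations --- no triple-point moves arise, since the $\alphak$-curves are pairwise disjoint, as are the $\betak$-curves --- and applying the confluence result. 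The trade-off: the paper's route is longer but elementary and self-contained, in keeping with its stated goal of avoiding outside machinery; your route would be shorter for a reader who already has Dehn--Thurston coordinates or hyperbolic geodesics in hand, but as it stands the crux is asserted rather than proved.
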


\begin{rem}
{\whelm In the statement of the above proposition, we assumed that our
  pair-of-pants diagrams were essential. This is, in fact, not needed
  for the existence statement, but it is needed for uniqueness.}
\end{rem}
We return to the proof of the theorem after a definition and a lemma.
\begin{defn}
{\whelm
\begin{itemize}
\item Let ${\DD}$ and ${\DD}'$ be two Heegaard diagrams. We say that
  ${\DD}'$ is obtained from ${\DD}$ by an \emph{elementary
  simplification} if ${\DD}'$ is obtained by eliminating a single
  elementary bigon in ${\DD}$, cf. Figure~\ref{f:dis}(a). (In
  particular, the attaching circles for ${\DD}$ are isotopic to those
  for ${\DD}'$, via an isotopy which cancels exactly two intersection
  points between attaching circles $\alpha_i$ and $\beta_j$ for
  ${\DD}$.)

\item Given a Heegaard diagram ${\DD}$, a \emph{simplifying sequence}
  is a sequence of Heegaard diagrams $\{{\DD}_i\}_{i=0}^n$ with the
  following properties:
  \begin{itemize}
  \item ${\DD}={\DD}_0$
  \item ${\DD}_{i+1}$ is obtained from ${\DD}_i$ by an
    elementary simplification.
  \item ${\DD}_n={\mathcal {E}}$ is bigon--free.
  \end{itemize} 
In this case, we say that ${\DD}={\DD}_0$ \emph{simplifies to}
${\DD}_n={\mathcal {E}}$.

\item If ${\DD}$ is a Heegaard diagram and ${\EE}$ is a
  bigon--free diagram, the \emph{distance} from ${\DD}$ to
  ${\EE}$ is the minimal length of any simplifying sequence
  starting at ${\DD}$ and ending at ${\EE}$. (Of course,
  this distance might be $\infty$; we shall see that this happens only
  if ${\EE}$ is not isotopic to ${\DD}$.)
\end{itemize}
}
\end{defn}

\begin{lem}
  \label{l:simpseq}
  Given a Heegaard diagram ${\DD}$ for a 3--manifold $Y$,
  there exists a simplifying sequence $\{{\DD}_i\}_{i=0}^n$. If
  ${\DD}$ is an essential pair-of-pants diagram, and
  $Y$ contains no $S^1\times S^2$--summand, then  any two
  simplifying sequences starting at ${\DD}$ have the same
  length, and they terminate in the same bigon--free diagram ${\mathcal
    E}$.
\end{lem}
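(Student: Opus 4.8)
The plan is to prove the two assertions separately: first existence of a simplifying sequence, then—under the pair-of-pants/no-$S^1\times S^2$ hypothesis—the fact that the length and the terminal diagram are well-defined. For existence, I would argue by a straightforward induction on the total number of intersection points $\sum_{i,j}\vert\alpha_i\cap\beta_j\vert$. If the diagram $\DD$ is already bigon-free there is nothing to do. Otherwise there is an elementary bigon; performing the elementary simplification that removes it strictly decreases the intersection number (by exactly two), so by induction the resulting diagram admits a simplifying sequence, and we prepend the one step. The only point needing a sentence of care is that removing one elementary bigon is always possible: an innermost elementary bigon gives an isotopy of a single $\beta_j$ (or $\alpha_i$) across a disk meeting no other curves, so the move is legitimate and produces a genuine Heegaard diagram. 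This handles the first sentence of the lemma and does not use the essential pair-of-pants hypothesis.

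For the uniqueness statement, the natural strategy is a diamond (local confluence) argument in the spirit of Newman's lemma: show that whenever two distinct elementary simplifications are applied to the same diagram $\DD$, the two results can be brought back together after one further elementary simplification each, yielding a common diagram—and that the two length-two paths have the same length (they do, both being length two). Since the ``distance to bigon-free'' is bounded (existence, plus the intersection number is a strictly decreasing, nonnegative integer along any simplifying sequence, so all such sequences are finite and in fact have length $\tfrac12(\,\#\{\text{intersections in }\DD\}-\#\{\text{intersections in }\EE\}\,)$), local confluence upgrades to global confluence and uniqueness of the terminal object by the standard Newman argument (induction on the length of a maximal simplifying sequence out of $\DD$). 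The hypotheses enter precisely in the confluence step: if the two bigons being cancelled are disjoint (do not share an intersection point), the moves obviously commute. If they share an edge or a vertex, one must check by a finite case analysis on the local picture inside $\Sigma-\alphak$ (a planar piece) that after one cancellation the other bigon persists, or is replaced by a nearby bigon whose cancellation reaches the same diagram. It is here that bigon-freeness of pair-of-pants diagrams after isotopy (Theorem~\ref{thm:nobigon}, existence part) and Lemma~\ref{l:noiso} are used: they rule out the degenerate configuration of a pair of curves bounding an annulus of bigons (a ``bigon train''), which is exactly the obstruction to confluence—an infinite such train would allow cancellations in either direction without ever meeting.

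Concretely, I would organize it as: (1) define the \emph{complexity} $c(\DD)=\tfrac12\sum_{i,j}\vert\alpha_i\cap\beta_j\vert$, observe $c$ drops by $1$ under an elementary simplification, hence every simplifying sequence from $\DD$ has length exactly $c(\DD)-c(\EE)$ for its terminus $\EE$, and all such sequences are finite; (2) prove the local confluence diamond; (3) invoke Newman's lemma to conclude that the reduction is confluent, so the bigon-free diagram reached is independent of the choices, and equating complexities of the (now unique) terminus forces all simplifying sequences to have the same length.

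The main obstacle I expect is step (2), the confluence diamond, specifically the subcase where two elementary bigons to be cancelled overlap. One has to enumerate the possible local pictures of two bigons sharing a vertex or an edge on the planar surface $\Sigma-\alphak$, and show the cancellations can be completed to a common diagram—while ruling out, via Lemma~\ref{l:noiso} and the structure of essential pair-of-pants diagrams (every curve meets a curve of the other system, no isotopic $\alpha_i,\beta_j$ pair), the bad scenario of nested bigons forming an unbounded ``train'' between two curves isotopic in $\Sigma$. Everything else is bookkeeping.
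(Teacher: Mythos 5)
Your proposal is correct and follows essentially the same route as the paper: existence by the obvious complexity-decreasing induction, and uniqueness by termination plus local confluence, closed off by an induction on distance to the terminus---which is exactly the Newman's-lemma argument the paper carries out without naming it. The overlap case analysis you flag as the crux is handled in the paper precisely as you anticipate: two distinct elementary bigons cannot share a side (by homological essentiality of the curves), sharing two corners would produce an isotopic $\alphak$--$\betak$ pair ruled out by Lemma~\ref{l:noiso}, and the one-shared-corner case closes the diamond by a direct local picture.
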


\begin{proof}
  The sequence $\{{\DD}_i\}_{i=0}^n$ is constructed in the following
  straightforward manner.  If the diagram ${\DD}_i$ contains an
  elementary domain which is a bigon, then isotope the $\betak$--curve
  until this bigon disappears, to obtain ${\DD}_{i+1}$ (cf.
  Figure~\ref{f:dis}(a)), and if ${\DD}_i$ does not contain
  any bigons, then stop.
  \begin{figure}[ht]
    \begin{center}
      \epsfig{file=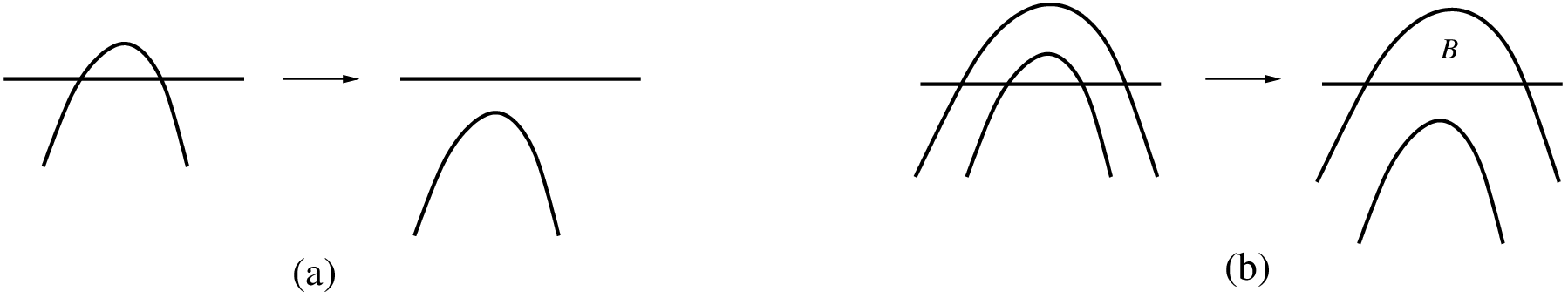, height=2.5cm}
    \end{center}
    \caption{{\bf Elimination of a bigon.} As (b) shows, the elimination of one bigon
might create another one.}
    \label{f:dis}
  \end{figure}
 Although the above isotopy might create new bigons (see $B$ of 
 Figure~\ref{f:dis}(b)), the number of intersection points of the
 $\alphak$-- and $\betak$--curves decreases by two at every elementary
 simplification, hence the
 sequence will eventually terminate in a bigon--free diagram.
 
 Formally, if we define the complexity $K({\DD})$ of a diagram
 ${\DD}$ to be $\sum_{i,j} |\alpha_i\cap\beta_j|$ (where
 $|\cdot|$ denotes the total number of intersection points), then the
 distance $d$ between ${\DD}$ and ${\EE}$ is given by
 $K({\DD})-K({\EE})=2d$. Thus, any two simplifying
 sequences from ${\DD}$ to the same bigon--free diagram
 ${\EE}$ must have the same length.

 Fix now a bigon--free diagram ${\EE}$.  We prove by induction
 on the distance from ${\DD}$ to ${\EE}$ that if
 ${\DD}$ is a diagram with finite distance $d$ from ${\mathcal
   E}$, then any simplifying sequence starting at ${\DD}$
 terminates in ${\EE}$.

The statement is obvious if $d=0$, i.e. if ${\DD}={\EE}$.  By induction,
suppose that we know that every diagram ${\DD}$ with distance $d$ from
the bigon--free ${\EE}$ has the property that each simplifying
sequence starting at ${\DD}$ terminates in ${\EE}$. We must now verify
the following: if $\{{\DD}_i\}_{i=0}^{d+1}$ and $\{{\DD}_i'\}_{i=0}^n$
are two simplifying sequences both starting at
${\DD}={\DD}_0={\DD}_0'$, and with ${\DD}_{d+1}={\EE}$, then in fact
$n=d+1$ and ${\DD}_{n}'={\EE}$.  To see this, note that ${\DD}_1$ is
obtained by eliminating some bigon $B$ in ${\DD}$, and ${\DD}_1'$ is
obtained by eliminating a (potentially) different bigon $B'$ in
${\DD}$. Of course when $B=B'$, induction provides the result.
 
For $B\neq B'$ there are two subcases: either $B$ and $B'$ are  disjoint or
they intersect.  If $B$ and $B'$ are disjoint, we can construct a
third simplifying sequence $\{{\DD}_i''\}_{i=0}^m$ which we construct
by first eliminating the bigon $B$ (so that ${\DD}_1''={\DD}_1$) and
next eliminating $B'$ (and then continuing the sequence arbitarily to
complete these first two steps to a simplifying sequence).  By the
inductive hypothesis applied for ${\DD}_1$, it follows that $m=d+1$
(since the distance from ${\DD}_1$ to ${\EE}$ is $d$), and that
${\DD}_{m}''={\EE}$. We now consider a fourth simplifying sequence
which looks the same as the third, except we eliminate the first two
bigons in the opposite order; i.e. we have $\{{\DD}_i'''\}_{i=0}^m$
with the property that ${\DD}_1'''={\DD}_1'$ and
${\DD}_i'''={\DD}_i''$ for $i\geq 2$. The existence of this sequence
ensures that the distance from ${\DD}_1'$ to ${\EE}$ is $d$, and
hence, by the inductive hypothesis, $n=d+1$, and ${\DD}_n'={\EE}$, as
needed.

Suppose now that the bigons $B$ and $B'$ are not disjooint.
Since the curves in the markings are homologically essential, two distinct elementary
bigons cannot share a side.
Therefore the two bigons share
 at least one corner. In case the two bigons share two corners, we get
 parallel $\alphak$-- and $\betak$--curves, contradicting our
 assumption, cf. Lemma~\ref{l:noiso}. (Recall that we assumed that $Y$
 has no $(S^1\times S^2)$--summands.) If the two bigons share exactly
 one corner, then by a simple local consideration, it follows that
 ${\DD}_2$ and ${\DD}_2'$ are already isotopic.
 cf. Figure~\ref{f:nobigg}. In particular, the inductive hypothesis
 immediately applies, to show that $n=d+1$ and ${\DD}_n'={\mathcal
   E}$.
\end{proof}
\begin{figure}[ht]
\begin{center}
\input{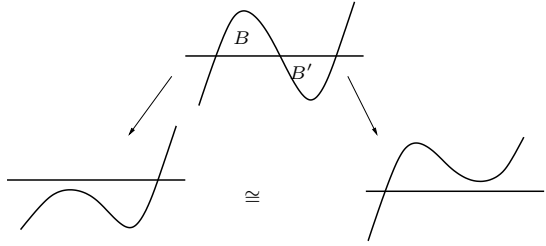}
\end{center}
\caption{{\bf Elimination of bigons with nontrivial intersection in
    different orders.}}
\label{f:nobigg}
\end{figure}
Armed with this lemma, we are ready to give the proof of the theorem:
\begin{proof}[Proof of Theorem~\ref{thm:nobigon}]
  Note first that if ${\DD}_2$ is obtained from ${\DD}_1$
  by an elementary simplification, then both ${\DD}_1$ and
  ${\DD}_2$ simplify to the same bigon--free diagram. To see
  this, take a simplifying sequence starting at ${\DD}_2$
  (whose existence is guaranteed by Lemma~\ref{l:simpseq}), and prepend
  ${\DD}_1$ to the sequence.

  Suppose now that there are two bigon--free diagrams ${\EE}_1$
  and ${\EE}_2$, both isotopic to a fixed, given one.  This, in
  particular, means that the bigon--free diagrams ${\EE}_1$
  and ${\EE}_2$ are isotopic.  Making the isotopy generic,
  and subdividing it into steps, we find a sequence of diagrams
  $\{{\DD}_i\}_{i=1}^m$ where:
  \begin{itemize}
  \item ${\EE}_1={\DD}_1$ and ${\EE}_2={\DD}_m$
  \item  ${\DD}_i$ and ${\DD}_{i+1}$ differ by an elementary
    simplification; i.e. either ${\DD}_{i+1}$ is obtained from ${\DD}_i$ by
    an elementary simplification or vice versa.
  \end{itemize}
  By the above remarks, any two consecutive terms simplify to the same
  bigon--free diagram.  Since by Lemma~\ref{l:simpseq} that
  bigon--free diagram is unique, there is a fixed bigon--free diagram
  ${\FF}$ with the property that any of the diagrams ${\DD}_i$
  simplifies to ${\FF}$. Since ${\DD}_1={\EE}_1$ and
  ${\DD}_n={\EE}_2$ are already bigon--free, it follows
  that ${\EE}_1\cong {\FF}\cong {\EE}_2$.
\end{proof}

In our subsequent discussions the combinatorial shapes of the
components of $\Sigma - \alphak -\betak$ will be of central
importance.  As the next result shows, a bigon--free essential
pair-of-pants decomposition is rather simple in that respect. In fact,
for purposes which will become clear later, we consider the slightly
more general situation where we delete one curve from $\alphak$.

\begin{prop}\label{p:convdomains}
Suppose that $Y$ contains no $S^1\times S^2$--summand, $(\Sigma ,
\alphak , \betak )$ is a bigon--free, essential pair-of-pants Heegaard
diagram for $Y$, and let $\alphak_1 $ be given by deleting an
arbitrary curve from $\alphak$. Then each $\beta _j \in \betak $ is
intersected by some curve in $\alphak_1$, and the components of
$\Sigma -\alphak_1 -\betak $ are either rectangles, hexagons or
octagons. Consequently, the components of $\Sigma - \alphak -\betak $
are also either rectangles, hexagons or octagons.
\end{prop}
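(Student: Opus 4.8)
The plan is to analyze the combinatorics of the curve system $\betak$ together with the reduced $\alphak$-system $\alphak_1$ inside a single pair-of-pants component of $\Sigma - \alphak$. Since $\alphak$ is a pair-of-pants decomposition, $\Sigma - \alphak$ is a disjoint union of thrice-punctured spheres $P$. Deleting one curve from $\alphak$ to form $\alphak_1$ has the effect of gluing either two distinct pairs-of-pants along a common boundary circle (producing a four-holed sphere) or gluing a single pair-of-pants to itself along two of its boundary circles (producing a two-holed torus); all other components of $\Sigma - \alphak_1$ remain thrice-punctured spheres. So it suffices to understand, inside each of these three model pieces (a pair-of-pants, a four-holed sphere, a two-holed torus), what the $\betak$-curves look like and what regions they cut out, given that the global diagram is bigon-free and essential. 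First I would record that by the Corollary following Lemma~\ref{l:noiso}, every $\beta_j$ meets some $\alphak$-curve, and then argue the sharper claim that every $\beta_j$ meets some curve of $\alphak_1$: if $\beta_j$ were disjoint from all of $\alphak_1$ it would lie in one of the model pieces $Q$ above, and a simple closed curve in a pair-of-pants, four-holed sphere, or two-holed torus that is disjoint from the "extra" boundary data is either boundary-parallel or (in the torus case) nonseparating; boundary-parallelism contradicts Lemma~\ref{l:noiso} (it would be isotopic to an $\alphak$-curve), and I would rule out the remaining possibilities using essentiality of $\betak$ and the bigon-free hypothesis exactly as in the proof of the Corollary.

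Next, the geometric heart: inside a pair-of-pants $P$, an essential arc system (the pieces of $\betak$-curves meeting $P$) that is bigon-free cuts $P$ into regions each of which is a polygon with at most some bounded number of sides. The standard fact is that a pair-of-pants has, up to isotopy, exactly three isotopy classes of essential properly embedded arcs (those separating pairs of boundary circles), and any two non-parallel such arcs meet in at most one point when put in minimal position. I would carry out a direct case analysis of how many distinct isotopy types of $\beta$-arcs appear in $P$ and with what multiplicities, using that the diagram is bigon-free to force minimal position, and read off the complementary regions; the bound one gets is that each complementary region in $P$ is a rectangle or a hexagon. Then I would redo this for the four-holed sphere and the two-holed torus models for $\alphak_1$: here the "boundary" of the model piece is cut into arcs by the single removed $\alpha$-curve's former position, but since we are counting regions of $\Sigma - \alphak_1 - \betak$ the $\beta$-arcs in a four-holed sphere (resp. two-holed torus) can produce regions with up to eight sides, and I would verify no region has more than eight sides by the same minimal-position/arc-type bookkeeping. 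Finally, restoring the deleted $\alphak$-curve only subdivides regions further (it cuts each region it passes through into pieces with no more sides than the original, generically adding two sides at most per crossing but, because the removed curve is itself one boundary circle of the pairs-of-pants it came from, it cannot create anything larger than what we already allowed), so the components of $\Sigma - \alphak - \betak$ are rectangles, hexagons, or octagons as well.

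The main obstacle I expect is the bookkeeping in the four-holed sphere and two-holed torus pieces: one must genuinely enumerate the possible configurations of $\beta$-arcs (how many parallel families, which boundary components they connect, whether they wrap in the torus case) and check in each case that no complementary polygon exceeds an octagon, while invoking bigon-freeness and essentiality precisely enough to eliminate the bad configurations. This is where the hypothesis that $(\Sigma,\alphak,\betak)$ is bigon-free and essential, and that $Y$ has no $S^1\times S^2$-summand (hence no isotopic $\alpha_i,\beta_j$ pair, by Lemma~\ref{l:noiso}), does the real work: it is what prevents, e.g., an annular region or a region with a free boundary arc that could otherwise balloon into an arbitrarily large polygon. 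The argument is "finitely many cases" in the spirit the introduction advertises, but getting the case list complete and correct is the delicate part; a careful picture for each model piece, together with the observation that an essential arc in these surfaces has a bounded number of isotopy classes and pairwise geometric intersection number at most one (or two, in the torus), is what makes it go through.
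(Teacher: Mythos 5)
Your plan takes a genuinely different route from the paper's, and as written it has a gap. You propose to analyze $\betak$-arcs inside the components of $\Sigma-\alphak_1$, which you say are three-punctured spheres, four-punctured spheres, or possibly a two-holed torus. The paper instead dualizes: it fixes a pair-of-pants component $P$ of $\Sigma-\betak$ and analyzes the $\alphak_1$-arcs inside $P$. Since $\betak$ is a full pair-of-pants decomposition, every such $P$ is a three-punctured sphere, bigon-freeness forces every $\alphak_1$-arc in $P$ to be essential (not boundary-parallel), and a pair-of-pants supports only three isotopy classes of essential properly embedded arcs. Collapsing parallel families to single intervals, one is left with exactly the three configurations of Figure~\ref{f:poss}, from which it is immediate that every complementary region is a $2m$-gon with $m\le 4$. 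This entirely sidesteps the four-punctured-sphere bookkeeping you flag as ``the delicate part.''

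Two concrete problems with your version. First, the torus case cannot occur: if the deleted curve $\alpha_0$ were adjacent to the same pair-of-pants $P$ on both sides, the third boundary curve $\gamma$ of $P$ would satisfy $[\gamma]=2[\alpha_0]=0$ in $H_1(\Sigma;\Z/2\Z)$, contradicting the hypothesis that $\alphak$ is a \emph{marking}. You should have noticed and invoked this; it shortens your case list and is the same kind of use of the marking hypothesis that the paper relies on implicitly. Second, and more seriously, the heart of your plan --- verifying that the $\betak$-arc configurations inside a four-punctured sphere produce nothing worse than octagons --- is not carried out. Unlike the pair-of-pants, a four-punctured sphere has arcs with both endpoints on the same boundary circle that can separate the remaining circles in several inequivalent ways, and allowing arbitrary multiplicities makes the region count genuinely nontrivial. (The paper does eventually perform exactly such a four-punctured-sphere enumeration, but only later, in the proof of Theorem~\ref{t:fixHD}, via Figure~\ref{f:hexposs} and after substantial preparation.) You acknowledge this is the hard part, but no argument is supplied, so there is a real gap. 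The quickest repair is to switch to the paper's perspective and work inside the $\betak$-pairs-of-pants; the final step you outline (restoring $\alpha_0$ only subdivides, and since by bigon-freeness each subdividing $\alpha$-arc runs between two distinct $\beta$-sides, a $2m$-gon is cut into $2m'$-gons with $m'\le m$) then goes through as stated.
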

\begin{proof}
Suppose that there is a $\betak$--curve (say $\beta_1$) which is
disjoint from all the curves in $\alphak_1$. Any component of $\Sigma
-\alphak_1$ is either a three--punctured or a four--punctured
sphere. By its disjointness, $\beta_1$ must be in one of these
components. If it is in a three--punctured sphere, then it is isotopic
to a boundary component (which is a curve in $\alphak_1$),
contradicting Lemma~\ref{l:noiso}. If $\beta_1$ is in the
four--punctured sphere component, then it is either isotopic to a
boundary curve (contradicting Lemma~\ref{l:noiso} again), or it separates
the component into two pairs-of-pants. Therefore by adding a small
isotopic translate of $\beta_1$ to $\alphak_1$ we would get an essential
pair-of-pants diagram $(\Sigma , \alphak ', \betak )$ for
$Y$ which contradicts Lemma~\ref{l:noiso}. This shows that there is no
$\beta_1$ which is disjoint from all the curves in $\alphak_1$.

Since there are no bigons in $(\Sigma , \alphak , \betak )$, there are
obviously no bigons in $(\Sigma , \alphak_1 , \betak )$ either.
Consider a pair-of-pants component $P$ of $\Sigma - \betak $ and
(a component of) the interesection of $P$ with a curve in $\alphak
_1$. This arc either intersects one or two boundary components. Notice
that since there are no bigons in the decomposition, the $\alphak
_1$--arc cannot be boundary parallel.  Figure~\ref{f:arc} shows the
two possibilities (up to diffeomorphism on the pair-of-pants).
\begin{figure}[ht]
\begin{center}
\epsfig{file=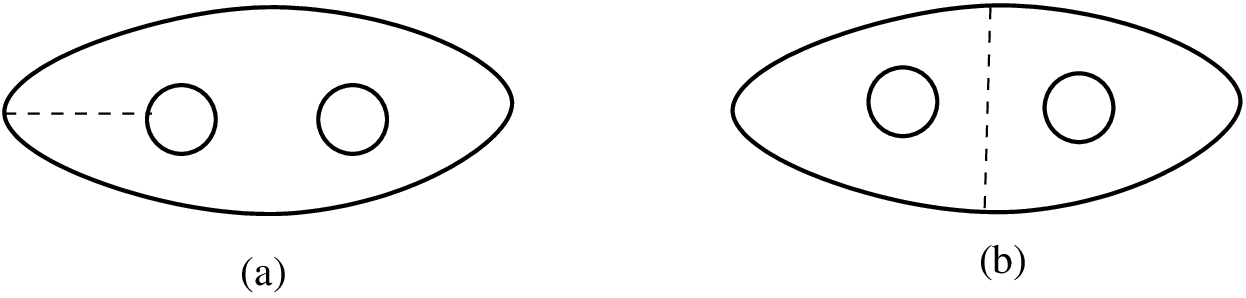, height=2.5cm}
\end{center}
\caption{{\bf The dashed line represents the $\alphak_1$--arc in the
  $\betak$--pair-of-pants $P$.}}
\label{f:arc}
\end{figure}
By denoting a bunch of parallel $\alphak_1$--arcs with a unique
interval we get three possibilities for the $\alphak_1$--curves in a
component of the $\betak$--pair-of-pants, as shown in
Figure~\ref{f:poss}.
\begin{figure}[ht]
\begin{center}
\epsfig{file=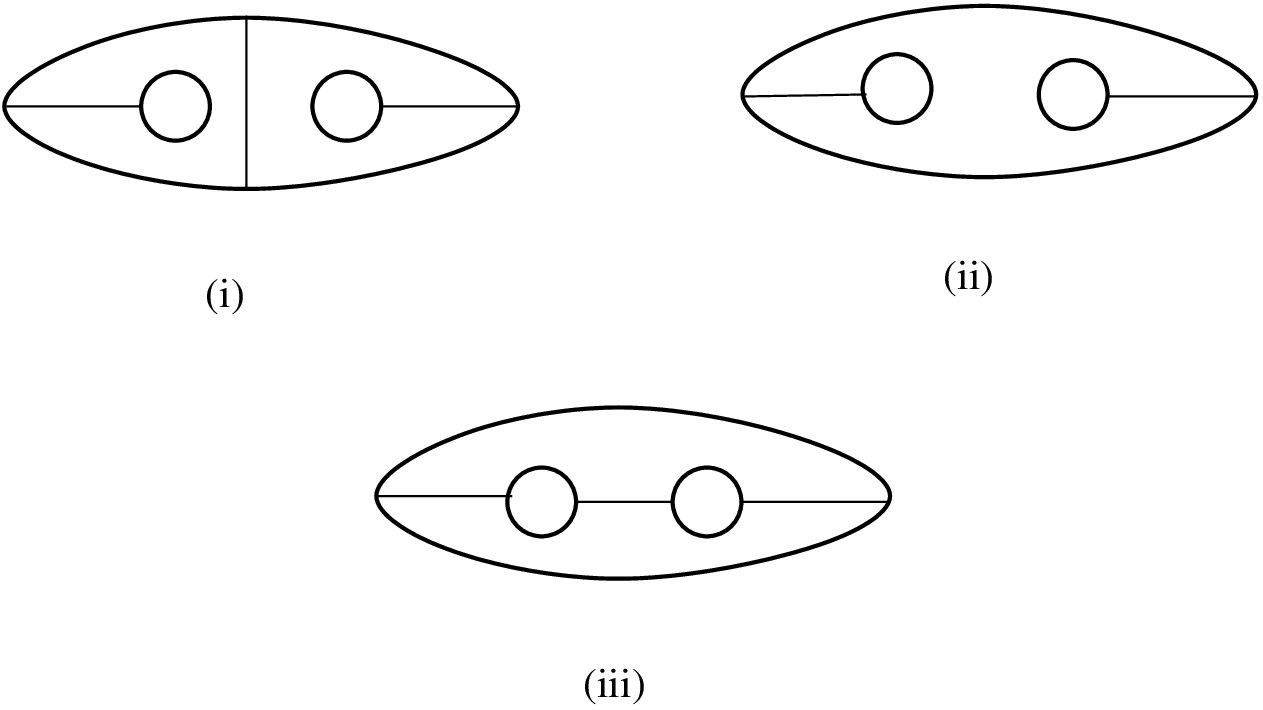, height=5cm}
\end{center}
\caption{{\bf Possible $\alphak $--arcs in a $\betak$--pair-of-pants.}
  Intervals denote parallel copies of $\alphak$--arcs.}
\label{f:poss}
\end{figure}
(Notice that we already showed that any $\betak$--curve is intersected
by some $\alphak$--curve.) Since all the domains in such a
pair-of-pants diagram are $2m$--gons with $m=2,3,4$, this observation
verifies the claim regarding the shape of the domains in $(\Sigma ,
\alphak_1, \betak )$. Obviously, adding the deleted single
$\alphak$--curve back, the same conclusion can be drawn for the
components of $\Sigma -\alphak -\betak $.
\end{proof}

\subsection{Stabilizing pair-of-pants diagrams}
\label{subsec:StabPairOfPants}

Suppose that $(\Sigma , \alphak , \betak )$ is a given essential
pair-of-pants Heegaard diagram for the Heegaard decomposition
${\mathfrak {U}}$.  A pair-of-pants diagram for the stabilized
Heegaard decomposition can be given as follows. Consider a point $x\in
\Sigma$ which is an intersection of $\alpha_1\in \alphak $ and $\beta
_1\in \betak$. Consider a small isotopic translate $\alpha_1'$ (and
$\beta_1 '$) of $\alpha_1$ (and $\beta_1$, resp.)  such that
$\alpha_1 , \alpha '_1$ (and similarly $\beta_1, \beta '_1$) cobound
an annulus $A_{\alpha}$ (and $A_{\beta}$, resp.) in $\Sigma$. 
Stabilize the Heegaard decomposition
${\mathfrak {U}}$ in the elementary rectangle with boundaries $\alpha
_1, \beta_1, \alpha_1', \beta_1'$, containing the chosen $x$ on the
boundary. Add the curves $\alpha , \beta$ of the stabilizing torus and
a further pair $\alpha_1'', \beta_1 ''$ (as shown in
Figure~\ref{f:stab}) to the sets of curves $\alphak$ and $\betak$.
(Notice that the curves in $\alphak$ and $\betak$ can be naturally viewed
as curves in the stabilized Heegaard surface $\Sigma '$.)
\begin{figure}[ht]
\begin{center}
\epsfig{file=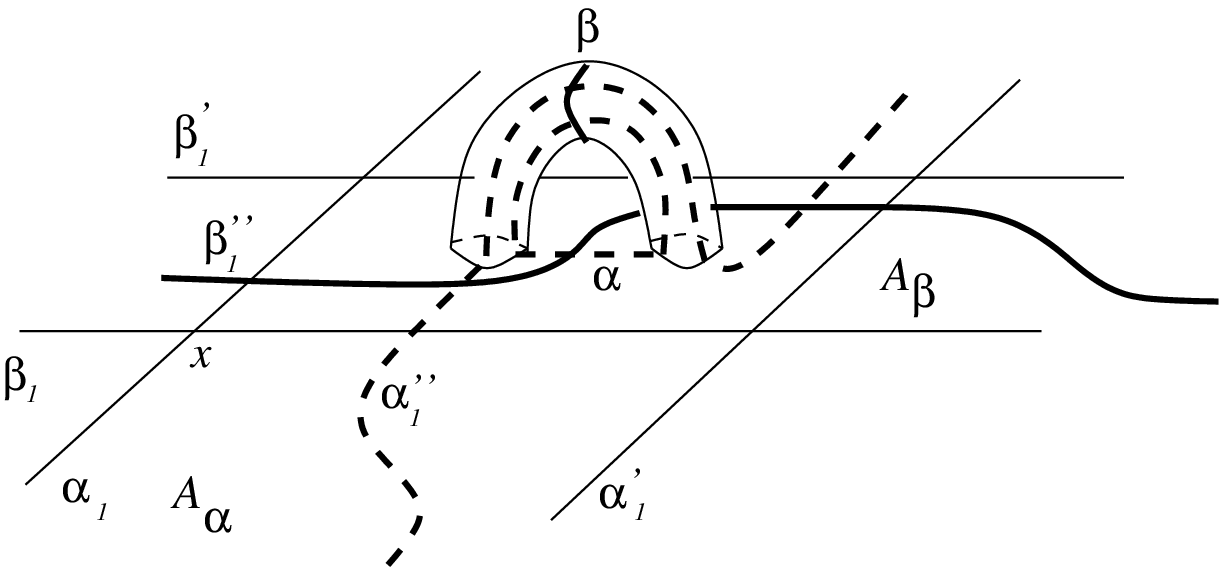, height=4cm}
\end{center}
\caption{{\bf The stabilization of an essential pair-of-pants Heegaard
    diagram.} We stabilize near the intersection point $x$ of $\alpha
  _1$ and $\beta _1$ and introduce a 2--dimensional 1--handle
  (increasing the Heegaard genus by 1), together with the additional
  curves $\alpha , \alpha _1, \alpha _1''$ and $\beta , \beta _1,
  \beta _1''$.}
\label{f:stab}
\end{figure}
\begin{lem}\label{l:stabil}
The procedure above gives an essential pair-of-pants Heegaard diagram
$(\Sigma ' , \alphak ', \betak ')$ for the stabilized Heegaard
decomposition.
\end{lem}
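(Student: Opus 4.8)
The plan is to check separately the three conditions that make $(\Sigma',\alphak',\betak')$ an essential pair-of-pants Heegaard diagram for the stabilized decomposition $\mathfrak U'$: that it is a Heegaard diagram for $\mathfrak U'$, that $\alphak'$ and $\betak'$ are pair-of-pants decompositions of $\Sigma'$, and that every curve in $\alphak'$ and $\betak'$ is homologically essential in $H_1(\Sigma';\Z/2\Z)$. The entire construction is supported in a neighborhood of the elementary rectangle $R$ with sides on $\alpha_1,\alpha_1',\beta_1,\beta_1'$ --- precisely the region where one connect-sums with the standard genus-one diagram of $S^3$ --- so each condition will come down to a finite local inspection of Figure~\ref{f:stab} together with a bookkeeping argument over the part of $\Sigma$ disjoint from $R$, where nothing has changed. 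Recall that stabilizing $\mathfrak U=(\Sigma,U_0,U_1)$ means forming the connected sum with the standard Heegaard decomposition $(T^2,V_0,V_1)$ of $S^3$ in a ball $B$ meeting $\Sigma$ in a disk in the interior of $R$; thus $\Sigma'=\Sigma\#T^2$, $U_0'=U_0\natural V_0$, $U_1'=U_1\natural V_1$, and $\alpha$ (resp. $\beta$) is the curve bounding the compressing disk of $V_0$ (resp. $V_1$), with $\alpha\cdot\beta=1$.

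To see that $\alphak'=\alphak\cup\{\alpha_1',\alpha_1'',\alpha\}$ determines $U_0'$, I would exhibit the required $3(g+1)-3=3g$ disjoint compressing disks with these boundaries inside $U_0'=U_0\natural V_0$. The disks bounded by $\alpha_1,\dots,\alpha_k$ survive after a small isotopy pushing them off $B$; $\alpha$ bounds the compressing disk of $V_0$; and, reading off Figure~\ref{f:stab}, $\alpha_1'$ bounds a parallel translate of the disk bounded by $\alpha_1$, while $\alpha_1''$ bounds a disk obtained from such a translate by band-summing with the disk of $V_0$ bounded by $\alpha$. One checks in the figure that all of these can be made disjoint, and that their boundaries span a $(g+1)$-dimensional subspace of $H_1(\Sigma';\Z)$. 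Because the disks are produced explicitly inside $U_0\natural V_0$, this identifies not merely some handlebody but the correct one; the symmetric argument handles $\betak'$ and $U_1'$.

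For the pair-of-pants property, note that every component of $\Sigma'-\alphak'$ meeting $\Sigma\setminus R$ is unchanged from $\Sigma-\alphak$, hence a three-punctured sphere. Inside the modified region --- the once-punctured torus glued into $R$ --- the new curves $\alpha_1',\alpha_1'',\alpha$, together with the portions of the two old pairs-of-pants that abut $R$ along $\alpha_1$ and $\beta_1$, are arranged in Figure~\ref{f:stab} precisely so as to cut it into three-punctured spheres, which is a direct local check; the same applies to $\betak'$. Finally, for essentiality, the inclusion $H_1(\Sigma;\Z/2\Z)\hookrightarrow H_1(\Sigma';\Z/2\Z)$ is injective, so every old curve $\alpha_i,\beta_j$ stays essential; $\alpha$ and $\beta$ generate the new rank-two summand; $\alpha_1'$ is homologous to the essential curve $\alpha_1$; and Figure~\ref{f:stab} exhibits $[\alpha_1'']$ and $[\beta_1'']$ as nonzero classes (e.g. $[\alpha_1'']=[\alpha_1]+[\alpha]$).

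I expect the main obstacle to be the step identifying the compressing disks rather than the combinatorics of the pair-of-pants pieces: a pair-of-pants decomposition whose curves span a $(g+1)$-dimensional subspace only certifies that $\alphak'$ determines \emph{some} handlebody, so the real work is to construct the disks explicitly inside $U_0\natural V_0$ and verify their mutual disjointness in the local model of Figure~\ref{f:stab}. The remaining points are either homological formalities or finite inspections of that figure.
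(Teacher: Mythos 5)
Your verification that $\alphak'$ determines the \emph{correct} handlebody $U_0'$, via an explicit construction of compressing disks inside $U_0\natural V_0$, is a reasonable and more thorough addition: the paper's proof treats this point as implicit in the construction and focuses entirely on the combinatorics of the components. Your essentiality argument is also fine. However, there is a genuine gap in the pair-of-pants verification, caused by a geometric misconception about where the modification is supported.

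You claim that the entire construction is "supported in a neighborhood of the elementary rectangle $R$," and correspondingly that every component of $\Sigma'-\alphak'$ meeting $\Sigma\setminus R$ is unchanged from $\Sigma-\alphak$, with the rest readable off a local picture inside $R$. This is false. The curve $\alpha_1'$ is a small isotopic translate of the \emph{entire} circle $\alpha_1$; it runs all the way around $\alpha_1$ and cobounds with it the annulus $A_\alpha\subset\Sigma$, which extends far beyond $R$ (similarly for $\beta_1'$ and $A_\beta$). Only the $1$--handle itself and the curves $\alpha$, $\beta$ are local to $R$. After stabilizing, $A_\alpha$ becomes a twice-punctured torus, and the two new pair-of-pants components arising from cutting it along $\alpha$ and then $\alpha_1''$ are genuinely new, meet $\Sigma\setminus R$, and cannot be seen in a picture localized to $R$; likewise, the pair-of-pants component $P_1$ of $\Sigma-\alphak$ on the $\alpha_1'$-side of $\alpha_1$ has its boundary circle moved from $\alpha_1$ to $\alpha_1'$, so it is also not literally "unchanged" (though it remains a pair-of-pants). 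The paper's proof sidesteps all of this by taking the annulus $A_\alpha$ (not the rectangle $R$) as the region to analyze: components of $\Sigma-\alphak$ outside $A_\alpha$ persist up to isotopy, while $A_\alpha$ itself becomes a twice-punctured torus which $\alpha$ and $\alpha_1''$ cut into two pairs-of-pants. To repair your argument you would need to replace "$R$" by "$A_\alpha$" (resp.\ "$A_\beta$") throughout the pair-of-pants step, at which point it essentially becomes the paper's argument.
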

\begin{proof}
Consider components of $\Sigma - \alphak$ outside of the strip $A_{\alpha }$ between
$\alpha_1 $ and $\alpha_1'$. Those are obviously unchanged, hence
are still pairs-of-pants. In the annulus $A_{\alpha}$ between $\alpha_1$ and
$\alpha_1'$ we perform a connected sum operation with a torus
(turning the annulus into a twice punctured torus), cut open the torus
along its generating circles (getting a four--punctured sphere) and
finally introducing an $\alphak$--curve which partitions the
four--punctured sphere into two pairs-of-pants.  Similar argument
applies for the $\betak$--circles and $\betak$--components. The
argument also shows that if we start with a marking then the result of
this procedure will be a marking as well, concluding the proof.
\end{proof}
Notice also that if $(\Sigma , \alphak, \betak) $ was bigon--free then so is
the stabilized diagram $(\Sigma ', \alphak ', \betak ') $.

\section{Nice diagrams and nice moves}
\label{sec:second}
Suppose that ${\mathfrak {U}} =(\Sigma, U_0, U_1)$ is a genus--$g$
Heegaard decomposition of the 3--manifold $Y$, and let
$(\Sigma,\alphak,\betak)$ (with $\alphak = \{ \alpha_1, \ldots ,
\alpha_k \}$, $\betak =\{ \beta_1, \ldots , \beta _k \}$) be a
corresponding generalized Heegaard diagram. Choose furthermore a
$(k-g+1)$--tuple of points $\w =\{ w_1, \ldots , w_{k-g+1}\} \subset
\Sigma -\alphak -\betak$ with the property that each component of
$\Sigma - \alphak$ and each component of $\Sigma -\betak $ contains a
unique element of $\w$. (Notice that this assumption, in fact,
determines the cardinality of $\w$.)  Then $\DD = (\Sigma , \alphak , \betak
, \w )$ is called a \emph{multi-pointed Heegaard diagram}. Points of
$\w$ are called \emph{basepoints}; their number is denoted by 
$b(\DD )=\vert \w \vert$.  Generalizing the corresponding
definition of \cite{SW} to the case of multiple basepoints (in the
spirit of~\cite{OSzlinks}), we have

\begin{defn}
\label{def:Nice}
{\whelm The multi-pointed Heegaard diagram $\DD =(\Sigma , \alphak , \betak ,
  \w )$ is \emph{nice} if an elementary domain (a connected component
  of $\Sigma - \alphak - \betak$) which contains no basepoint is
  either a bigon or a rectangle.}
\end{defn}
According to one of the main results of \cite{SW}, any once-pointed Heegaard
diagram (i.e. a Heegaard diagram with exactly $g$ $\alphak$-- and
$\betak$--curves and hence with $\w =\{ w\}$) can be transformed by isotopies
and handle slides to a once-pointed nice diagram.  A useful lemma for
multi-pointed nice diagrams was proved in \cite{LMW}:
\begin{lem}(\cite[Lemma~3.1]{LMW}) \label{l:mindket}
Suppose that $(\Sigma , \alphak , \betak , \w )$ is a nice Heegaard diagram and
$\alpha_i \in \alphak$. Then there are elementary domains $D_1, D_2$,
both containing basepoints such that $\alpha_i \cap \partial D_1$ and
$\alpha_i \cap \partial D_2$ are both nonempty, and the orientation
induced by $D_1$ on $\alpha_i$ is opposite to the one induced by
$D_2$. (The domains $D_1, D_2$ get their orientation from the Heegaard
surface $\Sigma$.)  In short, $\alpha_i$ contains a basepoint on
either of its sides. \qed
\end{lem}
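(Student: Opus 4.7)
Proof plan. I would argue by contradiction. Fix an orientation of $\alpha_i$ determining a ``$+$'' side, and suppose no basepointed elementary domain touches $\alpha_i$ from the ``$+$'' side. Let $C^+$ denote the component of $\Sigma - \alphak$ meeting a one-sided collar of $\alpha_i$ on the ``$+$'' side. By the basepoint condition for multi-pointed Heegaard diagrams, $C^+$ contains a unique basepoint $w^+$, lying in a unique elementary domain $D_w \subset C^+$, and our contradiction hypothesis means $\partial D_w$ shares no arc with $\alpha_i$. By the niceness hypothesis, every elementary domain in $C^+$ other than $D_w$ is a bigon or a rectangle. In particular, the cyclically ordered sequence of elementary domains $D_1,\dots,D_n$ encountered along $\alpha_i$ on the ``$+$'' side consists entirely of bigons and rectangles.

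The main step is to show that this combinatorial picture forces $D_w$ to be adjacent to $\alpha_i$, yielding the contradiction. My plan is to study the compact tiled surface $S := \overline{C^+}\setminus \interior D_w$. Its boundary is the disjoint union of $\partial D_w$ (by hypothesis not meeting $\alpha_i$), the boundary curve (or curves) of $\overline{C^+}$ containing $\alpha_i$, and possibly other alpha-boundary components. The surface $S$ is tiled entirely by bigons and rectangles. Now I would argue that such a tiling of $S$ allows for a sequence of elementary collapses (deleting a free alpha-beta edge pair of an outermost bigon, or collapsing a rectangle whose ``outer'' alpha-edge lies on $\partial S$), successively reducing to a simpler tiled surface. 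Iterating, one deformation-retracts $S$ either onto a $1$-complex contained in $\partial D_w \cup (\partial\overline{C^+}\setminus \alpha_i)$, or onto a subsurface containing $\alpha_i$ with no remaining $2$-cells. In the first case, $\alpha_i$ would be disjoint from the image of the retraction, contradicting the fact that $\alpha_i \subset \partial\overline{C^+}$ is a boundary component of $S$; in the second, the tile count forces a remaining bigon or rectangle adjacent to $\alpha_i$ and simultaneously to $\partial D_w$, again contradicting $\alpha_i\cap \partial D_w=\emptyset$.

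The main obstacle is making the collapse procedure precise: one must handle rectangles both of whose alpha-arcs lie on $\alpha_i$ (or on a single other curve $\alpha_l$), and ensure that collapses do not create non-manifold points or prematurely disconnect $S$. A cleaner alternative, which I expect also to work, is a double Euler-characteristic computation on $S$: on the combinatorial side, summing local contributions from bigons (two vertices, two edges, one face) and rectangles (four vertices, four edges, one face) with appropriate shared-edge corrections; on the topological side, expressing $\chi(S)$ via the genus of $\overline{C^+}$ and the number of its boundary components, including the extra contribution $-1$ from removing $D_w$. The two expressions can only be reconciled when $\partial D_w$ meets $\alpha_i$ on the ``$+$'' side, which gives the contradiction. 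The same argument applied to the ``$-$'' side yields the second basepointed domain $D_2$, and the induced orientations on $\alpha_i$ are opposite by the orientations of $\Sigma$.
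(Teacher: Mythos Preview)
The paper does not supply its own proof: the lemma is quoted from \cite[Lemma~3.1]{LMW} and closed with a \qed. So there is no in-paper argument to compare against; I will evaluate your plan against the standard Euler-measure proof.

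Your ``cleaner alternative'' is the correct strategy and is essentially how the lemma is proved in \cite{LMW}, but as written it is only a sketch and one detail is off. You say removing $D_w$ contributes ``$-1$'' to $\chi(\overline{C^+})$, implicitly treating $D_w$ as a disk in the interior; neither is guaranteed, since in a nice diagram the basepointed region may fail to be simply connected and in any case meets $\partial\overline{C^+}$ along $\alpha$-arcs. The clean way to run the count is to pass to the component $S_0$ of $\overline{C^+}\setminus\interior D_w$ whose boundary contains $\alpha_i$. With a metric making the $\alpha$- and $\beta$-circles orthogonal geodesics, $\int_{S_0}K=2\pi\sum e(D)=\pi b$ where $b$ is the number of bigon tiles, while Gauss--Bonnet gives $\int_{S_0}K=2\pi\chi(S_0)-\tfrac{\pi}{2}\,c$, with $c$ the number of (convex) corners of $\partial S_0$. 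The boundary $\partial S_0$ consists of $a\geq 1$ entire $\alpha$-circles (including $\alpha_i$, by hypothesis) together with $p\geq 1$ circles built from alternating $\alpha$- and $\beta$-arcs along $\partial D_w$; here $p\geq 1$ follows from connectedness of $C^+$, and each such circle carries at least two corners because in a nice diagram no $\beta$-circle can lie entirely inside an $\alpha$-component (the adjacent non-basepointed domain would fail to be a bigon or rectangle). Hence $b+\tfrac{c}{2}\geq 1$, whereas $b+\tfrac{c}{2}=2\chi(S_0)=4-2(a+p)\leq 0$, a contradiction. Filling in precisely this computation turns your plan into a proof.

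Your first plan (iterated collapses of bigons and outermost rectangles) might be salvageable, but as you already flag, rectangles with both $\alpha$-edges on the same $\alpha$-curve obstruct a naive collapse, and your claimed dichotomy for the terminal state of the process is not evidently exhaustive. The Euler-measure count is shorter and avoids these issues; I would drop the collapse approach.
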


Next we describe three modifications, isotopies, handle slides and
stabilizations (with two types of the latter) which modify a nice
diagram in a manner that it remains nice. We discuss these moves in
the order listed above.

\noindent{\bf {Nice isotopies.}} An embedded arc in a Heegaard diagram
starting on an $\alphak$--circle (but otherwise disjoint from
$\alphak$), transverse to the $\betak $--circles, and ending in the
interior of a domain naturally defines an isotopy of the circle which
contains the starting point of the arc: apply a finger move along the
arc.  Special types of isotopies can be therefore defined by requiring
special properties of such arcs.
\begin{figure}[ht]
\begin{center}
\epsfig{file=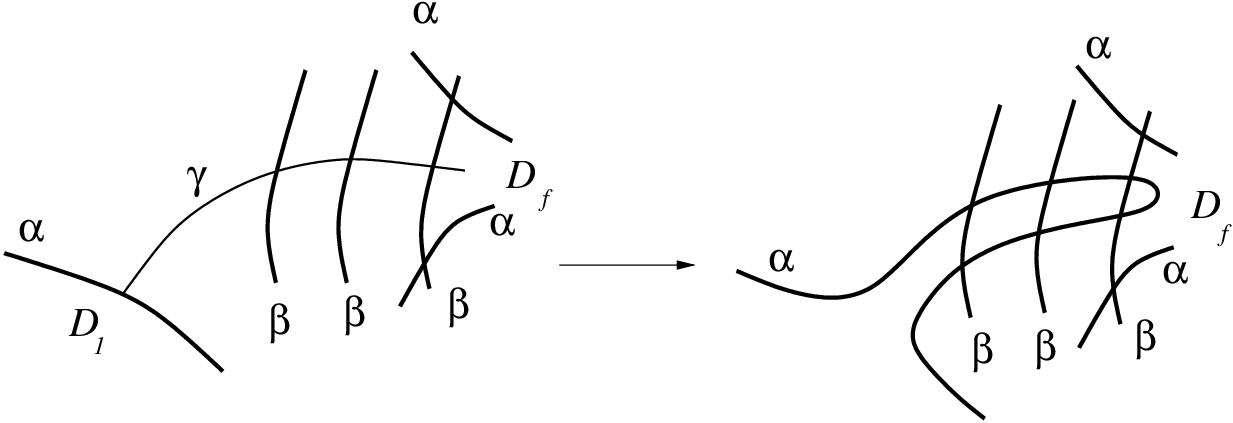, height=4cm}
\end{center}
\caption{{\bf Nice isotopy along the arc $\gamma$.}}
\label{f:niceiso}
\end{figure}

\begin{defn}
\label{def:NiceIsotopy}
{\whelm Suppose that $\DD = (\Sigma , \alphak , \betak , \w )$ is a nice
  diagram. We say that the embedded arc $\gamma=(\gamma (t))_{t\in
    [0,1]}$ is \emph{nice} if
\begin{itemize}
\item The starting point $\gamma (0)$ of $\gamma$ is on an
  $\alphak$--curve $\alpha$, while the endpoint $\gamma (1)$ is in the
  interior of the elementary domain $D_{f}$ which is either a bigon or
  a domain containing a basepoint;
\item $\gamma -\gamma (0)$ is disjoint from all the $\alphak$--curves,
  $\gamma$ intersects any $\betak$--curve transversally,
  and $\gamma$ is transverse to $\alpha$ at $\gamma (0)$;
\item the elementary domain $D_1$ containing $\gamma (0)$
  on its boundary, but not $\gamma (t)$ for small $t$, is either a
  bigon or it contains a basepoint;
\item for any elementary domain $D$, at most one component of
  $D-\gamma$ is not a rectangle or a bigon, and if there is such a
  component, it  contains a basepoint;
\item the component of $D_f-\gamma $ containing $\gamma (1)$
is either a bigon, or it contains a basepoint, and finally
\item  if $D_{1}=D_{f}$ then we assume that the component of 
$D_1-\gamma $ containing $\gamma (1)$ also contains a 
basepoint. 
\end{itemize}
An isotopy defined by a nice arc is called a \emph{nice isotopy}.}
\end{defn}

\noindent{\bf {Nice handle slides.}}  Recall that in a Heegaard
diagram a handle slide of the curve $\alpha_1$ over $\alpha_2$ can be
specified by an embedded arc $\delta$ with one endpoint on $\alpha_1$,
the other on $\alpha_2$ and with the property that $\delta$ (away from
its endpoints) is disjoint from all the $\alphak$--curves.  The result
of sliding $\alpha _1$ over $\alpha _2$ along $\delta$ is a pair of
curves $(\alpha _1', \alpha _2)$, where $\alpha _1'$ is the connected
sum of $\alpha _1$ and $\alpha _2$ along $\delta$,
cf. Figure~\ref{f:slide}.
\begin{figure}[ht]
\begin{center}
\epsfig{file=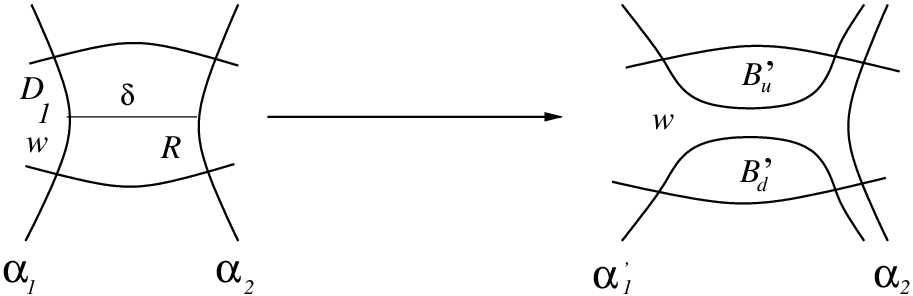, height=4cm}
\end{center}
\caption{{\bf Nice handle slide along the arc $\delta$.}}
\label{f:slide}
\end{figure}

\begin{defn}
{\whelm Suppose that $\DD =(\Sigma , \alphak , \betak , \w )$ is a nice
  diagram. We say that the embedded arc $\delta$ defines a \emph{nice
    handle slide} if the interior of $\delta$ is contained in a single elementary
  rectangle $R$, and the other elementary domain $D_{1}$
  containing $\delta (0)$ on its boundary contains a basepoint.}
\end{defn}

\noindent{\bf {Nice stabilizations.}}  Suppose that $\DD = (\Sigma , \alphak ,
\betak , \w )$ is a nice diagram.  There are two types of stabilizations of
the diagram: type-$b$ stabilizations do not change the Heegaard surface
$\Sigma$, but increase the number of $\alphak$-- and $\betak$--curves, and
also increase the number of basepoints, while type-$g$ stabilizations increase
the genus of $\Sigma$ and the number of $\alphak$-- and $\betak$--curves, but
keep the number of basepoints fixed. In the following we will describe both
types of stabilizations.

We start with the description of nice type-$b$ stabilizations.
Suppose that $D$ is an elementary domain of the diagram $\DD$, which
contains a basepoint $w$. Suppose furthermore that $\alpha ' , \beta
'\subset D$ are embedded, homotopically trivial circles, bounding the
disks $D_{\alpha '}, D_{\beta '}$ respectively, and intersecting each
other in exactly two points.  Assume that the disks $D_{\alpha '}, D
_{\beta '}$ are disjoint from the basepoint of $D$ and consider a new basepoint
$w'\in D_{\alpha '}\cap D_{\beta '}$, cf. Figure~\ref{f:nsta}.
\begin{figure}[ht]
\begin{center}
\epsfig{file=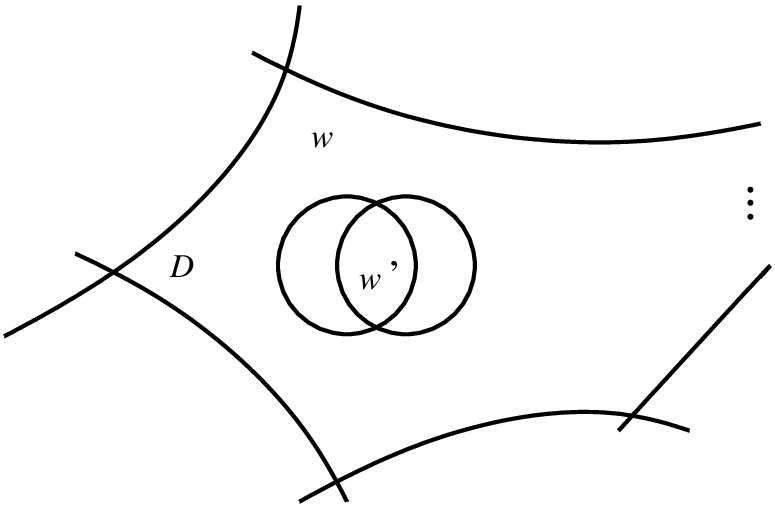, height=4cm}
\end{center}
\caption{{\bf Nice type-$b$ stabilization in the domain $D$ containing
    the basepoint $w$.}}
\label{f:nsta}
\end{figure}

\begin{defn}
{\whelm The multi-pointed Heegaard diagram $\DD '= (\Sigma , \alphak \cup
  \{ \alpha '\}, \betak \cup \{ \beta '\}, \w \cup \{ w'\} )$ is
  called a \emph{nice type-$b$ stabilization} of $\DD =(\Sigma ,
  \alphak , \betak , \w )$.  Conversely, $(\Sigma , \alphak , \betak ,
  \w )$ is a \emph{nice type-$b$ destabilization} of $\DD '$.}
\end{defn}

Suppose now that ${\mathcal {T}}=(T^2, \alpha , \beta )$ is the
standard toric Heegaard diagram of $S^3$, that is, the Heegaard surface is
a genus-1 surface and $\alpha, \beta$ form a pair of simple closed curves
intersecting each other transversely in a single point. 
\begin{defn} {\whelm The connected sum of ${\mathcal {T}}$ with $\DD = (\Sigma
    , \alphak , \betak ,\w )$, performed in a point of $T^2-\alpha -\beta$ and
    in an interior point of an elementary domain $D$ of $\DD$ containing a
    basepoint $w$ is called a \emph{nice type-$g$ stabilization} of $\DD$,
    cf. Figure~\ref{f:nstag}. The inverse of this operation is called a
    \emph{nice type-$g$ destabilization}.}
\end{defn}
\begin{figure}[ht]
\begin{center}
\epsfig{file=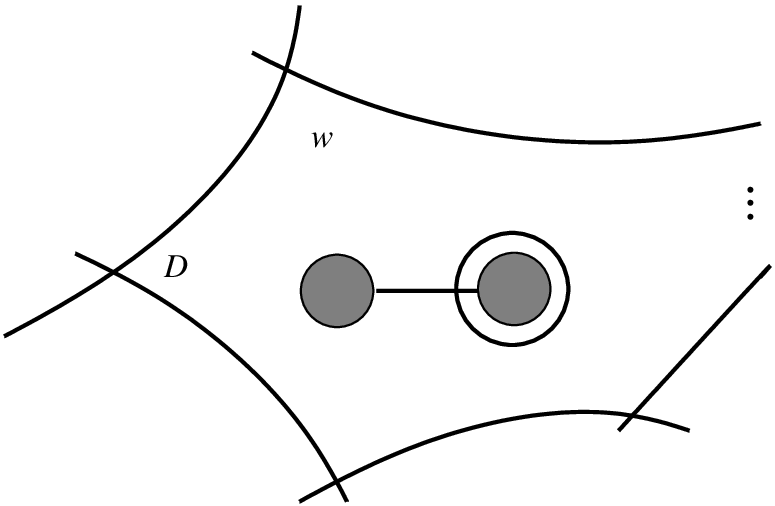, height=4cm}
\end{center}
\caption{{\bf Nice type-$g$ stabilization in the domain $D$.} The two
  full circles indicate the feet of the 1-handle we add to $\Sigma$,
  the contour of one of which is parallel to the new $\alphak$--curve,
  while the interval joining the two disks (which becomes a circle
  when completed in the 1-handle) is the new $\betak$--curve.}
\label{f:nstag}
\end{figure}

The expression ``nice stabilization'' will refer to either of the
above types. 

\begin{rem} {\whelm The two types of nice stabilizations can be regarded as
    taking the connected sum of the multi-pointed Heegaard diagram $\DD$ with
    the diagrams (a) (for type-$b$ stabilization) and (b) (for type-$g$
    stabilization) of Figure~\ref{f:hd}, depicting two diagrams for $S^3$.
    Since we take the connected sum in a domain $D$ containing a basepoint
    $w$, one of the basepoints of Figure~\ref{f:hd} ($w_2$ for (a) and $w$ for
    (b)) should be eliminated.}
\end{rem}
\begin{figure}[ht]
  \begin{center}
    \input{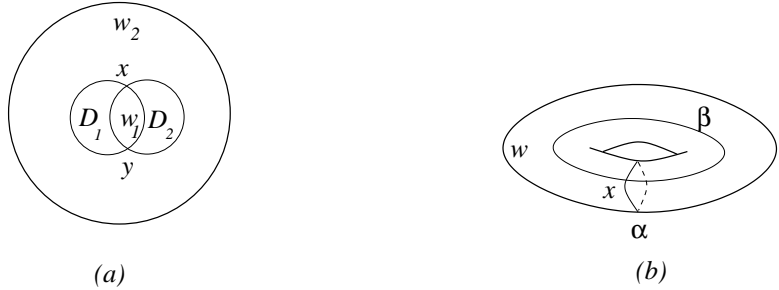}
  \end{center}
  \caption{{\bf Two Heegaard diagrams of $S^3$.} The left diagram is
      a spherical Heegaard diagram for $S^3$, with a single $\alpha$--
      and a single $\beta$--curve and two basepoints. The diagram on
      the right is the standard toroidal Heegaard diagram of $S^3$
      with one basepoint.}
    \label{f:hd}
    \end{figure}

In the sequel a \emph{nice move} will mean either a nice isotopy, a
nice handle slide or a nice stabilization/destabilization. It is an
elementary fact that the result of a nice move on a multi-pointed
Heegaard diagram of a 3--manifold $Y$ is also a multi-pointed Heegaard
diagram of $Y$.

\begin{thm}
Suppose that $\DD '=(\Sigma , \alphak ', \betak ', \w ')$ is given by
a nice move on the nice diagram $\DD =(\Sigma , \alphak , \betak , \w)$.
Then $(\Sigma , \alphak ', \betak ', \w ')$ is nice, in the sense
of Definition~\ref{def:Nice}.
\end{thm}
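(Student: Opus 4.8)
The plan is to verify the statement separately for each of the four types of nice move, in each case examining only those elementary domains of $\DD'$ that actually differ from the elementary domains of $\DD$; domains away from the small region in which the move takes place are literally unchanged and so remain bigons or rectangles, or basepoint--containing. Throughout I use the elementary fact that any elementary domain of a (transverse, balanced) Heegaard diagram which is a disk is a $2m$--gon whose sides alternate between arcs of $\alphak$--curves and arcs of $\betak$--curves, since each corner of such a domain is a transverse intersection point of some $\alpha_i$ with some $\beta_j$. Thus to prove that $\DD'$ is nice in the sense of Definition~\ref{def:Nice}, it suffices to show that every new disk domain which contains no basepoint has two or four sides.

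First I would dispose of the two stabilization moves. By the remark preceding Figure~\ref{f:hd}, a nice type--$b$ (resp.\ type--$g$) stabilization is, up to discarding one redundant basepoint, the connected sum of $\DD$ --- performed at an interior point of an elementary domain $D$ containing a basepoint $w$ --- with diagram (a) (resp.\ (b)) of Figure~\ref{f:hd}; the resulting local picture is the one in Figure~\ref{f:nsta} (resp.\ Figure~\ref{f:nstag}). Every elementary domain disjoint from $D$ is unaffected, while $D$ is replaced by the explicit finite configuration shown there, whose pieces are bigons, rectangles, and one or two regions still containing a basepoint (either $w$ or the new basepoint $w'$). Hence niceness is preserved, and this step is just a finite inspection of two figures.

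Next the nice handle slide. Here the interior of $\delta$ lies in a single elementary rectangle $R$, so $\alpha_1$ and $\alpha_2$ are the two $\alphak$--sides of $R$, and the elementary domain $D_1$ on the side of $\delta(0)$ away from $R$ contains a basepoint. Replacing $\alpha_1$ by the band sum $\alpha_1'=\alpha_1\#_\delta\alpha_2$, pushed slightly off $\alpha_2$ and chosen to miss every basepoint, the curve $\alpha_1'$ runs parallel and close to $\alpha_2$ away from $R$. Consequently every elementary domain lying along $\alpha_2$ on the side of $\alpha_1'$ has one or two thin strips shaved off it; each strip is a rectangle (two $\betak$--sides and two $\alphak$--sides), and the remaining part has the same number of sides as before and keeps any basepoint it had. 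Inside $R$ and near $\delta(0)$, a look at Figure~\ref{f:slide} shows the new domains there are rectangles, bigons, and a single region absorbing the basepoint of $D_1$; the old elementary domain containing $\delta(1)$ on $\alpha_2$ is examined in the same way. So the handle--slide case is again a finite local check.

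The bulk of the proof --- and the main obstacle --- is the nice isotopy, because a finger move along a nice arc $\gamma=(\gamma(t))_{t\in[0,1]}$ can alter all the elementary domains that $\gamma$ meets. After the move, the $\alphak$--curve $\alpha$ through $\gamma(0)$ carries a thin finger (a neighbourhood of $\gamma$) pushed off it: the new curve follows $\gamma$ out from $\gamma(0)$ to a point near $\gamma(1)$, makes a $U$--turn, and returns. I would organize the verification into three parts. \emph{(i) Domains $\gamma$ merely traverses.} For an elementary domain $D$ which $\gamma$ enters and leaves through $\betak$--sides, the finger contributes one thin rectangle inside $D$ for each arc of $\gamma\cap D$, while the remaining new domains inside $D$ are in bijection with the components of $D-\gamma$, each having the same combinatorial type once the arcs of $\gamma$ on its boundary are reinterpreted as $\alphak$--arcs; the fourth clause of Definition~\ref{def:NiceIsotopy} says exactly that these components are bigons or rectangles with at most one exception, which contains a basepoint. \emph{(ii) The ends near $\gamma(0)$.} The finger move deletes a short arc of $\alpha$ at $\gamma(0)$, so $D_1$ (on the far side of the finger, a bigon or basepoint domain by the third clause) merges with the thin sliver at the base of the finger; a direct check shows the merged region is a rectangle --- or a bigon, in the degenerate case that $\gamma$ meets no $\betak$--curve --- or contains a basepoint, while the domain on the near side of $\gamma(0)$ is treated as in (i), with the arc of $\gamma$ leaving $\gamma(0)$ understood to be concatenated with the adjacent arc of $\alpha$. \emph{(iii) The target domain $D_f$.} By the first clause $D_f$ is a bigon or a basepoint domain; the finger pokes in through a $\betak$--side and stops, the region enclosed by its $U$--turn being automatically a bigon, and the fifth clause ensures the remaining component of $D_f-\gamma$ meeting $\gamma(1)$ is a bigon or contains a basepoint, the coincidence $D_1=D_f$ being covered by the sixth clause. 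Putting (i)--(iii) together shows every basepoint--free disk domain of $\DD'$ has two or four sides, so $\DD'$ is nice. I expect the real difficulty to be the bookkeeping in this case rather than any single computation: for the finitely many domains met by $\gamma$ (with multiplicity) one must match each clause of Definition~\ref{def:NiceIsotopy} against each potential basepoint--free, non--bigon, non--rectangle piece, and also handle the degenerate configurations --- $\gamma$ revisiting a domain, $D_1=D_f$, or $\gamma$ crossing no $\betak$--curve --- but each of these is a self--contained argument about arcs on a surface and involves no analysis.
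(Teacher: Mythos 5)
Your proof is correct and follows essentially the same approach as the paper: a case-by-case local inspection of how elementary domains change under each of the four nice moves, using that strips swept out (or shaved off) are rectangles cut by $\betak$--arcs and that the clauses of Definition~\ref{def:NiceIsotopy} (resp.\ the basepoint condition on $D_1$ for handle slides, on $D$ for stabilizations) force the exceptional pieces to contain basepoints. The paper's proof is organized around the same three regimes for the isotopy and the same pair-of-pants picture for the handle slide, so there is no genuine difference of method.
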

\begin{proof}
The result of a nice move is a multi-pointed Heegaard diagram, so we
need to check only that $\DD'$ is nice, i.e. if an elementary domain
contains no basepoint then it is either a bigon or a rectangle.

Consider a nice isotopy first. For a domain disjoint from the nice arc
$\gamma$, the shape of the domain remains intact. Similarly, if a
domain does not contain $\gamma (0)$ or $\gamma (1)$ then $\gamma$
splits off bigons and/or rectangles, and (by our assumption) a component
which is not bigon or rectangle, which must contain a basepoint. Finally our
assumptions on the domains $D_{1}$ and $D_{f}$ ensure that the
resulting diagram is nice.

Suppose now that we perform a nice handle slide of $\alpha_1$ over $\alpha_2$.
First consider the diagram which is identical to the nice diagram $\DD$ we started
with, except we replace $\alpha_1$ by a new curve $\alpha_1'$ which is the
connected sum of $\alpha_1$ with $\alpha_2$ along $\delta$. To get the diagram
$\DD'$ (which is the result of the handle slide), we need to add a small
isotopic translate of $\alpha _2$ (still denoted by $\alpha _2$) to this
diagram.  The curves $\alpha_1'$, $\alpha_1$, and $\alpha_2$ bound a
pair-of-pants in the Heegaard surface.  (Notice that $\alpha _1$ is not in the
diagram $\DD ' $.)  The diagram $\DD '$ has a collection of elementary domains
which are rectangles, supported in the region between $\alpha_1'$ and
$\alpha_2$. There are also two bigons $B_u$ and $B_d$ in the new diagram,
which are contained in the rectangle containing (in the old diagram $\DD $)
the arc $\delta$, cf. Figure~\ref{f:slide}. There is a natural one-to-one
correspondence between all other elementary domains in the diagram before and
after the handle slide.  The domain $D_{1}$ in the original diagram $\DD$
acquires four additional corners in the new diagram; all other domains have
the same combinatorial shape before and after the handle slide.  Since $D_{1}$
contains a basepoint, the new diagram $\DD '$ is nice as well. See
Figure~\ref{f:slide} for an illustration.

Finally, a nice type-$b$ stabilization introduces three new bigons
(one of which is with the basepoint $w'$) and changes $D$ only. Since
$D$ contains a basepoint, the resulting diagram is obviously nice.  A
nice type-$g$ stabilization changes only the domain $D$, hence if we
start with a nice diagram, the fact that $D$ contains a basepoint
implies that the result will be nice, concluding the proof.
\end{proof}

\section{Convenient diagrams}
\label{sec:cdiagdef}
Suppose now that $(\Sigma , \alphak , \betak )$ is an essential
pair-of-pants diagram of a 3--manifold $Y$ which contains no $S^1\times
S^2$--summand. In the following we will give an algorithm which
provides a nice diagram from $(\Sigma , \alphak , \betak )$. Any
output of this algorithm will be called a \emph{convenient}
diagram. (The algorithm will require certain choices, and depending on
these choices we will have $\alphak$--, $\betak$-- and symmetric
convenient diagrams.) The algorithm involves seven steps, which we
spell out in detail below.

\begin{algorithm} \label{algo:alg}
{\whelm
The following algorithm provides a nice multi-pointed Heegaard
diagram from an essential pair-of-pants diagram $(\Sigma , \alphak , \betak )$
of a 3--manifold which has no $S^1\times S^2$--summand.

\noindent {\bf {Step 1}} \quad Apply an isotopy on $\betak$ to get the
bigon--free model of $(\Sigma , \alphak , \betak )$.  Recall that by
Theorem~\ref{thm:nobigon} the resulting diagram is unique (up to homeomorphism).
We will henceforth use the notation $(\Sigma ,
\alphak , \betak )$ to denote this bigon-free model.

\noindent {\bf {Step 2}} \quad
Choose one of the curve systems $\alphak$ or $\betak$. Depending on the
choice here, the result of the algorithm will be called
$\alphak$--convenient or $\betak$--convenient. To ease notation, we will
assume that we chose the $\alphak$--curves; for the other choice
the subsequent steps must be modified accordingly.

\noindent {\bf {Step 3}} \quad
Put one basepoint into the interior of each hexagon, and two into
the interior of each octagon of $(\Sigma , \alphak , \betak )$.
Notice that in this way in each component of $\Sigma - \alphak$ (and of 
$\Sigma - \betak$) there will be two basepoints.

\noindent {\bf {Step 4}} \quad Consider a component $P$ of $\Sigma - \alphak$.
Denoting parallel $\betak$--curves in $P$ with a single interval, the
resulting digaram (after a suitable diffeomorphism of $P$) is one of the
diagrams shown in Figure~\ref{f:poss}, together with the two basepoints chosen
above.  In case (i) connect the two basepoints with an oriented arc $a_P$
which crosses each of the vertical $\betak$--arcs once and is disjoint from
all other curves in $P$.  (The orientation of $a_P$ can be chosen arbitrarily.
As we will see, the resulting convenient diagram will depend on the chosen
orientation of $a_P$. For an example, see Figure~\ref{f:new}(a).) 
In case (iii) connect the two basepoints with an
oriented arc $a_P$ which intersects the $\betak$--arcs indicated by one of
the horizontal arcs of Figure~\ref{f:poss}(iii), and which is disjoint from the
$\betak$--arcs corresponding to the other two horizontal arcs, cf.
Figure~\ref{f:new}(b).  Notice that for the arc therefore we have three possible
choices; so, when taking possible orientations into account, altogether we have
six choices in this case for $a_P$.
\begin{figure}[ht]
    \begin{center}
\epsfig{file=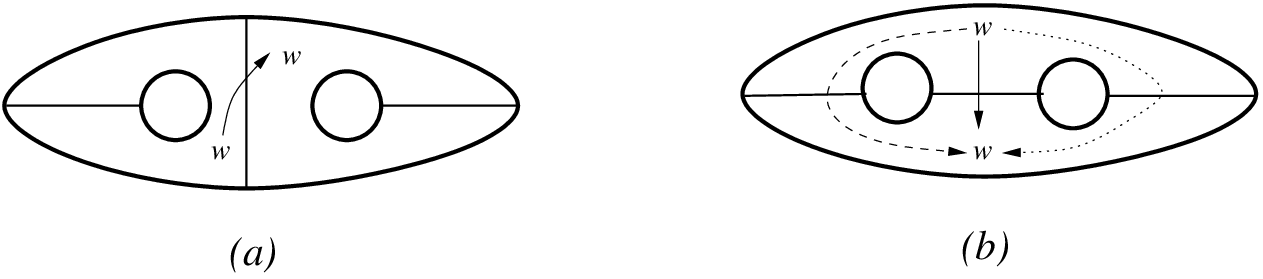, height=2cm}
    \end{center}
    \caption{{\bf Possible oriented arcs.} The left diagram shows the
only possible choice for $a_P$ with one of its possible orientatons.
In the right we show the three possible arcs, with one of their possible
orientations.}
    \label{f:new}
    \end{figure}
Now, for each 
component $P_j$ of $\Sigma - \alphak$ containing hexagons we fix an 
oriented arc $a_{P_j}$ as above.

\noindent {\bf {Step 5}} \quad
Choose a similar set of oriented arcs
$b_{Q_i}$ for the basepoints, now using the components $Q_i$ of
$\Sigma - \betak$.

\noindent {\bf {Step 6}} \quad
Add a new $\alphak$--curve in each pair-of-pants
component of $\Sigma - \alphak$ as indicated by the dashed curves of 
Figure~\ref{f:add}.
\begin{figure}[ht]
\begin{center}
\epsfig{file=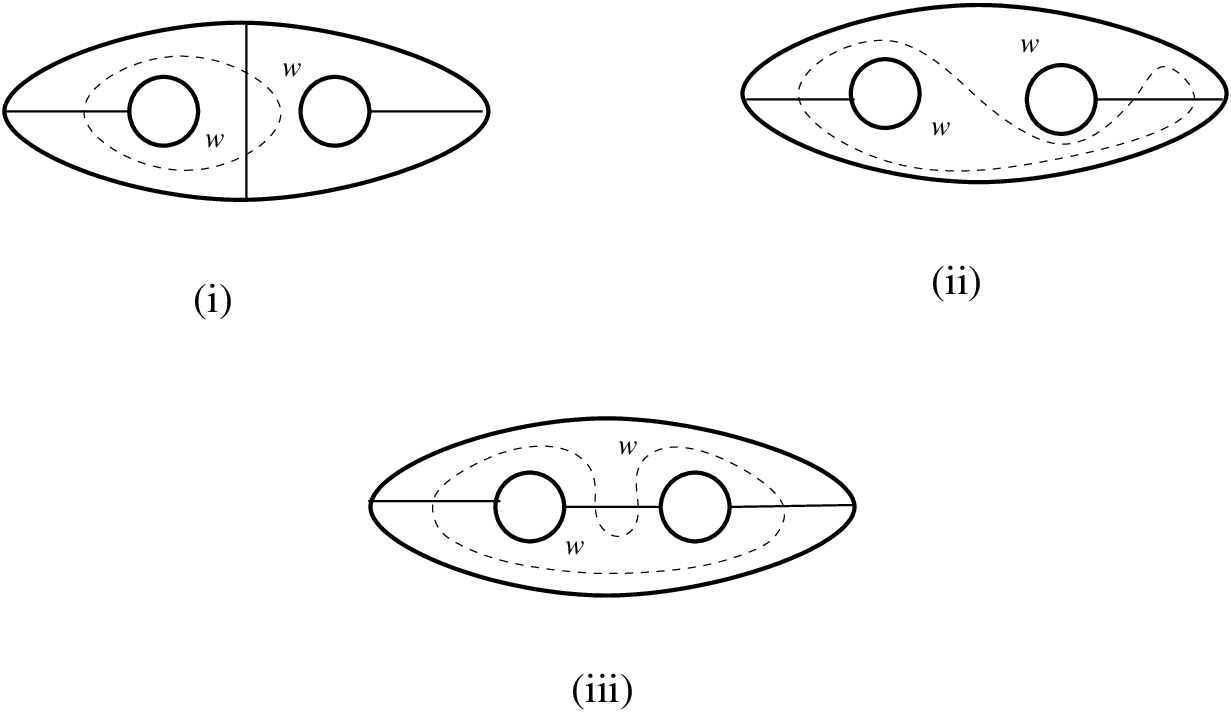, height=6cm}
\end{center}
\caption{{\bf Addition of the new curves separating the two basepoints in a
  pair-of-pants.} The basepoints are denoted by $w$.}
\label{f:add}
\end{figure}
The bigons in Figures~\ref{f:add}(i) and (iii) are placed in the
hexagon pointed into by the chosen oriented arc $a_P$, and in (iii) the
bigon rests on the $\betak$--curve which is intersected by $a_P$.
Although in the situation depicted in (ii) we also have a number of
choices, we do not record them by choosing an arc. Notice that adding
a curve as shown in (ii) in a pair-of-pants containing an octagon, we
cut it into a hexagon, an octagon, a rectangle and a bigon
(and some further rectangles between the parallel $\betak$--curves indicated
by a single arc in the diagram). The union
of the set $\alphak$ with the chosen new curves (a collection of
$5g(\Sigma )-5$ curves altogether) will be denoted by $\alphak ^c$.

\noindent {\bf {Step 7}} \quad Consider now a component $Q$ of $\Sigma
- \betak$. The intersection of $Q$ with $\alphak$ still falls into the
three categories shown by Figure~\ref{f:poss} (after a suitable
diffeomorphism has been applied).  After adding the new
$\alphak$--curves, the patterns slightly change. The diagrams might
contain bigons, and, when disregarding the bigons, we will have
diagrams only of the shape of (i) and (iii) of Figure~\ref{f:poss}
(since after disregarding bigons there is no elementary domain which
is an octagon). For the components where $Q\cap \alphak $ looked like
(i) or (iii) choose the new $\betak$--curve dictated by the chosen
arcs $b_Q$, while in those domains where $Q\cap \alphak$ is of (ii)
(and then $Q\cap \alphak ^c$, after disregarding the bigons, became
(i) or (iii)) we make further choices of oriented arcs and add the new
$\betak$--curves accordingly. We assume that the bigons in the
diagrams are very narrow and almost reach the basepoints --- this
convention helps deciding the intersection patterns between the bigons
and the newly chosen curves. Like before, the completion of $\betak$
with the above choices will be denoted by $\betak ^c$.}
\end{algorithm}

\begin{defn}\label{def:conv} 
{\whelm The resulting multi-pointed diagram $\DD = (\Sigma , \alphak ^c ,
  \betak ^c, \w )$ with $\vert \w \vert =4g(\Sigma ) -4$ and
$\vert \alphak ^c \vert=\vert \betak ^c \vert = 5g(\Sigma ) -5$ will be
called a \emph{convenient} diagram; depending on the choice 
made in Step 2, we call the diagram $\alphak$-- or $\betak$--convenient.}
\end{defn}

A simple variation of Algorithm~\ref{algo:alg} provides a
\emph{symmetric convenient diagram} as follows: skip Step 2, add only
one basepoint to an octagon in Step 3 and then apply Steps 4--7
modified so that in the components of $\Sigma -\alphak$ (and of
$\Sigma - \betak$) described by Figure~\ref{f:poss}(ii) no new curves
are added. The number of basepoints and the number of curves in a symmetric
convenient diagram therefore depend on the genus of the Heegaard
surface \emph{and} the number of octagons in the bigon--free model. An
example of a symmetric convenient diagram with no hexagons was
discussed (and was called \emph{adapted}) in \cite{U2}.

\begin{prop}\label{p:nicemarad}
Any $\alphak$--convenient ($\betak$--convenient or symmetric
convenient) Heegaard diagram is nice.  
\begin{comment}
In fact, in the result
all elemntary domains are 2--, 4--, 6--, 8-- or 10--gons, and each
$2n$--gon with $n>2$ contains a unique basepoint. Conversely, if a
domain contains a basepoint then it is a $2n$--gon with $n>2$.
\end{comment}
\end{prop}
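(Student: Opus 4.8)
The plan is to check niceness directly from the construction in Algorithm~\ref{algo:alg}, by tracking how each elementary domain evolves as the new curves are added and verifying that every basepoint-free domain ends up a bigon or a rectangle. Recall that by Proposition~\ref{p:convdomains} the bigon--free model $(\Sigma,\alphak,\betak)$ has only rectangles, hexagons and octagons as elementary domains, and that Step 3 places exactly one basepoint in each hexagon and two in each octagon; so at the start of the curve-adding process the only basepoint-free domains are the original rectangles, and niceness would already hold were it not for the new $\alphak$-- and $\betak$--curves introduced in Steps 6 and 7. The entire content of the proposition is therefore local: one must examine, pair-of-pants by pair-of-pants, what the dashed curves of Figure~\ref{f:add} do to the domains.

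First I would verify the $\alphak^c$ stage, i.e. after Step 6 but before Step 7. Here I would go through the three cases (i), (ii), (iii) of Figure~\ref{f:poss} for a component $P$ of $\Sigma-\alphak$, together with the placement of the new curve prescribed in Figure~\ref{f:add}. In each case the new $\alphak$--curve is a simple closed curve in the pair-of-pants $P$ meeting the $\betak$--arcs in a controlled pattern, and it subdivides the existing domains of $P$ into: some new rectangles (the ``parallel $\betak$'' strips), exactly one bigon, and one or two larger domains (the hexagon/octagon pointed into by $a_P$, into which the bigon is placed, gets cut up — as the text notes, in the octagon case of (ii) into a hexagon, an octagon, a rectangle and a bigon). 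The key point to record is that every domain of positive genus-of-complexity, i.e. every resulting hexagon or octagon, still contains a basepoint — because the new curve is placed inside the hexagon/octagon indicated by $a_P$ (hexagons have one basepoint, octagons two, so there is always a basepoint to spare for the leftover big piece), and the domains away from $a_P$ are untouched. Thus after Step 6 every basepoint-free domain is a bigon or a rectangle; this is the ``$\alphak$--convenient at the $\alphak^c$-level'' niceness.

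Next I would redo the same local analysis for Step 7, now viewing the picture from a component $Q$ of $\Sigma-\betak$. As the algorithm explains, after adding the $\alphak^c$--curves the pattern $Q\cap\alphak^c$, once the (narrow) bigons are disregarded, again only looks like (i) or (iii) of Figure~\ref{f:poss}, never like an octagon; so the new $\betak$--curve chosen in $Q$ (dictated by the arc $b_Q$, or by an auxiliary arc in the former case (ii)) cuts the domains of $Q$ exactly as in the Step 6 analysis — into rectangles, one bigon, and leftover hexagons that contain basepoints — with the additional bookkeeping of how the new $\betak$--curve crosses the narrow bigons already present. The convention that ``the bigons are very narrow and almost reach the basepoints'' is exactly what is needed to make this crossing pattern unambiguous and to ensure no basepoint-free octagon is created. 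Finally, for the symmetric convenient diagram one reruns the argument with the simplification that in the (ii)-components no new curve is added, so those octagons simply survive with their one basepoint, and only cases (i) and (iii) need checking.

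I expect the main obstacle to be the Step~7 analysis: there one is not working with the clean hexagon/octagon domains of the bigon--free model but with domains already cut up by the Step~6 curves and decorated with narrow bigons, so the case check is combinatorially the most involved and the one most sensitive to the precise conventions (orientation of $b_Q$, placement of the bigon relative to the $\betak$--curve it ``rests on'', the narrow-bigon convention). Everything else is a matter of staring at Figures~\ref{f:poss} and~\ref{f:add}. I would organize the write-up around a single local lemma — ``adding one of the prescribed dashed curves to a pair-of-pants as in Figure~\ref{f:add} produces only rectangles, bigons, and larger domains each containing a basepoint'' — proved by the finite case check, and then apply it twice (once for $\alphak^c$ over components of $\Sigma-\alphak$, once for $\betak^c$ over components of $\Sigma-\betak$), noting that the second application uses the output of the first. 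The commented-out sharper statement (all domains are $2n$--gons with $n\le 5$, and $n>2$ iff the domain has a unique basepoint) would drop out of the same case analysis if one wished to include it.
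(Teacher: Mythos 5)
Your two–pass plan (first verify that Step~6 alone leaves no basepoint-free $2n$-gon with $n>2$, then redo the analysis for Step~7 on the already-cut-up $\betak$-pairs-of-pants) is a genuinely different organization from what the paper does.  The paper does a \emph{joint, per-hexagon} analysis: it first observes that the case~(ii) modification is entirely local to the octagon and creates no new large domains elsewhere, and then for each hexagon it considers the \emph{pair} of oriented arcs $(a_P,b_Q)$ simultaneously (one from its $\alphak$-pair-of-pants, one from its $\betak$-pair-of-pants), enumerating the ten essentially distinct ways the two arcs can enter or exit through its sides and checking each directly in Figure~\ref{f:eset}.  That reduction to a single hexagon with two decorating arcs is what keeps the case count small and avoids the need to describe the intermediate (Step~6-only) diagram at all.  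Your route would also work, and in fact your intermediate claim that the Step~6 diagram already has no basepoint-free hexagons or octagons is true, but it forces you to carry out Step~7's check on a more complicated local picture (the arc pattern in a $\betak$-pair-of-pants after Step~6 is not literally one of the three Figure~\ref{f:poss} shapes: it has extra parallel $\alphak^c$-arcs and narrow bigons stapled on, and the algorithm's ``disregard the bigons'' normalization is doing real work there).  Each route buys something: yours is conceptually modular (one lemma applied twice), the paper's minimizes the total combinatorics by collapsing both passes into a single hexagon-level enumeration.

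One point in your description is actually wrong as stated and would need to be fixed before the argument could be written up.  You justify the Step~6 claim by saying ``the new curve is placed inside the hexagon/octagon indicated by $a_P$\dots and the domains away from $a_P$ are untouched.''  This is not so: the new curve added in Step~6 is a \emph{closed curve traversing the whole pair-of-pants} $P$; only its bigon-poke lands in the hexagon that $a_P$ points into.  In cases~(i) and (iii) the curve also passes through the \emph{other} hexagon of $P$ and through the strips of rectangles, cutting all of them.  So you do have to verify that the second hexagon is cut so that its basepoint lands in the larger piece, and that the rectangle strips only break into more rectangles --- this is exactly the kind of case check that Figure~\ref{f:eset} packages up.  The intuitive argument ``the curve stays in one domain so there's nothing to check elsewhere'' does not hold, even though the conclusion it was meant to support does.
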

\begin{proof}
We only need to check that after adding both the new $\alphak$-- and
$\betak$--curves we do not create any further $2n$--gons with $n>2$
than the ones containing the basepoints. It is obvious from the
construction that all basepoints will be in different components, and
any $\alphak$-- (and similarly $\betak$--) component contains a
basepoint.

When adding the new curve in the situation of Figure~\ref{f:add}(i), we have
two choices, as encoded by the orientation of the arc connecting the
two basepoints, pointing towards the region where the bigon is created.
(Notice that the boundary circles of a pair-of-pants in (i) are not symmetric:
one of the components is distinguished by the property that it is intersected
by the same $\betak$--arc twice.) We will label the corresponding oriented arc
with (i).  In the case of Figure~\ref{f:add}(ii) there are four possibilities,
according to which boundary the newly added circle is isotopic to (when the
basepoints are disregaded) and from which side it places the bigon.  Since the
modification of (ii) does not affect any other elementary $2n$--gon with $n>2$
besides the octagon we started with, the choice here will be irrelevant as far
as the combinatorics of the other domains go, and (as in the algorithm) we do
not record the choices made. For the case of Figure~\ref{f:add}(iii) there are
six choices, also indicated by an oriented arc connecting the two basepoints.
These oriented arcs will be decorated by (iii).

Now for a given hexagon we must choose from these possibilities for
both the $\alphak$-- and the $\betak$--curves. This amounts to
examining the changes on a hexagon with two oriented arcs pointing to
(or from) the basepoint in the middle of the hexagon. The two oriented
arcs (one corresponding to the fact that the hexagon is in an
$\alphak$-component, the other that it is in a $\betak$-component)
intersect either neighbouring, or opposite sides of the hexagon, and
either can be a type (i) or type (iii), and can point in or out.
Figure~\ref{f:vec} shows the modification of the hexagon in each case.
By drawing all possibilities for the two oriented arcs (taking
symmetries and identities into account, there are 10 of them),
and picturing the result on a given hexagon in 
Figure~\ref{f:eset}, the proof of the proposition is complete.
\begin{figure}[ht]
\begin{center}
\epsfig{file=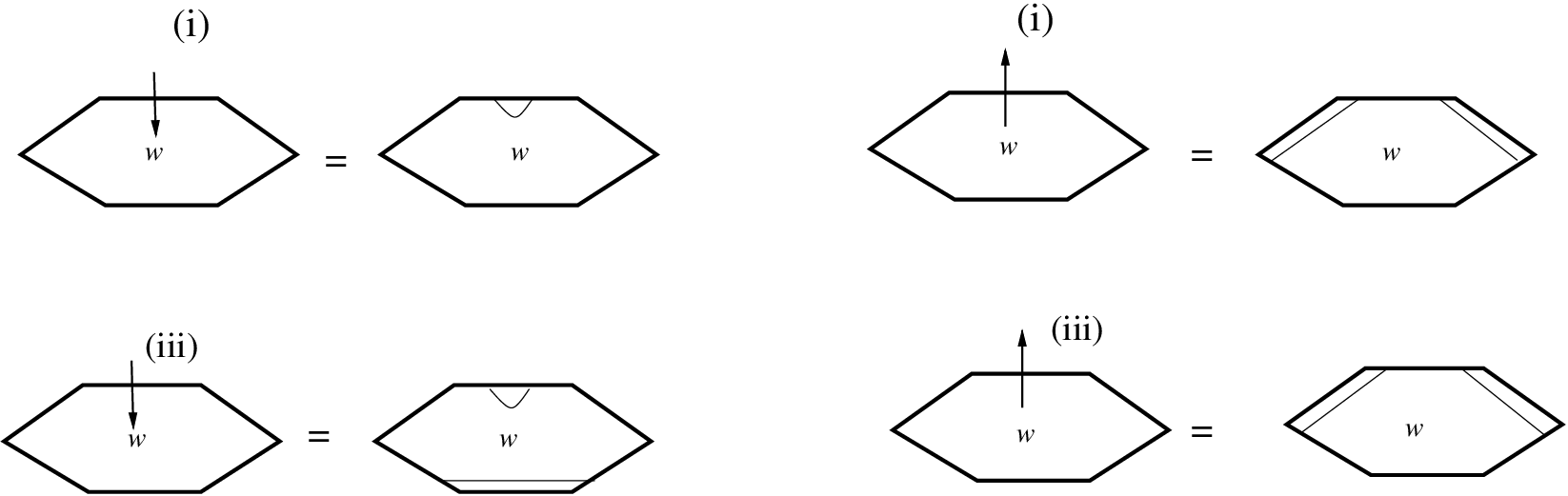, height=4cm}
\end{center}
\caption{{\bf Possible oriented arcs and the effect of adding a new curve 
in a hexagon, dictated by the oriented arc.}}
\label{f:vec}
\end{figure}
\end{proof}

\begin{figure}[ht]
\begin{center}
\epsfig{file=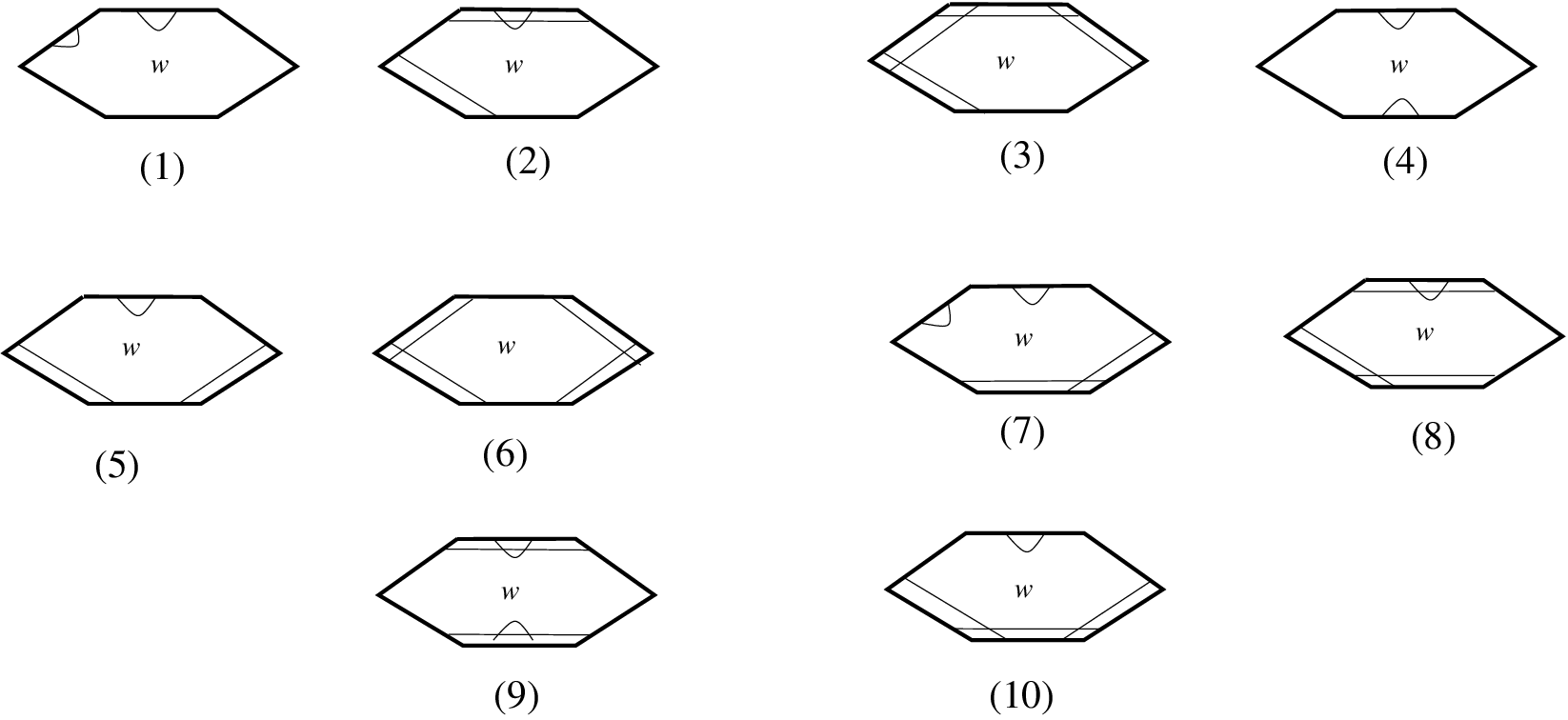, height=5cm}
\end{center}
\caption{{\bf The list of all the cases in the proof of
  Proposition~\ref{p:nicemarad}.} In (1), (2), (3), (7), (8) the
  oriented arcs intersect the top horizontal and the upper left interval,
  while in (4), (5), (6), (9), (10) the top and bottom
  horizontals. For out-pointing oriented arcs (i) and (iii) has the same
  effect.  In (1) the two oriented arcs are both (i) and point in, in (2)
  the vertical points in and is (i), the other points out, in (3) both
  point out. In (4) both point in and are (i), in (5) the top one
  points in and is (i), the other one points out, while in (6) both
  point out. (7), (8), (9) and (10) are the modifications of (1), (2),
  (4) and (5) by replacing the type (i) oriented arc with the type (iii)
  having the same direction.}
\label{f:eset}
\end{figure}

We will define the Heegaard Floer chain complexes (determining the
stable Heegaard Floer invariants) using combinatorial properties of
convenient diagrams. Since in Algorithm~\ref{algo:alg} there are a
number of steps which involve choices (recall that the algorithm
itself starts with the choice of an essential pair-of-pants diagram for
$Y$), it will be crucial for us to relate the results of various
choices. The relations will be discussed in the next section.

\begin{rem} {\whelm There are further possible choices for the dashed
curves to turn an essential pair-of-pants diagram into a nice one.
For example, Figure~\ref{f:megegy} shows an alternate configuration
instead of Figure~\ref{f:add}(i). Although such variants will be used in our
later arguments, in the definition of convenient diagrams we chose
the curve given by Figure~\ref{f:add}(i), since this choice led to the
least number of possibilities to examine.}
\end{rem}
\begin{figure}[ht]
\begin{center}
\epsfig{file=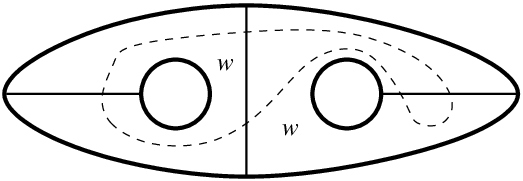, height=2cm}
\end{center}
\caption{{\bf An alternate curve for Figure~\ref{f:add}(i).}} 
\label{f:megegy}
\end{figure}

\section{Convenient diagrams and nice moves}
\label{sec:convdia}

The aim of the present section is to show that convenient diagrams of a 
fixed 3--manifold can be connected by nice moves. 
In order to state the main theorem of the section, we need a definition.
\begin{defn}
{\whelm Suppose that $\DD_1, \DD_2$ are given nice diagrams of a 3--manifold
$Y$. We say that $\DD_1$ and $\DD_2$ are \emph{nicely connected}
if there is a sequence $(\DD ^{(i)})_{i=1}^n$ of nice diagrams 
all presenting the same $3$--manifold $Y$
such that
\begin{itemize}
\item $\DD_1=\DD ^{(1)}$ and $\DD_2=\DD ^{(n)}$, and
\item consecutive elements $\DD ^{(i)}$ and $\DD ^{(i+1)}$ of the sequence
differ by a nice move.
\end{itemize}}
\end{defn}
It is a simple exercise to verify that being nicely connected is an
equivalence relation among nice diagrams representing a fixed
3--manifold $Y$. With the above terminology in place, in this section
we will show
\begin{thm} \label{thm:MainConv} 
Suppose that $Y$ is a given 3--manifold which contains no $S^1\times
S^2$--summand.  Suppose that $\DD_i$ $(i=1,2)$ are convenient
diagrams derived from essential pair-of-pants diagrams for $Y$.
Then $\DD_1$ and $\DD_2$ are nicely connected.
\end{thm}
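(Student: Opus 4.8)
The plan is to reduce the statement to the combinatorial mechanisms already established, namely: (i) any two essential pair-of-pants diagrams for a fixed Heegaard decomposition are connected by flips (Lemma~\ref{lem:conn}); (ii) any two Heegaard decompositions of $Y$ become isotopic after stabilizations (Reidemeister--Singer), and stabilization of pair-of-pants diagrams is controlled by Lemma~\ref{l:stabil}; (iii) the bigon-free model is canonical (Theorem~\ref{thm:nobigon}), so Step~1 of Algorithm~\ref{algo:alg} introduces no ambiguity. Thus the ``moves'' we must simulate by nice moves are: a flip on $\alphak$ or $\betak$; a pair-of-pants stabilization; and all the auxiliary choices internal to Algorithm~\ref{algo:alg} (the choice of $\alphak$ versus $\betak$ in Step~2, the placement of basepoints in octagons in Step~3, the oriented arcs $a_{P}$ and $b_{Q}$ in Steps~4--5 and~7, and the dashed curves in Step~6). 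The first step I would carry out is to set up this bookkeeping precisely: given $\DD_1,\DD_2$ convenient, I write each as the output of Algorithm~\ref{algo:alg} applied to an essential pair-of-pants diagram, reduce to the case of a common Heegaard decomposition via Reidemeister--Singer (handling genus stabilization by Lemma~\ref{l:stabil} together with a nice type-$g$ stabilization on the convenient side), and then, using Lemma~\ref{lem:conn}, reduce to connecting the convenient outputs of two pair-of-pants diagrams that differ by a single flip, plus connecting the various algorithmic choices.

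Next I would treat the choices internal to the algorithm, which should be local and self-contained. Changing an oriented arc $a_P$ (or $b_Q$) in Step~4/5, or the dashed curve in Step~6, alters the diagram only inside a single pair-of-pants component and its adjacent hexagons/octagons; the claim is that any two such local choices are related by a bounded sequence of nice isotopies and nice handle slides supported near that pair-of-pants. Here the point is precisely that each affected domain already contains a basepoint (this is why convenient diagrams were arranged to have two basepoints in each $\alphak$- and each $\betak$-component), so the arcs defining the isotopies and slides automatically satisfy the ``$D_1$ contains a basepoint'' hypotheses in Definitions~\ref{def:NiceIsotopy} and the nice-handle-slide definition. The finitely many configurations are exactly those catalogued in Figures~\ref{f:poss}, \ref{f:add}, \ref{f:vec} and \ref{f:eset}, so this reduces to a finite case check. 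Swapping the Step~2 choice of $\alphak$ versus $\betak$ should similarly follow from a finite, symmetric check, or be deferred since $\alphak$-, $\betak$- and symmetric convenient diagrams can all be compared through a common refinement.

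The main work is then the flip. Suppose $\alphak' = \alphak_0\cup\{\alpha_1'\}$ and $\alphak=\alphak_0\cup\{\alpha_1\}$ differ by a flip inside a four-punctured sphere component $F$ of $\Sigma-\alphak_0$, so $\alpha_1$ and $\alpha_1'$ meet transversally in two points. After passing to bigon-free models, the plan is: first perform the convenient-diagram construction adapted to $\alphak$, then show that the curve $\alpha_1'$ can be introduced, and the curve $\alpha_1$ removed, through nice moves. Concretely, one isotopes $\betak$ so that its intersections with $F$ are in bigon-free position with respect to both $\alpha_1$ and $\alpha_1'$, observes that inside $F$ the possible configurations of $\betak$-arcs are the finitely many pictures of Figure~\ref{f:poss} (two such, for the two pairs-of-pants $F$ decomposes into), and then exhibits a nice handle slide of $\alpha_1$ over curves of $\alphak^c$ (or a short nice isotopy) that converts the $\alpha_1$-picture into the $\alpha_1'$-picture, up to further local nice moves of the type handled in the previous paragraph. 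This is the step I expect to be the main obstacle: one must keep \emph{every} intermediate diagram nice while the relevant domains temporarily change shape, which forces careful placement of the extra basepoints and extra curves, and a genuine enumeration of how a flip interacts with the at most octagonal domains guaranteed by Proposition~\ref{p:convdomains}. I expect this is where most of Section~\ref{sec:convdia} is spent, localizing the argument inside three- and four-punctured spheres exactly as the introduction promises, and dispatching the resulting finite list of local models one by one.
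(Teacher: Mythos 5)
Your proposal matches the paper's strategy: reduce via Reidemeister--Singer (with the stabilization step controlled by Theorem~\ref{thm:nicestab}), Lemma~\ref{lem:conn} (flips between essential pair-of-pants diagrams), and Proposition~\ref{p:symmlesz} together with Theorem~\ref{t:convconn} (internal algorithmic choices) to a single flip localized in a four-punctured sphere $S$, then dispatch that by a finite case analysis. The paper's execution of the flip differs slightly from your sketch---it deletes $\alpha_0$ from $S$, classifies the six background $\betak$-configurations (Figure~\ref{f:hexposs}), and normalizes both $\alpha_0$ and its flipped partner into a common stock of ``elementary curve configurations'' for each background (Propositions~\ref{p:elemek} and \ref{p:3333eq}, Theorem~\ref{t:elemgen}) rather than directly converting the $\alpha_1$-picture to the $\alpha_1'$-picture---but this is a matter of how the case analysis is organized, not a change of route.
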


\begin{rem}
{\whelm Notice that the diagrams in the path $(\DD ^{(i)})_{i=1}^n$ 
connecting the two given convenient diagrams $\DD _1$ and $\DD _2$ 
are all nice, but not necessarily convenient for $i\neq 1,n$.}
\end{rem}

\subsection{Convenient diagrams corresponding to a fixed pair-of-pants diagram}
\label{ssec:fix}

In this subsection, we wish to show that any two convenient diagrams
belonging to a fixed essential pair-of-pants diagram can be nicely connected.
We start by relating
the $\alphak$--, $\betak$-- and symmetric convenient Heegaard diagrams
corresponding to the same pair-of-pants diagram and the same choice of
oriented arcs.

\begin{prop}\label{p:symmlesz}
Suppose that $\DD_1$ is an $\alphak$--convenient Heegaard diagram.  Let
$\DD_2$ denote the symmetric convenient diagram corresponding to the
same pair-of-pants diagram and the same choice of oriented arcs fixed
in Steps 4 and 5 of Algorithm~\ref{algo:alg}. Then $\DD_1$ and $\DD_2$
are nicely connected.
\end{prop}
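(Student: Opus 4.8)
The plan is to show that $\DD_1$ and $\DD_2$ differ only in small neighborhoods of the octagons of the bigon--free model, and that each such local discrepancy is accounted for by a nice type-$b$ stabilization together with finitely many nice handle slides and nice isotopies.

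First I would compare Algorithm~\ref{algo:alg} with its symmetric variant, run on the \emph{same} bigon--free model and with the \emph{same} oriented arcs of Steps 4 and 5. In the pair-of-pants components of $\Sigma-\alphak$ and of $\Sigma-\betak$ of types (i) and (iii) of Figure~\ref{f:poss}, the two constructions are literally identical: the same new $\alphak$-- and $\betak$--curves are added and the same basepoints are placed. The two diagrams therefore agree outside the type-(ii) components, each of which carries a single octagon. Near a fixed octagon, $\DD_1$ contains, beyond what $\DD_2$ has, one extra $\alphak$--curve $\alpha''$ (isotopic to one of the boundary curves $\alpha_i$ of the ambient $\alphak$--pair-of-pants, cutting off a bigon as in Figure~\ref{f:add}(ii)), one extra $\betak$--curve $\beta''$ (isotopic to some $\beta_j$, cutting off a bigon), and one extra basepoint. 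Since a type-(ii) pair-of-pants contains exactly one octagon, distinct octagons occupy disjoint regions of both $\Sigma-\alphak$ and $\Sigma-\betak$, so these local modifications can be treated independently.

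Next I would convert $\DD_2$ into $\DD_1$ one octagon at a time. Near a given octagon of $\DD_2$, which is an elementary domain containing a single basepoint, I first perform a nice type-$b$ stabilization inside it, introducing a new basepoint $w'$ and two small null--homotopic circles $\alpha'$, $\beta'$ meeting in two points with $w'\in D_{\alpha'}\cap D_{\beta'}$. After a sequence of nice isotopies bringing $\alpha'$ next to the boundary curve $\alpha_i$ and $\beta'$ next to $\beta_j$, I perform a nice handle slide of $\alpha'$ over $\alpha_i$; since $\alpha'$ is null--homotopic, the result is a curve isotopic to $\alpha_i$, which after a further nice isotopy becomes exactly $\alpha''$, with the bump enclosing $w'$ placed on the side prescribed in $\DD_1$. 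A symmetric nice handle slide of $\beta'$ over $\beta_j$ produces $\beta''$, and nice isotopies finally align the whole local picture with $\DD_1$ (choosing in the construction the same boundary curves and bigon sides as the given $\DD_1$). Running this for every octagon, and using that nice moves preserve niceness (Section~\ref{sec:second}), carries $\DD_2$ to $\DD_1$ through nice diagrams, so $\DD_1$ and $\DD_2$ are nicely connected; if the bigon--free model has no octagons then $\DD_1=\DD_2$ and there is nothing to prove.

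The main obstacle is verifying that every intermediate move can genuinely be chosen \emph{nice} in the technical sense of Section~\ref{sec:second}: each handle--slide arc must have its interior in a single elementary rectangle and the elementary domain on the far side of its initial point must contain a basepoint, and each finger--move arc must satisfy all the conditions of Definition~\ref{def:NiceIsotopy}. This reduces to a finite, explicit inspection of the local models near an octagon (Figures~\ref{f:poss} and~\ref{f:add}), where one exploits the freedom in placing $\alpha'$, $\beta'$ and $w'$ during the stabilization, together with Lemma~\ref{l:mindket} (which supplies basepoint--carrying domains on both sides of every $\alphak$-- and $\betak$--curve), to position the arcs correctly; the preliminary nice isotopies are inserted precisely to move $\alpha'$ and $\beta'$ into the requisite positions and to bring the initial point of each handle--slide arc next to a basepoint--carrying domain.
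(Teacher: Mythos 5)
Your proposal is correct and follows essentially the same strategy as the paper: both localize the difference between $\DD_1$ and $\DD_2$ to neighborhoods of the octagons of the bigon--free model, and both bridge the gap at each octagon with one nice type-$b$ (de)stabilization plus a finite sequence of nice isotopies and handle slides, verified by inspecting the local pictures (the paper records this inspection in Figure~\ref{f:ii}, running from $\DD_1$ toward $\DD_2$, whereas you run the equivalent sequence in reverse starting from the stabilization of $\DD_2$).
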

\begin{proof} Let us fix an octagon of the bigon--free pair-of-pants
  decomposition underlying the convenient diagrams.  We only need to work in
  the respective $\alphak$-- or $\betak$--pair-of-pants containing this fixed
  octagon. To visualize the octagon better (and to indicate that arcs
  correspond to potentially more than one parallel segments), now we use two
  parallel $\betak$-- (or $\alphak$--) curves from the bunch intersecting the
  pair-of-pants. In Figure~\ref{f:ii}(a) we show an $\alphak$--pair-of-pants
  (that is, the circles are all $\alphak$--curves, the newly chosen one being
  dashed, while the intervals denote the $\betak$--components in this
  pair-of-pants, and the dotted lines correspond to the new $\betak$--curve).
  Figure~\ref{f:ii}(b) shows a possible configuration in the
  $\betak$--pair-of-pants containing the same octagon (again, the new
  $\alphak$--curve is dashed while the new $\betak$--curve is dotted). Now the
  sequence of nice isotopies and nice handle slides on both the $\alphak$--
  and the $\betak$--curves, as indicated by the diagrams of
  Figure~\ref{f:ii} (showing the effect of the nice moves only in the
  $\alphak$--pair-of-pants), transforms the diagram into Figure~\ref{f:ii}(g).
  From here, a nice type-$b$ destabilization (for each octagon) provides a
  symmetric convenient diagram (depicted by Figure~\ref{f:ii}(h)).
\begin{figure}[ht]
\begin{center}
\epsfig{file=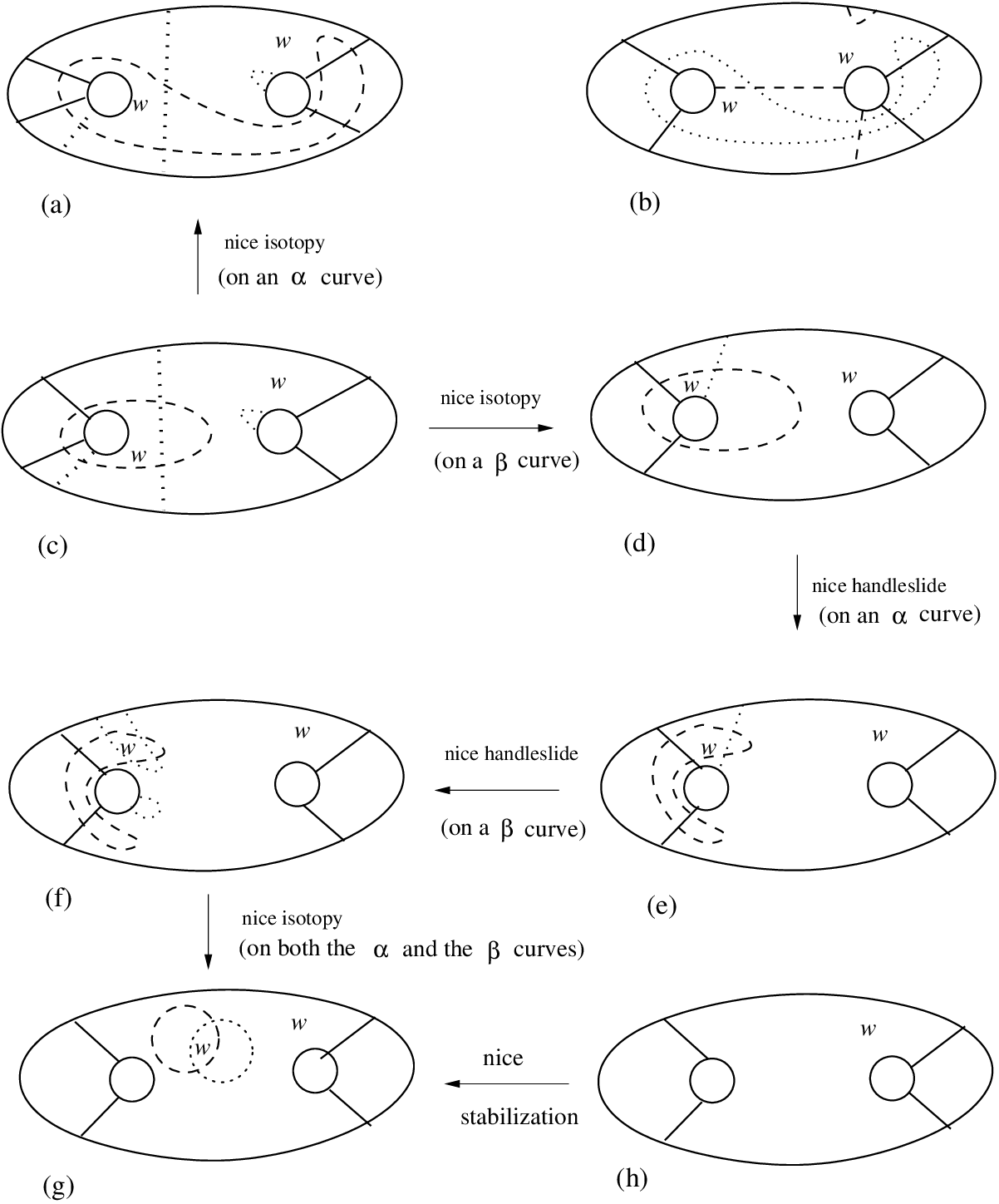, height=10cm}
\end{center}
\caption{{\bf Isotopies and handle slides for showing that
    $\alphak$--convenient and symmetric convenient diagrams are nicely
    connected.}  Diagram (b) shows the $\betak$--pair-of-pants in the
  starting Heegaard diagram, all other diagrams are depicting the
  $\alphak$--pair-of-pants. Nice moves are indicated between
  consecutive diagrams. The new $\alphak$--curve appear as dashed,
  while the new $\betak$--curve as dotted segment(s).}
\label{f:ii}
\end{figure}
\end{proof}

In view of the above result, when studying which diagrams can be
nicely connected, it is no longer necessary to specify if a diagram is
$\alphak$--convenient, $\betak$--convenient or symmetric. So we will
typically drop this quantifier from the notation, and refer simply to
{\em convenient} diagrams.

Next we will analyze the connection between convenient diagrams
corresponding to a fixed pair-of-pants decomposition of $Y$.
\begin{thm}\label{t:convconn}
Suppose that two convenient Heegaard diagrams $\DD_1$ and $\DD_2$
are derived from the same essential pair-of-pants 
diagram of a 3--manifold $Y$ which contains no $S^1\times
S^2$--summand. Then the convenient diagrams are nicely connected.
\end{thm}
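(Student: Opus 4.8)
The plan is to reduce the statement to a finite check, localized inside a single $\alphak$--pair-of-pants (or $\betak$--pair-of-pants), by exploiting Proposition~\ref{p:symmlesz} together with the observation that two convenient diagrams built on the same bigon--free pair-of-pants model differ only in the choices made in Steps~3--7 of Algorithm~\ref{algo:alg}. These choices are: the placement of the two basepoints in each hexagon and octagon (Step~3), the oriented arcs $a_{P_j}$ in the $\alphak$--components and $b_{Q_i}$ in the $\betak$--components (Steps~4--5), the new $\alphak$--curves added in each pair-of-pants (Step~6, the cases (i)--(iii) of Figure~\ref{f:add}), and the analogous new $\betak$--curves (Step~7). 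By Proposition~\ref{p:symmlesz} we are free to pass back and forth between an $\alphak$--convenient diagram, a $\betak$--convenient diagram, and a symmetric convenient diagram for the same pair-of-pants decomposition and the same arc choices; so it suffices to show that changing any one of the remaining choices can be realized by a sequence of nice moves.

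First I would handle the choice of basepoint positions and the choice of oriented arcs as a single step: moving a basepoint within its elementary domain, or sliding an oriented arc $a_P$ across a $\betak$--arc while keeping its endpoints on the two basepoints, is visibly realized by a nice isotopy (a finger move of a $\betak$--curve, or of the newly added $\alphak$--curve, along an arc satisfying the bulleted conditions of Definition~\ref{def:NiceIsotopy} — the relevant domains $D_1, D_f$ are hexagons or octagons, hence contain basepoints, so niceness of the arc is automatic). Next I would treat the choice of which of the three possible arcs is used in case (iii) of Figure~\ref{f:poss}, and the choice of orientation of $a_P$ in cases (i) and (iii): each such change moves the little bigon of Figure~\ref{f:add}(i),(iii) from one hexagon to an adjacent one, or flips it across the $\betak$--curve it rests on; I expect each of these to be a nice handle slide (the arc $\delta$ lying in a single rectangle, with $D_1$ the hexagon containing the basepoint) followed, if necessary, by a nice isotopy. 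The choices internal to Figure~\ref{f:add}(ii) — which boundary component the new curve is parallel to, and from which side the bigon is placed — I would dispose of by the same combination of a nice handle slide across the octagon and nice isotopies; here one uses that the modification of (ii) only affects the octagon it is performed in, so the move is local.

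The main obstacle, and where the real work lies, is the bookkeeping: one must verify that the local pictures of Figure~\ref{f:ii}, Figure~\ref{f:vec}, and Figure~\ref{f:eset} really do list \emph{all} configurations that can arise in a hexagon (or octagon) where an $\alphak$--arc choice and a $\betak$--arc choice overlap, and that in each case an explicit finite sequence of the three types of nice moves interpolates between the two choices without ever leaving the class of nice diagrams. Because a single hexagon is shared by one $\alphak$--pair-of-pants and one $\betak$--pair-of-pants, changing, say, an $\alphak$--arc choice can alter the intersection pattern that the $\betak$--side sees; one must check that the intermediate diagrams still have every non-rectangular, non-bigon domain occupied by a basepoint. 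By the analysis leading to Proposition~\ref{p:convdomains} the ambient combinatorics is constrained to rectangles, hexagons and octagons, and the ten cases of Figure~\ref{f:eset} exhaust the hexagon possibilities up to symmetry; once each of those ten cases is connected by nice moves — together with the (simpler) octagon cases governed by Figure~\ref{f:add}(ii) — and once the global choices are seen to be independent across distinct pairs-of-pants (so they can be changed one at a time), the theorem follows by concatenating these local sequences.
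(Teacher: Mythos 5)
Your proposal follows essentially the same route as the paper, and the core strategy is correct: invoke Proposition~\ref{p:symmlesz}, localize to a single pair-of-pants, and perform a finite check by explicit nice handle slides and isotopies, using the independence of choices across distinct pairs-of-pants to concatenate the local sequences. A few points where the paper is more economical or where your description is slightly off: (i) the paper reduces fully to \emph{symmetric} convenient diagrams before comparing choices, and in a symmetric convenient diagram no new curve is introduced in a Figure~\ref{f:poss}(ii) component at all --- so the ``(ii) choices'' you spend a paragraph dispatching simply do not arise once the reduction is done, and Proposition~\ref{p:symmlesz} already absorbs them; (ii) basepoint position within a fixed elementary domain is not a combinatorial choice (it does not affect the chain complex), so treating it as a move to be realized is superfluous; (iii) the ten cases of Figure~\ref{f:eset} belong to the proof of Proposition~\ref{p:nicemarad} (that convenient diagrams are nice), not to this connectivity argument --- the relevant figures here are Figures~\ref{f:indep1}, \ref{f:indep2}, \ref{f:indep3}, which cover the direction change in case~(i), the arc change in case~(iii), and the direction change in case~(iii) respectively; and (iv) your remark that ``niceness of the arc is automatic'' because $D_1$ and $D_f$ contain basepoints checks only two of the bulleted conditions in Definition~\ref{def:NiceIsotopy} --- the intermediate-domain condition must also be verified, which is why the paper exhibits the explicit move sequences rather than appealing to automatism. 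None of these affect the correctness of the overall plan; they just make it longer than the paper's.
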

\begin{proof}
According to Proposition~\ref{p:symmlesz}, we need to relate symmetric
convenient Heegaard diagrams only. According to
Algorithm~\ref{algo:alg}, the two symmetric diagrams differ by the
different choices of the oriented arcs connecting the basepoints
sharing the same ($\alphak$-- or $\betak$--) pair-of-pants
components. Since the choice of these arcs is independent from each
other, we only need to examine the case of changing one choice in one
single pair-of-pants.  The proof will rely on giving the sequence of
diagrams, differing by nice moves, connecting the two different
choices. Since we can work locally in a single pair-of-pants, these
diagrams will not be very complicated. To simplify matters even more,
we will follow the convention that bigons are omitted from the
diagrams. Once again, we always imagine that bigons are very thin and
almost reach the basepoint which is in the domain. Since nice moves
cannot cross basepoints, the addition of these bigons will still keep
niceness.

For the case of Figure~\ref{f:add}(i) we need to specify only the
direction of the oriented arc. As Figure~\ref{f:indep1} shows, the two
choices can be connected by a nice handle slide and a nice
isotopy. 
\begin{figure}[ht]
\begin{center}
\epsfig{file=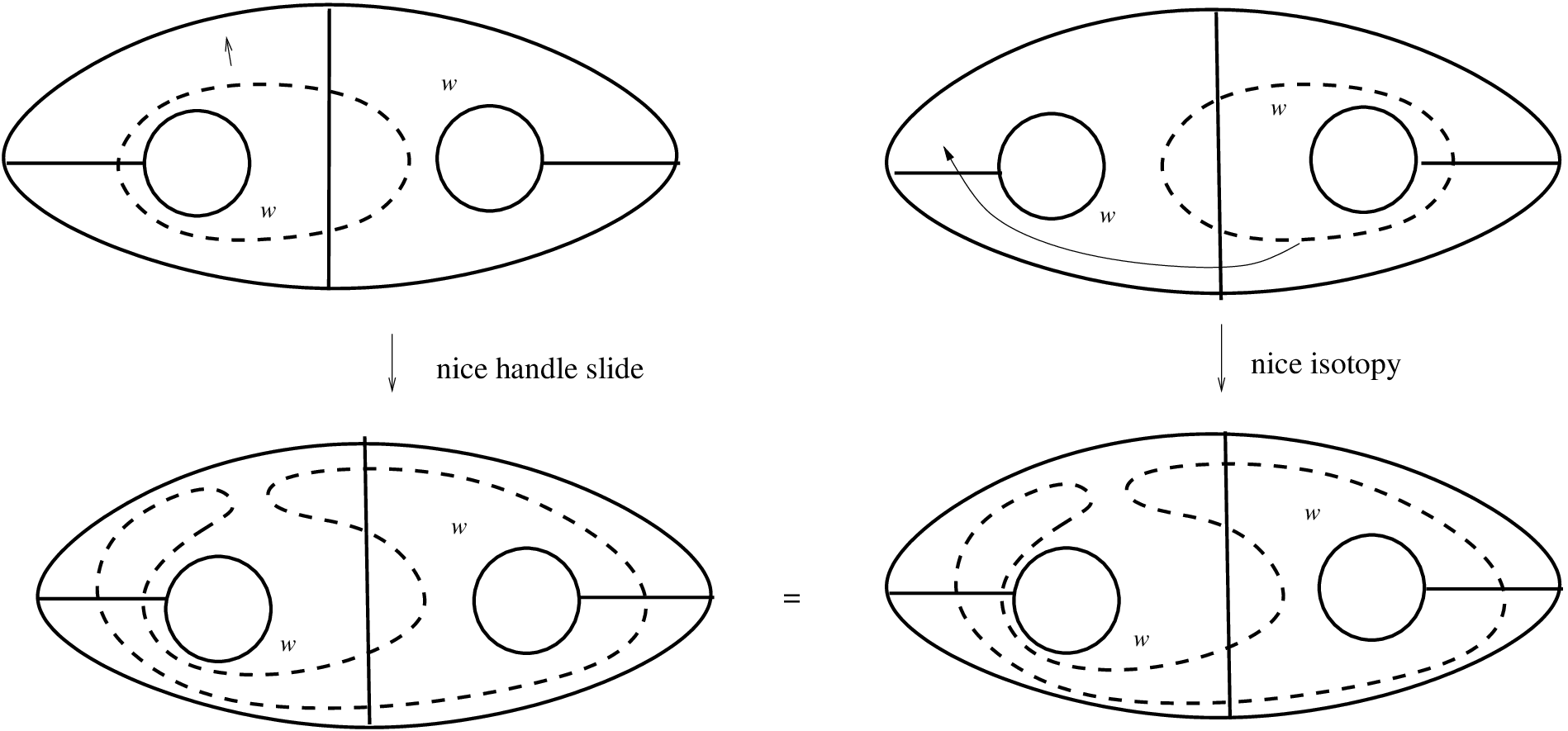, height=5cm}
\end{center}
\caption{{\bf Connecting different choices by nice moves for the
    configuration in Figure~\ref{f:add}(i).}  In this case the
  difference between the chosen oriented arcs $a_P$ and $a'_P$
  providing the upper diagrams is in their orientations.}
\label{f:indep1}
\end{figure}
In the case depicted by Figure~\ref{f:add}(iii) we need to consider
the change of the oriented arc and the change of its direction. We can
deal with the two cases separately; and as Figures~\ref{f:indep2} and
\ref{f:indep3} show, these changes can be achieved by nice isotopies
and nice handle slides.  
\begin{figure}[ht]
\begin{center}
\epsfig{file=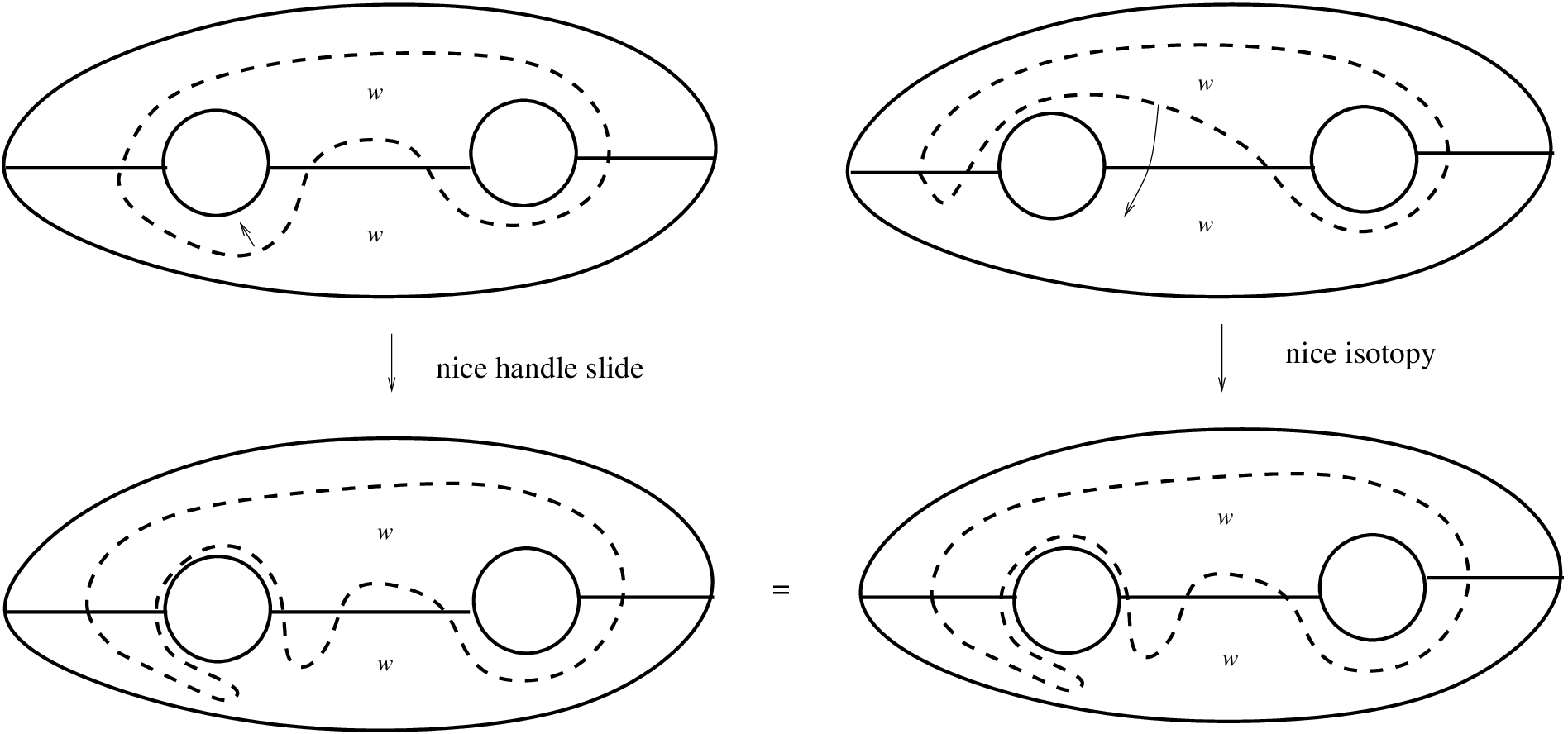, height=5cm}
\end{center}
\caption{{\bf Connecting different choices by nice moves for the
    configuration in Figure~\ref{f:add}(iii).}The upper diagrams correspond
to choosing the two arcs $a_P$ and $a_P'$ in such a way that they intersect
different horizontal arcs of Figure~\ref{f:poss}(iii).}
\label{f:indep2}
\end{figure}
\begin{figure}[ht]
\begin{center}
\epsfig{file=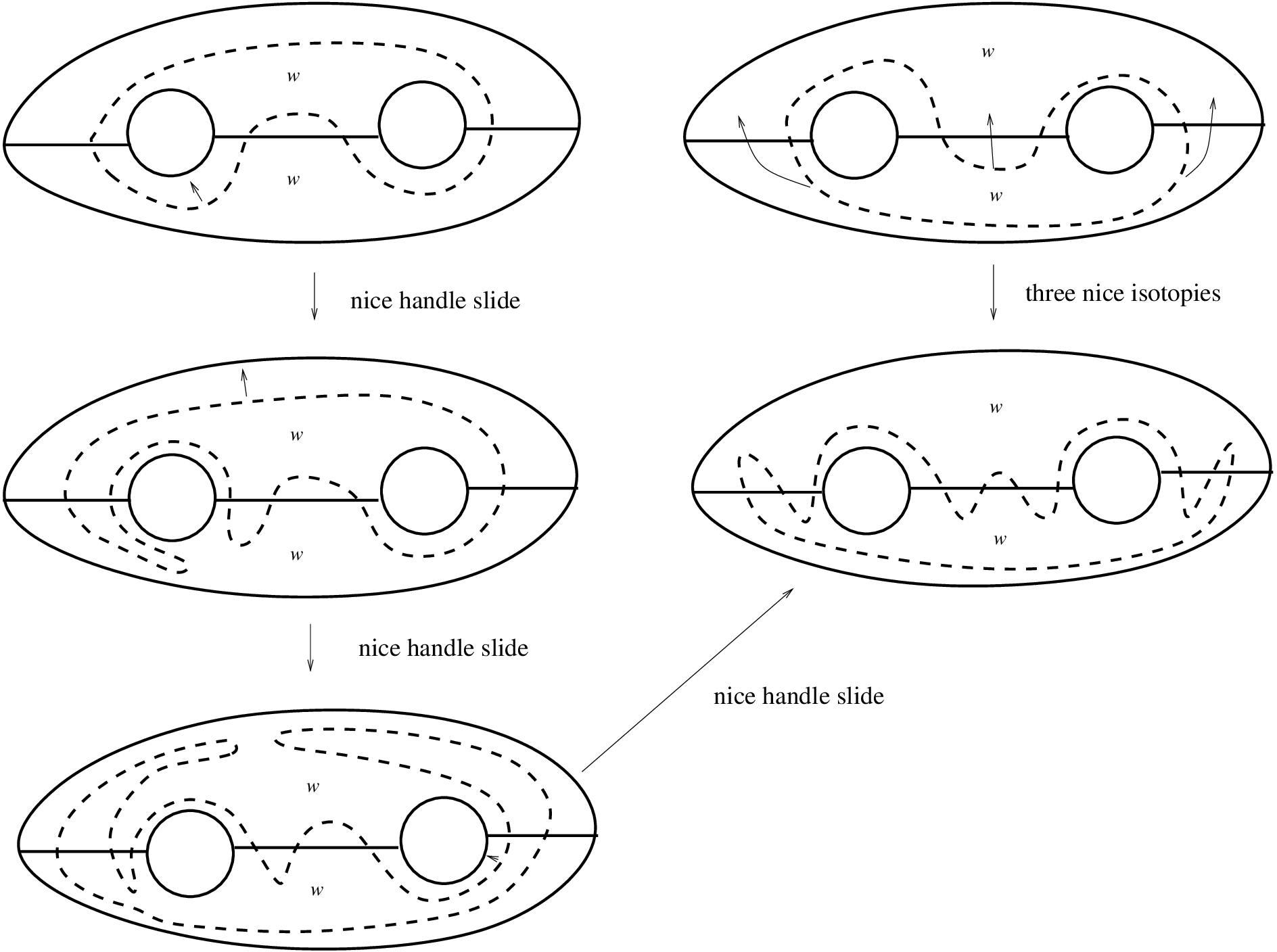, height=7cm}
\end{center}
\caption{{\bf Connecting different choices by nice moves for the
    configuration in Figure~\ref{f:add}(iii).} The two upper diagrams correspond to the choice
    of the same arc $a_P$ equipped with the two possible orientations.}
\label{f:indep3}
\end{figure}
\end{proof}
\begin{rem} {\whelm
Although it is not needed in the present situation, we will refer later
to the simple fact that the choices given by Figure~\ref{f:add}(i) and the one
shown by Figure~\ref{f:megegy} are nicely connected. 
} 
\end{rem}

\subsection{Convenient diagrams 
corresponding to a fixed Heegaard decomposition}

The next step in proving Theorem~\ref{thm:MainConv} is to relate
convenient Heegaard diagrams which are derived from the same Heegaard
decomposition but not necessarily from the same essential
pair-of-pants Heegaard diagram. This is the most demanding 
part of the proof of Theorem~\ref{thm:MainConv}, since now we need to work in
the four-punctured sphere as opposed to the three-punctured sphere 
(as in Subsection~\ref{ssec:fix}).

\begin{thm}\label{t:fixHD}
Suppose that ${\mathcal {U}}$ is a fixed Heegaard decomposition of the
3--manifold $Y$, which contains no $S^1\times S^2$--summand.  If $\DD
_i$ are convenient diagrams of $Y$ derived from the essential
pair-of-pants diagrams $(\Sigma , \alphak _i ,\betak _i)$ $(i=1,2)$
both corresponding to ${\mathcal {U}}$, then $\DD_1$ and $\DD_2$ are
nicely connected.
\end{thm}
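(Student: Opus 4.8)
The plan is to reduce Theorem~\ref{t:fixHD} to a purely local statement inside a four--punctured sphere and then settle that statement by a finite case analysis. First I would apply Lemma~\ref{lem:conn}: since $(\Sigma,\alphak_1,\betak_1)$ and $(\Sigma,\alphak_2,\betak_2)$ both correspond to the fixed Heegaard decomposition $\mathcal{U}$, they are joined by a sequence of essential pair-of-pants diagrams of $\mathcal{U}$ whose consecutive terms differ by a flip, each flip on the $\alphak$-- or on the $\betak$--side. By Theorem~\ref{t:convconn} together with Proposition~\ref{p:symmlesz}, any two convenient diagrams built from one and the same essential pair-of-pants diagram are nicely connected, so it is enough to handle a single flip, and for each of the two pair-of-pants diagrams involved we are free to use whichever convenient diagram (symmetric, $\alphak$-- or $\betak$--convenient, with any oriented arcs) is most convenient. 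Interchanging the roles of $\alphak$ and $\betak$ if needed, assume the flip is on the $\alphak$--side: $\alphak_1=\alphak_0\cup\{\alpha\}$, $\alphak_2=\alphak_0\cup\{\alpha'\}$, with $\alpha$ and $\alpha'$ lying in the four--punctured sphere component $N$ of $\Sigma-\alphak_0$ and meeting transversally in two points of opposite sign (cf.\ Figure~\ref{f:flip}); the system $\betak$ is left unchanged.

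Next I would localize to $N$. Away from $N$ neither the $\alphak_0$--curves nor the ambient surface is touched by the flip, so a single isotopy of $\betak$ supported in a collar of $N$ puts each of $(\Sigma,\alphak_1,\betak)$ and $(\Sigma,\alphak_2,\betak)$ into bigon--free position; using that $Y$ has no $S^1\times S^2$--summand, so that the bigon--free model is unique up to a homeomorphism isotopic to the identity (Theorem~\ref{thm:nobigon}), I would arrange the two bigon--free models to agree outside a neighbourhood of $N$. Running Steps~2--7 of Algorithm~\ref{algo:alg} with identical choices of basepoints, oriented arcs and added curves outside $N$ then produces convenient diagrams $\DD_1$ and $\DD_2$ that coincide outside a neighbourhood of $N$. (Any discrepancy near $N$ forced on the outside data by the flip can be absorbed afterwards by Theorem~\ref{t:convconn}, and the preliminary isotopy of $\betak$ can be cut into elementary finger moves, each a nice isotopy in the sense of Definition~\ref{def:NiceIsotopy} by placing the necessary basepoints via Lemma~\ref{l:mindket}.)

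It then remains to treat the local problem: inside $N$ we have a four--punctured sphere carrying a bigon--free family of $\betak$--arcs, one extra $\alphak$--curve ($\alpha$ or $\alpha'$), the auxiliary curves of $\alphak^c$ and $\betak^c$ produced in Steps~6--7, and the attendant basepoints, and $\DD_1$ and $\DD_2$ differ only in this picture. A flip is a standard move inside $N$, and the constraints of Proposition~\ref{p:convdomains} together with a finiteness statement for arc configurations of the type used in the Sarkar--Wang construction (cf.\ Theorem~\ref{thm:sark}) leave, up to a diffeomorphism of $N$ fixing $\partial N$, only finitely many configurations of $\betak\cap N$ to examine. For each I would exhibit an explicit sequence of nice isotopies, nice handle slides and nice type--$b$/type--$g$ (de)stabilizations, supported in a neighbourhood of $N$, carrying the local picture of $\DD_1$ to that of $\DD_2$; extending by the identity outside $N$ and prepending and appending the connections supplied by Theorem~\ref{t:convconn} then yields the desired nice connection between the originally given convenient diagrams.

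The main obstacle I expect is exactly this last step: organizing the case analysis so that niceness (every basepoint--free elementary domain a bigon or a rectangle) is preserved at \emph{every} intermediate diagram while the $\betak$--arcs and the auxiliary $\alphak^c,\betak^c$--curves are pushed across the flip region, and simultaneously bringing the basepoints into matching positions. A lesser but genuine subtlety is checking that, once the data outside $N$ has been normalized, the flip really does present itself in $N$ in one of the finitely many pictures one draws; by contrast, the reductions of the first two paragraphs are routine given Lemma~\ref{lem:conn}, Theorem~\ref{t:convconn} and Theorem~\ref{thm:nobigon}.
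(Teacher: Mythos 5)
Your reduction steps match the paper's exactly: Lemma~\ref{lem:conn} to reduce to a single flip, Proposition~\ref{p:symmlesz} and Theorem~\ref{t:convconn} to remove the dependence on choices made in Algorithm~\ref{algo:alg}, and localization in the four--punctured sphere. However, you defer the substance of the argument to the sentence ``for each I would exhibit an explicit sequence of nice moves,'' and you are candid that this is where you expect the real difficulty. That candor is warranted: this is not a routine finite check but the bulk of the section. The paper resolves it by first classifying the possible $\betak$--arc patterns in $S$ into six backgrounds via an enumeration of connected spherical graphs (Figure~\ref{f:hexposs}); then introducing a canonical, explicitly enumerated list of $\alpha_0$--placements (``elementary situations'' and ``elementary/Case~B curve configurations''); then proving in Proposition~\ref{p:elemek} that an arbitrary Case~A configuration can be pushed to an elementary one by a carefully ordered sequence of nice handle slides and isotopies, with a nontrivial ``forking arc'' normalization to keep niceness at every intermediate stage; and finally proving in Theorem~\ref{t:elemgen} (via Propositions~\ref{p:3333eq} and~\ref{p:vegso3333}) that the finitely many elementary and Case~B configurations for each background are mutually nicely connected. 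None of that machinery is present or even gestured at in your plan, so as it stands there is a genuine gap: the difficult invariant you would need to control (niceness of every intermediate diagram while moving $\alpha_0$, the auxiliary curves and the basepoints) is exactly what the elementary-situation formalism is designed to tame.

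Two smaller points. Your appeal to Theorem~\ref{thm:sark} for a ``finiteness statement for arc configurations'' is misplaced: that theorem is Sarkar's additivity of the combinatorial Maslov index and plays no role here; the finiteness is purely elementary (spherical multigraphs on four vertices with six edges, no parallel or trivial loops). And your preliminary normalization (isotoping $\betak$ so the two bigon--free models agree outside a neighbourhood of $N$, then cutting the isotopy into nice finger moves) is more than the paper needs: the paper instead normalizes $\alpha_0$ \emph{inside} $S$ relative to the fixed $\betak$--arc pattern, which is why Proposition~\ref{p:convdomains} and the Case~A/Case~B dichotomy appear; your version would also have to worry about whether the outside isotopy can really be made nice, a claim you assert via Lemma~\ref{l:mindket} but do not verify.
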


According to Lemma~\ref{lem:conn}
(which rests on Theorem~\ref{t:luo}) two essential pair-of-pants
diagrams determining the same Heegaard decomposition can be connected
by a sequence of essential pair-of-pants diagrams, where the consecutive terms
differ by a flip of one of the curve systems.

Thus, we need to connect two
$\betak$--convenient diagrams which are derived from pair-of-pants
decompositions $(\Sigma ,\alphak , \betak )$ and $(\Sigma ,\alphak ',
\betak )$, so that $\alphak, \alphak '$ and $\betak$ are markings,
and $\alphak '$ is given by applying a flip to one of the curves in
$\alphak$. Let $S$ denote the 4--punctured sphere in which the flip
takes place, i.e. $S$ is the union of two pair-of-pants components of
$\Sigma - \alphak$. According to Theorem~\ref{t:convconn} we can
assume that away from $S$ (i.e. for all basepoint pairs outside of
$S$) and for all $\betak$--pair-of-pants we apply the same choices for
the two convenient diagrams. Hence all differences between the
convenient diagrams are localized in $S$.

First we would like to enumerate the possible configurations the
$\betak$--curves can have in $S$.  Let us first consider only those
$\alphak$-- and $\betak$--curves which were in the given pair-of-pants
decomposition. We will denote these sets of elements still by $\alphak$ and
$\betak$.  Let $\alphak_1$ denote $\alphak - \{ \alpha_0\}$, where $\alpha_0$
is the curve on which we will perform the flip. Recall that the curves in
$\alphak$ and $\betak$ provide a bigon--free Heegaard diagram, hence by
Proposition~\ref{p:convdomains} the domains in $\Sigma -\alphak_1 -\betak$ are
either rectangles, hexagons or octagons. Recall also that further
$\betak$--curves (and then $\alphak$--curves) are added to the diagram to turn
it into a convenient diagram.  By our previous discussion in
Section~\ref{sec:cdiagdef}, it follows that when forgetting about the bigons
in $S$, the additional $\betak$--curves will cut the octagons into hexagons.
(Once again, in our diagrams and considerations we will disregard the bigons.
Since those can be assume to be very thin and almost reach the basepoints, the
nice moves will remain nice even when adding these bigons back.)  Assume now
that we did add the new $\betak$--curves, but we did not add the new
$\alphak$--curves in $S$ yet. (Recall that we are considering here a $\betak$--convenient
diagram.) According to the above, we can assume that the
domains of $S-\betak $ are all hexagons. We continue to follow the
convention that parallel $\betak$--arcs in $S$ are denoted by a single
arc. The two pair-of-pants components contain four basepoints altogether,
hence there are four hexagons in $S$. This means that there are six arcs
partitioning $S$ into the four hexagons.

\begin{lem}
There are six possible configurations of six arcs to partition $S$
into four hexagons. These configurations are given by
Figure~\ref{f:hexposs} and are indexed by the four--tuples of degrees of the
four boundary circles of $S$. (The degree of a circle is the
number of arcs intersecting the circle.) 
\begin{figure}[ht]
\begin{center}
\epsfig{file=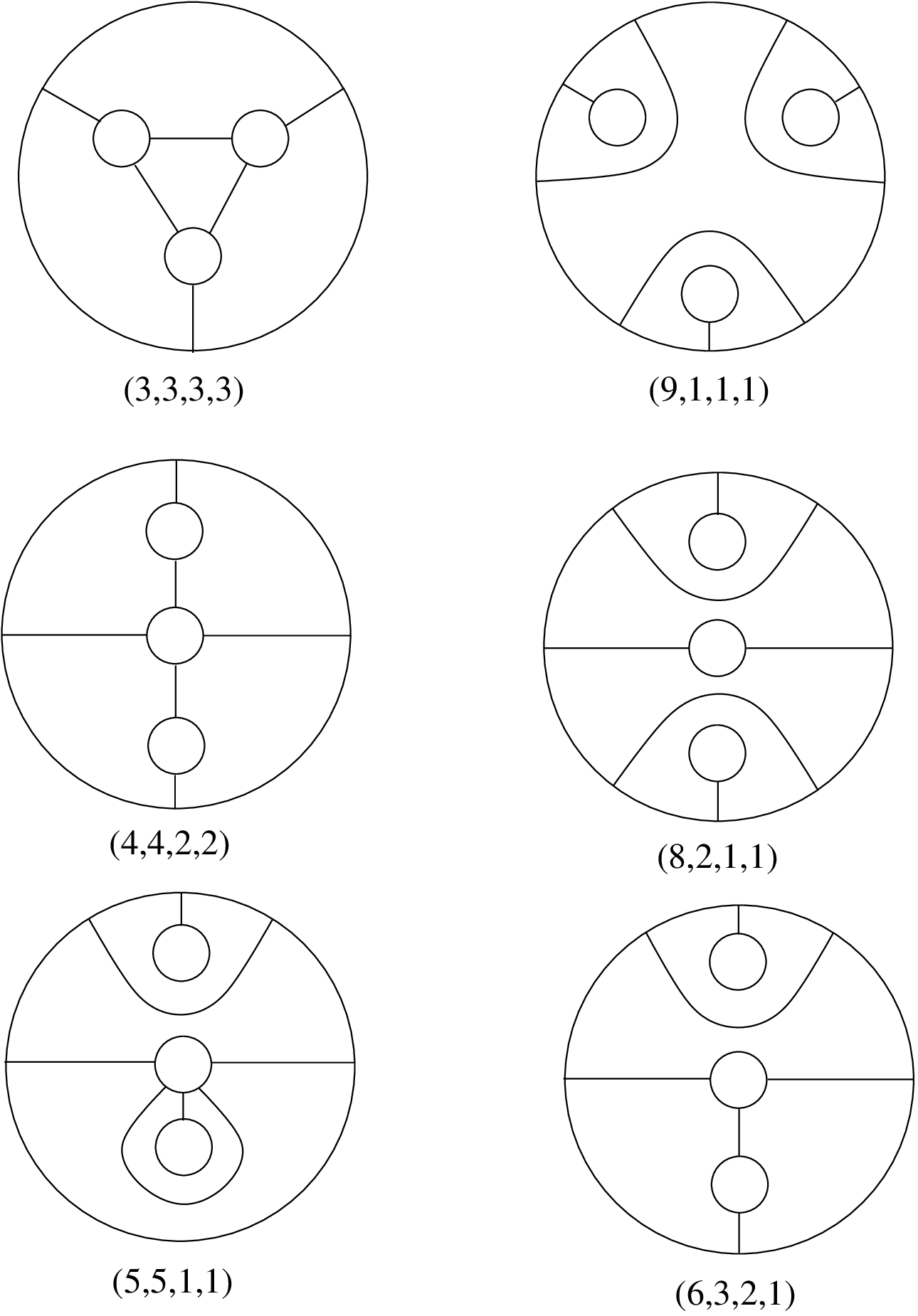, height=9cm}
\end{center}
\caption{{\bf Possible configurations of $\betak$--curves in the
    4--punctured sphere component of $\Sigma - \alphak_1$.} Boundary
  circles are all $\alphak$--curves, while the arcs are (parallel)
  $\betak$--curves in the 4--punctured sphere. As usual, we do not
  indicate the bigons and the basepoints.}
\label{f:hexposs}
\end{figure}
\end{lem}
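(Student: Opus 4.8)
The plan is to enumerate configurations combinatorially by a counting argument, and then verify that exactly six survive by drawing them. First I would set up the bookkeeping. After adding the new $\betak$--curves in $S$ (but not the new $\alphak$--curves), each of the two pair-of-pants components of $S-\betak$ has been cut so that all elementary domains of $S-\betak$ are hexagons, and there are exactly four basepoints in $S$, one per hexagon, so there are exactly four hexagons. Collapsing each bundle of parallel $\betak$--arcs to a single arc, these hexagons are glued along arcs; since each hexagon has three sides on the four boundary circles of $S$ and three sides that are arcs (this is forced by the local pictures of Figure~\ref{f:poss} together with the fact that a hexagon cannot be boundary-parallel in a pair-of-pants without creating a bigon), the total number of arc-sides is $4\cdot 3=12$, and each arc is shared by two hexagons, giving $12/2=6$ arcs. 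This recovers the count ``six arcs partitioning $S$ into four hexagons.''

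Next I would extract the Euler-characteristic / degree constraints. Let $d_1,d_2,d_3,d_4$ be the degrees of the four boundary circles of $S$, i.e.\ the number of arc-endpoints on each circle. Each arc has two endpoints, all of which lie on the boundary circles (an arc cannot have an endpoint in the interior), so $d_1+d_2+d_3+d_4 = 2\cdot 6 = 12$. Moreover each $d_i\geq 1$: a boundary circle disjoint from all arcs would force, by the same bigon-free reasoning used in Proposition~\ref{p:convdomains} and Lemma~\ref{l:noiso}, an $\alphak$--curve isotopic to a $\betak$--curve or a pair-of-pants component with no basepoint, both impossible. One also checks, again from the local models, that no boundary circle can have degree larger than $4$ (a circle bounding one side of every hexagon would be met at most four times). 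So we are looking at unordered $4$--tuples $(d_1,\dots,d_4)$ of integers in $\{1,2,3,4\}$ summing to $12$; these are $(3,3,3,3)$, $(4,3,3,2)$, $(4,4,2,2)$, $(4,4,3,1)$, $(4,2,2,4)$ --- wait, I should be careful: the partitions of $12$ into four parts each between $1$ and $4$ are $(3,3,3,3)$, $(4,3,3,2)$, $(4,4,2,2)$, $(4,4,3,1)$, $(4,2,3,3)$ --- after removing duplicates the list is $(3,3,3,3),\ (4,3,3,2),\ (4,4,2,2),\ (4,4,3,1)$, together with whatever further type is realized; the enumeration in Figure~\ref{f:hexposs} shows the honest count is six once one distinguishes genuinely different arc patterns sharing the same degree tuple (e.g.\ two non-homeomorphic patterns with tuple $(3,3,3,3)$ or $(4,4,2,2)$).

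Having pinned down the numerics, I would then do the geometric realization step: for each candidate degree tuple, draw all ways of arranging six arcs in the four-holed sphere so that the complementary regions are four hexagons with the prescribed boundary incidences, using that $S$ is planar and that two arcs meeting the same pair of circles the same number of times in the same pattern are isotopic rel endpoints. This produces Figure~\ref{f:hexposs}, and one checks no two of the six pictures are related by a homeomorphism of $S$ permuting the punctures, while every numerically-allowed tuple is realized by at least one (and one shows exactly how many) picture. The main obstacle I expect is precisely this last casework: being sure the hand-enumeration of arc systems in the four-punctured sphere is complete and non-redundant, i.e.\ that the six pictures of Figure~\ref{f:hexposs} really are all of them up to the relevant equivalence. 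This is a finite check but an error-prone one, and the cleanest way to organize it is to fix a pair-of-pants decomposition of $S$ (one interior curve, say), classify the arcs by how they sit relative to that curve, and then quotient by the (small, explicit) mapping class group of $S$ fixing the four boundary circles setwise.
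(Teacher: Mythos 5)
Your argument has a fatal arithmetic error in the degree-bounding step, and the downstream enumeration collapses because of it. You claim that no boundary circle can have degree larger than $4$, on the grounds that ``a circle bounding one side of every hexagon would be met at most four times.'' This is false: a single hexagon may have two or even all three of its $\alphak$--arc sides on the \emph{same} boundary circle (and an arc may be a loop with both endpoints on one circle, contributing $2$ to its degree). In fact the six configurations in Figure~\ref{f:hexposs} have degree tuples $(3,3,3,3)$, $(4,4,2,2)$, $(6,3,2,1)$, $(8,2,1,1)$, $(5,5,1,1)$, and $(9,1,1,1)$ --- degrees range up to $9$. Your restriction to tuples with parts in $\{1,2,3,4\}$ thus misses four of the six cases and spuriously admits tuples like $(4,3,3,2)$ and $(4,4,3,1)$ that do not occur. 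Your conjecture that some degree tuple is realized by two non-homeomorphic pictures is also wrong: in the paper's list each of the six configurations has a distinct degree tuple, which is precisely why the lemma can use the tuple as an index.

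The paper sidesteps this difficulty with a cleaner organizing device: contract the four boundary circles of $S$ to points, turning the arc system into a connected graph on $S^2$ with four vertices and six edges, with no null-homotopic edges and no parallel edges. One then enumerates by the number of \emph{loops} in the graph ($0$, $1$, $2$, or $3$), using the planarity constraint that a loop separates the remaining three vertices into $2+1$ and the connectedness constraint to force the isolated vertex to attach to the loop's base. This gives the counts $2+1+2+1=6$ directly. If you want to salvage your degree-counting approach, you would need a correct upper bound for the degree of a single circle --- which turns out to be $9$, not $4$ --- and you would then face a much longer list of candidate tuples summing to $12$; the loop-counting organization is genuinely more efficient here. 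Your observation that each hexagon has exactly three sides on the boundary circles, giving $12/2=6$ arcs, and that every $d_i\geq 1$, are both correct, and can serve as a sanity check on the paper's graph-theoretic enumeration.
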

\begin{proof}
By contracting the boundary circles of $S$ to points, the above
problem becomes equivalent to the enumeration of connected spherical
graphs on four vertices involving six edges, such that no
homotopically trivial and parallel edges are allowed. We can further
partition the problem according to the number of loops (i.e. edges
starting and arriving to the same vertex) the graph contains. Since we
view the graphs on $S^2$, a loop partitions the remaining three points
into a group of two and a single one. By connectedness the single one
must be connected to the base of the loop. If there is no loop in the
graph, then a simple combinatorial argument shows that the graph is a
square with a diagonal, and with a further edge, for which we have two
possibilities, corresponding to the graphs $(3,3,3,3)$ and
$(4,4,2,2)$ of Figure~\ref{f:graphs}, giving the corresponding 
configurations of Figure~\ref{f:hexposs}. 
\begin{figure}[ht]
\begin{center}
\epsfig{file=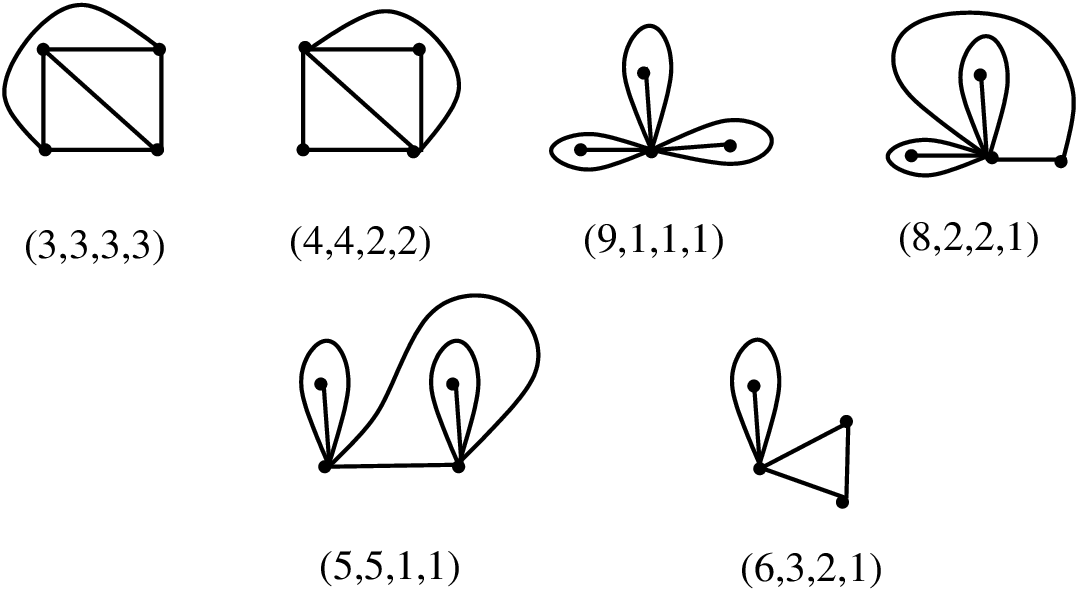, height=4cm}
\end{center}
\caption{{\bf The six connected spherical graphs.}}
\label{f:graphs}
\end{figure}
If the graph has one loop, then the only possibility in given by the
diagram with index $(6,3,2,1)$.  For two loops there are two
possibilities (according to whether the bases of the loops coincide or
differ); these are the graphs $(8,2,1,1)$ and $(5,5,1,1)$ of
Figure~\ref{f:graphs}, corresponding to the configurations $(8,2,1,1)$
and $(5,5,1,1)$ of Figure~\ref{f:hexposs}.  Finally there is one
possibility containing three loops, resulting in $(9,1,1,1)$ of
Figure~\ref{f:graphs}, giving rise to the configuration $(9,1,1,1)$ of
Figure~\ref{f:hexposs}.
\end{proof}

Now let us put $\alpha _0$ back into $S$.  Our next goal is to
normalize the curve $\alpha_0$ in $S$.  Notice that we could find a
diffeomorphic model of $S$ in which $\alpha _0$ is the standard curve
partitioning $S$ into two pairs-of-pants. With this model, however,
the configuration of the $\betak$--curves might be rather complicated.
We decided to work with a model of $S$ where the $\betak$--curves are
standard (as depicted by Figure~\ref{f:hexposs}) and in the following
we will normalize $\alpha _0$ by nice moves. In our subsequent
diagrams we will always choose a circle, which we will call ``outer''
and which we will draw as outermost in our planar pictures, and which
corresponds to the highest degree vertex of the spherical graph
encountered in the previous proof. (If this vertex is not unique, we
pick one of the highest degree vertices.)  The other three boundary
circles will be referred to as ``inner'' circles. Consider the
pair-of-pants from the two components of $S-\alpha_0$ which is
disjoint from the outer circle and denote it by $P$. Since we use a
model for $S$ that conveniently normalizes the $\betak$--arcs but not
necessarily $\alpha _0$, $P$ is not necessarily embedded in the
standard way into $S$ (as, for example the pairs-of-pants in
Figure~\ref{f:flip} embed into the 4--punctured sphere).  By an
appropriate homeomorphism $\phi$ on $P$, however, the $\betak$--curves
in $P$ can be normalized as before: since there are no octagons in
$S$, the result will look like one of the diagrams of
Figure~\ref{f:poss}(i) or (iii) (where $\alpha _0$ is the outer circle
of $P$). The two cases will be considered separately. We start with
the situation when the above pair-of-pants is of the shape of (i) (and
call this Case A), and address the other possibility (Case B)
afterwards. In the following we will show first that for any
$\alpha_0$ there is a sequence of nice isotopies and handle slides
which convert the curve system into one of finitely many cases which
we will call ``elementary curve configurations''.

Assume that we are in Case A. Consider the model of $P$
depicted by Figure~\ref{f:poss}(i) and connect the two boundary
components of $P$ different from $\alpha_0$ by a straight line
${\tilde {a}}_0$, which therefore avoids the $\betak$--segments
connecting different boundary components, and intersects the further
segments (intersecting the outer boundary $\alpha _0$ of $P$ twice)
transversely once each.  In the model $\alpha_0$ can be given by
considering the boundary of an $\epsilon$--neighbourhood (inside the
model for $P$) of the union of ${\tilde {a}}_0$ with the two boundary
circles it connects in $P$.  Let $a_0$ denote the image of ${\tilde
  {a}}_0$ in $S$ (when identifying $P\subset S$ with the model of
$P$ by the homeomorphism $\phi$). 
Consequently, $\alpha_0$ can be described by the arc $a_0$
connecting two boundary components of $S$: consider an
$\epsilon$--neighbourhood of the union of $a_0$ together with the two
boundary circles it connects.  We can also assume that the arc $a_0$
passes through the two basepoints $w_P^1$ and $w_P^2$ contained by the
pair-of-pants $P$, and we assume that these basepoints are near the
boundary components of $P$ the arc $a_0$ connects. Fix the dual curve
$a_0'$ (connecting the other two boundary circles of $S$ in the
complement of $a_0$, passing through the remaining two basepoints
$w_{S-P}^1$ and $w_{S-P}^2$) and distinguish one of the basepoints
from each pair outside and inside $P$, say $w_P^2$ and
$w_{S-P}^2$. The latter will be denoted by $w_d$.

Our immediate aim is to show that the curve system under consideration
can be transformed using nice moves into one of a finite collection of
curve systems (or ``elementary curve configurations'') described
below.  In order to state the precise result, first we need to consider
oriented arc systems in the diagrams of Figure~\ref{f:hexposs}.

\begin{defn} {\whelm Fix one of the diagrams of
    Figure~\ref{f:hexposs}, together with a distinguished basepoint
    $w_d$. An \emph{elementary situation} is a collection of three
    disjoint oriented arcs $\gamma_1$, $\gamma_2$, and $\gamma_3$
    in $S$ subject to the following
    constraints:
\begin{itemize}
\item each oriented arc $\gamma_i$ starts at one of the inner boundary
  circles, and there is only one arc starting at each inner circle,
\item immediately after starting at an inner circle, each $\gamma_i$ passes through the
  basepoint of the domain (i.e. the $\gamma_i$ crosses this basepoint before
  crossing any of the other $\beta$-circles), 
\item the intersection of $\gamma_i$ with the $\betak$ is minimal in
  the following sense: there are no bigons in $S$ consisting of an arc
  in $\gamma_i$ and an arc in one of the $\betak$, and in fact, there
  are no triangles consisting of an arc in $\gamma_i$, an arc in
  $\betak$ and an arc in $\alpha_i$,
\item each arc contains a unique basepoint, none of which is $w_d$,
  and finally
\item each arc enters the domain of $w_d$ exactly once and points into it.
\end{itemize}
}
\end{defn}

Before proceeding further, we give the list of all elementary
situations.
\begin{lem}\label{l:3333elem}
Consider the configuration of $S$ depicted by $(3,3,3,3)$ of
Figure~\ref{f:hexposs}, and fix $w_d$ in the lower left hexagon. Then there are four
elementary situations of this case,  given by Figure~\ref{f:elem}.
\begin{figure}[ht]
\begin{center}
\epsfig{file=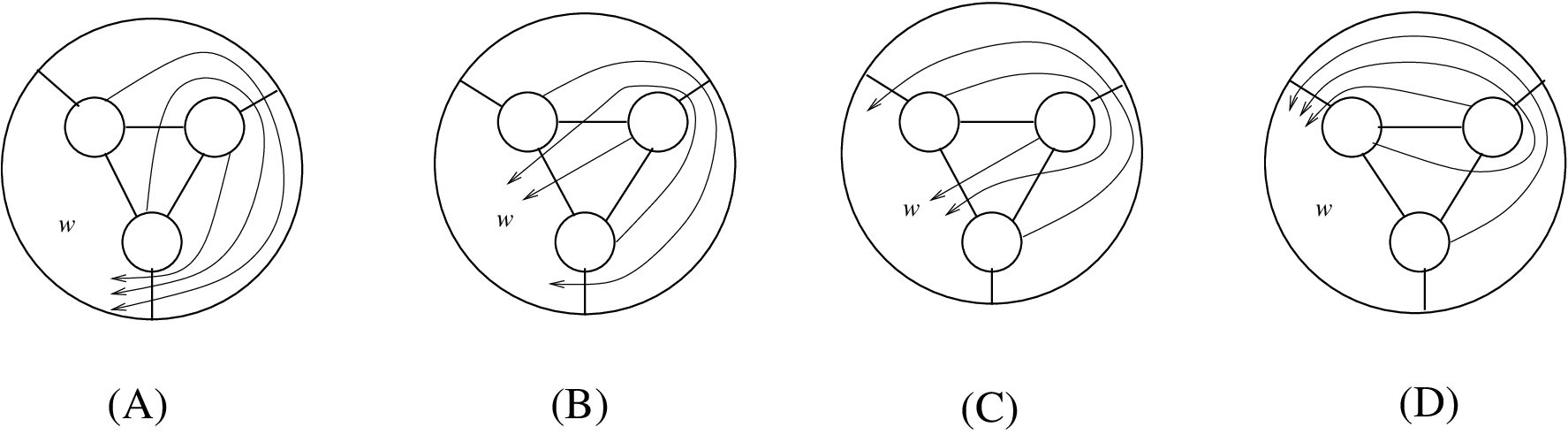, height=3cm}
\end{center}
\caption{{\bf Elementary situations for (3,3,3,3) of
    Figure~\ref{f:hexposs}, with $w_d=w$ being chosen in the lower
    left hexagon.}}
\label{f:elem}
\end{figure}
\end{lem}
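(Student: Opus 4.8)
The plan is to enumerate elementary situations for the $(3,3,3,3)$ configuration directly, exploiting the rigidity built into the definition of ``elementary situation.'' The key observation is that the constraints are extremely restrictive: there are exactly three inner circles, each must emit exactly one oriented arc $\gamma_i$, and that arc must immediately pass through the basepoint sitting in the hexagon adjacent to that inner circle. In the $(3,3,3,3)$ picture of Figure~\ref{f:hexposs}, each of the four hexagons contains a unique basepoint, and the combinatorics of which hexagon is adjacent to which inner circle is fixed. So the choice of an elementary situation reduces to: (a) for each inner circle, which basepoint does its outgoing arc first cross (this is forced to be the basepoint of the incident domain), then (b) how the arc continues so as to (i) carry exactly one basepoint total, (ii) not carry $w_d$, (iii) enter the $w_d$-hexagon exactly once pointing inward, and (iv) meet the $\betak$-arcs minimally (no bigons, no triangles with an $\alpha_0$-arc).

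First I would set up coordinates on the $(3,3,3,3)$ diagram: label the four hexagons (upper left, upper right, lower left, lower right, say), with $w_d$ fixed in the lower left, and label the three inner boundary circles together with the $\betak$-arcs (the ``square with a diagonal plus one more edge'' from the graph picture). Each inner circle lies on the boundary of exactly two hexagons, and its outgoing $\gamma_i$ begins by crossing the basepoint of one of those two hexagons --- but the minimality (no-triangle) condition together with ``passes through the basepoint immediately'' pins this down to essentially one local choice per circle, up to which of the two incident hexagons the arc dips into first. Then the arc must be routed, staying disjoint from the other two arcs, to terminate by entering the $w_d$-domain. Because the arc already contains the basepoint of its starting hexagon, it may not pick up any further basepoint, so its route from its starting hexagon to the lower-left hexagon is forced to go ``around'' without re-entering a basepoint-carrying region --- in the genus-considerations of a four-holed sphere this leaves only finitely many isotopy classes, and the no-bigon/no-triangle normalization collapses each class to a unique representative.

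The bulk of the argument is then a finite case check: run over the possible local behaviors at each of the three inner circles, discard those that violate disjointness of the $\gamma_i$ or force a second basepoint onto some arc or fail the ``enters $w_d$-domain exactly once'' condition, and observe that precisely four global configurations survive; these are exactly the four pictures in Figure~\ref{f:elem}. I would organize this by first noting that the arc emanating from the inner circle incident to the $w_d$-hexagon has an essentially unique short route (it starts adjacent to $w_d$'s hexagon and must immediately cross that hexagon's basepoint --- but that basepoint is $w_d$, which is forbidden, so in fact this arc must cross the basepoint of the \emph{other} incident hexagon, and then re-enter the $w_d$-hexagon); this fixes one arc up to a binary choice, and the remaining two arcs are then constrained by disjointness from it and from each other, yielding a $2\times 2$ count, hence four.

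I expect the main obstacle to be the bookkeeping in the finite case check: keeping track of which routings are genuinely distinct up to isotopy rel the $\betak$-arcs and the basepoints (so that the no-bigon/no-triangle condition really does pick a canonical representative), and making sure no ostensibly valid configuration has been double-counted or missed because of a symmetry of the $(3,3,3,3)$ picture. The symmetry group of the $(3,3,3,3)$ diagram is nontrivial, but fixing $w_d$ in a specified hexagon breaks it, so the four configurations should be genuinely inequivalent; I would verify this at the end by exhibiting, for each pair, a $\betak$-arc or basepoint separating them. Everything else --- the existence of each of the four, and the fact that each satisfies all five bullet-point constraints --- is a direct inspection of Figure~\ref{f:elem}.
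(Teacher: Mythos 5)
Your high-level plan---exploit the rigidity of the definition of ``elementary situation'' and enumerate by a finite case check---is exactly the paper's strategy, but the specific way you organize the check has a combinatorial error that undermines the count. You start from ``the inner circle incident to the $w_d$-hexagon,'' treating it as a single distinguished circle. In the $(3,3,3,3)$ configuration the underlying spherical graph is $K_4$, so every hexagon is bounded by exactly three of the four circles; the $w_d$-hexagon therefore abuts \emph{two} inner circles together with the outer circle, and there is exactly \emph{one} inner circle that is disjoint from the $w_d$-hexagon. The paper's proof begins with that far circle, not with a near one. Your parenthetical --- ``it must immediately cross that hexagon's basepoint, but that is $w_d$, which is forbidden'' --- also misstates the constraint: the arc from a circle may start into any of the three hexagons adjacent to that circle, so for a near circle only the $w_d$-hexagon is excluded (leaving two starting choices, as you say), while for the far circle all three starting hexagons are admissible (none of them contains $w_d$).

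The bigger problem is the unsupported ``$2\times 2$ count.'' You say fixing one near arc up to a binary choice and then ``constraining'' the remaining two arcs ``yields a $2\times 2$ count,'' but the two factors are not identified, and the three arcs are not independent: they must be pairwise disjoint and each must enter the $w_d$-hexagon exactly once pointing inward, so the set of completions of one arc depends on the others. The paper's actual bookkeeping is: the far arc has three admissible isotopy classes (the three starting hexagons, each forcing a minimal route into the $w_d$-domain), and of these one allows two ways to complete the remaining pair of arcs while the other two allow one way each, giving $1+2+1=4$. Your route could in principle be pushed through, but as written the ``$2\times 2$'' is asserted rather than derived, and the singular ``the inner circle'' suggests a misreading of the $(3,3,3,3)$ diagram that would have to be fixed before the enumeration can be carried out.
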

\begin{proof}
Consider the arc starting at the circle which is disjoint from the
domain containing $w_d$. There are three choices for that arc (shown
by (A), by (B) and (C), and by (D) of Figure~\ref{f:elem}), since
after entering a domain (and passing through the basepoint there) the
arc should enter and therefore stop at the domain of $w_d$. A similar
simple case-by-case analysis for the remaining two arcs shows that
Figure~\ref{f:elem} lists all possibilities in this case.
\end{proof}
The further three possible choices of $w_d$ in the case of $(3,3,3,3)$
are all symmetric, hence (after possible rotations) the diagrams of
Figure~\ref{f:elem} provide a complete list of elementary situations
in the case of $(3,3,3,3)$. Before listing all elementary situations
for the remaining five possibilities of Figure~\ref{f:hexposs} we make an 
observation. Suppose that
$w_d$ is in a domain which has an inner circle on its boundary
which circle is not adjacent to any other domain. (For example, in
$(9,1,1,1)$ of Figure~\ref{f:hexposs} there are three such domains.) Then
there are no elementary situations with this choice of $w_d$, since $w_d$
could be the only basepoint for the arc starting at the inner circle,
but that is not allowed by our definition. This observation cuts down
the possible choices for the distinguished point $w_d$.

\begin{lem}
The elementary situations of the remaining five configurations of
Figure~\ref{f:hexposs} (up to symmetry) are shown by
Figure~\ref{f:tovabbi}.
\begin{figure}[ht]
\begin{center}
\epsfig{file=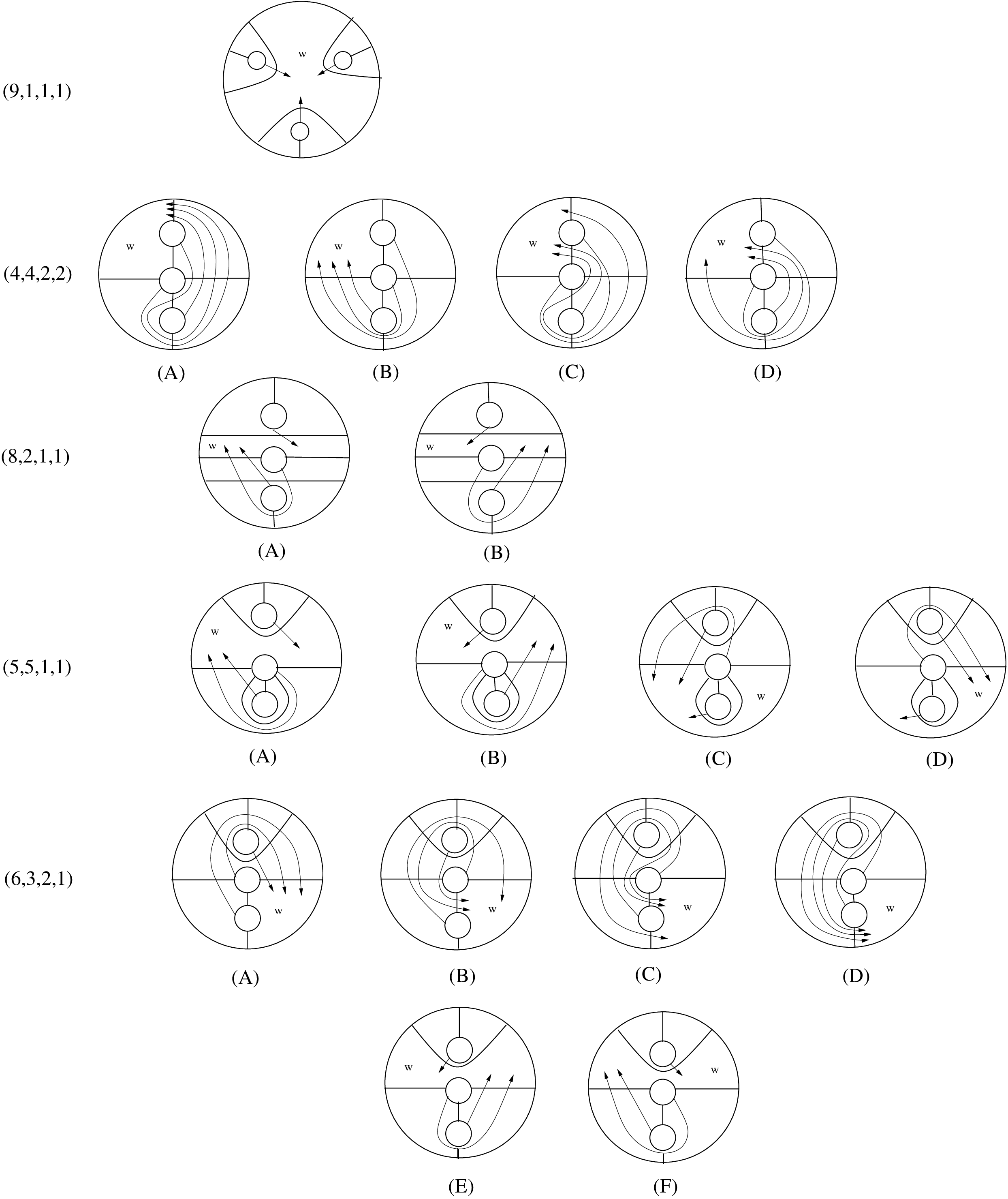, height=12cm}
\end{center}
\caption{{\bf Elementary situations for the further five possibilities of
  Figure~\ref{f:hexposs}.}}
\label{f:tovabbi}
\end{figure}
\end{lem}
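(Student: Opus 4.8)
The proof runs exactly parallel to that of Lemma~\ref{l:3333elem}: for each of the five remaining configurations $(4,4,2,2)$, $(6,3,2,1)$, $(8,2,1,1)$, $(5,5,1,1)$ and $(9,1,1,1)$ of Figure~\ref{f:hexposs} we carry out a finite case analysis, first over the admissible positions of the distinguished basepoint $w_d$ and then over the three oriented arcs $\gamma_1,\gamma_2,\gamma_3$. The plan is to use the observation preceding the statement to cut the number of positions for $w_d$ down to one or two per configuration (up to symmetry), and then to exploit the fact that, once $w_d$ is fixed, each $\gamma_i$ is essentially determined by its endpoints together with a small number of discrete choices.

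First I would restrict where $w_d$ can sit. By the observation recorded just before the statement, $w_d$ cannot lie in any hexagon which has on its boundary an inner circle of degree $1$ that meets no other hexagon: the arc forced to start at such a circle would have to terminate in that very hexagon, so $w_d$ would be its only basepoint, violating the definition of an elementary situation. In $(9,1,1,1)$ this excludes three of the four hexagons, leaving a single admissible one; in $(8,2,1,1)$, $(5,5,1,1)$ and $(6,3,2,1)$ it excludes the hexagon(s) abutting the degree-$1$ circles; and in $(4,4,2,2)$, which has no degree-$1$ circle, one simply falls back on the symmetry of the configuration. Combining these exclusions with the symmetry group of the corresponding spherical graph of Figure~\ref{f:graphs} leaves, in each case, at most a couple of essentially different choices of $w_d$, which we then treat one at a time.

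With $w_d$ fixed, I would enumerate the arcs exactly as in the proof of Lemma~\ref{l:3333elem}. The governing point is that each hexagon carries exactly one basepoint while $\gamma_i$ may cross exactly one basepoint — the one in the hexagon adjacent to the inner circle at which it starts, which it crosses immediately — so after passing that basepoint $\gamma_i$ must run into the hexagon of $w_d$ and stop there pointing in. The minimality conditions (no $\gamma_i$–$\betak$ bigon, and no $\gamma_i$–$\betak$–$\alphak$ triangle) then fix the isotopy class of $\gamma_i$ once one records into which of the hexagons adjacent to its starting circle it first enters and through which side of $w_d$'s hexagon it comes in. One orders the three arcs so as to pin down the most constrained one first (for instance, the arc at the inner circle not meeting the hexagon of $w_d$, when such a circle exists), then the remaining two, and finally discards the triples that are not pairwise disjoint or that meet the hexagon of $w_d$ more than once. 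What survives is precisely the list drawn in Figure~\ref{f:tovabbi}.

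The difficulty here is entirely combinatorial bookkeeping rather than anything conceptual: one must correctly identify the symmetry group of each of the five configurations, so as neither to over- nor under-count the admissible positions of $w_d$, and then verify pairwise disjointness of the three arcs in every surviving triple. I expect $(6,3,2,1)$, whose spherical graph has no nontrivial symmetry, to produce the most individual subcases, while $(9,1,1,1)$, with its large symmetry group and a single admissible hexagon for $w_d$, is dispatched quickly. Since all of this takes place inside the fixed four-punctured sphere $S$ with its standardized $\betak$-arcs, each subcase reduces to a routine inspection of a small planar diagram, and Figure~\ref{f:tovabbi} records the outcome.
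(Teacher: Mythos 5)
Your proposal is correct and follows the same route the paper itself takes: first use the ``dead-end'' observation (a degree-$1$ inner circle adjacent only to $w_d$'s hexagon kills all elementary situations) together with the symmetry of each spherical graph to reduce the placements of $w_d$ to one or two per configuration, and then run the same case-by-case enumeration of the three oriented arcs as in Lemma~\ref{l:3333elem}. The paper's proof is terser, but the accounting of admissible $w_d$-positions it records ($1$ for $(9,1,1,1)$, $4$ symmetric ones for $(4,4,2,2)$, $2$ symmetric for $(8,2,1,1)$, $2$ each for $(5,5,1,1)$ and $(6,3,2,1)$) matches exactly what your argument would produce.
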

\begin{proof}
In $(9,1,1,1)$ there is only one domain into which we can place $w_d$
without having an empty set of elementary situations.  For that choice
the elementary situation is unique.  For $(4,2,2,2)$
all four choices of domains for $w_d$ are possible and symmetric, for
$(8,2,1,1)$ there are two (symmetric) choices. (Figure~\ref{f:tovabbi} shows only
one of the symmetric choices.) For $(5,5,1,1)$ and
$(6,3,2,1)$ there are two possible choices, the further choices are
either symmetric, or do not provide any elementary situations. A
fairly straightforward argument, similar to the one given in the
proof of Lemma~\ref{l:3333elem} now shows that Figure~\ref{f:tovabbi}
provides all possible  elementary situations.
\end{proof}

Now we return to the discussion of curve systems on the
four--punctured sphere $S$. Notice first that an
elementary situation provides a curve system on $S$: take each
oriented arc, together with the boundary circle it starts from, and
consider the boundary of an $\epsilon$--neighbourhood (for
sufficiently small $\epsilon$) of it in $S$. The resulting curves,
regarded as $\alphak$--curves (together with the basepoints on which
the arcs passed through) provide a nice diagram on $S$ (which,
together with curves on $\Sigma - S$, gives a nice diagram for
$Y$). We will call these curve systems on $S$ \emph{elementary
  curve configurations}.

Let us consider the curve $\alpha_0$ in $S$, which (according to our
previous discussions) can be described by an arc $a_0$ connecting two
boundary components of $S$. Recall first that in the
$\betak$--convenient diagram there are further $\alphak$--curves: one
in $P$ (separating the two basepoints on the arc $a_0$) and one in
$S-P$ (separating the two basepoints on $a_0'$). We choose these
curves as follows.

Suppose first that $a_0$ enters and leaves the domain containing $w_d$
at least once. Then consider the subarc $a_1$ of $a_0$ which starts at
one of its endpoints, passes through one of the basepoints and stops
right before $a_0$ passes through the second basepoint, which we
choose to be the distinguished one. The boundary of a small
neighbourhood of the circle component from which $a_1$ starts and of
$a_1$ now provides $\alpha_1$. In case $a_0$ does not enter and leave
the domain of $w_d$ (for such a possibility see
Figure~\ref{f:elfel}(a)), we choose another curve $\alpha _1$: Instead
of applying Figure~\ref{f:add}(i), we rather apply the choice shown
by Figure~\ref{f:megegy}. In the above example the
appropriate choice is given by Figure~\ref{f:elfel}(b).
A similar choice applied in the
pair-of-pants $S-P$ gives $\alpha_1'$; now the subarc $a_1'$ will
avoid the distinguished basepoint $w_d$. Now we are ready to state the
result which nicely connects curve configurations in Case A to the
elementary curve configurations.
\begin{figure}[ht]
\begin{center}
\epsfig{file=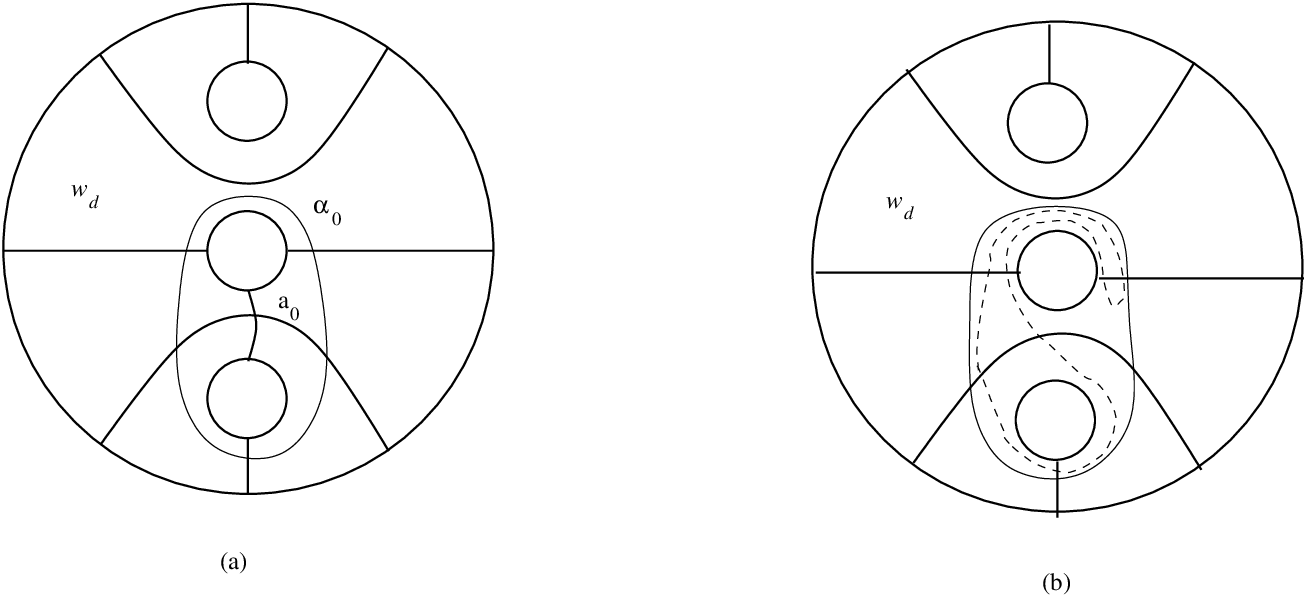, height=6cm}
\end{center}
\caption{{\bf A configuration when $a_0$ does not enter/leave 
the domain of $w_d$.} In (b) the choice of $\alpha _1$ is shown.}
\label{f:elfel}
\end{figure}

\begin{prop}\label{p:elemek}
Suppose that a $\betak$--convenient diagram $\DD_1$ in $S$ 
falling under Case A (with
$\alpha_0$ given, and $\alpha_1, \alpha_1'$ chosen as above) is
fixed. Then there is an elementary curve configuration $\DD_2$ such that 
$\DD_1$ and $\DD_2$ are nicely connected. 
\end{prop}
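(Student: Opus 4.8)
The plan is to reduce the claim to a bounded, local combinatorial problem in the four--punctured sphere $S$ and then to verify it by a finite case check, exploiting the lists of configurations and elementary situations already established. First I would observe that by the preceding analysis the $\betak$--arcs in $S$ (after forgetting bigons) are already in one of the six standard configurations of Figure~\ref{f:hexposs}, and that the curve $\alpha_0$ in $S$ is determined by an arc $a_0$ connecting two boundary components of $S$, passing through the two basepoints $w_P^1, w_P^2$ of the pair-of-pants $P$. Likewise $\alpha_0$ together with $\alpha_1$ (chosen as in the paragraph preceding the statement, using Figure~\ref{f:add}(i) or the variant Figure~\ref{f:megegy} when $a_0$ misses the domain of $w_d$) and the dual data $a_0', \alpha_1'$ in $S-P$ give all the $\alphak^c$--information localized in $S$. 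So the statement becomes: any such arc $a_0$ (with its auxiliary curve) can be brought, by nice isotopies and nice handle slides supported in $S$, to one where $a_0$ agrees with one of the oriented arcs $\gamma_i$ of an elementary situation, i.e.\ the curve configuration becomes an elementary curve configuration.

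The key steps, in order, would be: (1) Put $a_0$ into minimal position with respect to the $\betak$--arcs — using finger-move (nice) isotopies one removes all bigons between $a_0$ and the $\betak$--curves; since the $\betak$--configuration is one of the six standard ones and $S-\alpha_0-\betak$ has only rectangles, hexagons and bigons, these isotopies can be chosen nice (each splits off only rectangles/bigons or a single basepoint-containing piece), exactly as in the proof that nice moves preserve niceness. (2) Arrange that immediately after leaving its starting inner circle $a_0$ crosses the basepoint of that domain; if it does not, a short nice isotopy pushing the foot of $a_0$ across the basepoint (or, in the degenerate case of Figure~\ref{f:elfel}(a), switching to the Figure~\ref{f:megegy} model, which by the Remark after Theorem~\ref{t:convconn} is nicely connected to the Figure~\ref{f:add}(i) model) fixes this. (3) Handle-slide $\alpha_0$ over the already-present additional $\betak$--curves (and, where needed, slide the new $\alphak$--curves $\alpha_1,\alpha_1'$) to absorb any extra windings of $a_0$ around inner circles; each such slide is contained in a single rectangle with a basepoint-bearing neighbour, hence nice. (4) Check that after (1)--(3) the arc $a_0$ realizes one of the finitely many oriented arcs listed in Lemma~\ref{l:3333elem} and the following lemma, so the resulting configuration is literally an elementary curve configuration $\DD_2$; recording the finitely many normal forms case by case over the six $\betak$--configurations and the allowed positions of $w_d$ completes the argument.

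The main obstacle I expect is step (3): controlling the isotopy/handle-slide sequence so that it stays \emph{nice} at every intermediate stage while removing the unbounded amount of winding $a_0$ may a priori carry. A naive isotopy that unwinds $a_0$ will sweep across several domains at once and may temporarily create a pentagon or larger polygon with no basepoint. The fix is to do the unwinding one $\betak$--strand at a time via handle slides of $\alpha_0$ (rather than isotopies), each slide localized in one rectangle adjacent to a basepoint-containing domain, so that the bookkeeping reduces to the local pictures of Figures~\ref{f:indep1}--\ref{f:indep3} and their four--punctured-sphere analogues; one then argues by induction on the geometric intersection number $|a_0\cap\betak|$ in $S$ that finitely many nice moves suffice. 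The remaining work — enumerating the normal forms and matching them against the elementary situations — is a routine (if lengthy) finite check, to be carried out with the aid of the figures, and I would present it as such rather than grinding through every subcase here.
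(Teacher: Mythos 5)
Your overall framing — reduce to the four-punctured sphere $S$, normalize $\alpha_0$ by nice moves to one of the finitely many elementary curve configurations — is the right one, and your step (2) (switching to the Figure~\ref{f:megegy} variant when $a_0$ misses the domain of $w_d$) matches what the paper actually does. But the proposal does not close the gap it itself identifies as the central difficulty.

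The substantive issue is your step (3). You propose to unwind $a_0$ ``one $\betak$-strand at a time via handle slides of $\alpha_0$'' and then argue by induction on $|a_0 \cap \betak|$, but this is precisely the point where the argument could fail and you give no verification that each such move is actually nice, nor that the induction terminates. (Also, you cannot handle-slide $\alpha_0$ over a $\betak$-curve — you presumably mean the additional $\alphak$-curves — and it is not clear that any single one of those slides is available at a given moment, since the relevant arc $\delta$ must lie in a single rectangle with a basepoint-bearing neighbour, a condition that must be checked, not assumed.) The paper's actual mechanism is quite different and is worth knowing: it performs exactly \emph{one} nice handle slide, namely of $\alpha_0$ over $\alpha_1$ at a segment neighbouring $w_d$, and then represents the three curves as three oriented arcs (one of which forks). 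The simplification is then carried out entirely by nice isotopies, using a specific local criterion: one looks for a point $p$ on one of the arcs inside the domain $D_{w_d}$ that can be joined to $w_d$ in the complement of all the arcs, and terminates the arc at $p$; the inverse of this shortening is a nice isotopy because the arc can only terminate next to $w_d$ or inside the fork's bigon. Iterating this, one automatically arrives at a configuration where each arc enters $D_{w_d}$ exactly once, which \emph{is} an elementary situation — so no separate case-matching against the lists of elementary situations is needed at the end, contrary to your step (4). The careful choice of $\alpha_1$ (using Figure~\ref{f:megegy} in the degenerate case) exists precisely to guarantee that the initial handle slide is available; this is the role of that choice, and it replaces the unspecified inductive handle-slide scheme you sketch. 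Your steps (1) and (2) are largely unnecessary given the setup (the bigon-free normalization and the placement of $a_0$ through $w_P^1, w_P^2$ are already built into the definition of $a_0$). In short: you have the right target and the right sort of moves, but the missing idea is the ``terminate at a point $p$ connectable to $w_d$'' reduction step, without which the claim that the process stays nice and terminates is unsupported.
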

\begin{proof}
  First we will represent the three curves by three oriented arcs
  (which will resemble the presentation of elementary situations).  We
  start by applying a nice handle slide on $\alpha_0$ over $\alpha_1$
  performed at a segment of $\alpha_0$ neighbouring $w_d$. Notice that
  with the somewhat complicated choice of $\alpha _1$ given above,
  such a nice handle slide always exists: if the arc $a_0$ enters and
  leaves the domain of $w_d$ then the parallel portion of $\alpha _0$
  and $\alpha _1$ provide the required (nice) handle slide, while in
  the other possibility for $a_0$, our modified choice of $\alpha _1$
  makes sure of the existence of the handle slide. 
  In Figure~\ref{f:simp} we work out a particular example: (a) shows
  the two arcs $a_0$ and $a_0'$ (the neighbourhoods of which, together
  with their endcircles, provide $\alpha _0$ and $\alpha _0'$); the
  arc $a_0$ is solid, while $a_0'$ is dashed on Fiure~\ref{f:simp}(a).
  Figure~\ref{f:simp}(b) shows $\alpha _0$ and $\alpha _1$, and also
  indicates the point where we take the handle slide; in this figure
  $\alpha _0$ is dashed and $\alpha _1$ is solid.
\begin{figure}[ht]
\begin{center}
\epsfig{file=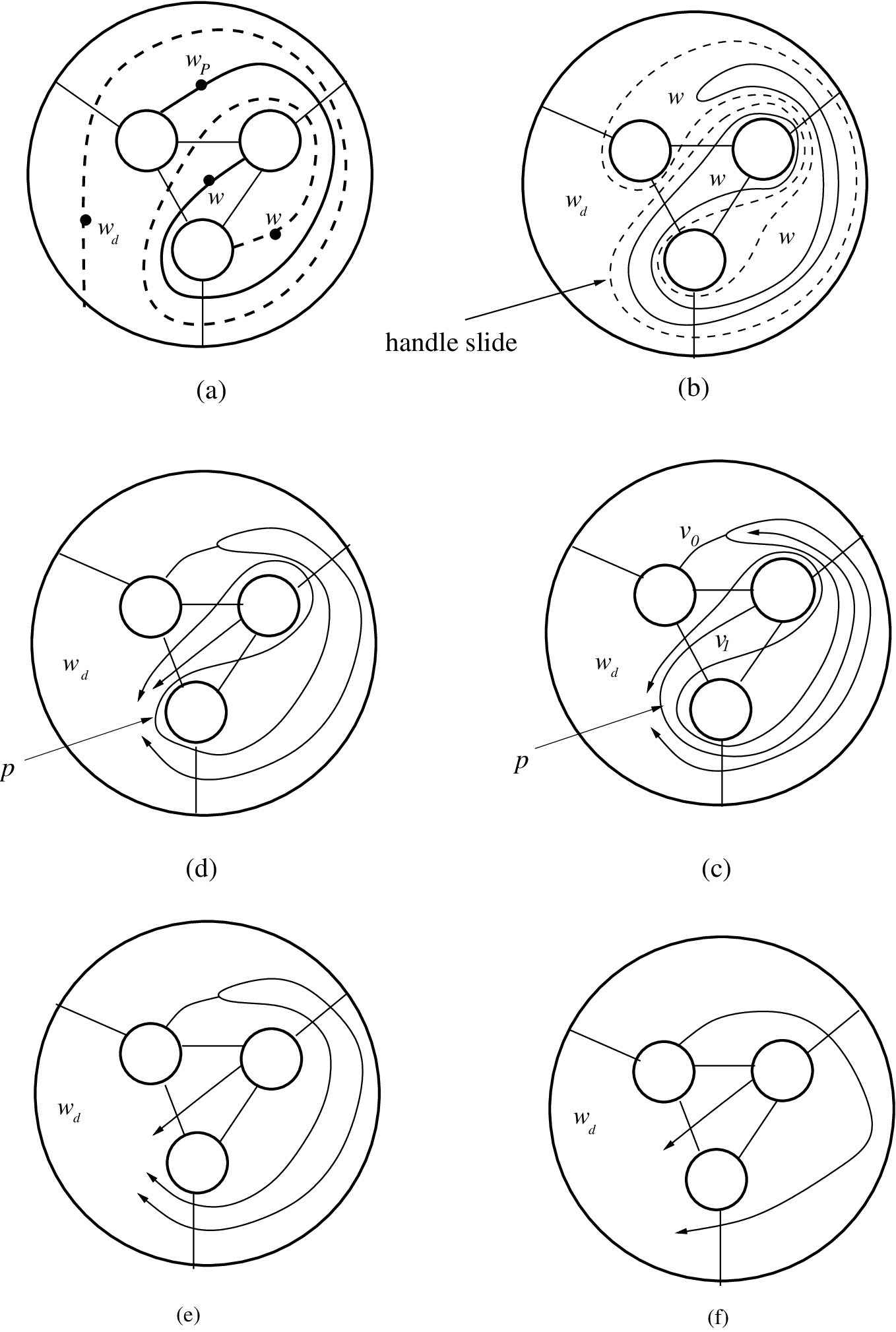, height=10cm}
\end{center}
\caption{{\bf Simplifying the curves in an example to an elementary
    situation.}  In (a) the curve $a_0$ (solid) and $a_0'$ (dashed)
  are shown, giving rise to $\alpha _0$ (dashed in (b)) and $\alpha
  _0'$ (solid in (b)). Diagrams (c) and (d) show the points where the
  simplifications (by nice isotopies) should be performed.  We pass
  from (e) to (f) by a nice isotopy again, and (f) is part of an
  elementary situation.}
\label{f:simp}
\end{figure}
Perform the handle slide along a curve $\delta$ where $\delta (0)$ is
the point of $\alpha_0$ in the domain $D_{w_d}$ of $w_d$ closest to
$w_d$. To simplify notation, indicate $\alpha_1$ with the subarc
defining it, with an arrow on its end which is not on a boundary
component, and denote this oriented arc by $v_1$.  The curve $a_0$,
after the handle slide has been performed, will be indicated by a
similar curve, this time however it starts at the other boundary
component (which was connected to the first by $a_0$), passes through
the other basepoint of $P$ and forks right before it reaches $v_1$.
We put an arrow to both ends of the fork; the result will be denoted
by $v_0$. The two curves in the chosen particular example are shown by
Figure~\ref{f:simp}(c).  A similar object is introduced for the last
curve $\alpha_0'$, which will be denoted by $v_0'$ (and which, for
simplicity, is not shown on Figure~\ref{f:simp}(c)). The result is
reminiscent to the three oriented arcs in the definition of an
elementary situation: we have three oriented 'arcs' (one of which
forks) starting at different inner circles, and passing through three
basepoints of $S$ distinct from $w_d$.  The arcs typically enter the
domain containing $w_d$ many times.  The curves $\alpha_1, \alpha_1'$
and $\alpha_0$ can be recovered from these arcs as the boundaries of
the small neighbourhoods of the arcs together with the boundary
circles the arcs start from.

Consider a point $p$ on one of the arcs which is in the domain
$D_{w_d}$ containing $w_d$, which point $p$ can be connected to $w_d$
in the complement of all the oriented arcs within $D_{w_d}$, and when
traversing on the arc containing $p$ to its end with an arrow, we
leave $D_{w_d}$ at least once; see Figure~\ref{f:simp}(c) for such a
point $p$. (If there is no such point on a certain arc, then the arc
at question enters $D_{w_d}$ and immediately stops, exactly as arcs in
an elementary situation do.) Now consider the same three arcs (one of
which still might fork), and modify the one containing $p$ by
terminating it at $p$. Consider the curve system corresponding to this
modified set of oriented arcs. (The result of $v_1$ of our example
under this operation is shown by Figure~\ref{f:simp}(d).)  The rest of
the arc (pointing from $p$ to the endpoint of the arc) then can be
regarded as a curve $\gamma$ defininig an isotopy from this newly
defined curve system back to the previous one. Since an arc can
terminate either in $D_{w_d}$ next to $w_d$, or in the bigon defined
by the fork, the isotopy defined by this $\gamma$ is a nice
isotopy. Repeat this procedure as long as appropriate $p$ can be found
(Figure~\ref{f:simp}(d) shows a further choice).  The two arrows of
the fork, together with an arc of the boundary of $D_{w_d}$, define a
bigon. If there are no other arcs in this bigon, then, as above, the
inverse of a nice isotopy can be used to eliminate the fork and
replace it with a single oriented arc. (This is exactly what happens
in Figure~\ref{f:simp}(e), and after applying this move, we get
Figure~\ref{f:simp}(f), which is an elementary situation --- at least
it provides two curves of an elementary situation, and the third can
be recovered easily from the above sequence of diagrams.)

By repeating the above procedure, we will get a collection of three
disjoint oriented arcs, starting on the three inner circles and entering
$D_{w_d}$ exactly once, hence we get an elementary situation.
Since all the isotopies performed above are nice isotopies (or their
inverses), the claim of the theorem follows at once.
\end{proof}

Before proceeding further with the proof of Theorem~\ref{p:symmlesz}, let us
describe our current position. We classified all possible background
configurations of the $\betak$-curves in the four-punctured sphere where the
flip on the $\alphak$-curves takes place (there are six types of these
backgrounds).  Then we divided the possible $\alphak$-curves into two classes
(Case A and Case B), and showed that there is a finite number of possibilities
for the $\alphak $-curves for each background (coming from the elementary situations) with
which all other Case A configurations are nicely connected. What is left is to
show that the Case B configurations and the elementary curve configurations
(corresponding to a fixed background) are also nicely connected.

We proceed next to the classification of Case B configurations. Notice
that (as for the Case A configurations) the curve $\alpha _0\subset S$
can be indicated by an arc $a_0$ connecting two inner boundary circles
of $S$.  (Then $\alpha _0$ is the boundary of the tubular
neighbourhood in $S$ of this arc, together with the two boundary circles it
connects.)  The corresponding pair-of-pants is of the shape given by
Figure~\ref{f:poss}(iii) exactly in case the arc $a_0$ defining
$\alpha _0$ can be isotoped to be disjoint from all the $\betak$-arcs
in $S$. This means that $a_0$ can be chosen to be parallel with one of
the $\betak$-arcs in $S$.  By fixing the outer circle, therefore for
$(3,3,3,3)$ there are three Case B configurations, while this number
is zero for $(9,1,1,1)$, $(8,2,1,1)$ and $(6,3,2,1)$, one for
$(5,5,1,1)$ and two for $(4,4,2,2)$.  Recall also that in Case A we
distinguished a basepoint (called $w_d$) which is in a domain
neighbouring the outer circle. We say that a Case B configuration is
\emph{compatible} with the choice of $w_d$ if the arc $a_0$ is not
parallel to a boundary arc of the domain of $w_d$.

Now we are ready to show that all elementary curve configurations of Case A corresponding to a
fixed background configuration of Figure~\ref{f:hexposs} and a fixed
distinguished point $w_d$, and all Case B configurations
compatible with $w_d$ (and also correspond to the same background) 
are equivalent under nice handle slides and
isotopies.  As before, we give the arguments in detail for the
case of $(3,3,3,3)$ as a background configuration, and then 
indicate the necessary modifications to be made for the other cases.

\begin{prop}\label{p:3333eq}
Fix a distinguished basepoint $w_d$ as before and consider all the
elementary situations for the background
$(3,3,3,3)$ with $w_d$ as the distinguished basepoint. Consider also the two
Case B configurations compatible with the chosen $w_d$. The convenient 
diagrams corresponding to these choices are nicely connected.
\end{prop}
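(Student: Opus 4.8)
The plan is to prove the proposition by a finite case analysis: one lists the convenient diagrams in play — by Lemma~\ref{l:3333elem} there are four elementary curve configurations for the $(3,3,3,3)$ background (Figure~\ref{f:elem}), and by hypothesis there are two Case~B configurations compatible with $w_d$, for a total of six — and exhibits for each of them an explicit chain of nice isotopies and nice handle slides connecting it to one fixed reference diagram, say the elementary curve configuration coming from situation (A) of Figure~\ref{f:elem}. Throughout one works entirely inside $S$: by Theorem~\ref{t:convconn} the choices outside $S$, and the configuration of the $\betak$-arcs in $S$, may be assumed identical for all six diagrams, and, as usual, the bigons are suppressed under the convention that they are arbitrarily thin and are never crossed by a nice move, so that niceness is never lost when they are reinstated.

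First I would connect the three non-reference elementary curve configurations to (A). This is done exactly as in the worked example of Figure~\ref{f:simp}: an elementary curve configuration is recorded by three disjoint oriented arcs running from the three inner circles of $S$ into the domain $D_{w_d}$, and passing from one such triple to another is achieved by a nice handle slide of $\alpha_0$ over $\alpha_1$ (or $\alpha_1'$) followed by one or two nice finger-move isotopies pushing the freed strand across a single rectangle into $D_{w_d}$. The symmetries of the $(3,3,3,3)$ background — the ones already used in Lemma~\ref{l:3333elem} to reduce the four choices of $w_d$ to one — cut the number of genuinely distinct transitions down to a handful, each of which is a short sequence that can be displayed in a single figure.

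Next I would treat the two Case~B configurations, in which $\alpha_0$ is represented by an arc $a_0$ isotopic (in $S$) to one of the $\betak$-arcs. Here I would first replace the convenient curve $\alpha_1$ of Figure~\ref{f:add}(i) by the alternative curve of Figure~\ref{f:megegy}; this is harmless because the two choices are nicely connected (the Remark following Theorem~\ref{t:convconn}), and it arranges that a segment of $\alpha_0$ runs parallel to a segment of $\alpha_1$ next to a basepoint, so that a nice handle slide of $\alpha_0$ over $\alpha_1$ is available. Performing that slide and then one nice finger-move isotopy pushing the freed strand into $D_{w_d}$ brings the diagram into the form of one of the elementary curve configurations already treated; compatibility of the Case~B configuration with $w_d$ — i.e. that $a_0$ is not parallel to a boundary arc of the domain of $w_d$ — is exactly what guarantees the handle slide used is nice.

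The content of the argument is almost entirely bookkeeping rather than insight: at each intermediate diagram one must confirm it is still nice, i.e. that every finger-move arc cuts each domain it crosses into bigons, rectangles, and at most one further piece carrying a basepoint, and that every handle-slide arc lies in a single rectangle whose other adjacent domain contains a basepoint. Lemma~\ref{l:mindket} (each $\alphak$-curve has a basepoint on both sides) together with the thin-bigon convention make these checks routine. The one real hazard to anticipate — and hence the expected main obstacle — is a transition whose ``obvious'' finger move would have to drag a strand across a basepoint; this is precisely why the alternative curve of Figure~\ref{f:megegy} is forced in Case~B, and a similar small detour (an extra handle slide, or the opposite orientation of an auxiliary arc) may be needed in one or two of the elementary-situation transitions. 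Once the finite list of sequences is written down, the niceness of each step is immediate by inspection and the proposition follows; the same sequences, with the relabellings dictated by the symmetries of the other backgrounds of Figure~\ref{f:hexposs}, will then handle those cases as well.
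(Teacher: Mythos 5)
Your plan has the right general shape (a finite case analysis, working locally in $S$, suppressing the thin bigons, and invoking Theorem~\ref{t:convconn} to ignore choices outside $S$), but it misses the structural insight that drives the paper's proof and, in one place, makes an unsubstantiated claim that is essentially the content of the proposition.

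You propose to connect the three non-reference elementary curve configurations ``directly'' to~(A), asserting that ``passing from one such triple to another is achieved by a nice handle slide of $\alpha_0$ over $\alpha_1$ (or $\alpha_1'$) followed by one or two nice finger-move isotopies.'' This is not what Figure~\ref{f:simp} illustrates: that figure simplifies an arbitrary Case~A curve $\alpha_0$ \emph{to} an elementary situation, and it shows nothing about moving \emph{between} two elementary situations (where, by Lemma~\ref{l:3333elem}, the configurations may differ in more than one of the three arcs, and the curves $\alpha_1$, $\alpha_1'$ differ as well, not just $\alpha_0$). A single handle slide of $\alpha_0$ over $\alpha_1$ cannot in general change which inner circle an arc leaves from, so it does not obviously carry~(A) to~(B), and no argument or figure is given to back up the claim that it does. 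In effect this step asserts the conclusion of the proposition.

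The paper's proof takes a different, shorter route that avoids exactly this difficulty: it uses the Case~B configurations (and the auxiliary Case~A configuration (3) of Figure~\ref{f:choice}) as bridges. The key observation is that for a \emph{fixed} $\alpha_0$ in a Case~B configuration, there are two distinct admissible placements of the added curve $\alpha_1$ (Figures~\ref{f:choice}(4) and~(5)); these two placements are nicely connected by Theorem~\ref{t:convconn} (this is the ``different choices within a fixed pair-of-pants diagram'' case), and, after completing the diagram and simplifying via the procedure of Proposition~\ref{p:elemek}, they land on two \emph{different} elementary configurations. That is how (A)$\sim$(B), (C)$\sim$(D), and — via diagram (3) — (B)$\sim$(D) are obtained; the Case~B diagrams are not an afterthought to be separately reduced, they are the intermediaries. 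Your plan treats them as an afterthought and consequently has to do the hard work elsewhere, where you have not shown it can be done.

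Finally, your Case~B treatment invokes the alternative curve of Figure~\ref{f:megegy}, but that device is introduced in Proposition~\ref{p:elemek} for the special case where the arc $a_0$ does not enter and leave the domain of $w_d$; it is not what the paper uses here, nor is it clear why it applies. The correct observation in this proposition is that a Case~B curve makes the corresponding pair-of-pants of type~(iii) of Figure~\ref{f:poss}, and the freedom in choosing $\alpha_1$ among the type-(iii) options (already covered by Theorem~\ref{t:convconn}) is precisely what makes Case~B configurations good pivots. Without that, your plan leaves a genuine gap between elementary configurations that you have not bridged.
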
 
\begin{proof}
Consider the diagrams (1),  (2) and (3) of
Figure~\ref{f:choice}.
The first two diagrams show the two Case B configurations
(with the distinguished basepoint $w=w_d$), while the third diagram shows a 
Case A configuration which will be helpful in the proof.

Consider now the two placements of $\alpha_1$ (corresponding to the
position of $\alpha_0$ given by Figure~\ref{f:choice}(1)) as shown by
Figures~\ref{f:choice}(4) and (5). Since these two choices are two
cases of adding a new curve in a pair-of-pants listed (iii) in
Figure~\ref{f:poss}, Theorem~\ref{t:convconn} shows that the two
choices give rise to nicely connected diagrams.  On the other hand, by
adding the last $\alphak$-curve as given by the dashed curve in (4) and (5),
after a nice handle slide and nice isotopies we conclude that
the elementary situations (A) and (B) of Figure~\ref{f:elem} and the Case B configuration of
(1) are nicely connected. The same two placements of $\alpha _1$ in the
diagaram (2) (and the choice of  $\alpha _0'$ as shown in (C) or (D) of 
Figure~\ref{f:elem})
show that this Case B diagram is nicely connected with (C) and (D) of 
Figure~\ref{f:elem}. Putting the curve $\alpha _1$ into the diagram 
of (3) as given by Figures~\ref{f:choice}(6) and (7), a nice handle slide 
and nice isotopies turn this diagram into the elementary situations shown by 
(B) and (D) of Figure~\ref{f:elem}. In conclusion, we connected all 
the curve configurations corresponding to elementary situations
(and also Case B configurations) with the distinguished basepoint $w=w_d$
by nice moves, concluding the proof of the proposition.
\end{proof}
\begin{figure}[ht]
\begin{center}
\epsfig{file=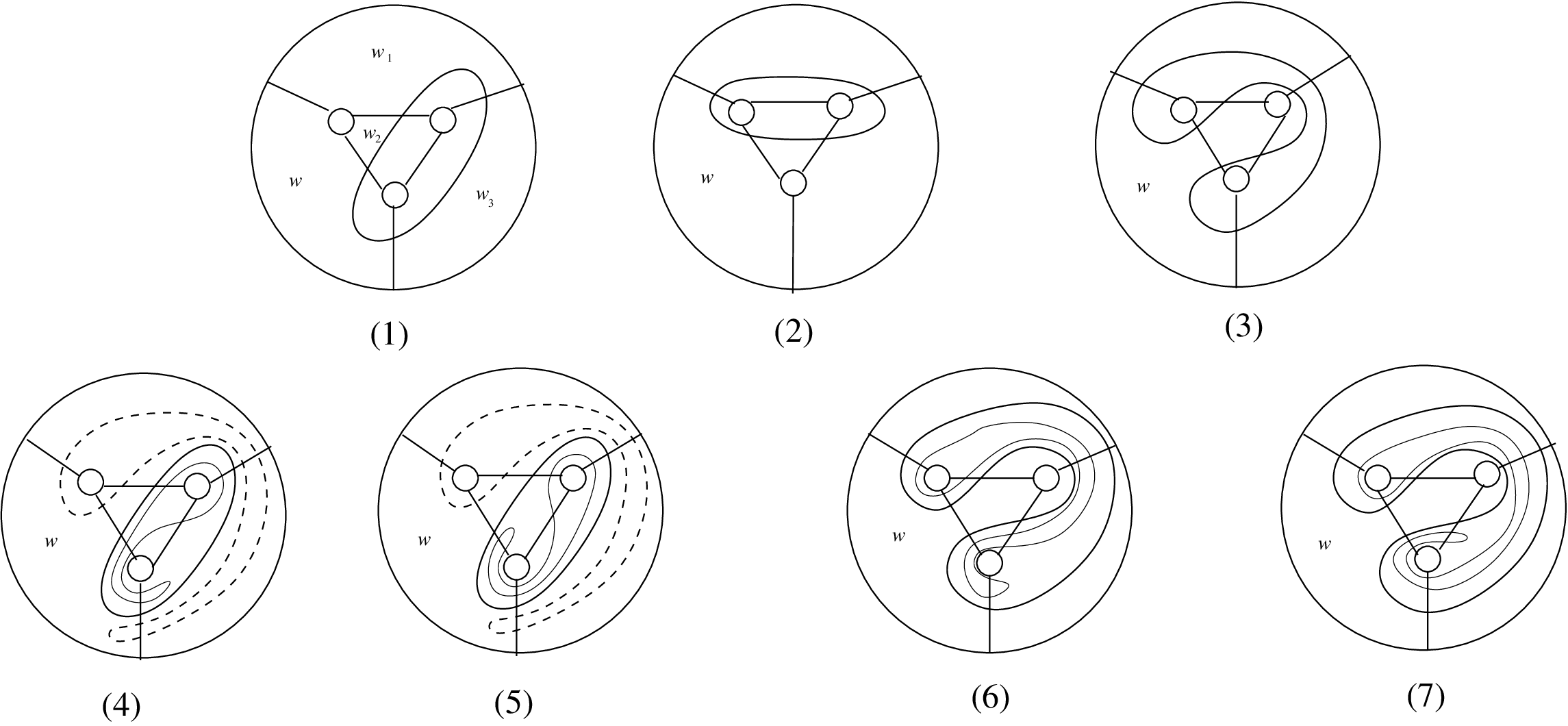, height=5cm}
\end{center}
\caption{{\bf Diagrams (4) and (5) connect (A) and (B) of
  Figure~\ref{f:elem}; here $w=w_d$.}  Similar choices in (2) and (3)
  connect (C) to (D) and (B) to (D) of Figure~\ref{f:elem}. Notice also that 
  (1) and (2) give all Case B configurations compatible with the choice
  $w=w_d$.}
\label{f:choice}
\end{figure}
\begin{prop}\label{p:vegso3333}
  Suppose that the $\betak$-curves are positioned in the
  four-punctured sphere $S$ as shown by $(3,3,3,3)$. Then all
  configurations are nicely connected.
\end{prop}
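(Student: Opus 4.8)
The plan is to gather into a single class of nicely connected diagrams every configuration of the $\alphak$--curves that can occur in $S$ for the $(3,3,3,3)$ background, by combining Propositions~\ref{p:elemek} and~\ref{p:3333eq} with Theorem~\ref{t:convconn}. First, for a fixed position of the flip curve $\alpha_0$, any two admissible choices of the new curves $\alpha_1,\alpha_1'$ --- including the variant of Figure~\ref{f:megegy} --- produce convenient diagrams built from the \emph{same} essential pair-of-pants diagram, hence are nicely connected by Theorem~\ref{t:convconn}. Thus it is enough to produce, for each admissible choice of $\alpha_0$ and of the distinguished basepoint $w_d$, one representative configuration, and to show that all of these lie in one class. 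By Proposition~\ref{p:elemek} every Case~A configuration is nicely connected to the curve configuration of an elementary situation of $(3,3,3,3)$; and by Proposition~\ref{p:3333eq}, for a fixed $w_d$, the elementary curve configurations with distinguished basepoint $w_d$, together with the Case~B configurations compatible with $w_d$, form a single class $C_{w_d}$. So the proposition reduces to two points: (a) every Case~B configuration is compatible with at least one admissible $w_d$; and (b) all the classes $C_{w_d}$ coincide.

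Point (a) is read off from Figure~\ref{f:hexposs}: a Case~B arc $a_0$ is isotopic onto one of the six $\betak$--arcs of $S$, and such an arc is a boundary arc of only two of the four hexagons, so there remains an admissible $w_d$ whose domain does not have $a_0$ on its boundary. For point (b) I link the classes $C_{w_d}$ in two ways. The first link comes from changing the distinguished basepoint while keeping $\alpha_0$ fixed: the pair-of-pants $S-P$ carries exactly two basepoints, and replacing $w_d$ by the other one alters only $\alpha_1'$, which is again a choice of a new curve in the pair-of-pants $S-P$; by Theorem~\ref{t:convconn} (realized by the local moves of Figures~\ref{f:indep1}--\ref{f:indep3}, possibly with Figure~\ref{f:megegy}) the two diagrams are nicely connected, so together with Proposition~\ref{p:elemek} this gives $C_{w_d}=C_{w_d'}$ whenever some Case~A curve $\alpha_0$ puts the pair $\{w_d,w_d'\}$ on the $S-P$ side. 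The second link comes from any Case~B configuration that is compatible with two admissible choices $w_d,w_d'$, since such a configuration lies in both $C_{w_d}$ and $C_{w_d'}$.

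What is left is the finite verification --- carried out directly on the $(3,3,3,3)$ picture of Figure~\ref{f:hexposs}, and using its symmetries --- that these links connect all admissible choices of $w_d$ into one class; this is the crux of the argument. Concretely, one checks that the basepoint pairs realized on the $S-P$ side by Case~A curves, together with the pairs of distinguished basepoints that are simultaneously compatible with a Case~B configuration, span a connected graph on the set of admissible distinguished basepoints. Granting this, there is a single class of nicely connected diagrams containing every elementary curve configuration and every Case~B configuration, hence --- by Proposition~\ref{p:elemek} and the first paragraph --- every configuration of the $\alphak$--curves in $S$ for the $(3,3,3,3)$ background; this is the assertion of the proposition. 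The main obstacle is precisely this bookkeeping: correctly listing the admissible $w_d$, the compatibility constraints on Case~B configurations, and using the symmetries of the $(3,3,3,3)$ configuration, so as to be certain the connectivity graph is connected and that no case has been overlooked.
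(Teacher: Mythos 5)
Your framework matches the paper's: reduce Case~A configurations to elementary situations via Proposition~\ref{p:elemek}, gather each $w_d$-class $C_{w_d}$ via Proposition~\ref{p:3333eq}, and then show the classes coincide by exhibiting diagrams that lie in more than one class. Both linking mechanisms you describe are sound. However, there are two gaps.

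First, you explicitly defer the connectivity check, calling it ``the main obstacle.'' That check is the substance of the proposition, so leaving it unverified means the proof is not complete. It is in fact short: with a fixed outer circle there are three admissible $w_d$ (one in each hexagon adjacent to the outer circle), and the three Case~B configurations for the $(3,3,3,3)$ background are each compatible with exactly two of these three; the three pairs exhaust all pairs, so the graph you want to be connected is $K_3$. The paper uses only this Case~B mechanism --- your auxiliary Case~A link is valid but not needed.

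Second, and more importantly, you never mention the choice of \emph{outer circle}, which enters every earlier normalization: it determines $P$ vs.\ $S-P$, hence which configurations are Case~A vs.\ Case~B, which $w_d$ are admissible, and which arc systems qualify as elementary situations. Your verification, as stated, can at most connect the three hexagons adjacent to a fixed outer circle; the fourth hexagon (the face of the $K_4$ graph not containing the outer vertex) is never reached, nor is a configuration normalized with a different outer circle compared to one normalized with yours. The paper closes this gap in its final sentence by observing that a given Case~B configuration is shared by two choices of outer circle, exactly as it is shared by two choices of $w_d$; this is the piece of the argument your proposal is missing.
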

\begin{proof}
Since each Case B configuration is compatible with  two choices of the distinguished 
basepoint, we can use these configurations to connect 
configurations with different fixed distinguished basepoints.
The same argument applies if we change the choice of the outer circle,
concluding the argument.
\end{proof}
The same strategy applies for the 
further five 
remaining backgrounds listed in Figure~\ref{f:hexposs}:
\begin{thm}\label{t:elemgen}
Consider a background configuration of Figure~\ref{f:hexposs}.
Then the Case A elementary curve configurations and the Case B 
elementary curve configurations (if any)
corresponding  to the chosen  background are nicely connected.
\end{thm}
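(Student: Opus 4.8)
The plan is to run, for each of the five remaining backgrounds $(4,4,2,2)$, $(5,5,1,1)$, $(6,3,2,1)$, $(8,2,1,1)$, $(9,1,1,1)$ of Figure~\ref{f:hexposs}, the same finite case analysis that was used for $(3,3,3,3)$ in Propositions~\ref{p:3333eq} and~\ref{p:vegso3333}. For a fixed background and a fixed distinguished basepoint $w_d$, the elementary situations are the finitely many configurations already tabulated in Figure~\ref{f:tovabbi}, and the first task is to list beside them the Case~B configurations compatible with $w_d$ (there are none for $(6,3,2,1)$, $(8,2,1,1)$, $(9,1,1,1)$). Each such configuration determines a convenient diagram in $S$ once the auxiliary curves $\alpha_1\subset P$ and $\alpha_1'\subset S-P$ have been added as in Proposition~\ref{p:elemek}; as there, we keep the convention that the associated bigons are thin and almost reach the basepoints, so that every move below is genuinely nice.

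The single mechanism that does the work is the one already exploited in Proposition~\ref{p:3333eq}. Any two admissible placements of the added curve $\alpha_1$ in a pair-of-pants (of type (i), (ii) or (iii) of Figure~\ref{f:poss}), with $\alpha_0$ held fixed, differ by a choice of oriented arc of the sort governed by Theorem~\ref{t:convconn}, hence give nicely connected convenient diagrams; the same holds for the alternate choice of Figure~\ref{f:megegy}. So for each Case~B configuration, and for the auxiliary general Case~A configurations playing the role of diagram~(3) of Figure~\ref{f:choice}, one is free to choose the placement of $\alpha_1$ parallel to a segment of $\alpha_0$ lying next to $w_d$. A single nice handle slide of $\alpha_0$ over $\alpha_1$ along that segment, followed by a few nice isotopies exactly as in Proposition~\ref{p:elemek}, then carries the diagram onto one of the elementary curve configurations. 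Running this for the two (or more) placements attached to one auxiliary configuration shows that the elementary situations it sees, for a common $w_d$, are nicely connected; this is the analogue of diagrams~(4)--(7) of Figure~\ref{f:choice} and is a purely local finite check for each background.

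What remains is to connect the families belonging to different choices of $w_d$ and of the outer circle. For $(4,4,2,2)$ and $(5,5,1,1)$ this is done exactly as in Proposition~\ref{p:vegso3333}: each Case~B configuration is compatible with more than one choice of $w_d$ and thus bridges the corresponding families, and changing the outer circle is handled the same way. For $(6,3,2,1)$, $(8,2,1,1)$, $(9,1,1,1)$ there are no Case~B configurations, and this is where the main work lies: one must bridge the (few) inequivalent elementary situations using only general Case~A configurations, as diagram~(3) of Figure~\ref{f:choice} was used in Proposition~\ref{p:3333eq}. This is feasible because, by the observation ruling out any $w_d$ whose domain meets an inner circle lying on no other domain (following Lemma~\ref{l:3333elem}) and by the enumeration of elementary situations carried out above, each of these three backgrounds admits only a very short list of elementary situations --- indeed for $(9,1,1,1)$ a single one up to symmetry --- so only finitely many bridging diagrams need to be drawn. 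The hard part throughout is not any individual move but the bookkeeping: checking, background by background, that the handle slide employed is nice (its arc lies in one elementary rectangle whose companion domain at the foot carries a basepoint) and that none of the intervening isotopies crosses $w_d$, and confirming that in the Case~B--free cases the finite stock of general Case~A bridges genuinely suffices.
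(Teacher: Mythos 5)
Your proposal is correct and follows essentially the same route as the paper's proof. The paper likewise invokes Propositions~\ref{p:3333eq} and~\ref{p:vegso3333} as templates, specifies the required bridging $\alpha_0$-circles for the remaining backgrounds via the dashed arcs of Figure~\ref{f:modif} (noting, as you do, that $(9,1,1,1)$ needs no further bridging since it has a unique elementary situation and no Case~B configuration), and leaves the routine choices of the last $\alphak$-curve unspecified — the same level of detail as your account, which correctly identifies the Case~B-free backgrounds $(6,3,2,1)$ and $(8,2,1,1)$ as the spots where explicit Case~A bridge curves must be exhibited.
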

\begin{proof}
The idea of the proof is exactly the same as the proof of
Propositions~\ref{p:3333eq} and \ref{p:vegso3333}, therefore we only provide the 
$\alpha _0$-circles 
which should be used in the same spirit as used in the proofs of the above
propositions. 
Indeed, the circles can be defined by the dashed arcs of 
Figure~\ref{f:modif}. In each case one needs to make a careful (but rather straightforward)
choice of  the last $\alphak$-curve in the diagram; this last choice will not be given 
explicitly here. 
Notice that for $(9,1,1,1)$ there is one possible
nontrivial place for $w_d$ and in this case there is only one
elementary situation (and no Case B configuration), hence we do not need to do anything further. For the remaining cases the diagrams of Figure~\ref{f:modif}
provide the appropriate dashed arcs (as usual, the curves are the boundaries
of the neighbourhoods of the unions of the arcs and the two circles
they connect).
\end{proof}
\begin{figure}[ht]
\begin{center}
\epsfig{file=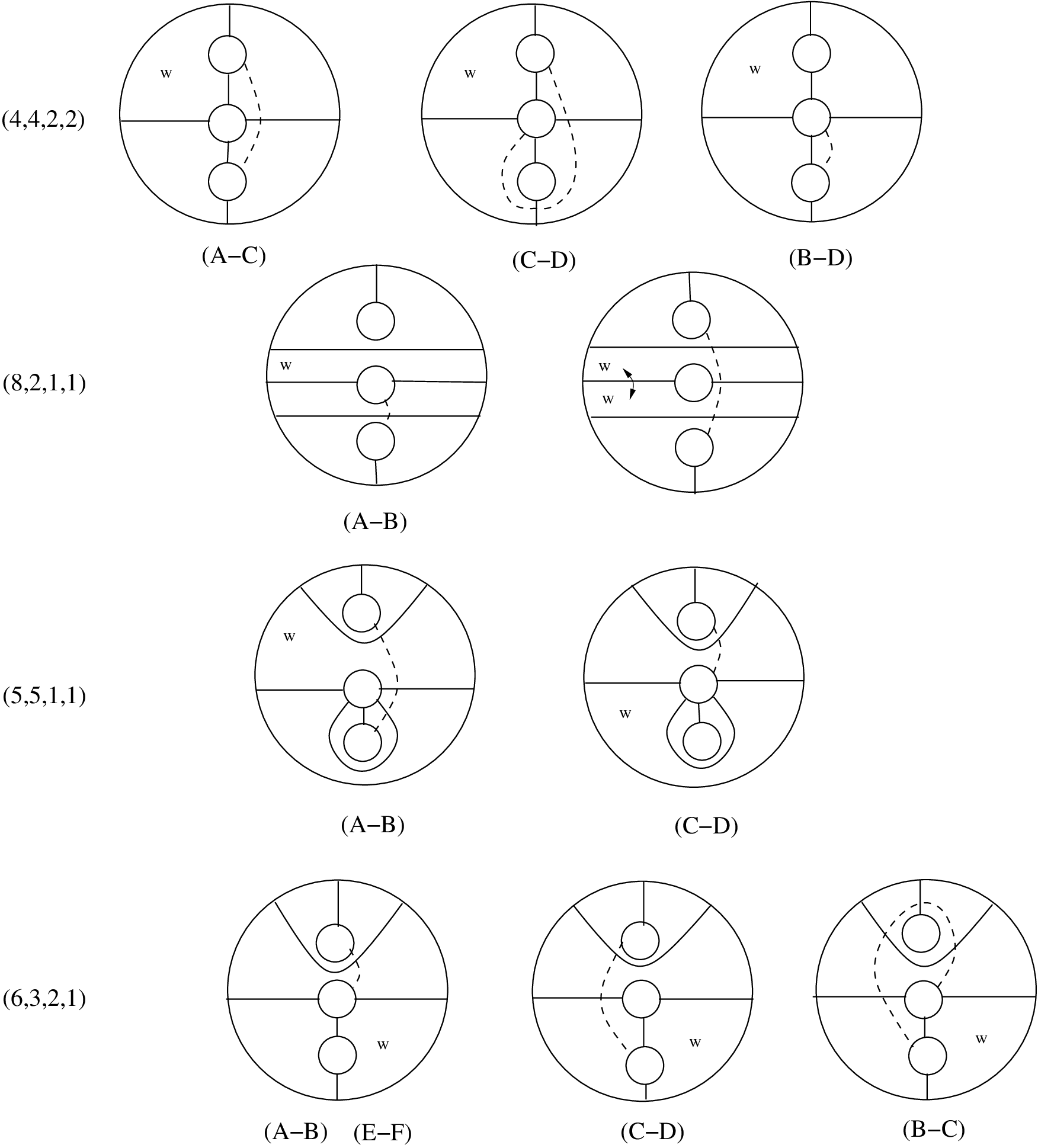, height=10cm}
\end{center}
\caption{{\bf Diagrams instructing how to connect elementary curve
    configurations.} The circle $\alpha _0$ is given by the boundary
  of the neighbourhood of the dashed arc together with the two circles
  it connects. The further curves should be added as it is shown in
  Figure~\ref{f:choice}; hence there are more than one possibilities
  for them. Applying straightforward handle slides and isotopies, the
  individual diagrams can be used to connect different elementary
  curve configurations and Case B curve configurations.}
\label{f:modif}
\end{figure}

\begin{proof}[Proof of Theorem~\ref{t:fixHD}]
Suppose that the convenient diagram $\DD_i$ is derived from the
essential pair-of-pants diagram $(\Sigma , \alphak _i, \betak _i)$
($i=1,2$). According to the assumption of the theorem, the two
essential pair-of-pants diagrams represent the same Heegaard
decomposition, therefore by Lemma~\ref{lem:conn} they are connected by
a sequence of flips. Therefore it is enough to check the theorem in
the case when the markings $\alphak_1$ and $\alphak _2$ differ by a
flip and $\betak_1=\betak _2$. Suppose that the flip takes place in
the four-punctured sphere $S\subset \Sigma$.  The $\betak$--curves
provide one of the configurations of Figure~\ref{f:hexposs}. According
to Proposition~\ref{p:elemek} then both the curve systems $\alphak _1$ and
$\alphak _2$ (before and after the flip) are nicely connected to
either an elementary curve configuration or a Case B curve configuration. Applying
Theorem~\ref{t:elemgen} we conclude that the original Heegaard
diagrams are nicely connected, finishing the proof of the theorem.
\end{proof}

\begin{rem}
{\whelm Notice that (since we normalized the shape of $\alpha _0$ in
  $S$), the same proof applies for $g$-flip equivalent configurations,
  hence we can use Theorem~\ref{thm:luohely} of the Appendix instead
  of Theorem~\ref{t:luo}.  }
\end{rem}

\subsection{Convenient diagrams and stabilization}
\label{ssec:stab}

Next we consider the relation between convenient diagrams and stabilizations.
Suppose that $(\Sigma , \alphak , \betak )$ is a bigon--free essential pair-of-pants
Heegaard diagram for the 3-manifold $Y$ which contains no $S^1\times S^2$-summand. 
 Choose a crossing $x$ of an $\alphak$-- and a
$\betak$--curve (called $\alpha_1$ and $\beta_1$) which is on the boundary
of a domain $D$ which is either a hexagon or an octagon. Let $(\Sigma ' ,
\alphak ' , \betak ')$ denote the pair-of-pants Heegaard diagram we get by the
stabilization procedure described in Lemma~\ref{l:stabil}. In the following,
${\DD}$ will denote a symmetric convenient diagram derived from $(\Sigma , \alphak ,
\betak )$, while ${\DD}'$ will be a symmetric convenient diagram we get from $(\Sigma '
, \alphak ', \betak ')$ by applying the following choices: 
\begin{itemize}
\item For any pair-of-pants
which is away from the stabilization we apply the same choices as for ${\DD}$.
\item For part of  of the diagram we get by stabilizing the annuli $A_{\alpha}$ and $A_{\beta}$, there are no further choices to make. This is because
these regions contain only
 rectangles and octagons, and our goal is to construct a
 symmetric convenient diagrams.
\end{itemize}
\begin{figure}[ht]
\begin{center}
\epsfig{file=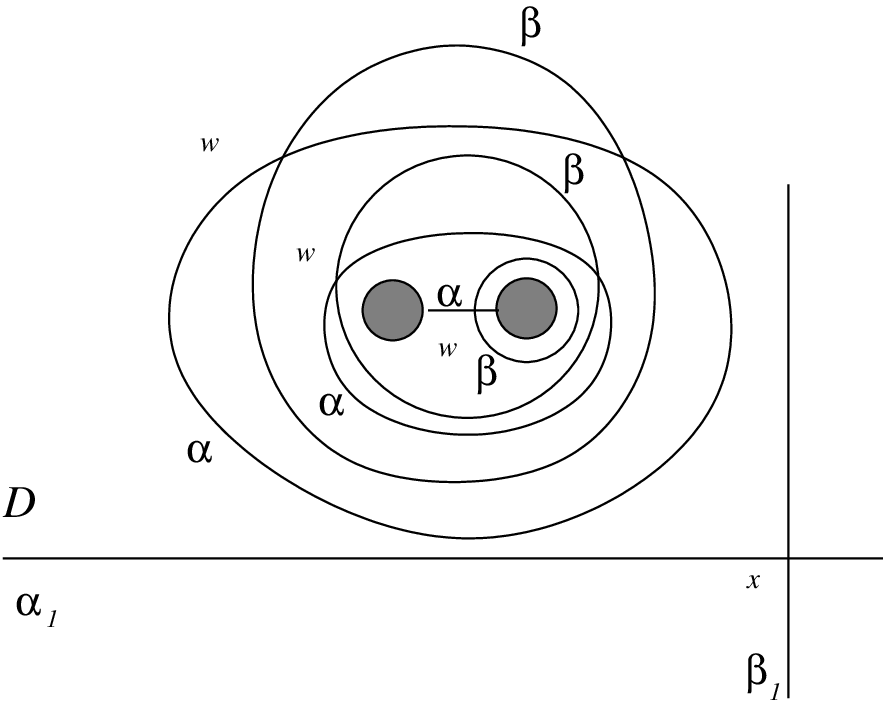, height=6cm}
\end{center}
\caption{{\bf Stabilizations resuting the diagram $\DD _1$.} The two
  full circles denote feet of a 1--handle. Basepoints are denoted by
  $w$.}
\label{f:stabb}
\end{figure}
\begin{thm}\label{thm:nicestab}
The convenient diagrams ${\DD}$ and ${\DD}'$ are nicely connected.
\end{thm}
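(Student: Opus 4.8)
The plan is to show that the stabilized symmetric convenient diagram $\DD'$ can be reached from $\DD$ by a single nice type-$g$ stabilization followed by a bounded sequence of nice isotopies and nice handle slides, all of which take place inside (a neighbourhood of) the domain $D$ where the stabilization is performed. First I would recall exactly what $\DD'$ looks like near the stabilization region. By Lemma~\ref{l:stabil}, stabilizing the pair-of-pants diagram turns the annuli $A_\alpha$ and $A_\beta$ into twice-punctured tori, cut open along generating circles into four-punctured spheres, each receiving one new $\alphak$-- (resp.\ $\betak$--) curve; the new local curves are those of Figure~\ref{f:stab}. The key structural observation (recorded after Lemma~\ref{l:stabil}, plus the discussion preceding Theorem~\ref{thm:nicestab}) is that this stabilization region contains only rectangles and octagons before we add the convenient curves, and only rectangles and octagons/hexagons after; in particular the symmetric convenient recipe of Step~3--7, applied here, involves \emph{no genuine choices} inside the stabilization region — the new curves are forced. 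So $\DD'$ is, up to the choices made away from $D$ (which agree with those of $\DD$ by hypothesis), a completely explicit local modification of $\DD$.

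Second, I would identify the ``model'' stabilization. A nice type-$g$ stabilization of $\DD$ performed inside $D$ (which contains a basepoint, since $D$ is a hexagon or octagon in the bigon-free model, hence carries a basepoint in a symmetric convenient diagram) produces a diagram $\DD_1$ consisting of $\DD$ with a standard toroidal $S^3$-summand glued in at an interior point of $D$; this is depicted in Figure~\ref{f:stabb}. The remaining work is purely local: exhibit a finite sequence of nice isotopies and nice handle slides, supported in the union of $D$ with the new 1--handle, carrying $\DD_1$ to $\DD'$. Because both diagrams restrict to the same picture outside a disk neighbourhood of $D$, and both are nice with the basepoint of $D$ (plus, for $\DD_1$, no new basepoint) available as a ``license'' for non-rectangular pieces, one checks case by case — according to whether $D$ is a hexagon or an octagon, and according to the local pattern of $\alphak$-- and $\betak$--arcs through $D$ (there are only finitely many, enumerated already in Figure~\ref{f:poss} and its convenient refinements) — that the standard toroidal summand can be ``spread out'' along $A_\alpha$ and $A_\beta$ by finger moves (nice isotopies) and handle slides over the parallel $\alphak_1$-- and $\betak$--copies, reproducing the forced convenient curves of Figure~\ref{f:stab}. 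Each intermediate diagram stays nice because the single basepoint of $D$ absorbs any transient non-rectangular domain, exactly as in the proof that nice moves preserve niceness.

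The main obstacle I expect is the bookkeeping of the local case analysis: verifying that for \emph{every} admissible local configuration of $\alphak$-- and $\betak$--arcs in $D$ the spreading-out sequence can be chosen to consist only of \emph{nice} moves — i.e.\ that each finger move arc meets the hypotheses of Definition~\ref{def:NiceIsotopy} (every component of every domain it cuts, except possibly one carrying a basepoint, is a bigon or rectangle) and each handle-slide arc lies in a single rectangle with the ``other'' adjacent domain carrying a basepoint. This is where the hypothesis that $x$ lies on a hexagon or octagon is used: it guarantees the necessary basepoint is present in $D$ and positioned so that the finger moves never need to cross a basepoint. I would organize this as two lemmas, one for the hexagon case and one for the octagon case, each reducing to inspection of the diagrams of Figures~\ref{f:stab} and~\ref{f:stabb}; the octagon case is the more involved since adding the forced curve subdivides the octagon into a hexagon, an octagon, a rectangle and a bigon, so more intermediate handle slides are needed. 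Once both lemmas are in place, the theorem follows: $\DD \leadsto \DD_1$ by a nice type-$g$ stabilization, and $\DD_1 \leadsto \DD'$ by the local nice-move sequence, so $\DD$ and $\DD'$ are nicely connected.
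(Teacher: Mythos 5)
Your overall strategy matches the paper's: construct an intermediate diagram $\DD_1$ by stabilizing $\DD$ locally inside the preferred domain, then exhibit a sequence of nice isotopies and handle slides, supported near the stabilization, carrying $\DD_1$ to $\DD'$. You also correctly identify the structural point (recorded in the paper just before the theorem) that the symmetric convenient recipe leaves no genuine choices inside the stabilization region, so $\DD'$ is an explicit local modification of $\DD$.

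However, there is a genuine gap: you propose a \emph{single} nice type-$g$ stabilization, but this cannot work, because nice isotopies and nice handle slides preserve the number of basepoints and a type-$g$ stabilization preserves it as well, whereas $b(\DD')=b(\DD)+2$. To see this, recall that in the bigon-free model with $n_{\mathrm{hex}}$ hexagons and $n_{\mathrm{oct}}$ octagons, summing Euler measures over elementary domains gives $-\tfrac12 n_{\mathrm{hex}} - n_{\mathrm{oct}} = \chi(\Sigma)=2-2g$. The pair-of-pants stabilization raises the genus by one and, by the observation you quote, adds only rectangles and octagons, so $n_{\mathrm{hex}}$ is unchanged and $n_{\mathrm{oct}}$ increases by exactly two; hence the symmetric convenient basepoint count $n_{\mathrm{hex}}+n_{\mathrm{oct}}$ increases by two. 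Your intermediate diagram, obtained by a single type-$g$ stabilization, has the same number of basepoints as $\DD$ and so can never be nicely connected to $\DD'$ by isotopies and handle slides alone. The paper's proof repairs this by taking \emph{two nice type-$b$ stabilizations together with one nice type-$g$ stabilization} (all inside the distinguished basepointed domain) to define $\DD_1$; only then does it proceed, as you do, by sliding the extra curves ($\alpha_1''$, $\beta_1''$, then $\alpha_1'$, $\beta_1'$) over $\alpha_1$, $\beta_1$ and finishing with local nice isotopies and handle slides. You should correct your intermediate diagram accordingly before the rest of your local case analysis can go through.
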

\begin{proof}
Let us define the diagram $\DD_1$ by taking two nice type-$b$ and 
a nice type-$g$ stabilization in the elementary domain $D$ of $\DD$
containing a basepoint, where the pair-of-pants stabilization took
place, as it is instructed by  Figure~\ref{f:stabb}. We would like to show that 
$\DD '$ and $\DD _1$ can be connected by nice handle slides and nice isotopies.

Indeed,  slide the $\alpha _1''$ (and similarly $\beta _1''$) of 
Figure~\ref{f:stab}  in $\DD '$ over $\alpha _1$ (and $\beta _1$, resp.)
by a nice handle slide, and apply nice isotopies until the resulting curves
become part of the domain $D$. Repeat the same procedure now for the curves
$\alpha _1'$ and $\beta _1'$. The resulting diagram is shown in
Figure~\ref{f:mego}. Now it is easy to find a sequence of nice
handle slides and nice isotopies connecting the diagram of Figure~\ref{f:mego}
and of Figure~\ref{f:stabb}, concluding the proof of the theorem.
\begin{figure}[ht]
\begin{center}
\epsfig{file=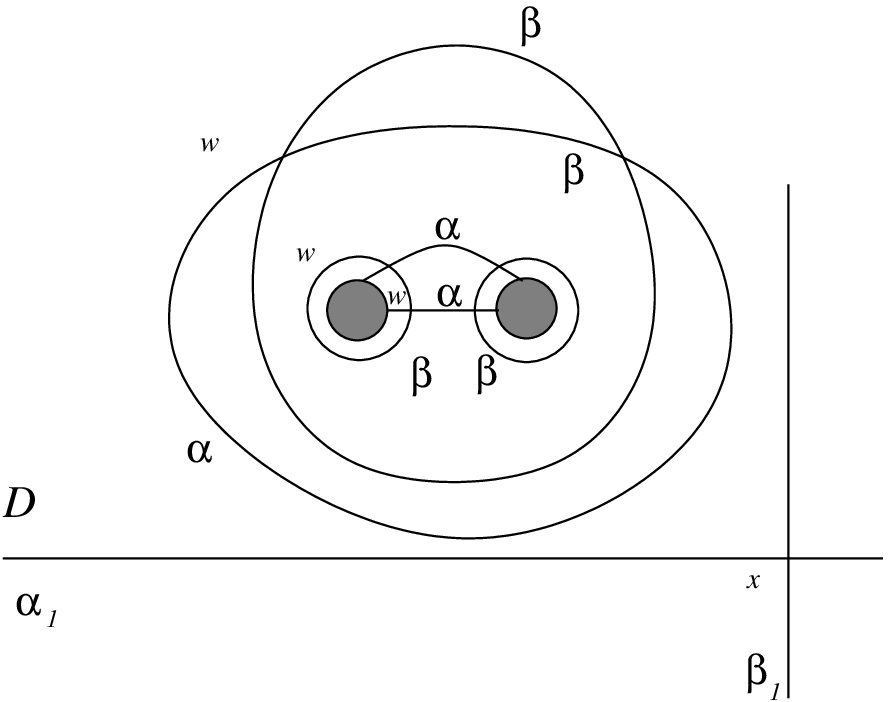, height=6cm}
\end{center}
\caption{{\bf The diagram after the four nice handle slides and
    appropriate nice isotopies.} Further nice handle slides and nice
  isotopies transform the diagram into Figure~\ref{f:stabb}.}
\label{f:mego}
\end{figure}
\end{proof}

\begin{proof}[Proof of Theorem~\ref{thm:MainConv}]
Suppose that $(\Sigma _i , \alphak _i, \betak _i )$ are essential
pair-of-pants diagrams (corresponding to Heegaard decompositions
${\mathcal {U}}_i$) giving rise to convenient Heegaard diagrams $\DD
_i$ ($i=1,2$). According to the Reidemesiter--Singer Theorem \cite{R,S} (see also \cite{saul}), the
Heegaard decompositions ${\mathcal {U}}_1$ and ${\mathcal {U}}_2$
admit isotopic stabilizations.
Let $(\Sigma , \alphak ^i , \betak ^i )$ denote the
essential pair-of-pants diagram compatibe with the common Heegaard decomposition 
${\mathcal {U}}$ we get
by stabilizing the essential pair-of-pants diagram $(\Sigma _i ,
\alphak _i , \betak _i)$.  Choose a convenient diagram $\DD ^i$
derived from $(\Sigma , \alphak ^i, \betak ^i)$. According to
Theorem~\ref{thm:nicestab}, the convenient diagrams $\DD_i$ and $\DD
^i$ are nicely connected for $i=1,2$. On the other hand, $\DD ^1$ and
$\DD ^2$ are now convenient diagrams corresponding to the same
Heegaard decomposition, hence by Theorem~\ref{t:fixHD} these diagrams
are nicely connected.  Since being nicely connected is transitive, the
above argument shows that the convenient diagrams $\DD_1$ and $\DD_2$
are nicely connected, concluding the proof.
\end{proof}

\section{The chain complex associated to a nice diagram}
\label{sec:hfhom}

In this section we define the chain complex $(\CFaa (\DD ), \partialaa
_{\DD } )$ on which the definition of the stable Heegaard Floer
invariant will rely.  The definition of this chain complex is modeled
on the definition of the Heegaard Floer homology groups $\CFa$ of
\cite{OSzF1, OSzF2}, cf. also \cite{SW} and
Section~\ref{subsec:Appendix} of the present paper. In the next two
sections we will deal with nice diagrams, and put our results
concerning convenient diagrams temporarily aside.

Suppose that $\DD = (\Sigma =\Sigma _g, \alphak =\{ \alpha_i \}
_{i=1}^k, \betak =\{ \beta _j \} _{j=1}^k, \w = \{ w_1, \ldots ,
w_{k-g+1}\} )$ is a nice multi-pointed Heegaard diagram for $Y$.  An
unordered $k$--tuple of points $ \x=\{ x_1, \ldots , x_k \} \subset
\Sigma$ will be called a \emph{generator} if the intersection of $\x$
with any $\alphak$-- or $\betak$--curve is exactly one point. In other
words, $\x$ contains a unique coordinate from each $\alpha_i$ and
from each $\beta _j$.  Let $\Gen$ denote the set of these
generators, and let
\[
\CFaa (\DD ) =\oplus _{\x \in \Gen}\Field
\]
be the $\Field$--vector space generated by the elements of ${\mathcal
  {S}}$.  We will typically not distinguish an element of ${\mathcal
  {S}}$ from its corresponding basis vector in $\CFaa (\DD)$.

\begin{defn}
\label{def:EmptyPolygon}
(Cf. \cite[Definition~3.2]{SW})\label{d:sw} {\whelm Fix two generators
  $\x$ and $\y \in \Gen$. We say that a \emph{2n--gon from $\x$ to
    $\y$} is a formal linear combination $D=\sum n_i D_i$ of the
  elementary domains $D_i$ of $\DD=(\Sigma , \alphak , \betak )$,
  satisfying the following conditions:
\begin{itemize}
\item
$x_i=y_i$ with $n$ exceptions;
\item
all multiplicities $n_i$ in $D$ are either 0 or 1, and
at every coordinate $x_i\in \x$ (and similarly for $y_i \in \y$)
either all four domains meeting at $x_i$ have multiplicity 0 (in which case
$x_i=y_i$) or exactly one domain has multiplicity 1 and all three others have
multiplicity 0 (when $x_i \neq y_i$);
\item
the support $s(D)$ of $D$, which is the union of the closures
${\overline {D}}_i$ of the elementary domains which have $n_i=1$ in
the formal linear combination $D=\sum _i n_i D_i$ is a subspace of
$\Sigma$ which is homeomorphic to the closed disk, with $2n$ vertices
on its boundary;
\item the $n$ coordinates (say $x_1, \ldots , x_n$ and $y_1, \ldots ,
  y_n$) where $x_i$ differs from $y_i$ (which we call the
  \emph{moving} coordinates) are on the boundary of $s(D)$ in an
  alternating fashion, in such a manner that, when using the boundary
  orientation of $s(D)$ (which is oriented by $\Sigma$) the
  $\alphak$--arcs point from $x_i$ to $y_j$ while the $\betak$--arcs
  from $ y_i$ to $x_j$. In short, $\partial (\partial D \cap \alphak
  )=\y -\x$ and $\partial (\partial D \cap \betak )=\x -\y$.
\end{itemize}
The $2n$--gon is \emph{empty} if the interior of $s( D)$ is disjoint
from the basepoints $\w$ and the two given points $\x$ and $\y$. As
before, for $n=1$ the $2n$--gon is called a \emph{bigon}, while for
$n=2$ it is a \emph{rectangle}. } 
\end{defn}

Notice that an empty bigon contains
exactly one elementary bigon and some number of elementary rectangles,
while an emtpy rectangle is the union of some number of elementary
rectangles.

Suppose that $\x,\y \in \CFaa(\DD )$ are two generators.  Define the
$\pmod{2}$ number $m_{\x \y}\in \Field$ to be the cardinality
$\pmod{2}$ of the set $\mxy$ defined as follows.  We declare $\mxy$ to
be empty if $\x$ and $\y$ are either equal or differ in at least three
coordinates.  If $\x$ and $\y$ differ at exactly one coordinate, then
we define $\mxy$ as the set of empty bigons from $\x$ to $\y$, while
if $\x$ and $\y$ differ in exactly two coordinates, then $\mxy$ is the
set of empty rectangles from $\x$ to $\y$.  It is easy to see that
either $\mxy$ is empty or it contains one or two elements.  The two
elements of $\mxy$ can be distinguished by the part of the $\alphak$--
(or $\betak$--) curves containing the moving coordinates that are in
the boundary of the domain.  (If $\x$ and $\y$ differ in exactly one
coordinate and $\mxy$ contains two elements, then there are isotopic
$\alphak$-- and $\betak$--curves.)

Now define the \emph{boundary operator}
\[
\partialaa _{\DD}\colon \CFaa (\DD ) \to \CFaa (\DD )
\]
by the formula
\[
\partialaa _{\DD} (\x) =\sum _{\y \in \Gen}m _{\x \y}\cdot \y
\]
on the generators, and extend the map linearly to $\CFaa (\DD)$.

For future reference, it will be convenient to have an alternative
characterization of $\partialaa _{\DD}$. To this end, it will help
to generalize Definition~\ref{def:EmptyPolygon} as follows:

\begin{defn}
  \label{def:Domain}
  {\whelm Suppose that $\x, \y \in \Gen$ are two generators in the
    Heegaard diagram $(\Sigma , \alphak , \betak )$.  A {\em domain
      connecting $\x$ to $\y$} (or, when $\x$ and $\y$ are implicitly
    understood, simply a {\em domain}) is a formal linear combination
    $D=\sum_{i} n_i\cdot D_i$ of the elementary domains, which in turn
    can be thought of as a $2$-chain in $\Sigma$, satisfying the
    following constraints.  Divide the boundary $\partial D$ of the
    2-chain $D$ as $a+b$, where $a$ is supported in $\alphas$ and $b$
    is supported in $\betas$.  Then, thinking of $\x$ and $\y$ as
    0-chains, we require that $\partial a=\y-\x$ (and hence $\partial
    b=\x-\y$). The set of domains from $\x$ to $\y$ will be denoted by
    $\pi _2 (\x , \y)$.}
\end{defn}

Less formally, for each $i=1,\dots,g$, the portion of $\partial D$ in
$\alpha_i$ determines a path from the $\alpha_i$--coordinate of $\x$
to the $\alpha_i$--coordinate of $\y$, and the portion of $\partial D$
on $\beta _i$ determines a path from the $\beta_i$--coordinate of $\y$
to the $\beta_i$--coordinate of $\x$.

\begin{defn}
  A domain $D=\sum n_i\cdot D_i$ is \emph{nonnegative} (written $D\geq
  0$) if all $n_i\geq 0$.  Given an elementary domain $D_i$, the
  coefficient $n_i$ is called the {\em multiplicity of $D_i$ in
    $D$}. Equivalently, given a point $z\in \Sigma-\alphak -\betak $
  the {\em local multiplicity of $D$ at $z$}, denoted $n_{z}(D)$, is
  the multiplicity of the elementary domain $D_i$ containing $z$ in
  $D$.  For $\w = \{ w_1, \ldots , w_k\} $ we define $n_{\w }(D)=\sum
  _i n _{w_i}(D)$.
\end{defn}

It is often fruitful to think of domains from the following elementary
point of view.  A domain $D$ connecting $\x$ to $\y$ is a linear
combination of elementary domains whose local multiplicities satisfy a
system of linear equations, one for each intersection point $p$ of
$\alpha_i$ with $\beta_j$. To describe these relations, we need a
little more notation. At each intersection point $p$ of $\alpha_i$
and $\beta_j$ there are four (not necessarily distinct) elementary
domains, which we label clockwise as $A_p$, $B_p$, $C_p$, and $D_p$,
so that $A_p$ and $B_p$ are above $\alpha_i$ and $B_p$ and $C_p$ are
to the right of $\beta_j$, cf. Figure~\ref{f:kereszt}. Let $a_p$,
$b_p$, $c_p$, and $d_p$ denote the multiplicities of $A_p$, $B_p$,
$C_p$, and $D_p$ in $D$.
\begin{figure}[ht]
\begin{center}
\epsfig{file=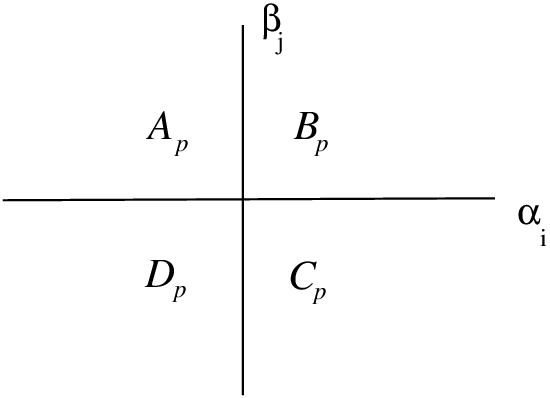, height=3cm}
\end{center}
\caption{{\bf The quadrants $A_p,B_p,C_p$ and $D_p$ at a crossing.}}
\label{f:kereszt}
\end{figure}
For a generator $\x \in {\mathcal {S}}$ and an intersection point
$q$ define
  \[\delta(q,\x)=\left\{
  \begin{array}{ll}
  +1 & {\text{if $q\in \x$}} \\ 0 & {\text{otherwise.}}
  \end{array}
  \right.
  \]

\begin{lem}
The formal linear combination $D=\sum D_i$ is in $\pi _2 (\x , \y )$
(ie. is a domain from $\x$ to $\y$) if, for each $p\in \alpha_i\cap
\beta_j$, we have that
  \begin{equation}
    \label{eq:DomainFromXtoY}
    a_p+c_p= b_p+d_p- \delta(p,\x)+\delta(p,\y).
  \end{equation}
\end{lem}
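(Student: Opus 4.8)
The plan is to reduce the statement to a purely local computation at each intersection point, using the standard description of the boundary of a $2$--chain on an oriented surface in terms of local multiplicities. First I would fix an arbitrary orientation on each curve $\alpha_i$ and $\beta_j$; the asserted equation is manifestly independent of these choices, but orientations are needed in order to speak of the boundary of a $1$--chain. For a $2$--chain $D=\sum_i n_i D_i$ on the oriented surface $\Sigma$, its boundary $\partial D$ is a $1$--chain supported on $\alphak\cup\betak$, and along each oriented edge $e$ (a component of some $\alpha_i$ or $\beta_j$ with the intersection points deleted) the coefficient of $e$ in $\partial D$ is the difference of the multiplicities of the two elementary domains flanking $e$, the sign being fixed by the orientations of $e$ and of $\Sigma$ (``left minus right''). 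Then I would split $\partial D=a+b$, with $a$ supported on the $\alphak$--edges and $b$ on the $\betak$--edges, so that by Definition~\ref{def:Domain} the condition $D\in\pi_2(\x,\y)$ is precisely $\partial a=\y-\x$. Since $\partial(\partial D)=0$ forces $\partial b=-\partial a$, the $\betak$--part $\partial b=\x-\y$ comes for free, so it suffices to analyse $\partial a$.

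Next I would localize at a fixed crossing $p\in\alpha_i\cap\beta_j$ and work with the labelling of Figure~\ref{f:kereszt}: there are exactly two $\alpha_i$--edges meeting $p$, one on each side, carrying coefficients $\pm(a_p-d_p)$ and $\pm(b_p-c_p)$ in $\partial D$; one of these edges terminates at $p$ and the other emanates from it. Summing these two contributions to the coefficient of $p$ in $\partial a$ (the two $\betak$--edges at $p$ contribute only to $b$, and no other edge touches $p$) should yield $a_p+c_p-b_p-d_p$. Equating this with the coefficient of $p$ in $\y-\x$, namely $\delta(p,\y)-\delta(p,\x)$, and rearranging, gives exactly \eqref{eq:DomainFromXtoY}; when $p\notin\x\cup\y$ this reads $a_p+c_p=b_p+d_p$, and when $p$ lies in $\x$ or in $\y$ it records the $\mp1$ from the endpoint of the $\alpha$--path sitting at $p$. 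Since each step is an equivalence, this simultaneously proves the converse, so one really obtains ``if and only if''.

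The one step I expect to be the genuine obstacle is pinning down the signs: one must check that the clockwise convention for $A_p,B_p,C_p,D_p$ (with $A_p,B_p$ above $\alpha_i$ and $B_p,C_p$ to the right of $\beta_j$), together with the orientation of $\Sigma$ and the specific normalization $\partial a=\y-\x$ (rather than $\x-\y$) from Definition~\ref{def:Domain}, really produces the asymmetric combination $a_p+c_p$ versus $b_p+d_p-\delta(p,\x)+\delta(p,\y)$ and not some sign- or letter-permuted variant. I would verify this by computing $\partial D$ explicitly for $D$ equal to a single quadrant (one $n_i=1$, the rest $0$) and cross-checking with the induced boundary orientation, and also by confirming that reversing the chosen orientation of $\alpha_i$ leaves the final local equation unchanged — which it must, since the condition $\partial a=\y-\x$ does not see that orientation. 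Everything else is routine, and the computation is entirely local, hence insensitive to the global combinatorics of the diagram.
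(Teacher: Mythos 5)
Your proposal is correct and takes essentially the same approach as the paper's proof: the paper also computes the multiplicities of the two $\alpha_i$--arcs at $p$ (as $a_p-d_p$ and $b_p-c_p$), concludes that $p$ appears in $\partial(\partial D\cap\alpha_i)$ with coefficient $a_p+c_p-b_p-d_p$, and equates this with $\delta(p,\y)-\delta(p,\x)$. Your extra care about orientation conventions, the cross-check on a single quadrant, and the observation that the argument gives the converse for free are sensible but do not represent a different route.
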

\begin{proof}
  Consider the quadrants around each intersection point $p$ as
  illustrated in Figure~\ref{f:kereszt}. The right horizontal arc
  (between $B_p$ and $C_p$, oriented out of $p$) appears in $\partial
  D$ with multiplicity $b_p-c_p$, while the left horizontal arc
  (between $A_p$ and $D_p$, oriented into $p$) appears in $\partial D$
  with multiplicity $a_p-d_p$. Thus, the point $p$ appears in
  $\partial (\partial D\cap \alpha_i)$ with multiplicity
  $a_p+c_p-b_p-d_p$; and in a domain from $\x$ to $\y$, each
  coordinate appears with multiplicity
  $\delta(p,\y)-\delta(p,\x)$. Equation~\eqref{eq:DomainFromXtoY}
  then follows.
\end{proof}

It is straightforward to see that if $D=\sum n_i D_i\in \pi _2 (\x ,
\y )$ then $-D\in \pi _2 (\y , \x )$, and for the sum $D+D'$ with
$D\in \pi _2 (\x,\y )$ and $D'\in \pi _2 (\y , \z )$ we have
$D+D'\in \pi _2 (\x , \z )$.

Suppose that $(\Sigma , \alphak , \betak , \w)$ is a nice
multi-pointed Heegaard diagram, and assume that the elementary domain
$D_i$ is a $2n$--gon. Define $e(D_i)$ by the formula $1-\frac{n}{2}$,
and extend this definition linearly to all domains with $n_{w_i}=0$
($i=1, \ldots , k-g+1$).  The resulting quantity $e(D)$ is the
\emph{Euler measure} of $D$. 
\begin{rem}
{\whelm
The Euler measure has a natural
interpretation in terms of the Gauss-Bonet theorem as follows. Endow
$\Sigma$ with a metric for which all $\alpha_i$ and $\beta_j$ are
geodesics, meeting at right angles.  The Euler measure of an
elementary domain is the integral of the curvature of this
metric. Notice that this alternate definition applies for elementary
domains which are not $2n$--gons.}
\end{rem}

If $D\in \pi _2 (\x,\y )$, then for each $\x$-- (and $\y$--)
coordinate $x_i$ (and $y_j$) consider the average of the
multiplicities of the four domains meeting at $x_i$ (and $y_j$). The
sum of the resulting numbers $p_{x_i}(D)$ and $p_{y_j}(D)$ will be
denoted by $p(D)$ and is called the \emph{point measure} of $D$. We
define the \emph{Maslov index} $\mu (D)$ to be the sum
\begin{equation}
  \label{eq:MaslovIndex}
  \mu(D)=e(D)+p(D).
\end{equation}
\begin{rem} {\whelm The term ``Maslov index'' is used here since,
    according to a theorem of Lipshitz~\cite{Lipshitz}, the quantity
    defined in Equation~\eqref{eq:MaslovIndex} computes the expected
    dimension of the moduli space of curves associated to the domain
    $D$.}
\end{rem}

It will be useful to have another construction; before introducing it,
we pause for a definition:
\begin{defn}
  An {\em elementary $\alphak$-arc} $a$ is a subarc of
  $\alpha_i\subset \Sigma$ which connects two intersection points
  $x_1=\alpha_i \cap \beta _j$ and $x_2=\alpha_i \cap \beta _k$ such
  that int$(a)$ contains no further intersection points, ie.
  int$(a)\cap \betak = \emptyset$. A similar definition gives the
  notion of elementary $\betak$--arcs. Let ${\mathcal {A}}$ denote the
  set of all elementary arcs ($\alphak$-- or $\betak$--) of the
  diagram.  It follows from the definition that an elementary arc $a$
  is in the boundary of two (not necessarily distinct) elementary
  domains $D_{l}$ and $D_{r}$.
\end{defn}

Let $\x,\y \in \Gen$ be two generators and consider $D\in \pi _2(\x,\y
)$ with $D\geq 0$. A topological space $S$, together with a tiling on
it, and a map $f\colon S \to \Sigma $ can be built from $D$ in the
following way.  If an elementary domain $D_i$ appears in $D$ with
multiplicity $n_i> 0$ then take $n_i$ copies of $D_i$ and denote them
by $D_i ^{(1)}, D_i ^{(2)}, \ldots , D_i ^{(n_i)}$. Suppose now that
$a\subset \alpha_t$ is an $\alphak$--elementary arc in the boundary of
the elementary domains $D_i$ and $D_j$, and assume without loss of
generality that $n_i \leq n_j$.  Then glue $D_i ^{(1)}$ to $D_j
^{(1)}, D_i ^{(2)}$ to $D_j ^{(2)}, \ldots , D_i ^{(n_i)}$ to $D_j
^{(n_i)}$ along the part of their boundary corresponding to the arc
$a$. If $b\subset \beta _l$ is a $\betak$--elementary arc on the
boundary of $D_i$ and $D_j$ (and once again $n_i \leq n_j$), then glue
$D_i ^{(n_i)}$ to $D_j ^{(n_j)}, D_i ^{(n_i-1)}$ to $D_j ^{(n_j-1)},
\ldots , D_i ^{(1)}$ to $D_j ^{(n_j-n_i+1)}$ along the part of their
boundary corresponding to the arc $b$. The existence of both the
tiling and the contiuous map $f$ obviously follow from the
construction. (Note that this construction is similar to the
construction of the surface in~\cite{Lipshitz}: the only difference is
the manner in which we handle the corner points.)

\begin{prop}
  \label{p:ConstructSurface}
  Suppose that for the domain $D\in \pi _2 (\x,\y)$ we have $D\geq 0$,
  $n_{\w}(D)=0$ and suppose that at each coordinate $x_i\in\x$ and
  $y_j\in\y$, we have that $p_{x_i}(D)$ and $p_{y_j}(D)$ are
  strictly less than $1$.  Then the topological space $S$ defined
  above is a surface with boundary and with corner points
  corresponding to the points $z\in\x\cup\y$ with $p_{z}(D)\equiv
  \frac{1}{4}\pmod{\frac{1}{2}}$.
\end{prop}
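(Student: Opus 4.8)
The plan is to carry out a purely local analysis, verifying that $S$ is a topological manifold with boundary near each of its points and recording where the boundary is not smooth. Two reductions come first. Since $D\geq 0$ and $n_{\w}(D)=0$, every elementary domain occurring in $D$ with positive multiplicity is disjoint from $\w$, hence — the diagram being nice — is a bigon or a rectangle; so $S$ is built by gluing finitely many copies of bigons and rectangles along their edges, each edge a copy of an elementary $\alphak$- or $\betak$-arc. Moreover, by the gluing recipe, along any fixed elementary arc each copy of an adjacent elementary domain is identified with at most one copy of the other adjacent domain (the ``increasing'' rule on $\alphak$-arcs and the ``decreasing'' rule on $\betak$-arcs are injective pairings). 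Consequently, at a point in the interior of a copy of an elementary domain $S$ is locally $\R^2$, and at a point in the interior of a copy of an elementary arc $S$ is locally $\R^2$ or a closed half-plane; in either case a manifold-with-boundary point, the latter a boundary point exactly when the elementary-arc copy is not glued to a second copy. The only points left to examine are the preimages under $f$ of intersection points $p\in\alpha_i\cap\beta_j$.

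Fix such a $p$, with quadrants $A_p,B_p,C_p,D_p$ as in Figure~\ref{f:kereszt} and multiplicities $a_p,b_p,c_p,d_p$ in $D$. A neighbourhood of $f^{-1}(p)$ in $S$ is assembled from $a_p+b_p+c_p+d_p$ ``corner pieces'' — one quarter-disk for each copy of each quadrant — glued along the germs at $p$ of the two $\alphak$-arcs (separating $A_p$ from $D_p$, and $B_p$ from $C_p$) and the two $\betak$-arcs (separating $A_p$ from $B_p$, and $C_p$ from $D_p$), following the recipe. The link of $f^{-1}(p)$ is then the $1$-complex whose edges are the quarter-circle links of these corner pieces, with two quarter-circles joined at an endpoint exactly when the corresponding corner pieces are glued there. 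Since the gluings are pairings, every vertex of this complex has valence at most two, so the link of $f^{-1}(p)$ is a disjoint union of circles and arcs; hence $S$ is a surface with boundary near $f^{-1}(p)$, its boundary points over $p$ being those with an arc link. Finally the cyclic order of the quadrants around $p$ forces the corner pieces along any link component to alternate with period four in the pattern $A_p\to B_p\to C_p\to D_p\to A_p\to\cdots$, so every circle component has length a multiple of $4$, corresponding to an interior point of $S$ over which $f$ is a (possibly branched) cover of a disk. This establishes the first assertion.

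It remains to identify the corners — the boundary points over some $p$ whose link is an arc whose two terminal free edges are of opposite type (one $\alphak$, one $\betak$). Counting the free edges at $p$ — $|a_p-d_p|$ and $|b_p-c_p|$ free $\alphak$-edges, $|a_p-b_p|$ and $|c_p-d_p|$ free $\betak$-edges — and tracing how they pair up into the arc components of the link, one extracts the parity of the number of corners over $p$. The key input is the domain relation $a_p+c_p=b_p+d_p-\delta(p,\x)+\delta(p,\y)$ of equation~\eqref{eq:DomainFromXtoY}: it gives $a_p+b_p+c_p+d_p\equiv \delta(p,\x)+\delta(p,\y)\pmod 2$, so the number of corners over $p$ is odd exactly when $p$ lies in the symmetric difference $\x\bigtriangleup\y$, which is precisely the set of $z\in\x\cup\y$ with $p_z(D)=\tfrac{1}{4}(a_z+b_z+c_z+d_z)\equiv\tfrac14\pmod{\tfrac12}$. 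The hypothesis $p_z(D)<1$ for every $z\in\x\cup\y$ then forces $a_z+b_z+c_z+d_z\leq 3$, i.e.\ $\in\{1,3\}$, at such a $z$; a short case check (using the domain relation to rule out the configurations with no gluing at $z$) shows there is then exactly one corner over each $z\in\x\bigtriangleup\y$, and the same bookkeeping shows there are none over the remaining intersection points (where $a_z+c_z=b_z+d_z$, so no isolated corner piece can occur and all odd-length arcs are excluded). This yields the stated correspondence.

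I expect the main obstacle to be the third paragraph: organizing the free edges at $p$ into the arc components of the link carefully enough to pin down the parity and, in particular, to exclude the spurious odd-length link-arcs when $\delta(p,\x)=\delta(p,\y)$, together with the handful of degenerate configurations in which an elementary arc bounds the same elementary domain on both sides (so a corner piece is glued to another copy of itself) — these either pass through the same pairing/parity bookkeeping verbatim or can be removed by a small perturbation of the diagram. The remaining steps — the two reductions and the ``link is a $1$-manifold'' argument — are routine once the gluing recipe is unwound, and the whole construction is modeled on the one in~\cite{Lipshitz} as noted in the text.
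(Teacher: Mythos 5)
Your proposal follows essentially the same strategy as the paper: reduce to a local analysis at the quarter--disk corners, note that the hypothesis $p_z(D)<1$ caps $a_z+b_z+c_z+d_z$ at $3$ at coordinates of $\x\cup\y$, and then use the domain relation $a_p+c_p-b_p-d_p=\delta(p,\y)-\delta(p,\x)$ to determine where corners appear. The paper carries this out by simply listing the five admissible local distributions $(1,0,0,0)$, $(1,1,0,0)$, $(1,2,0,0)$, $(2,1,0,0)$, $(1,1,1,0)$ and reading off the corner in all but the second, rather than via your link--of--vertex and parity bookkeeping; both arguments also leave implicit the verification that the ``increasing on $\alphak$, decreasing on $\betak$'' gluing rule produces no corners over intersection points outside $\x\cup\y$ (where the multiplicities need not be bounded by $3$). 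Your framing is a bit more systematic, but the content and the key inputs are the same, and the ``short case check'' you defer is exactly the five-case enumeration the paper does explicitly.
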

\begin{proof}
  The above construction provides a smooth manifold-with-boundary over
  each point $t\in D$ which is not one of the coordinates of $\x$ or
  $\y$.  At coordinates of $\x$ or $\y$, there are only a few ways the
  local multiplicities can distribute over the four adjoining regions.
  Indeed, up to cyclic orderings (reading clockwise around $t$) we can
  have one of the following distributions: $(1,0,0,0)$, $(1,1,0,0)$,
  $(1,2,0,0)$, $(2,1,0,0)$, and $(1,1,1,0)$. Following the above
  construction, we see that in all but the second case, the surface
  $S$ has a corner over $t$.
\end{proof} 

\begin{prop}\label{p:mashogy}
Suppose that  $\DD$ is a nice multi-pointed Heegaard
diagram.  Suppose furthermore that for $D\in \pi _2 (\x,\y )$ we
have $D\geq 0$, $n_{\w }(D)=0$ and $\mu
(D)=1$. Then either 

\noindent (a) $e(D)=\frac{1}{2}$ and the point measures
$p_{x_i}(D)=p_{y_i}(D)$ vanish with a single exception $i=j$, for which
both point measures $p_{x_j}(D)=p_{y_j}(D)$  are equal to $\frac{1}{4}$, or

\noindent (b) $e(D)=0$ and the point measures vanish with two exceptional
indices $i,j$ for which
$p_{x_i}(D)=p_{x_j}(D)=p_{y_i}(D)=p_{y_j}(D)=\frac{1}{4}$.
\end{prop}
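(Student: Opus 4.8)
The plan is to analyze the constraint $\mu(D) = e(D) + p(D) = 1$ using the arithmetic of the two summands. First I would recall that $D$ is a nonnegative domain in a \emph{nice} multi-pointed diagram with $n_{\w}(D)=0$, so every elementary domain appearing with positive multiplicity in $D$ is a bigon (Euler measure $\tfrac12$) or a rectangle (Euler measure $0$); in particular $e(D) \in \tfrac12\mathbb Z_{\geq 0}$, and $e(D) = \tfrac12 \cdot (\text{number of bigons, counted with multiplicity, in }D) \geq 0$. Separately, the point measure $p(D) = \sum_i p_{x_i}(D) + \sum_j p_{y_j}(D)$ is a sum of terms each of which is a nonnegative multiple of $\tfrac14$ (an average of four nonnegative integers). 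So $p(D) \in \tfrac14 \mathbb Z_{\geq 0}$ as well, and $e(D) + p(D) = 1$ forces $e(D) \in \{0, \tfrac12, 1\}$ with $p(D) = 1 - e(D)$ correspondingly.

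Next I would rule out $e(D) = 1$ and pin down the local point-measure distributions. The key local input is already essentially in Proposition~\ref{p:ConstructSurface}: at any coordinate $z \in \x \cup \y$, up to cyclic ordering the four surrounding multiplicities form one of the patterns $(1,0,0,0)$, $(1,1,0,0)$, $(1,2,0,0)$, $(2,1,0,0)$, $(1,1,1,0)$ giving $p_z(D) \in \{\tfrac14, \tfrac12, \tfrac34, \ldots\}$ — but for our purposes the relevant fact is that $p_z(D) = 0$ or $p_z(D) \geq \tfrac14$, and moreover if some coordinate is a \emph{moving} coordinate (i.e. $x_i \neq y_i$) then the local multiplicities are forced to be nonzero so $p_{x_i}(D) > 0$ and likewise $p_{y_i}(D) > 0$. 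I would first argue $e(D) \leq \tfrac12$: if $e(D) = 1$ then $p(D) = 0$, so every $p_z(D) = 0$, meaning no coordinate is moving, i.e. $\x = \y$; but then $D$ is a sum of elementary domains bounded entirely by full $\alpha$- and $\beta$-circles (a "periodic domain"), which has $e(D) = 0$ by additivity of the Euler measure over such closed-up regions (each $\alpha_i$ and $\beta_j$ is fully traversed or not at all, so the boundary contribution to $\mu$ vanishes and in fact $e$ of a periodic domain that is nonnegative and avoids basepoints must vanish — alternatively, $\mu(D) = 0$ for periodic domains avoiding $\w$, contradicting $\mu(D) = 1$). Hence $e(D) \in \{0, \tfrac12\}$.

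Then I would treat the two cases. If $e(D) = \tfrac12$: then $p(D) = \tfrac12$, a sum of terms in $\{0\} \cup [\tfrac14, \infty)$; the only way a $\tfrac14\mathbb Z_{\geq 0}$-valued sum of such terms equals $\tfrac12$ is to have exactly two nonzero terms each equal to $\tfrac14$, or one nonzero term equal to $\tfrac12$. The latter would mean a single coordinate $z$ with $p_z(D) = \tfrac12$ and all others $0$; but since $\x$ and $\y$ together contribute their coordinates and the $\alpha$- (resp. $\beta$-) boundary of $D$ is a union of paths pairing up $\x$-coordinates with $\y$-coordinates, the set of coordinates with positive point measure must come in $\x$/$\y$ pairs with equal point measure (the path along $\alpha_i$ from the $\x$-coordinate to the $\y$-coordinate forces both endpoints to have positive local multiplicity when the path is nontrivial, and the point measures at the two ends match by a local count). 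So a lone $p_z(D) = \tfrac12$ is impossible, and we land in case (a): a single index $j$ with $p_{x_j}(D) = p_{y_j}(D) = \tfrac14$ and all other point measures zero. If $e(D) = 0$: then $p(D) = 1$, a sum in $\tfrac14\mathbb Z_{\geq 0}$ of terms each $0$ or $\geq \tfrac14$, which forces four coordinates with point measure $\tfrac14$ (again no single term can be $\tfrac12$ or $\tfrac34$ for the pairing reason, and two terms of $\tfrac14$ would give $p(D) = \tfrac12 \neq 1$), and by the $\x$/$\y$ pairing these four coordinates are $x_i, x_j, y_i, y_j$ for two indices $i \neq j$, giving case (b). The main obstacle I expect is making the "$\x$/$\y$ pairing forces equal point measures and an even count at each moving index" step fully rigorous: one must check that along each $\alpha_i$ (resp.\ $\beta_j$) the boundary path of $D$ either is constant (both point measures at that coordinate $= 0$) or genuinely moves the coordinate with the local multiplicity pattern at both endpoints forced into one of the patterns of Proposition~\ref{p:ConstructSurface}, and that niceness (only bigons and rectangles carry positive multiplicity away from $\w$) prevents the pattern $(1,2,0,0)$ or $(2,1,0,0)$ or $(1,1,1,0)$ from occurring at a point measure that would break the count — in fact I would show those higher patterns force $e(D)$ to be too large or a basepoint to be covered, contradicting our hypotheses, so every moving coordinate has the $(1,0,0,0)$ or $(1,1,0,0)$ pattern, i.e.\ point measure exactly $\tfrac14$ or $\tfrac12$, which closes the argument cleanly.
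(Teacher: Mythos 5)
Your arithmetic setup is correct and, if anything, more careful than the paper's: you correctly note $e(D)\in\tfrac12\Z_{\geq 0}$ and $p(D)\in\tfrac14\Z_{\geq 0}$, and you actually justify excluding $e(D)=1$ (via the observation that $p(D)=0$ forces $\x=\y$ and then a nonnegative periodic domain with $n_{\w}(D)=0$ must vanish by Lemma~\ref{l:mindket}), whereas the paper simply asserts $p(D)$ is a \emph{positive} multiple of $\tfrac14$. You also correctly observe that if $x_i\neq y_i$ then both $p_{x_i}(D)$ and $p_{y_i}(D)$ are at least $\tfrac14$, which rules out the lone term of $\tfrac12$ in the $e(D)=\tfrac12$ case.

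However, there is a genuine gap in the case $e(D)=0$, $p(D)=1$. Your load-bearing claim --- that ``the set of coordinates with positive point measure must come in $\x$/$\y$ pairs with \emph{equal} point measure'' --- is false, and it is exactly what is at issue. The distribution $(p_{x_1}(D),p_{y_1}(D))=(\tfrac34,\tfrac14)$ with $x_1\neq y_1$ is an $\x$/$\y$ pair with \emph{unequal} point measures, it is perfectly compatible with $p(D)=1$ and with the local index relation $a_p+c_p=b_p+d_p\mp 1$ (take local multiplicities $(1,1,1,0)$ at $x_1$ and $(1,0,0,0)$ at $y_1$), so your pairing argument does not exclude it. Likewise the configuration $(p_{x_1}(D),p_{y_1}(D))=(\tfrac12,\tfrac12)$ with $x_1=y_1$ is a pair with \emph{equal} point measures, so it also survives your pairing filter. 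These two configurations are precisely the ones that must be excluded, and your final paragraph only gestures at how (``those higher patterns force $e(D)$ to be too large or a basepoint to be covered'') without carrying it out; high local multiplicity at a corner does not by itself increase $e(D)$, so that route would not close.

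The paper's actual exclusions use two different mechanisms, neither of which appears in your outline. For $(\tfrac12,\tfrac12)$: having $x_1=y_1$ with $p_{x_1}(D)=\tfrac12$ forces the entire $\alpha$- or $\beta$-circle through that point into $\partial D$, and then Lemma~\ref{l:mindket} (basepoints on both sides of every attaching circle) contradicts $n_{\w}(D)=0$. For $(\tfrac34,\tfrac14)$ the argument splits: if some local multiplicity near the $\tfrac34$-corner is $\geq 2$, a propagation argument again produces a covered basepoint; if all multiplicities are $\leq 1$, one must invoke the surface $S$ of Proposition~\ref{p:ConstructSurface}, observe that $\partial S$ is a single circle so $\chi(S)$ is odd, and compare with $e(D)=0$ via Gauss--Bonnet (the $+\tfrac14$ and $-\tfrac14$ corner corrections at the $\tfrac14$- and $\tfrac34$-corners cancel) to reach a parity contradiction. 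This surface-theoretic step is the genuine content of the proposition, and it is missing from your proposal. To repair it you would need to replace the ``pairing with equal point measure'' heuristic with these two case analyses.
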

\begin{proof}
Notice that since $D\geq 0$, by definition $p(D)$ is a positive
multiple of $\frac{1}{4}$ and (since the Heegaard diagram is nice) the
Euler measure $e(D)$ is a nonnegative multiple of
$\frac{1}{2}$. Therefore the condition $\mu (D)=e(D)+p(D)=1$ implies
that either

\noindent (a) $e(D)=\frac{1}{2}$ and $p(D)=\frac{1}{2}$ (implying
$(p_{x_1}(D),p_{y_1}(D))=(\frac{1}{4}, \frac{1}{4})$) or

\noindent (b) $e(D)=0$ and $p(D)=1$. 
\noindent In this latter case we have three possibilities for the point
measures:
\begin{enumerate}
\item $(p_{x_1}(D), p_{x_2}(D), p_{y_1}(D), p_{y_2}(D))=(\frac{1}{4},
  \frac{1}{4}, \frac{1}{4}, \frac{1}{4})$ or
\item $(p_{x_1}(D),p_{y_1}(D))=(\frac{1}{2}, \frac{1}{2})$ or
\item $(p_{x_1}(D),p_{y_1}(D))=(\frac{3}{4}, \frac{1}{4})$.
\end{enumerate}
Case (a) is exactly the first and (1) of Case (b) is the second
possibility given by the proposition.  We claim that (2) and (3) of
Case (b) cannot exist. When the point measure
$(p_{x_1}(D),p_{y_1}(D))$ is equal to $(\frac{1}{2}, \frac{1}{2})$, we
have that the two points $x_1$ and $y_1$ are equal and the entire
$\alphak$-- (or $\betak$--) circle containing it is in the boundary
$\partial D$. Since the domain is in one of its side, by
Lemma~\ref{l:mindket} we conclude that $n_{\w} (D)\neq 0$, a
contradiction.

Finally, we need to exclude the possibility for the point measures to
be equal to $(p_{x_1}(D),p_{y_1}(D))=(\frac{3}{4}, \frac{1}{4})$. This
can in principle happen in one of two ways: either all local
multiplicities around the corner point with multiplicity $\frac{3}{4}$
are bounded above by one, or not. In the latter case, there is some
curve, say $\alpha_i$ (or $\beta_j$, but that is handled in exactly
the same manner) with the property that the local multipicities of $D$
at the corner point are strictly greater on one side of $\alpha_i$
than they are on the other. From this, it follows globally that the
local multiplicities of $D$ are strictly greater on one side of
$\alpha_i$ than they are on the other. Since $D$ is a nonnegative
domain, it follows that $D$ contains all the elementary domains on one
side of $\alpha_i$. In view of Lemma~\ref{l:mindket}, this violates
the condition that $n_{\ws}(D)=0$. We return now to the case where all
local multiplicities are $\leq 1$. In this case,
Proposition~\ref{p:ConstructSurface} constructs a surface $S$ mapping
to $D$. It is easy to see that $S$ has a single boundary component,
and hence its Euler characteristic is congruent to $1\pmod{2}$. Note
that $S$ is an unbranched cover of a subsurface of $\Sigma$, and hence its Euler
measure is calculated as the Euler measure of $D$.  On the other hand,
by the Gauss-Bonet theorem, the Euler measure of $S$ coincides with
its Euler characteristic (since the correction terms coming from the
two corners cancel). But the Euler measure $e(D)$ is zero,
contradicting $\chi (S)\equiv 1$ (mod 2).
\end{proof}

We now give the following result essentially
from~\cite[Theorem~3.3]{SW}, where it is shown that the Maslov index
one pseudo-holomorphic disks in a nice diagram are (empty) embedded
bigons and rectangles.  The proof of this result consists of two
parts. In the first, it is shown that the properties of the index
formula ensure that the holomorphic curves that need to be counted
correspond to bigons and rectangles mapping into $\Sigma$. The second,
combinatorial part, shows that such bigons and rectangles are in fact
embedded.

The version we need here is slightly different. It states that the
index one (as defined by Equation~\eqref{eq:MaslovIndex}) nonnegative
domains are embedded bigons and rectangles. Again, the argument can be
thought of as consisting of two parts. In the first part, it is shown
that a nonnegative, index one domain corresponds to an immersed bigon or
rectangle (this is, effectively, Proposition~\ref{p:mashogy} above).
Once this is done, the proof that the corresponding domain is in fact
an embedded bigon or rectangle proceeds exactly as in~\cite{SW}.

\begin{prop}\label{p:atfog}
The space $\mxy$ 
of empty rectangles and bigons connecting $\x$ and $\y$ can be described by
\[
\mxy= \{ D\in \pi _2(\x, \y) \mid D\geq 0,\  n_{\w }(D)=0,\  \mu (D)=1 \}. 
\]
\end{prop}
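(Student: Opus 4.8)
The plan is to prove the two inclusions separately; the containment of $\mxy$ in the set on the right is a quick computation, while the reverse containment is the substantive half, handled along the lines indicated in the discussion preceding the proposition (Proposition~\ref{p:mashogy} together with the combinatorial input of Sarkar--Wang).

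For the inclusion ``$\subseteq$'', recall that $\mxy$ is by construction nonempty only when $\x$ and $\y$ agree in all but one coordinate, in which case its elements are empty bigons, or in all but two coordinates, in which case they are empty rectangles. If $D$ is an empty bigon, then by the remark following Definition~\ref{def:EmptyPolygon} it is the union of a single elementary bigon with some elementary rectangles, so $D\geq 0$, $n_{\w}(D)=0$, and $e(D)=\tfrac12$; at each of its two corners $D$ occupies exactly one of the four quadrants, so $p(D)=\tfrac14+\tfrac14$ and $\mu(D)=e(D)+p(D)=1$. If $D$ is an empty rectangle it is a union of elementary rectangles, so $e(D)=0$, while its four corners each contribute $\tfrac14$ to the point measure, whence $\mu(D)=1$ again. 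In both cases $D$ belongs to the right-hand side.

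For the inclusion ``$\supseteq$'', let $D\in\pi_2(\x,\y)$ satisfy $D\geq 0$, $n_{\w}(D)=0$ and $\mu(D)=1$. By Proposition~\ref{p:mashogy}, $D$ is of type (a) --- then $\x,\y$ differ in one coordinate, $e(D)=\tfrac12$, and the only nonzero point measures are the value $\tfrac14$ at that coordinate of $\x$ and of $\y$ --- or of type (b) --- then $\x,\y$ differ in two coordinates, $e(D)=0$, and the only nonzero point measures are $\tfrac14$ at those two coordinates of $\x$ and of $\y$. Next I would check that every local multiplicity of $D$ equals $0$ or $1$: if some multiplicity exceeded $1$, then, exactly as in the last paragraph of the proof of Proposition~\ref{p:mashogy}, there would be a curve $\alpha_i$ (or $\beta_j$) across which the local multiplicities of $D$ are everywhere strictly larger on one side than on the other, and nonnegativity of $D$ would force $D$ to contain every elementary domain on that side, contradicting $n_{\w}(D)=0$ by Lemma~\ref{l:mindket}. (Near the moving coordinates the multiplicities are already $(1,0,0,0)$ up to rotation, since the point measure there is $\tfrac14$.) Granting this, Proposition~\ref{p:ConstructSurface} applies and yields a tiled surface $S$ with boundary together with a map $f\colon S\to\Sigma$ whose corner points are precisely the moving coordinates --- two in type (a), four in type (b). Since $f$ realizes $S$ as an unbranched cover onto its image, the Euler measure of $S$ equals $e(D)$, and Gauss--Bonnet gives $\chi(S)=e(S)+\tfrac14\cdot(\#\text{corners})$, that is $\chi(S)=\tfrac12+\tfrac12=1$ in type (a) and $\chi(S)=0+1=1$ in type (b). Since $S$ has nonempty boundary and, as in the proof of Proposition~\ref{p:mashogy}, a single boundary circle, the identity $\chi(S)=1$ forces $S$ to be a disk. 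Thus $f$ exhibits $D$ as an \emph{immersed} bigon (type (a)) or rectangle (type (b)) in $\Sigma$.

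It remains to promote ``immersed'' to ``embedded'': one must show that $f$ is injective, so that the support $s(D)$ is an embedded disk with the prescribed $2n$ boundary vertices and alternating $\alphak$-- and $\betak$--arcs, which is exactly the assertion $D\in\mxy$. Here the index-theoretic part of the original argument has been replaced by Proposition~\ref{p:mashogy} and the surface of Proposition~\ref{p:ConstructSurface}, and the remaining purely combinatorial argument of \cite[Theorem~3.3]{SW} --- which uses the global combinatorics of the nice diagram to rule out overlaps of the image --- applies verbatim. I expect this final passage from an immersed to an embedded polygon to be the main obstacle, the preceding local analysis being comparatively routine once Propositions~\ref{p:mashogy} and~\ref{p:ConstructSurface} are in hand.
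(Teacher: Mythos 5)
Your proposal follows essentially the same route as the paper: forward inclusion by directly computing the Euler and point measures of an empty bigon or rectangle; reverse inclusion by combining Proposition~\ref{p:mashogy} and Proposition~\ref{p:ConstructSurface} to produce the tiled surface $S\to\Sigma$, showing $\chi(S)=1$, and then invoking the combinatorial argument of \cite[Theorem~3.3]{SW} to upgrade the resulting immersed disk to an embedded one. Your explicit Gauss--Bonnet bookkeeping is a welcome unpacking of the paper's terse ``In view of Proposition~\ref{p:mashogy}, $S$ is a disk.''

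However, the parenthetical step in which you argue that every local multiplicity is $0$ or $1$ does not hold up as written, and it is also unnecessary at that point. The hypothesis of Proposition~\ref{p:ConstructSurface} is only that $p_{x_i}(D), p_{y_j}(D)<1$, which Proposition~\ref{p:mashogy} already gives you; no global bound on the multiplicities is needed to build $S$ or to run Gauss--Bonnet (the map $f$ is an unbranched cover regardless). More seriously, the argument you borrow from the last paragraph of the proof of Proposition~\ref{p:mashogy} is tailored to a corner whose four adjacent quadrants have local multiplicities like $(2,1,0,0)$: in that situation the difference of $D$ across a curve is everywhere strictly positive because it can only jump at the two corners and is already positive near one of them. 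But once Proposition~\ref{p:mashogy} has placed you in case (a) or (b), all corner configurations are $(1,0,0,0)$ up to rotation, so the difference across any $\alphak$- or $\betak$-curve is always $0$ or $\pm 1$. A region of multiplicity $\ge 2$ sitting several curve-crossings away from a basepoint is therefore not ruled out by this sign argument at all; no curve is forced to have a uniformly positive jump. The bound on multiplicities is in fact a consequence of the \emph{global} combinatorial argument in \cite[Theorem~3.3]{SW} that you cite for embeddedness, so the correct move is simply to delete your intermediate claim and let Sarkar--Wang's argument establish both the multiplicity bound and injectivity of $f$ in one stroke, exactly as the paper does.
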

\begin{proof}
For $D\in \mxy $ we have, by definition, that $D\in \pi _2 (\x,\y )$
and $D\geq 0$ (since all coefficients are either 0 or 1). Since $D$ is
empty, we have also $n_{\w}(D)=0$ and that all coordinates which do
not move have vanishing point measure. In addition, the moving
coordinates have point measure $\frac{1}{4}$. Now if $D$ is a bigon,
then it contains a unique elementary bigon, hence its Euler measure is
$\frac{1}{2}$, and since it has two moving coordinates $x_i, y_i$, we
conclude that $p(D)=\frac{1}{2}$, implying $\mu (D)=1$. If $D$ is a
rectangle, then $e(D)=0$ and since there are four moving coordinates,
we get $p(D)=1$, showing again that $\mu (D)=1$. Therefore $D\in \mxy$
satisfies the three required properties.

Assume conversely that $D\in \pi _2(\x, \y)$ satisfies $D\geq 0$,
$n_{\w}(D)=0$ and $\mu (D)=1$. Notice first that these properties
imply that $p(D)\leq 1$, hence, in particular, $D$ is empty,
i.e. does not contain any coordinate $x_i$ (or $y_i$) in its
interior. Consider
now the surface-with-boundary $S$ with the map $f\colon S\to \Sigma$
representing $D$ and the tiling given on $S$, as constructed in
Proposition~\ref{p:ConstructSurface}.  In view of
Proposition~\ref{p:mashogy}, $S$ is a disk with either two or four
corner points, each of which has $90^\circ$ angle. Now the same line
of reasoning as the one given for \cite[Theorem~3.3]{SW} shows that
$f$ is an embedding and $D$ is a bigon or rectangle, hence $D\in
\mxy$, concluding the proof.
\end{proof}
With the above identity, the boundary operator $\partialaa _{\DD}$ can
be rewritten on $\x \in \Gen$ as
\[
\partialaa _{\DD } \x = \sum _{\y \in \Gen}\sum _{\{ D\in \pi
  _2(\x, \y)\big| D\geq 0, n_{\w }(D)=0, \mu (D)=1\} } \y.
\]

We now turn back to the study of the pair $(\CFaa (\DD ), \partialaa
_{\DD})$.

\begin{thm}\label{thm:chaincomplex}
The pair $(\CFaa (\DD ) , \partialaa _{\DD}) $ is a chain complex,
that is, $\partialaa _{\DD}^2=0$.
\end{thm}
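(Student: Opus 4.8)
The plan is to reduce the identity $\partialaa_\DD^2 = 0$ to a purely combinatorial statement about cutting a domain into two pieces, and then to settle that statement by the local case analysis already carried out in \cite[Theorem~3.3]{SW}. First I would unwind the definitions. For generators $\x, \z \in \Gen$, the coefficient of $\z$ in $\partialaa_\DD^2\x$ is $\sum_{\y \in \Gen} m_{\x\y}\,m_{\y\z} \in \Field$. By Proposition~\ref{p:atfog}, $m_{\x\y}$ is the mod~$2$ count of the empty bigons and rectangles from $\x$ to $\y$, i.e.\ of the domains $D_1 \in \pi_2(\x,\y)$ with $D_1 \geq 0$, $n_\w(D_1) = 0$ and $\mu(D_1) = 1$, and likewise for $m_{\y\z}$. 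Thus the coefficient of $\z$ in $\partialaa_\DD^2\x$ equals, mod~$2$, the number of ordered pairs $(D_1, D_2)$ in which $D_1$ is an empty bigon or rectangle from $\x$ to some $\y \in \Gen$ and $D_2$ is an empty bigon or rectangle from that same $\y$ to $\z$. Every such pair determines its sum $D = D_1 + D_2 \in \pi_2(\x,\z)$, with $D \geq 0$ and $n_\w(D) = 0$. So it suffices to prove: for each fixed pair $\x, \z$ and each $D \in \pi_2(\x,\z)$ with $D \geq 0$ and $n_\w(D) = 0$, the number of ways to write $D = D_1 + D_2$ as an ordered sum of empty bigons/rectangles through some intermediate generator $\y \in \Gen$ is even.

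Next I would fix such a $D$ (assumed to admit at least one decomposition) and analyze it. Since the local multiplicities of $D_1$ and $D_2$ all lie in $\{0,1\}$, those of $D$ lie in $\{0,1,2\}$; and $n_\w(D) = 0$ keeps $D$ away from the basepoints, so its support sits in a planar part of $\Sigma$ and every local picture coincides with the corresponding once-pointed one. One can moreover pin down the behaviour of $D$ at its coordinates exactly as in Proposition~\ref{p:mashogy} and Proposition~\ref{p:ConstructSurface} --- using that $\mu(D) = 2$ by additivity of the Maslov index --- to see that the four local multiplicities at each non-moving coordinate are bounded by one and to control the corner structure of the associated surface. These constraints force $s(D)$ into one of a short, explicit list of shapes. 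When $s(D_1)$ and $s(D_2)$ have disjoint interiors and move disjoint sets of coordinates, the two pieces can be reglued in the opposite order, giving exactly one further decomposition; in all remaining cases $s(D)$ is a longer rectangle, an $L$--shaped hexagonal region, a bigon and a rectangle glued along an arc, or one of finitely many similarly small configurations, and a direct inspection of each shows that it can be sliced into an admissible ordered pair of empty bigons/rectangles in exactly two ways. In every case the number of decompositions is $2$, hence even, and so $\partialaa_\DD^2 = 0$.

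I expect the main obstacle to be this final case analysis: one must be certain that the enumeration of the ways two empty bigons/rectangles (with combined multiplicity at most $2$ and missing all basepoints) can overlap is exhaustive, and that no configuration --- including potential degeneracies such as isotopic $\alphak$-- and $\betak$--curves --- yields an odd number of re-decompositions. This is precisely the combinatorial core of \cite[Theorem~3.3]{SW}; I would invoke that argument, with the single remark that passing to multi-pointed diagrams changes nothing, since the condition $n_\w(D) = 0$ confines every relevant $D$ to a planar subsurface on which the local analysis is identical to the once-pointed case treated there.
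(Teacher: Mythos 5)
Your high-level strategy is the same as the paper's: express $\langle \partialaa_\DD^2\x,\z\rangle$ as the mod~$2$ count of ordered pairs of empty bigons/rectangles through an intermediate generator, and show this count is even by exhibiting a fixed-point-free involution on the set of such pairs (equivalently, by showing each total domain $D = D_1 + D_2$ admits an even number of decompositions). That part is fine, and fixing the total domain first as you do is a harmless reorganization of the paper's argument, which works directly with the pairs but whose involution indeed preserves $s(D_1\cup D_2)$.

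The gap is in what you do with the case analysis, which is the entire mathematical content of this theorem. You appeal to \cite[Theorem~3.3]{SW}, but that theorem is not about $\partialaa^2 = 0$ at all: it is the statement that Maslov-index-one pseudo-holomorphic disks in a nice diagram are embedded bigons and rectangles, and it is the input to Proposition~\ref{p:atfog}, not to the present theorem. Sarkar--Wang deduce $\partial^2 = 0$ by matching the combinatorial differential to the analytic one, for which $\partial^2=0$ comes from Gromov compactness; there is no combinatorial enumeration of re-decompositions in \cite{SW} for you to cite. Moreover, one of the explicit goals of this paper is a proof of $\partialaa^2=0$ that avoids the holomorphic theory, so even a correct citation to an analytically-grounded argument would not be in the spirit of what is being asked.

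Beyond the misdirected citation, the case list you sketch is not exhaustive and the claim ``each admits exactly two decompositions'' is left unverified. Some of the cases the paper must treat and which are not captured by ``a longer rectangle, an $L$-shaped region, or a bigon glued to a rectangle'' include: a bigon followed by a bigon with the \emph{same} moving coordinate, where the new intermediate generator $\y'$ must be located by tracking where a $\betak$--arc first exits the bigon (and there are two subcases depending on which of the two endpoints has point measure $\tfrac{3}{4}$); a pair of rectangles with exactly two moving coordinates whose supports overlap, where the re-decomposed rectangles may ``spiral around'' (cf.\ Figure~\ref{f:partcase2}(f)); the three-moving-coordinate rectangle case; and the mixed bigon/rectangle case with a shared corner, where the type of $D_1'$ versus $D_2'$ can swap. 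You also assert that the four local multiplicities at each non-moving coordinate are bounded by one, but this follows from the point-measure bounds in Proposition~\ref{p:mashogy} applied to each of $D_1$ and $D_2$ separately, not from a bound on $D$ alone, and it needs saying. In short, your outline is pointed in the right direction, but the proof is the case analysis, and it has to be carried out rather than deferred.
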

\begin{proof}
We need to show that for any
pair of generators $\x, \z$ the matrix element
\[
\langle \partialaa _{\DD}^2 \x , \z \rangle
\]
is zero (mod 2). Notice that the above matrix element is simply the
cardinality of the set
\[
{\mathfrak {N}}_{\x \z}= \bigcup _{\y \in \Gen} \mxy \times 
{\mathfrak{M}}_{\y ,\z}.
\]
The proof that ${\mathfrak {N}}= {\mathfrak {N}}_{\x \z}$ contains an
even number of elements will be partitioned into three subcases.
Define
\[
{\mathfrak {N}}(b)=\{ (D_1, D_2)\in {\mathfrak {N}}\mid {\mbox {both }}
D_i {\mbox { are bigons}}\}.
\]
In a similar vein, define ${\mathfrak {N}}(r)$ as the set of pairs
$(D_1, D_2)\in {\mathfrak {N}}$ when both $D_i$ are rectangles, and
finally define the set of mixed pairs ${\mathfrak {N}}(m)$ consisting
of those $(D_1, D_2)$ of ${\mathfrak {N}}$ in which one of the domains
is a bigon and the other one is a rectangle. Obviously
\[
{\mathfrak {N}}={\mathfrak {N}}(b)\cup {\mathfrak {N}}(r) \cup {\mathfrak {N}}(m)
\]
is a disjoint union, and if all the above subsets have even
cardinality, the evenness of $\vert {\mathfrak {N}}\vert $ follows at
once.

\noindent {\bf Case 1: Examination of ${\mathfrak {N}}(b)$.}  The set
${\mathfrak {N}}(b)$ will be further partitioned as follows: Suppose
that $(D_1, D_2)\in {\mathfrak {N}}(b)$. Let $i$ (and $j$) denote the
moving coordinate of $D_1$ (of $D_2$ resp.). Let ${\mathfrak
  {N}}(b)_1$ denote the set of pairs $(D_1, D_2)\in {\mathfrak
  {N}}(b)$ with $i=j$, and ${\mathfrak {N}}(b)_2$ the set of those
pairs where $i\neq j$.  

Suppose that the pair of bigons $(D_1, D_2)\in \mxy \times {\mathfrak
  {M}}_{\y ,\z}$ for some $\y \in \Gen$ is in ${\mathfrak {N}}(b)_2$.
Since the moving coordinate of $D_1$ is $i$, we get that $x_j=y_j$,
therefore the bigon $D_2 \in {\mathfrak{M}}_{\y , \z}$ can be regarded
as a bigon $D_1'=D_2\in {\mathfrak {M}}_{\x ,\y '}$, where the
coordinates of $\y'$ are given as $y_k'=x_k(=z_k)$ for all $k\neq
i,j$, $y_i'=x_i$ and $y_j'=z_j$. With this choice of $\y'$ it is easy
to see that $D_2'=D_1$ can be regarded as an element of
${\mathfrak{M}}_{\y ', \z}$, since $y_i=z_i$,
cf. Figure~\ref{f:part}(a). (The diagram also indicates that although
the moving coordinates of $D_1$ and $D_2$ are disjoint, the embedded
bigons themselves might intersect, requiring no alteration of the
above argument.)  Since $(D_1, D_2)$ and $(D_1', D_2')$ clearly
determine each other, we found a pairing on ${\mathfrak {N}}(b)_2$,
showing that the cardinality of this set is even.

\begin{figure}[ht]
\begin{center}
\epsfig{file=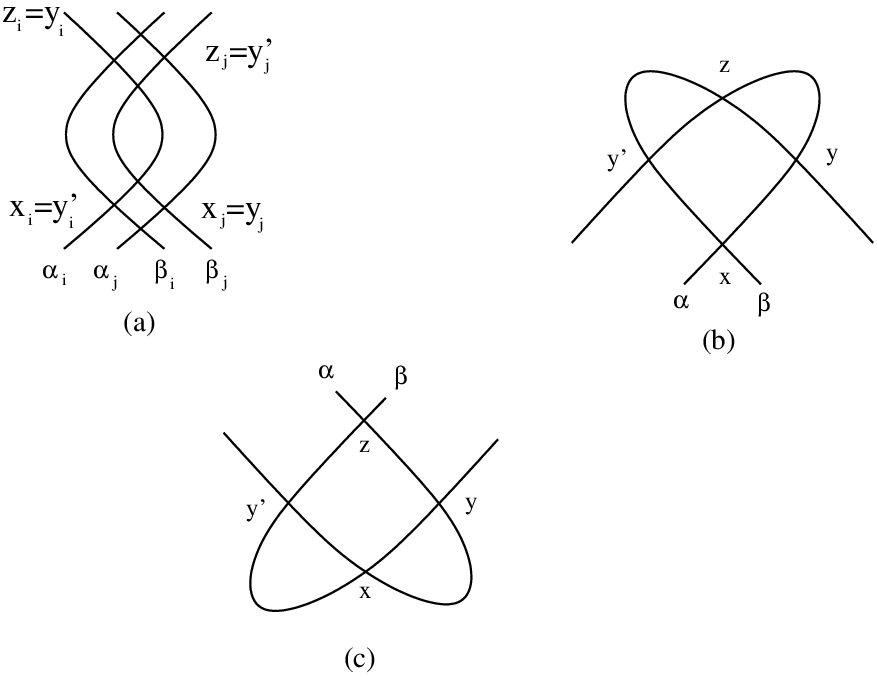, height=9cm}
\end{center}
\caption{{\bf Geometric possibilities when examining the matrix
    element $\langle \partialaa _{\DD}^2 \x , \z \rangle$.} The
  diagrams here correspond to Case 1 in the proof of
  Theorem~\ref{thm:chaincomplex}. In (a) the case of two moving coordinates
is illustrated, while in (b) and (c) we show the two possible
scenarios for one moving coordinate. The difference of these last 
diagrams is in the point measures of the starting ($x$) and final ($z$)
coordinates.}
\label{f:part}
\end{figure}

Consider now an element $(D_1, D_2)$ of ${\mathfrak {N}}(b)_1$.
Suppose that $D_1\in \mxy$ while $D_2\in {\mathfrak{M}}_{\y , \z}$.
Let $\alpha _i, \beta _i$ denote the curves containing the moving
coordinates $x_i, y_i, z_i$.  It follows from the orientation
convention that the elementary domain having multiplicity 1 in $D_2$
and starting at $\y$ is neighbouring the elementary domain at $\y$
which has multiplicity 1 in $D_1$. These elementary domains therefore
share either an elementary $\alphak$-- or a $\betak$--arc. The two
cases being symmetric, we assume the former. This means that the
domain $D_2$ starts back on the same elementary $\alphak$--arc
$\subset \alpha _i$ on which $D_1$ arrived to $y_i\in \y$. Now there
are two cases to consider. The segment either reaches first the
coordinate $x_i$ of $\x$ or $z_i$ of $\z$ on $\alpha _i$. In the first
case $p_{x_i}(D_1\cup D_2)=\frac{3}{4}$ and $p_{z_i}(D_1\cup
D_2)=\frac{1}{4}$, while in the second case $p_{x_i}(D_1\cup
D_2)=\frac{1}{4}$ and $p_{z_i}(D_1\cup D_2)=\frac{3}{4}$. Suppose that
we reach $z_i$ first --- the other case can be handles by obvious
modifications. This means that the $\betak$--curve $\beta_i$ enters
the bigon $D_1$ at $z_i$. Since at $x_i$ a portion of $\beta _i$ is
out of $D_1$, at some point $\beta_i$ must leave $D_1$. It can leave
the bigon between $z_i$ and $y_i$ (entering another bigon, which it
must also leave at some point), or between $z_i$ and $x_i$.  Since
$\beta_i$ will return to $x_i$, there exists an intersection point
$y_i'$ between $z_i$ and $x_i$ at which $\beta _i$ first leaves the
bigon. This argument then produces another intersection point $\y '$
with the coordinate on $\alpha_i$ and $\beta_i$ being $y_i'$, and puts
the situation in the form depicted in Figures~\ref{f:part}(b) and (c)
(depending whether the point measure of $D_1\cup D_2$ is $\frac{3}{4}$
at $z$ or at $x$).  So the pair $(D_1,D_2)\in \mxy \times
{\mathfrak{M}}_{\y ,\z}$ determines another pair $(D_1', D_2')\in
{\mathfrak{M}}_{\x ,\y'} \times {\mathfrak{M}}_{\y' , \z}$ (such that
the supports $s(D_1\cup D_2)$ and $s(D_1'\cup D_2')$ are equal),
defining a pairing on ${\mathfrak {N}}(b)_1$.  Since $y_i$ and $y_i'$
determine each other, we get that the cardinality of ${\mathfrak
  {N}}(b)_1$ is even.  This step concludes the proof that the
cardinality of ${\mathfrak {N}}(b)$ is even.

\noindent {\bf Case 2: Examination of ${\mathfrak {N}}(r)$.}  As
before, the set under examination can be partitioned further according
to the number of moving coordinates. This number is at least two
(since a single rectangle involves two moving coordinates), and for
the same reason it is at most four.  The case of four moving
coordinates means that the element $(D_1, D_2)$ involves two disjoint
rectangles (again, in the sense that although the supports might
intersect, the moving coordinates are on distinct curves,
cf. Figure~\ref{f:partcase2}(a)), and the evenness of the set of these
pairs follows from the same principle for ${\mathfrak {N}}(b)_2$.

Suppose that there are three moving coordinates. Suppose that the
corner point $y_i$ of $D_1$ is also a corner point of $D_2$. (Since
there are three moving coordinates, the two rectangles must share a
corner.) As before, the elementary domain in $D_2$ starting at $y_i$
shares a side with $D_1$; suppose it is an $\alphak $--arc. Moving
towards the $\x$--coordinate $x_i$ on that circle, we reach either
$x_i$ or the $\z$--coordinate $z_i$ first. As before, this means that
we found a point ($x_i$ or $z_i$) with the property that the point
meaure of $D_1\cup D_2$ at that point is $\frac{3}{4}$. This fact
provides an arc which cuts $D_1\cup D_2$ into two other rectangles and
provides the new coordinates for $\y '$. Since the triples $(\x, \y,
\z)$ and $(\x, \y ', \z)$ determine each other, and $\y \neq \y'$, the
evenness of the cardinality of the set at hand follows at once. See 
Figure~\ref{f:partcase2}(b).

\begin{figure}[ht]
\begin{center}
\epsfig{file=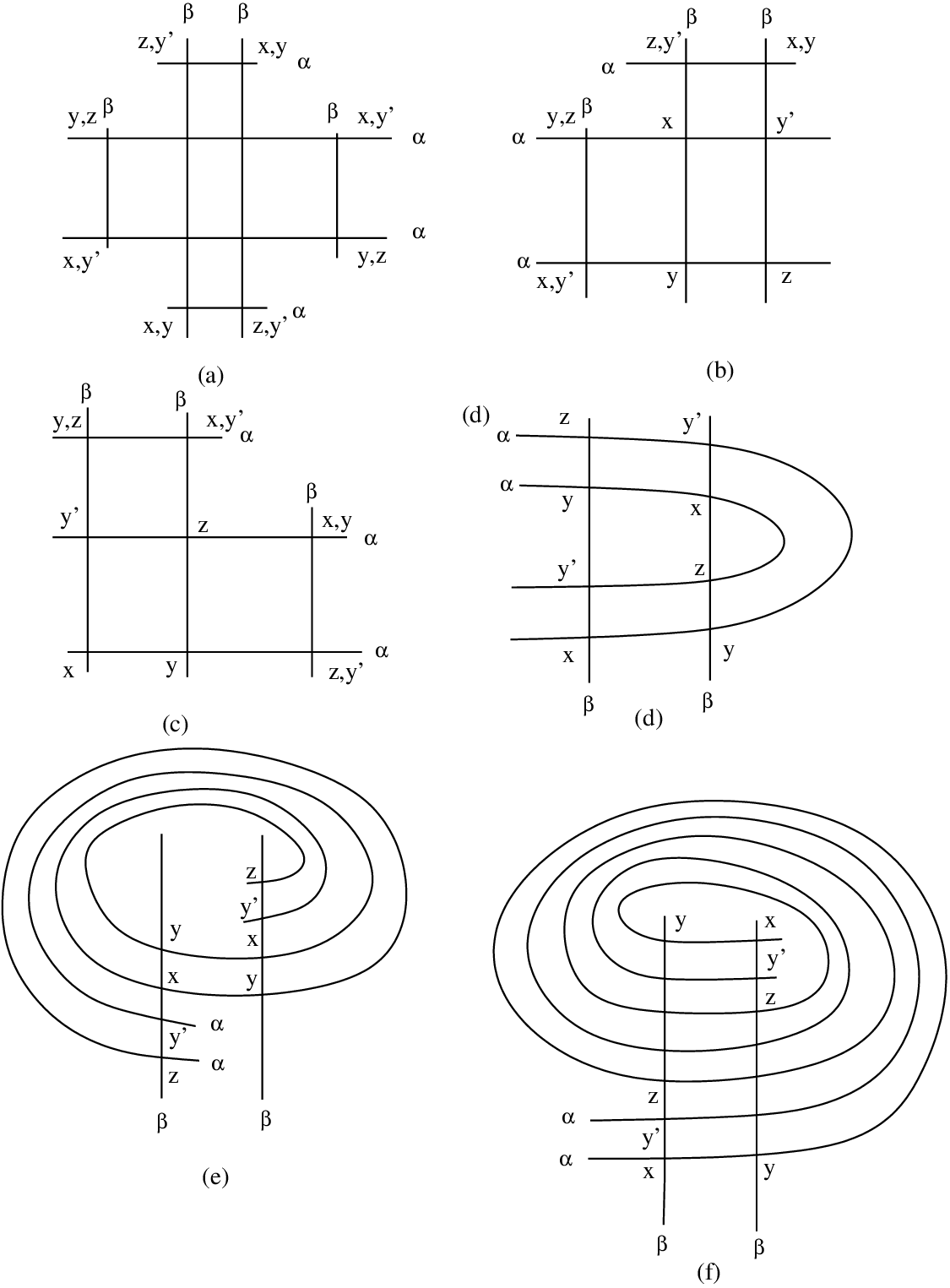, height=10cm}
\end{center}
\caption{{\bf Geometric possibilities when examining the matrix
    element $\langle \partialaa _{\DD}^2 \x , \z \rangle$.} The
  diagrams here correspond to Case 2 in the proof of
  Theorem~\ref{thm:chaincomplex}.} 
\label{f:partcase2}
\end{figure}

Finally we deal with the case of a pair $(D_1, D_2)$ of rectangles
with exactly two moving coordinates, which are on the curves $\alpha
_i, \alpha _j$ and $\beta _i , \beta _j$. Suppose that $D_1$ is a rectangle
from $\x$ to $\y$ and $D_2$ from $\y $ to $\z$ (with coordinates
$x_i, y_i, z_i$ on $\alpha _i$ and $x_j , y_j, z_j $ on $\alpha _j$).
There are various possibilities for $D_2$ to start at $\y$: for each of the
two coordinates $y_i, y_j$ it can start by sharing either an $\alphak$- or a 
$\betak$-edge with $D_1$, and on that side the $\z$-coordinate can be
reached before or after the corresponding $\x$-coordinate. We will deal with 
these different cases separately.

Let us start with the case when $D_2$ shares the $\alphak$-edge at
$y_j$ and the $\betak$-edge at $y_i$ with $D_1$. If on both arcs the
$\x$-coordinate comes before the $\z$-coordinate, then at $x_j$ there
is a quadrant where the multiplicity of $D_2$ is at least two, a
contradiction. Similarly, if both $\z$-coordinates come before the
$\x$-coordinates, we find a corner $z_i$ of $D_2$ with point measure
strictly greater than $\frac{1}{4}$, a contradiction again. Therefore
we can assume that on $\beta _j$ the domain $D_2$ reaches the
$\z$-coordinate first and then the $\x$-coordinate, while on $\alpha
_i$ we encounter $x_i$ first and then $z_i$. To avoid the
contradiction above, it can happen only if the point measurse of $z_j$
and $x_i$ are $\frac{3}{4}$; see Figure~\ref{f:partcase2}(d) for an
example. Since we are dealing with empty rectangles, the further
intersection points $y_i'$ and $y_j'$ become readily visible as the
point where $\alpha _i$ and $\beta _j$ leave $D_1$ and $D_2$
respectively.  This argument then provides the pair $(D_1', D_2')$
verifying the existence of the pairing on ${\mathfrak {N}}(r)_2$.
Notice that in this case one $\x$- and one $\z$-coordinate comes with
point measure $\frac{3}{4}$ in $D_1\cup D_2$.

Next assume that $D_2$ shares the $\alphak$-arcs at both $y_i$ and
$y_j$ with $D_1$. Since in a rectangle opposite sides support the same
number of elementary rectangles, when traveling from the $\y$- to the
$\z$-coordinate on either sides of $D_1$ we first reach either the
$\x$- or the $\z$-coordinate on both $\alphak$-curves. Suppose that we
reach the $\x$-coordinate first. The usual argument (using the tiling
of $D_2$ by elementary rectangles and the fact that in this case the
two $\x$-coordinates will have point measure $\frac{3}{4}$) provides
the appropriate point $\y '$ as the intersection points where the
$\alphak$-curves leave $D_2$.  This argument verifies the result in
this specific case, cf. Figure~\ref{f:partcase2}(e).  Similarly, if
the $\z$-coordinate is reached first, then we get a configuration
where the two $\z$-coordinates have point measure $\frac{3}{4}$ in
$D_1\cup D_2$. Once again, the $\y '$-coordinates can be easily
identified as the intersections of the $\betak$-curves with the
$\alphak$-curves when these latter leave $D_1\cup D_2$. Notice that in these
cases, depending on the position of $z_i, z_j$ with respect to the $\x$- and
the $\y$-coordinates, the domains $D_1',D_2'$ might ``spiral around'',
as it is illustrated by Figure~\ref{f:partcase2}(f).

\noindent {\bf Case 3: Examination of ${\mathfrak {N}}(m)$.}  Once again, we
subdivide our study according to the number of moving coordinates. By the fact
that we have a pair $(D_1, D_2)$ of a rectangle and a bigon, this number is
either two or three.  When it is three, the usual argument dealing with
disjoint domains (in the sense of having different moving coordinates, cf.
Figure~\ref{f:partuj}(a) for an example of intersecting interiors) proves
evenness for that subcase.  Assuming two moving coordinates, consider the case
when $D_1$ is a rectangle and $D_2$ is a bigon. (The other case is symmetric,
requiring only obvious modifications of the argument.)  Suppose that $y_i$ is
the corner of $D_2$ which moves to $z_i$. Start moving again towards $z_i$,
and distingush two cases whether we reach $z_i$ or $x_i$ first. In either case
we get a portion of an $\alphak$-- or $\betak$--curve which enters the
rectangle $D_1$ (or the bigon $D_2$ in the other case), which must eventually
leave it, producing a new intersection point $y_i'$. Notice that we get two
combinatorially different cases depending on how the arc leaves the bigon;
prototypes of the two cases are depicted by Figures~\ref{f:partuj}(b) and (c).
In the first case the domain $D_1'$ connecting $\x$ and $\y '$ is a bigon,
while from $\y '$ to $\z$ the domain $D_2'$ is a rectangle (recall that $D_1$
from $\x$ to $\y$ was a rectangle, while $D_2$ from $\y$ to $\z$ was a bigon).
In the second case the domain $D_1'$ connecting $\x$ and $\y'$ is still a
rectangle, while $D_2'$ from $\y '$ to $\z$ is a bigon. Neverthless, the same
argument as before shows that there are an even number of pairs in ${\mathfrak
  {N}}(m)$.  

Putting all three cases together, it follows that $\vert {\mathfrak
  {N}} _{\x , \z }\vert$ is even, concluding the proof of $\partialaa
_{\DD}^2 =0$.
\end{proof}
\begin{figure}[ht]
\begin{center}
\epsfig{file=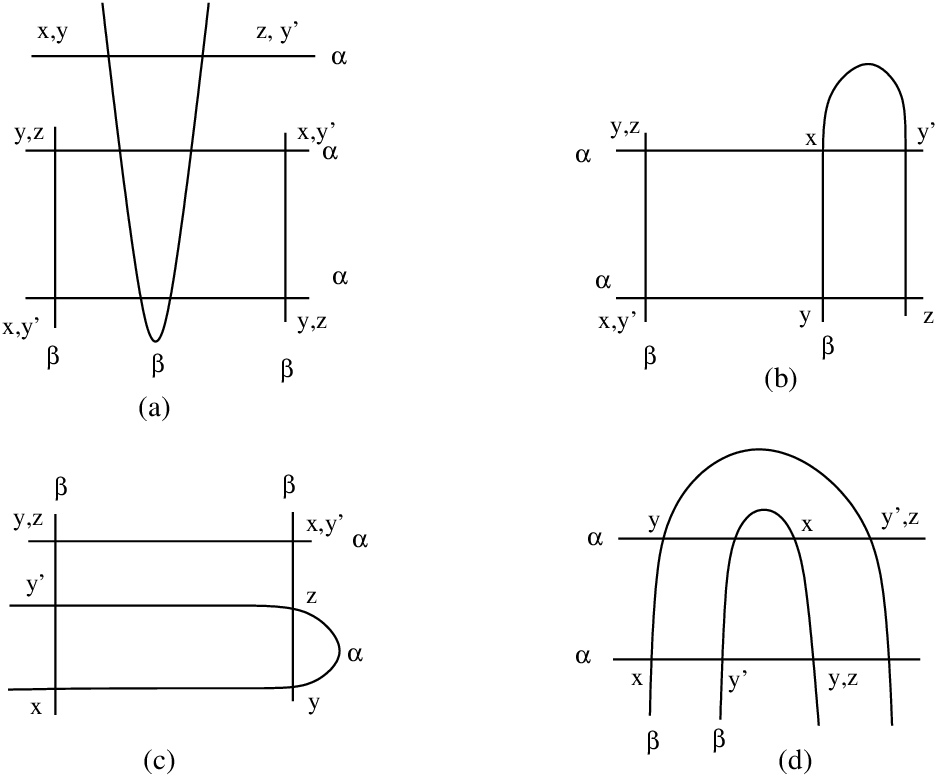, height=9cm}
\end{center}
\caption{{\bf Geometric possibilities when examining the matrix
    element $\langle \partialaa _{\DD}^2 \x , \z \rangle$.} Diagrams
  describe possibilities corresponding to Case 3 in the proof of
  Theorem~\ref{thm:chaincomplex}.}
\label{f:partuj}
\end{figure}

With the above result at hand, we have
\begin{defn}
{\whelm Suppose that $\DD$ is a nice multi-pointed Heegaard diagram. The
  \emph{combinatorial Heegaard Floer group} of $\DD$ is the homology
  group $\HFaa (\DD )=H_* (\CFaa (\DD ) , \partialaa _{\DD })$ of the
  chain complex $(\CFaa (\DD ), \partialaa _{\DD})$ defined above.}
\end{defn}

Recall that according to Definition~\ref{def:equiv} two pairs $(V_i,
b_i)$ of $\Field$--vector spaces (with $\Field = \bfz /2\bfz$) and
positive integers are equivalent if (assuming $b_1\geq b_2$) we have
that, as vector spaces, $V_1\cong V_2 \otimes (\Field \oplus \Field
)^{(b_1-b_2)}$. The equivalence class of $(V_i, b_i)$ is usually
denoted by $[V_i, b_i]$.
\begin{defn}
{\whelm Suppose that $\DD=(\Sigma , \alphak , \betak , \w)$ is a nice
  multi-pointed Heegaard diagram.  The \emph{stable Heegaard Floer
    homology of $\DD$} is defined as the equivalence class of the pair
\[
[\HFaa (\DD ) , b(\DD )],
\]
where $\HFaa (\DD )$ is the Floer homology group of $\DD$ defined as
above, and $b(\DD )$ is the cardinality of the basepoint set $\w$.  We
will denote the stable Heegaard Floer group of $\DD$ by $\HFast (\DD
)$.}
\end{defn}

\section{Nice moves and chain complexes}
\label{sec:cpx}
Although a nice move can change the chain complex derived from the
Heegaard diagram, as it will be shown in this section, the homology
of the chain complex defined in the previous section remains unchanged
under nice isotopy and handle slide and changes in a controlled manner
under nice stabilization.  Suppose therefore that $\DD_1$ is a nice
diagram and $\DD_2$ is given by the application of a nice move on
$\DD_1$. The main result of this section is summarized by the
following

\begin{thm}\label{thm:nicemoves}
If the nice move applied to $\DD_1$ to get $\DD_2$ is a nice isotopy,
a nice handle slide or a nice type-$g$ stabilization then
$( \CFaa (\DD_2),
\partialaa _{\DD_2})$ has homology isomorphic to that of 
$( \CFaa (\DD_1), \partialaa _{\DD_1})$, i.e.
\[
\HFaa (\DD_2)\cong \HFaa (\DD_1).
\]
If $\DD_2$ is given by a nice type-$b$ stabilization on $\DD_1$ then
\[
\HFaa (\DD_2 ) \cong \HFaa (\DD_1)\otimes (\Field \oplus \Field ).
\]
\end{thm}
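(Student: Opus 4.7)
The four moves split naturally into two groups: the stabilizations admit a direct chain-level identification, while nice isotopies and nice handle slides require an intermediate triple diagram and a triangle-counting chain map. I will handle them in this order.

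For a nice type-$b$ stabilization, the new curves $\alpha'$, $\beta'$ meet in exactly two points $p,q$ and are disjoint from every other $\alphak$- or $\betak$-curve, so $\Gen_2$ is canonically identified with $\Gen_1\times\{p,q\}$, giving a vector-space isomorphism $\CFaa(\DD_2)\cong\CFaa(\DD_1)\otimes(\F\oplus\F)$. The two lens-shaped bigons between $p$ and $q$ in the stabilization region each contain a basepoint (the old $w$ on one side of $\alpha'$, the new $w'$ on the other), so they are not empty; any other empty polygon with support meeting the stabilization region would have to contain one of these disks, hence also a basepoint. Thus $\partialaa_{\DD_2}=\partialaa_{\DD_1}\otimes\mathrm{id}$, giving the claimed tensor-product formula. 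A nice type-$g$ stabilization is handled the same way: the new $\alpha'$, $\beta'$ intersect transversely in a single point $p$ and meet no other curves, so $\Gen_2\cong\Gen_1$ by $\x\mapsto\x\cup\{p\}$; the point $p$ can never be a moving coordinate of an nonnegative domain of Maslov index $1$ (since it is the only intersection on $\alpha'$), and any domain whose support wraps around $\alpha'$ or $\beta'$ must enclose the basepoint that sits in the stabilization region, so empty polygons in $\DD_2$ are in bijection with those of $\DD_1$ and $\partialaa_{\DD_2}$ matches $\partialaa_{\DD_1}$.

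For nice isotopies and nice handle slides, the generator sets of $\DD_1$ and $\DD_2$ are not naturally in bijection. I would introduce a triple diagram. For a nice isotopy of the curve $\alpha_i\in\alphak$ to $\alpha_i'$, consider $(\Sigma,\alphak_1,\alphak_2,\betak,\w)$ with $\alphak_1=\alphak$ and $\alphak_2=(\alphak\setminus\{\alpha_i\})\cup\{\alpha_i'\}$, where the curves $\alpha_j$ and $\alpha_j'$ for $j\neq i$ are transverse small translates, and $\alpha_i, \alpha_i'$ meet in the intersection points produced by the finger move along the nice arc. There is a distinguished top-dimensional generator $\Theta\in\Ta\cap\Ta'$, consisting of one preferred intersection point on each $(\alpha_j,\alpha_j')$-pair. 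Define
\[
\Phi\colon\CFaa(\DD_1)\longrightarrow\CFaa(\DD_2),\qquad
\Phi(\x)=\sum_{\y\in\Gen_2}\#\{\,T \text{ an empty triangle from }(\x,\Theta)\text{ to }\y\,\}\cdot\y,
\]
where an empty triangle is an index $0$ nonnegative domain in the triple diagram connecting $\x,\Theta,\y$ with $n_\w=0$; the analysis of Proposition~\ref{p:atfog} applied in the triple setting shows these are honest combinatorial triangles. The niceness of the isotopy arc ensures these empty triangles are localized in the region swept by the finger move and that their count is finite. Nice handle slides are treated identically with $\alphak_2$ replaced by the result of the slide, where the analogous top generator exists because the niceness of the handle-slide arc confines all new intersections to a single elementary rectangle whose position is controlled.

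The main obstacle is then to verify that $\Phi$ is a chain map and a quasi-isomorphism. Chain-mapness, $\partialaa_{\DD_2}\circ\Phi+\Phi\circ\partialaa_{\DD_1}=0$, follows by a degeneration-and-pairing argument on index $1$ nonnegative domains in the triple diagram, strictly parallel to the case analysis of ${\mathfrak N}(b)$, ${\mathfrak N}(r)$, ${\mathfrak N}(m)$ used to prove $\partialaa^2=0$ in Theorem~\ref{thm:chaincomplex}: the boundary of a one-dimensional family of empty triangles decomposes as a triangle followed by a bigon/rectangle, or vice versa, and these pair up in twos. To show $\Phi$ is invertible, I would construct a candidate inverse $\Psi$ from the reversed triple diagram $(\Sigma,\alphak_2,\alphak_1,\betak,\w)$ (with its own top generator $\Theta'$), and exhibit a chain homotopy between $\Psi\circ\Phi$ and the identity via counts of empty quadrilaterals in the quadruple diagram $(\Sigma,\alphak_1,\alphak_2,\alphak_1,\betak,\w)$ --- the standard Heegaard Floer ``small triangles'' computation shows that with the canonical top generators the composition of the triangle maps is, up to chain homotopy, the identity, because the only contributing quadrilaterals are thin neighborhoods of the identification between the two copies of $\alphak_1$. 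Carrying out this quadrilateral analysis combinatorially, using nothing more than index $0$ and index $1$ nonnegative domains and the local niceness hypotheses, is the most technical part of the argument; everything else is a direct localization to the region affected by the move.
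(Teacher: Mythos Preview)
Your treatment of the two stabilizations is close to the paper's, with one slip in the type-$b$ case: it is not true that both bigons between the new intersection points contain a basepoint. With $\alpha'$ and $\beta'$ bounding disks $D_{\alpha'}$, $D_{\beta'}$ meeting in two points, there are three elementary bigons: the lens $D_{\alpha'}\cap D_{\beta'}$ (which contains $w'$) and the two lunes $D_{\alpha'}\setminus D_{\beta'}$ and $D_{\beta'}\setminus D_{\alpha'}$ (which contain no basepoint). The two lunes are both empty bigons from one new intersection point to the other, and it is their cancellation modulo $2$---not the presence of basepoints---that kills the cross term in the differential. This is easy to repair, and the type-$g$ argument is fine and matches the paper.

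For nice isotopies and handle slides your approach diverges substantially from the paper's, and the sketch has real gaps. The paper does not introduce a triple diagram or count triangles at all. Instead it works entirely inside $\CFaa(\DD_2)$: the new generators (those containing a coordinate $f_i$ or $e_i$ created by the move) come in pairs related by a canonical small bigon (isotopy case) or rectangle (handle slide case), and a filtration argument shows that these pairs span an acyclic subcomplex $K$. The quotient $Q=\CFaa(\DD_2)/K$ has a basis naturally identified with $\Gen_1$, and an explicit combinatorial analysis of ``chains'' of domains (Propositions~\ref{prop:IdentifyChains} and~\ref{prop:hIdentifyChains}) shows that under this identification $\partial_Q$ agrees with $\partialaa_{\DD_1}$. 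Everything happens in a single nice diagram; no auxiliary $\Theta$ generator, no index-zero triangles, no quadruple diagram.

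Your triangle-map program is in the spirit of the holomorphic invariance proofs and of the combinatorial cobordism maps of Lipshitz--Manolescu--Wang, and something like it can probably be made to work, but several of the steps you pass over are nontrivial. The triple diagram you describe is not nice: the small translates $\alpha_j'$ of $\alpha_j$ for $j\neq i$ create many thin bigons with no basepoints, so one cannot simply invoke Proposition~\ref{p:atfog} (which concerns index-one domains in a nice diagram) to conclude that index-zero domains are embedded triangles. Identifying a canonical top generator $\Theta$ combinatorially, proving the chain-map identity by a triangle-degeneration analysis, and especially carrying out the quadrilateral homotopy for $\Psi\circ\Phi\simeq\mathrm{id}$ each require substantial additional argument that you have not supplied. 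The paper's subcomplex/quotient method sidesteps all of this by exploiting directly the local combinatorics of the nice move.
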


\begin{cor}\label{c:nochange}
Suppose that the nice diagrams $\DD_1$ and $\DD_2$ are nicely
connected. Then the stable Heegaard Floer homologies $\HFast (\DD_1)$
and $\HFast (\DD_2)$ are equal.
\end{cor}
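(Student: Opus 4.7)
The plan is to induct on the length of the sequence of nice diagrams connecting $\DD_1$ to $\DD_2$, reducing to the case where the two diagrams differ by a single nice move. For the induction step, it suffices to show that the equivalence class $[\HFaa(\DD), b(\DD)]$ is preserved under each of the four types of nice moves listed in Theorem~\ref{thm:nicemoves}. Since being equal as stable invariants is transitive, this will complete the argument.

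I would first handle the three moves that fix $\HFaa$: nice isotopy, nice handle slide, and nice type-$g$ stabilization. In each case, $b(\DD)$ is unchanged, since a nice isotopy and nice handle slide leave the basepoint set $\w$ intact, and a nice type-$g$ stabilization is defined so as to keep the number of basepoints fixed. Combining $\HFaa(\DD_2)\cong \HFaa(\DD_1)$ with $b(\DD_2)=b(\DD_1)$ gives $[\HFaa(\DD_2),b(\DD_2)]=[\HFaa(\DD_1),b(\DD_1)]$ directly from Definition~\ref{def:equiv}.

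The remaining case is a nice type-$b$ stabilization, where Theorem~\ref{thm:nicemoves} gives $\HFaa(\DD_2)\cong \HFaa(\DD_1)\otimes(\Field\oplus\Field)$, and where the construction of Section~\ref{sec:second} inserts exactly one new basepoint, so $b(\DD_2)=b(\DD_1)+1$. The pair $(\HFaa(\DD_2),b(\DD_2))=(\HFaa(\DD_1)\otimes(\Field\oplus\Field),b(\DD_1)+1)$ is then equivalent to $(\HFaa(\DD_1),b(\DD_1))$ by Definition~\ref{def:equiv} applied with $b_1=b(\DD_1)+1$ and $b_2=b(\DD_1)$, since the stabilizing factor matches the single copy of $\Field\oplus\Field$ tensored onto the homology. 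Thus $\HFast(\DD_2)=\HFast(\DD_1)$ in this case as well.

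There is essentially no obstacle here beyond bookkeeping: the definition of the equivalence relation on pairs $(V,b)$ was tuned precisely so that one extra basepoint compensates for one extra tensor factor of $\Field\oplus\Field$. The only item to verify is that each move changes the basepoint count in the way I have just described, which is immediate from the definitions in Section~\ref{sec:second}. Stringing together the equalities along the sequence $(\DD^{(i)})_{i=1}^n$ witnessing that $\DD_1$ and $\DD_2$ are nicely connected then yields $\HFast(\DD_1)=\HFast(\DD_2)$, as required.
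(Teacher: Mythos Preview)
Your proof is correct and follows essentially the same approach as the paper: induct on the length of the connecting sequence to reduce to a single nice move, then use Theorem~\ref{thm:nicemoves} together with the observation that isotopies, handle slides, and type-$g$ stabilizations leave $b(\DD)$ unchanged, while a type-$b$ stabilization increases $b(\DD)$ by one, exactly compensating for the extra $\Field\oplus\Field$ factor via Definition~\ref{def:equiv}.
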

\begin{proof}
Applying an induction on the length of the chain of nice diagrams
connecting $\DD_1$ and $\DD_2$, it is enough to verify the statement
only in the case when $\DD_1$ and $\DD_2$ differ by a single nice
move.  If the nice move is a nice isotopy, a nice handle slide or a
nice type-$g$ stabilization, then (according to
Theorem~\ref{thm:nicemoves}) the Heegaard Floer homologies $\HFaa
(\DD_1)$ and $\HFaa (\DD_2)$ are isomorphic.  Since in these steps
the number of basepoints remains unchanged, we readily get that
\[
\HFast (\DD_1) = \HFast (\DD_2).
\]
If the nice move connecting $\DD_1$ and $\DD_2$ is a nice type-$b$ 
stabilization, then (once again, by Theorem~\ref{thm:nicemoves}) 
we have that 
$\HFaa (\DD_2 ) \cong \HFaa (\DD_1)\otimes (\Field \oplus \Field )$,
while (by the definition of a nice type-$b$ stabilization) we also get
that $b(\DD_2)= b (\DD_1)+1$.
According to the definition of the stable Heegaard Floer invariants, therefore
we conclude that 
\[
\HFast (\DD_1)=\HFast (\DD_2)
\]
in this case as well,
concluding the proof of the corollary.
\end{proof}

In proving Theorem~\ref{thm:nicemoves} we consider first the cases
where $\DD_2$ is given by a nice isotopy or a nice handle slide on
$\DD_1$, which will be followed by the (much simpler) cases of
stabilizations.  The strategy in the first two cases (isotopy and
handle slide) will be the following.  We will specify a subcomplex
$(K, \partialaa _K)$ of $(\CFaa (\DD_2), \partialaa _{\DD_2})$,
providing a quotient complex $(Q, \partialaa _Q)$. A relatively simple
argument will show that $H_*(K, \partial _K)=0$, and that $Q$ (as a
vector space) is isomorphic to $\CFaa (\DD_1)$. Based on the special
circumstances then we will show that we can pick a vector space
isomorphism between $\CFaa (\DD_1)$ and $Q$ which is, in fact, an
isomorphism of chain complexes.  Theorem~\ref{thm:nicemoves} then
follows quickly. In fact, the vector space $\CFaa$ comes with a
natural basis (given by the set of the generators), and since $K$ is
also defined using a subset of the generators, the quotient complex
$(Q, \partialaa _Q )$ also comes with a natural basis. It will be
therefore useful to describe $\partialaa _Q$ explicitly in this
special basis. We start our discussion with some formal aspects of the
situation.

\subsection{Formal aspects}
\label{ssec:formasp}
Suppose that a chain complex $(B, \partial _B)$ is given, and $B$ has
a preferred basis ${\mathcal {B}}$.  Assume that for $\x, \y \in
{\mathcal {B}}$ the matrix element $\langle \partial _B \x,\y
\rangle$ (defining the boundary map $\partial _B$) is given by the
(mod 2 defined) number $n_{\x\y}$.  Suppose furthermore that
${\mathcal {B}}$ can be given as a disjoint union ${\mathcal
  {B}}_1\cup {\mathcal {K}}\cup {\mathcal {L}}$, and there is a fixed
bijection $J\colon {\mathcal {K}}\to {\mathcal {L}}$. In addition,
assume that on the vector space spanned by basis vectors corresponding
to the elements of ${\mathcal {L}}$ there is a $\bfq$--filtration. (In
the following, vectors corresponding to elements of ${\mathcal {B}}$
will be denoted by boldface letters, while their linear combinations
by usual italics.)  Suppose that for a basis element $\kaa$
corresponding to an element in ${\mathcal {K}}$ we have that
\begin{align}
\partial _B \kaa  = & J(\kaa )\label{eq:filt} \\
& +
  {\mbox {higher filtration level in }} {\mathcal {L}} \nonumber \\
& + 
  {\mbox {further terms
      with coordiates only in }} {\mathcal {B}}_1\cup {\mathcal {K}}.
\end{align}
Consider now the subcomplex $K$ generated by the vectors corresponding
to the elements of ${\mathcal {K}}$, together with their $\partial
_B$--images, and let $Q=B/K$. From the property given by
Equation~\eqref{eq:filt} it follows that the quotient complex
$Q$ is generated by the vectors
$\{ \x +K\mid \x \in {\mathcal {B}}_1\}$, and is equipped with a
differential given by
\[
\partial _Q (\x +K)=\partial _B \x +K,
\]
for $\x \in {\mathcal {B}}_1$. Suppose now that $\partial _B \x = b_1 + k + l$,
where $b_1, k, l$ are vectors in the subspaces of $B$ spanned by basis
elements corresponding to elements of ${\mathcal {B}}_1, {\mathcal
  {K}}$ and ${\mathcal {L}}$, respectively. By the existence of the
filtration there are  further vectors $k', k''$ (also in the subspace
spanned by ${\mathcal {K}}$)
such that
\[
\partial _B (\x +k')=b_1'+k'',
\]
i.e., the coordinates in ${\mathcal {L}}$ can be eliminated.
Therefore $\partial _B \x = b_1 ' +k''+\partial _B k'$, which is
of the shape $b_1'+K$. This identity means that for $\x \in {\mathcal
  {B}}_1$we have $\partial _Q(\x +K)=b_1' +K$ (with $b_1'$ having
coordinates from ${\mathcal {B}}_1$ only).

Our next goal is to determine the matrix $(\langle \partial _Q (\x
+K), \y +K \rangle )_{\x, \y \in {\mathcal {B}}_1}$ defining the
boundary map $\partial _Q$. Recall that the boundary operator
$\partial _B $ is given by the matrix $N=(n _{\x ,\y})_{\x,\y \in
  {\mathcal {B}}}$ in the basis ${\mathcal {B}}= {\mathcal {B}}_1 \cup
{\mathcal {K}}\cup {\mathcal {L}}$; the matrix $N$ can be viewed as a
$3\times 3$ block matrix. The blocks in the block matrix will be
indexed by the sets of basis vectors they connect, for example the
upper right block is $N_{{\mathcal {B}}_1, {\mathcal {L}}}$. We also
assume that the basis vectors in ${\mathcal {L}}$ are ordered
according to their filtration.  Notice that by \eqref{eq:filt} the
block $N_{{\mathcal {K}}, {\mathcal {L}}}$ is lower triangular, with
only 1's in the diagonal, hence can be written as $I+T$, where $T$ is a
strictly lower triangular matrix. Note also that $I+T$ is obviously
invertible: $(I+T)^{-1}=\sum _{k=0}^{\infty} T^k$ where the sum is
finite, since $T$ is strictly lower triangular.

In order to determine the matrix of the boundary map $\partial _Q$ in the
basis $\{ \x +K\mid \x \in {\mathcal {B}}_1\}$, we need to examine the 
linear transformation $\partial _B$ in another
basis (since the basis vectors in ${\mathcal {K}}\cup {\mathcal {L}}$ do not 
generate a subcomplex). Let us therefore denote the set of vectors
\[
\{ \partial _B \kaa \mid \kaa \in {\mathcal {K}}\}
\]
by $\partial _B {\mathcal {K}}$.  If we take the matrix of $\partial _B$ 
in the basis
${\mathcal {B}}_1\cup {\mathcal {K}}\cup \partial _B{\mathcal {K}}$,
then the matrix of $\partial _Q$ (in the basis 
$\{ \x +K \mid \x \in {\mathcal {B}}_1\}$) is simply the upper left block of this matrix
(written in a block form corresponding to the blocks of basis vectors suggested by
the notation). In order to determine this upper left block, let us denote the
matrix of the base change
\[
{\mathcal {B}}_1\cup {\mathcal {K}}\cup \partial _B {\mathcal {K}} \to
{\mathcal {B}}_1\cup {\mathcal {K}}\cup {\mathcal {L}}
\]
by $G$. (In the following linear algebra considerations we always regard
vectors written in given bases as row vectors and the application of a linear transformation
will correspond to matrix multiplication from the right with the matrix of the linear transformation
in the given basis.)

\begin{lem}
The matrix $G$ is of the form
\[
\begin{pmatrix}
I & 0 & 0\\
0 & I & 0\\
U & V & Z
\end{pmatrix}
\]
where 
$U=N_{{\mathcal {K}}, {\mathcal {B}}_1}$, $V=N_{{\mathcal {K}}, {\mathcal {K}}}$
and $Z=N_{{\mathcal {K}}, {\mathcal {L}}}$.
\end{lem}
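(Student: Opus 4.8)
The plan is to read off the three block-rows of $G$ one at a time, directly from the description of the new basis, using the convention (stated just before the lemma) that a vector written in a basis is a row vector and that the rows of $G$ are the coordinates of the new basis vectors ${\mathcal {B}}_1\cup {\mathcal {K}}\cup \partial _B {\mathcal {K}}$ expressed in the old basis ${\mathcal {B}}_1\cup {\mathcal {K}}\cup {\mathcal {L}}$.

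First I would handle the first two block-rows. Every $\x \in {\mathcal {B}}_1$ is itself one of the old basis vectors, so its coordinate row is the corresponding standard basis vector supported entirely in the ${\mathcal {B}}_1$-block; assembling these rows over all of ${\mathcal {B}}_1$ produces the block-row $(I,0,0)$. In exactly the same way each $\kaa \in {\mathcal {K}}$ is an old basis vector, and the corresponding rows assemble into the block-row $(0,I,0)$.

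Next I would expand the remaining $|{\mathcal {K}}|$ rows. For $\kaa \in {\mathcal {K}}$ the definition of $\partial _B$ via the matrix $N$ gives $\partial _B \kaa = \sum _{\y \in {\mathcal {B}}} n_{\kaa \y}\, \y$. Splitting the index set ${\mathcal {B}}$ as ${\mathcal {B}}_1\cup {\mathcal {K}}\cup {\mathcal {L}}$, the ${\mathcal {B}}_1$--, ${\mathcal {K}}$-- and ${\mathcal {L}}$--components of the coordinate row of $\partial _B \kaa$ are precisely the $\kaa$-rows of $N_{{\mathcal {K}}, {\mathcal {B}}_1}$, $N_{{\mathcal {K}}, {\mathcal {K}}}$ and $N_{{\mathcal {K}}, {\mathcal {L}}}$. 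Letting $\kaa$ range over ${\mathcal {K}}$, these rows assemble into the block-row $(U,V,Z)$ with $U=N_{{\mathcal {K}}, {\mathcal {B}}_1}$, $V=N_{{\mathcal {K}}, {\mathcal {K}}}$, $Z=N_{{\mathcal {K}}, {\mathcal {L}}}$, which is exactly the asserted shape of $G$.

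The only point that needs a word --- and the closest thing here to an obstacle --- is that ${\mathcal {B}}_1\cup {\mathcal {K}}\cup \partial _B {\mathcal {K}}$ really is a basis, so that speaking of the base-change matrix $G$ is legitimate. This is immediate from the form just derived: $G$ is block lower triangular with diagonal blocks $I$, $I$ and $Z=N_{{\mathcal {K}}, {\mathcal {L}}}$, and $Z$ was already observed (from Equation~\eqref{eq:filt} and the filtration ordering of ${\mathcal {L}}$) to equal $I+T$ with $T$ strictly lower triangular, hence invertible. I would include this remark for completeness, although it follows at once from the discussion preceding the lemma.
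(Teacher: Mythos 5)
Your proof is correct and follows essentially the same route as the paper: the first two block-rows come from the fact that the ${\mathcal B}_1$- and ${\mathcal K}$-vectors are common to both bases, and the last block-row is read off from the expansion $\partial_B\kaa = \sum n_{\kaa\y}\y$. Your extra remark that $G$ is genuinely a change-of-basis matrix (since it is block lower triangular with invertible diagonal blocks) is a reasonable completeness check; the paper addresses this implicitly by writing down $G^{-1}$ immediately after the lemma.
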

\begin{proof}
Notice that the basis vectors given by the elements of ${\mathcal {B}}_1$ and 
${\mathcal {K}}$ are in both bases, hence they map into themselves. In matrix terms,
this means that the  rows corresponding to these basis elements contain a single
1 each, and 0's otherwise. Finally, if we take an element $\partial _B \kaa$ and apply 
$G$ to it, we get its expansion in the basis ${\mathcal {B}}_1\cup {\mathcal {K}}\cup {\mathcal {L}}$,
where the coordinates exactly give the matrices 
$N_{{\mathcal {K}}, {\mathcal {B}}_1}, N_{{\mathcal {K}}, {\mathcal {K}}}$
and $N_{{\mathcal {K}}, {\mathcal {L}}}$. This observation then concludes our argument.
\end{proof}
Now it is easy to verify that 
$G^{-1}$ is  equal to
\[
\begin{pmatrix}
I & 0 & 0\\
0 & I & 0\\
-Z^{-1}U & -Z^{-1}V & Z^{-1}
\end{pmatrix}
\]
Consequently, the matrix $M$ of $\partial _B$ in the new basis
${\mathcal {B}}_1\cup {\mathcal {K}}\cup \partial _B{\mathcal {K}}$ is
equal to $GNG^{-1}$, hence its upper left block (representing
$\partial _Q$ in the basis $\{ \x +K \mid \x \in {\mathcal {B}}_1\}$)
is equal to 
$N_{{\mathcal {B}}_1, {\mathcal {B}}_1}-N_{{\mathcal {B}}_1, {\mathcal
    {L}}}\cdot Z^{-1}U$. Since $Z=N_{{\mathcal {K}}, {\mathcal {L}}}=I+T$ is invertible and
$Z^{-1}=(I+T)^{-1}=\sum _{k=0}^{\infty} T^k$, as a conclusion we get
\begin{lem}\label{l:qleir}
The matrix of $\partial _Q$ in the basis $\{ \x +K \mid \x \in
{\mathcal {B}}_1\}$ is equal to
\[
N_{{\mathcal {B}}_1, {\mathcal {B}}_1}-
\sum _{k=0}^{\infty} N_{{\mathcal {B}}_1, {\mathcal {L}}}\cdot T^k\cdot
N_{{\mathcal {K}}, {\mathcal {B}}_1}.
\]
\qed
\end{lem}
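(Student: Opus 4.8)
The plan is to unwind the block computation that the preceding paragraphs have already set up; almost all of the work is done, and what remains is pure bookkeeping. Recall from the discussion above that the matrix of $\partial_Q$ in the basis $\{\x+K \mid \x\in\mathcal{B}_1\}$ is the upper-left block of $M=GNG^{-1}$, where $G$ and $G^{-1}$ have the explicit $3\times 3$ block forms displayed above, and $N$ is the matrix of $\partial_B$ written as a $3\times 3$ block matrix with respect to the ordered partition $\mathcal{B}=\mathcal{B}_1\cup\mathcal{K}\cup\mathcal{L}$, with blocks $N_{\mathcal{X},\mathcal{Y}}$ for $\mathcal{X},\mathcal{Y}\in\{\mathcal{B}_1,\mathcal{K},\mathcal{L}\}$. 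In this notation $N_{\mathcal{K},\mathcal{B}_1}=U$, $N_{\mathcal{K},\mathcal{K}}=V$ and $N_{\mathcal{K},\mathcal{L}}=Z=I+T$.

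First I would compute $GN$. Since the first two block-rows of $G$ coincide with those of the identity matrix, the first two block-rows of $GN$ are simply the first two block-rows of $N$; in particular the $\mathcal{B}_1$-row of $GN$ equals $(N_{\mathcal{B}_1,\mathcal{B}_1},\, N_{\mathcal{B}_1,\mathcal{K}},\, N_{\mathcal{B}_1,\mathcal{L}})$. Then I would multiply this row by the first block-column of $G^{-1}$, whose three blocks are $I$, $0$ and $-Z^{-1}U$; this produces the $(\mathcal{B}_1,\mathcal{B}_1)$ block of $M$, namely $N_{\mathcal{B}_1,\mathcal{B}_1}-N_{\mathcal{B}_1,\mathcal{L}}\,Z^{-1}\,U$. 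None of the other eight blocks of $M$ are needed.

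Finally I would substitute the expansion $Z^{-1}=(I+T)^{-1}=\sum_{k=0}^{\infty}T^{k}$, a finite sum since $T$ is strictly lower triangular, together with $U=N_{\mathcal{K},\mathcal{B}_1}$, obtaining $N_{\mathcal{B}_1,\mathcal{B}_1}-\sum_{k=0}^{\infty}N_{\mathcal{B}_1,\mathcal{L}}\,T^{k}\,N_{\mathcal{K},\mathcal{B}_1}$, which is the asserted matrix of $\partial_Q$; since we work over $\Field=\Zmod 2$ the sign in front of the sum is immaterial anyway.

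I do not expect any genuine obstacle here: everything reduces to two block-matrix multiplications once the three bases are fixed. The only conceptual point, which is really taken care of by the paragraph preceding the statement, is the assertion that the differential on the quotient $Q=B/K$ is computed by passing to the basis $\mathcal{B}_1\cup\mathcal{K}\cup\partial_B\mathcal{K}$ and extracting the $\mathcal{B}_1$-to-$\mathcal{B}_1$ block — equivalently, that $\mathcal{B}_1\cup\mathcal{K}\cup\partial_B\mathcal{K}$ is again a basis of $B$, adapted to the subcomplex $K$. This rests solely on the invertibility of $N_{\mathcal{K},\mathcal{L}}=I+T$, which in turn follows from condition~\eqref{eq:filt}.
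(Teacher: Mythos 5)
Your proposal is correct and follows exactly the route the paper takes: extract the upper-left block of $M=GNG^{-1}$ using the explicit block forms of $G$ and $G^{-1}$ already displayed, then substitute $Z^{-1}=\sum_{k\geq 0}T^{k}$ and $U=N_{\mathcal{K},\mathcal{B}_1}$. The paper merely states the result of this block computation without spelling out the two multiplications, so your write-up is just a slightly more explicit version of the same argument.
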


By its definition, $K$ is a subcomplex of $(B, \partial _B)$ (with the
boundary map inherited from $\partial _B$), and by 
Property~\eqref{eq:filt} it easily follows that $H_*(K)=0$.
Now the short exact sequence 
\[
0\to K \to B \to Q \to 0
\]
of chain complexes induces an exact triangle on the homologies, which (by
the vanishing of $H_*(K)$) provides an isomorphism between $H_*(B,
\partial _B)$ and $H_*(Q, \partial _Q)$.  

 In fact, the two chain
complexes $(B, \partial _B)$ and $(Q, \partial _Q)$ are chain homotopy
equivalent. Indeed, define the map $F\colon B \to Q$ by sending a
basis element $\x \in {\mathcal {B}}_1$ to $\x +K\in Q$ and all
elements of $K$ into 0. Let $G\colon Q \to B$ on $\x +K$ for
$\x \in {\mathcal {B}}_1$ be defined by $G(\x +K)=\x +k'$ where $k'$
is in the span of the basis vectors corresponding to elements of
${\mathcal {K}}$ and has the property that $\partial _B (\x +k')$ has
no coordinates in ${\mathcal {L}}$. Notice that such an element $k'$
always exists by Property~\eqref{eq:filt} of the filtration, and the map is
well-defined because of the uniqueness of $k'$: if $k'$ and $k''$ both 
satisfy these conditions, then $\partial _B (k'+k'')$ has no coordinates
in ${\mathcal {L}}$, contradicting Property~\eqref{eq:filt} of the filtration
unless $k'+k''=0$, i.e. $k'=k''$ (mod 2).
\begin{lem}
The maps $F$ and $G$ provide chain homotopies between the chain
complexes $(B, \partial _B)$ and $(Q, \partial _Q )$.
\end{lem}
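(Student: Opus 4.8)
The plan is to verify directly that $F$ and $G$ are chain maps, that $F\circ G=\Id_Q$, and that $G\circ F$ is chain homotopic to $\Id_B$ via an explicit homotopy supported on the subcomplex $K$. First I would check that $F$ is a chain map: since $F$ kills $K$ and $Q=B/K$ carries the induced differential, $F$ is just the quotient projection $B\to B/K$, which is tautologically a chain map. For $G$, I would use the defining property that $\partial_B(\x+k'_\x)$ has no $\mathcal L$-coordinates; writing $G(\x+K)=\x+k'_\x$, one has $\partial_B G(\x+K)=\partial_B\x+\partial_B k'_\x\in \mathcal B_1\oplus\mathcal K$, and reducing mod $K$ this equals $\partial_Q(\x+K)$ by the description of $\partial_Q$ established just above (the $\mathcal L$-coordinates of $\partial_B\x$ have been absorbed into $\partial_B k'_\x$ exactly as in the discussion preceding Lemma~\ref{l:qleir}). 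Hence $G\partial_Q=\partial_B G$.

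Next I would compute the two composites. The composite $F\circ G$ sends $\x+K$ to $F(\x+k'_\x)=\x+K$ since $k'_\x\in K$; so $F\circ G=\Id_Q$. It remains to produce a chain homotopy $H\colon B\to B$ with $G\circ F-\Id_B=\partial_B H+H\partial_B$. On a basis vector $\x\in\mathcal B_1$ we have $G F(\x)=\x+k'_\x$, so $GF-\Id_B$ vanishes modulo $K$ and in fact lands in $K$; on a vector of $\mathcal K$ or its $\partial_B$-image (i.e.\ on $K$ itself) I would check that $F$ is zero, so $GF-\Id_B=-\Id_B$ there. The natural candidate for $H$ is the map built from the bijection $J\colon\mathcal K\to\mathcal L$ and the triangularity of $Z=N_{\mathcal K,\mathcal L}=I+T$: on $J(\kaa)$-type vectors $H$ should invert the "leading term" of $\partial_B\kaa$, i.e.\ $H$ is essentially $Z^{-1}$ read through $J$, extended by $0$ on $\mathcal B_1$. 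Concretely, since $K$ has a filtered basis $\{\partial_B\kaa\}$ whose differential is strictly upper/lower triangular plus identity, $K$ is an acyclic filtered complex and one can write down a contracting homotopy on $K$ explicitly; combining it with the projection onto $K$ given by $\Id-GF$ yields the desired $H$ on all of $B$.

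The main obstacle I expect is bookkeeping rather than conceptual depth: one must be careful that the homotopy $H$ defined on $K$ via $Z^{-1}=\sum_{k\ge 0}T^k$ interacts correctly with the off-diagonal blocks $N_{\mathcal B_1,\mathcal L}$ and $N_{\mathcal K,\mathcal B_1}$, so that the "correction term" $\sum_k N_{\mathcal B_1,\mathcal L}T^kN_{\mathcal K,\mathcal B_1}$ appearing in Lemma~\ref{l:qleir} is exactly what $\partial_B H+H\partial_B$ contributes on $\mathcal B_1$. The cleanest way to organize this is to note that $G F-\Id_B$, written in the block basis $\mathcal B_1\cup\mathcal K\cup\partial_B\mathcal K$, is the projection onto the last block, and that the homotopy equivalence of an acyclic complex with $0$ combined with the split short exact sequence $0\to K\to B\to Q\to 0$ (split as graded vector spaces by the choice of $k'_\x$) automatically produces the homotopy; invoking the standard fact that a short exact sequence of complexes with acyclic sub-object is a chain homotopy equivalence $B\simeq Q$, realized precisely by $F$ and $G$, then finishes the proof. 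I would present the argument in that order: $F$, $G$ are chain maps; $FG=\Id$; acyclicity of $K$ plus the vector-space splitting gives the homotopy $GF\simeq\Id_B$.
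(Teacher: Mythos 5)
Your proposal is correct and follows essentially the same route as the paper: verify that $F$ and $G$ are chain maps, observe $F\circ G=\Id_Q$, and produce an explicit homotopy $H$ for $G\circ F\simeq\Id_B$. Your final recipe $H = h\circ(\Id_B - G F)$, where $h$ is the contracting homotopy on $K$ sending $\partial_B\kaa\mapsto\kaa$ and $\kaa\mapsto 0$, unwinds exactly to the paper's $H$ (zero on ${\mathcal B}_1\cup{\mathcal K}$, and $\partial_B\kaa\mapsto\kaa$ on the third block), so the two constructions coincide. One small slip: you assert that $GF-\Id_B$ written in the block basis ${\mathcal B}_1\cup{\mathcal K}\cup\partial_B{\mathcal K}$ is ``the projection onto the last block.'' In fact $GF+\Id_B$ is the identity on both ${\mathcal K}$ and $\partial_B{\mathcal K}$, and on ${\mathcal B}_1$ it sends $\x$ to $k'_{\x}\in\mathrm{span}({\mathcal K})$, so it is the projection onto all of $K$ (along the image of $G$), not onto $\partial_B{\mathcal K}$ alone. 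This does not invalidate the argument, since the correct block-by-block values are the ones you compute a few lines earlier and they are what feed into $H$; it is only the summary sentence that misstates them.
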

\begin{proof}
Both $F$ and $G$ are chain maps,
and it is easy to see that $F\circ G$ is the identity on $Q$.  We claim that the map
$G\circ F$ is chain homotopic to the identity id$_B$. Indeed, consider the map
$H\colon B\to B$ defined as 0 on the vectors spanned by ${\mathcal
  {B}}_1\cup {\mathcal {K}}$, and sending $\partial _B\kaa $ to $\kaa$
for every element $\partial _B\kaa $ of $\partial _B{\mathcal {K}}$. 
Then 
\[
G\circ F + id_B=H\circ \partial _B + \partial _B \circ H
\]
holds for every basis element: for an element $\kaa \in {\mathcal {K}}$
we have $F(\kaa )= H(\kaa )=0$, hence both sides of the above equality
take the value $\kaa$ on $\kaa$. Similar simple argument works for an element of the form 
$\partial _B \kaa $ (with $\kaa \in {\mathcal {K}}$), while we need Property~\eqref{eq:filt}
to verify the equality for $\x \in {\mathcal {B}}_1$.
(Remember that we are working over the field $\Field = \Z /2\Z$ of two
elements, so the signs are not relevant to  our discussions.)
\end{proof}

In conclusion, suppose that the chain complex $(B, \partial B)$ comes
with the partition ${\mathcal {B}}_1\cup {\mathcal {K}}\cup {\mathcal
  {L}}$ of its basis ${\mathcal {B}}$, together with the map
$J\colon {\mathcal {K}}\to {\mathcal {L}}$ and the filtration
on the subspace spanned by ${\mathcal {L}}$ satisfying Property~\eqref{eq:filt} 
then 

\begin{prop}\label{p:homeq}
The factor complex $(Q, \partial _Q)$ is chain homotopy equivalent 
to $(B, \partial _B)$. \qed
\end{prop}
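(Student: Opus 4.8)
The plan is to assemble Proposition~\ref{p:homeq} directly from the lemmas that precede it in this subsection, so that the proof is essentially a matter of citing the work already done. First I would recall the short exact sequence of chain complexes
\[
0 \to K \to B \to Q \to 0,
\]
where $K$ is the subcomplex generated by $\{\kaa \mid \kaa \in {\mathcal K}\}$ together with their images $\partial_B \kaa$, and $Q = B/K$. From Property~\eqref{eq:filt} one reads off that $K$ is spanned by the vectors $\kaa$ and $\partial_B \kaa$, and that $\partial_B$ carries $\kaa$ to $\partial_B\kaa$; hence every cycle in $K$ is a boundary in $K$, so $H_*(K) = 0$. This is the only genuinely substantive input, and it has already been noted in the discussion above.

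Next I would invoke the explicit maps $F\colon B \to Q$ and $G\colon Q \to B$ introduced just before the statement, together with the homotopy operator $H\colon B \to B$ which sends $\partial_B\kaa \mapsto \kaa$ and vanishes on the span of ${\mathcal B}_1 \cup {\mathcal K}$. The preceding Lemma establishes that $F$ and $G$ are chain maps, that $F\circ G = \mathrm{id}_Q$, and that $G\circ F + \mathrm{id}_B = H\circ\partial_B + \partial_B\circ H$. Given all of this, the proof of Proposition~\ref{p:homeq} is immediate: the pair $(F,G)$ together with $H$ exhibits $(B,\partial_B)$ and $(Q,\partial_Q)$ as chain homotopy equivalent, since $F\circ G$ is literally the identity on $Q$ and $G\circ F$ is homotopic to the identity on $B$ via $H$ (the well-definedness of $G$ having been checked using the uniqueness of the correcting vector $k'$, which in turn uses Property~\eqref{eq:filt}).

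Since essentially every ingredient has been verified above, the proof I would write is a single sentence: the maps $F$ and $G$, shown in the preceding lemma to be mutually inverse up to the chain homotopy $H$, give the asserted chain homotopy equivalence. There is no real obstacle here; the main work of the subsection — producing the well-defined splitting map $G$ from Property~\eqref{eq:filt}, checking the chain map conditions, and writing down $H$ — was front-loaded into the two lemmas, so the proposition is a formal corollary. The only point requiring a moment's care, were one to spell it out, is that the definition of $G$ genuinely depends on Property~\eqref{eq:filt} both for existence of $k'$ and for its uniqueness modulo $2$, but this too has already been addressed in the text immediately preceding the statement.

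\begin{proof}
This is immediate from the preceding discussion. By Property~\eqref{eq:filt}, the subcomplex $K$ is spanned by the vectors $\kaa$ (for $\kaa\in{\mathcal K}$) together with their images $\partial_B\kaa$, and $\partial_B$ maps $\kaa$ to $\partial_B\kaa$; hence $H_*(K)=0$. The maps $F\colon B\to Q$ and $G\colon Q\to B$ defined above are chain maps with $F\circ G=\mathrm{id}_Q$, and by the previous lemma $G\circ F+\mathrm{id}_B=H\circ\partial_B+\partial_B\circ H$, so $G\circ F$ is chain homotopic to $\mathrm{id}_B$. Therefore $F$ and $G$ are mutually inverse chain homotopy equivalences between $(B,\partial_B)$ and $(Q,\partial_Q)$.
\end{proof}
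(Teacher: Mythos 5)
Your proof is correct and follows the paper's own logic exactly: Proposition~\ref{p:homeq} is stated with \qed in the paper precisely because it is a formal corollary of the preceding lemma, which constructs $F$, $G$, and the homotopy $H$ satisfying $F\circ G=\mathrm{id}_Q$ and $G\circ F+\mathrm{id}_B=H\circ\partial_B+\partial_B\circ H$. The remark about $H_*(K)=0$ is not strictly needed once you have the explicit homotopy equivalence, but including it does no harm and mirrors the paper's discussion.
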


\subsection{Invariance  under nice isotopies}

In the following three subsections we will compare domains and
elementary domains of two nice (multi-pointed) Heegaard diagrams: $\DD$ will
denote the diagram before, while $\DD'$ the diagram after the nice
move. To keep the arguments transparent, we will adopt the convention
that elementary domains in $\DD $ (in $\DD'$) will be typically
denoted by $D_i$ (and $D_i'$, resp.), while domains in $\DD $ (and in
$\DD '$) will be typically denoted by ${\mathcal {D}}_i$ or by $\Delta
$ (and ${\mathcal {D}}_i'$, $\Delta '$ for $\DD '$).

We start with examining nice isotopies.  Assume that we isotope an
$\alphak$--curve $\alpha_1$ by a nice isotopy; let $\alpha_1'$ denote
the result of the isotopy and let $\DD$ and $\DD '$ denote the
Heegaard diagrams before and after the isotopy, respectively.  Recall
that the isotopy is determined by a nice arc $\gamma$ (along which the
finger move is performed).  Near every intersection point $x_i$ of
$\gamma$ with a $\betak$--curve $\beta _j$ there are two new
intersection points between $\alpha_1'$ and $\beta _j$; the two points
will be denoted by $f_i$ and $e_i$.  We follow the convention that
when using the induced orientation on the bigon (created by the finger
move) with corners $f_i$ and $e_i$, we traverse from $f_i$ to $e_i$ on
$\alpha_1'$.

To apply the result of the previous subsection, let $B=\CFaa (\DD ')$,
with generators $\Gen '$ and notice that the elements of $\Gen $
(i.e., generators originated from the Heegaard diagram $\DD$) can be
viewed naturally as elements of $\Gen '$. This set $\Gen $ will play
the role of ${\mathcal {B}}_1$, while ${\mathcal {K}}$ (and ${\mathcal
  {L}}$) will be the set of basis vectors having an $f_i$ ($e_i$,
resp.) as a coordinate.  The map $J(f_i\y)=e_i\y$ determines a
bijection $J\colon{\mathcal{K}}\longrightarrow{\mathcal{L}}$ which,
(as we shall see in Lemma~\ref{l:specif}) satisfies the requirements
of Subsection~\ref{ssec:formasp}.  A filtration on the vector space
generated by the elements of ${\mathcal {L}}$ is given by the linear
ordering of the points along the arc of $\alpha_1'$ containing all
these points.  The property required by Equation~\eqref{eq:filt} is
given (in fact, in a much stronger form) by the following result.

\begin{lem}\label{l:specif}
Let $f_i\y \in {\mathcal {K}}$ and $e_j\y '\in {\mathcal {L}}$ denote
elements of $\Gen '$ which contain $f_i$, resp. $e_j$ as a
coordinate. (As always, the same symbols also denote the corresponding
basis vectors of $\CFaa (\DD ')$.) Then the set
${\mathfrak{M}}_{f_i\y, e_j\y'}$ is nonempty if and only if $i=j$ and
$\y =\y '$. In this case ${\mathfrak{M}}_{f_i\y , e_i\y}$ consists of
a single bigon.
\end{lem}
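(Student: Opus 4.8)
The statement asks for a classification of empty bigons (and, implicitly, rectangles) $\mathfrak{M}_{f_i\y, e_j\y'}$ between generators that are "new" after a nice isotopy. The point is that the finger move creates a collection of small bigons $B_i$ (one for each crossing $x_i$ of $\gamma$ with a $\betak$-curve), each with corners $f_i, e_i$, and these are the only Maslov-index-one domains connecting an $f$-generator to an $e$-generator. My plan is to argue directly from the combinatorics of Proposition~\ref{p:atfog}: an element of $\mathfrak{M}_{f_i\y, e_j\y'}$ is a nonnegative domain $D$ with $n_{\w}(D)=0$ and $\mu(D)=1$, so it is an empty bigon or rectangle, and I only need to see where its moving coordinates can lie.

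\textbf{First}, I would analyze the local picture of $\DD'$ near $\alpha_1'$. The curve $\alpha_1'$ agrees with $\alpha_1$ outside a neighborhood of $\gamma$, and inside that neighborhood the finger move produces a thin bigon-like strip; cutting this strip along the $\betak$-arcs it crosses realizes each $x_i$ as a pair $f_i, e_i$ with a small elementary bigon $B_i$ between them (oriented from $f_i$ to $e_i$ along $\alpha_1'$), together with the elementary rectangles swept out between consecutive crossings. Crucially, because $\gamma$ is a \emph{nice} arc, every component of each cut-up elementary domain of $\DD$ is a bigon or rectangle unless it contains a basepoint (this is exactly what Definition~\ref{def:NiceIsotopy} guarantees, and it is why $\DD'$ is nice). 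So any empty domain in $\DD'$ that touches the strip can only "use" the small bigons $B_i$ and the rectangles between them — any region containing a basepoint is off limits since $n_{\w}(D)=0$.

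\textbf{Second}, I would pin down the moving coordinates. Since $f_i\y$ has $f_i$ on $\alpha_1'$ and $e_j\y'$ has $e_j$ on $\alpha_1'$, and both generators have a single coordinate on $\alpha_1'$, the coordinate on $\alpha_1'$ must move; hence $\alpha_1'$ contributes a boundary arc of $D$ running from $f_i$ to $e_j$. If $D$ is a bigon, $f_i$ and $e_j$ are the only moving coordinates, so $\y=\y'$, and the $\alpha_1'$-arc from $f_i$ to $e_j$ together with a $\betak$-arc bounds $D$; tracing through the local picture (the $\betak$-arc must close up without crossing a basepoint or another $\alphak$-curve) forces $i=j$ and $D=B_i$, the unique small bigon. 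If $D$ is a rectangle, there is a second pair of moving coordinates, on some $\alpha_k$ and $\beta_l$ with $k\neq 1$; but then the $\alpha_1'$-side of the rectangle is a full elementary $\alpha_1'$-arc, which near the strip is forced (by niceness, no basepoints, all local multiplicities $\le 1$) to run from $f_i$ to $e_i$ along $B_i$ — and then the opposite $\betak$-side of the rectangle would have to be an $\alpha_1'$-arc too, a contradiction with $k\ne 1$; so no rectangle occurs and $j=i$, $\y=\y'$ in all cases. This yields both the "only if" and the uniqueness assertion.

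\textbf{Main obstacle.} The delicate point is making the local-combinatorial argument in the previous paragraph airtight: one must rule out "long" bigons that travel along $\alpha_1'$ past several of the $f$- and $e$-points before closing up on a $\betak$-curve, and one must handle the sub-case $D_1 = D_f$ of Definition~\ref{def:NiceIsotopy} where the same elementary domain contains both the start and end of the finger. I expect to control this by the point-measure bookkeeping of Proposition~\ref{p:mashogy} — a Maslov-index-one empty domain has exactly two (or four) corners of point measure $\tfrac14$ and all local multiplicities $\le 1$ — combined with the observation that between consecutive crossings $x_i, x_{i+1}$ the strip forces the $\betak$-arc leaving $f_i$ (or $e_i$) to return to the \emph{same} intersection point if the domain is to stay nonnegative and basepoint-free, so no "travelling" is possible. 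Once that local rigidity is established, the conclusion $\mathfrak{M}_{f_i\y, e_j\y'}\neq\emptyset \iff i=j,\ \y=\y'$, with the single bigon $B_i$, follows immediately.
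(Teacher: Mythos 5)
Your general plan --- classify the empty bigons and rectangles between the new generators by working locally near the finger --- is the right one, and is the paper's plan as well; but the execution has a few concrete gaps.

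First, a factual error in the local geometry: you assert that the finger move creates ``a small elementary bigon $B_i$ between $f_i$ and $e_i$'' for each crossing $x_i$ of $\gamma$ with a $\betak$-curve. In fact the finger move creates exactly one new elementary bigon, at the tip of the finger (the region containing $\gamma(1)$), together with a chain of elementary rectangles between consecutive crossings. For $x_i$ not the last crossing, the bigon connecting $f_i\y$ to $e_i\y$ is a non-elementary domain consisting of several of those rectangles plus the single tip bigon. This matters, because the cleanest exclusion of rectangles below relies on there being exactly one elementary bigon in play.

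Second --- and this is the main gap --- you never pin down which arc of $\alpha_1'$ forms the $\alpha$-boundary of $\mathcal D'$. There are two arcs on $\alpha_1'$ from $f_i$ to $e_j$: one running through the finger strip, and one running along the part of $\alpha_1'$ that coincides with the old $\alpha_1$. Excluding the latter is precisely the role of Lemma~\ref{l:mindket}: both sides of that old $\alpha_1$-arc meet regions containing basepoints, so any domain whose $\alpha$-boundary uses it must have $n_{\w}(\mathcal D')>0$, a contradiction. Your remark that ``any region containing a basepoint is off limits'' gestures at this but doesn't isolate the boundary arc, and without this step the domain is not forced to contain the tip bigon, so the rest of the argument has nothing to lean on. Once you do know the $\alpha_1'$-arc lies in the finger, the orientation convention forces $\mathcal D'$ to contain the tip bigon with multiplicity $1$, hence $e(\mathcal D')\geq \tfrac12$; Proposition~\ref{p:mashogy} then puts you in case (a), so $\mathcal D'$ is a bigon with a single moving coordinate and $\y=\y'$. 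This replaces your rectangle-exclusion paragraph, which as written is not well-formed (in a rectangle the $\alpha_1'$-side is opposite another $\alphak$-side, not a $\betak$-side, and the $\alpha_1'$-arc of a rectangle need not be elementary). Finally, the step forcing $i=j$ should be spelled out: traversing the $\betak$-circle starting from $f_i$, the first intersection with $\alpha_1'$ is by definition $e_i$, and the boundary of a bigon cannot pass through additional intersection points, so the $\betak$-arc of $\mathcal D'$ must terminate at $e_i$, giving $j=i$ and identifying $\mathcal D'$ with the bigon from $f_i\y$ to $e_i\y$.
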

\begin{proof}
  Consider any ${\mathcal D} '\in{\mathfrak{M}}_{f_i\y,e_j\y'}$; by
  our orientation convention, the intersection $\partial{\mathcal D}
  '\cap\alpha_1'$ is an embedded arc from $f_i$ to $e_j$.  First we
  wish to identify this arc. In fact, there are two paths on
  $\alpha_1'$ connecting $f_i$ and $e_j$: one passes along the part of
  the curve $\alpha_1$ not affected by the finger move, while the
  other one is contained by the part created by the finger move.  By
  Lemma~\ref{l:mindket}, there is a basepoint next to the first path
  (on its either side), hence that cannot be used when considering
  empty bigons or rectangles connecting $f_i$ with some
  $e_j$. Therefore, $\partial{\mathcal D} '\cap\alpha_1'$ must be the
  second path.  The orientation convention shows that ${\mathcal D}'$
  contains the new elementary bigon $B'$ constructed during the
  isotopy with multiplicity $1$. This shows that $f_i \y$ and $e_j \y
  '$ differ only at one coordinate, so $\y = \y '$, and moreover that
  $f_i$ and $e_j$ are on the same $\betak$--circle.  Traversing along
  that $\betak$--circle (starting at $f_i$) the first intersection
  with $\alpha _1'$ is by definition $e_i$. The fact that ${\mathcal
    D}'$ has two convex corners now implies that ${\mathcal D}'$ in
  fact coincides with the bigon from $f_i\y$ to $e_i\y$.
\end{proof}

We define the subcomplex $K\subseteq \CFaa (\DD ')$ as in
Subsection~\ref{ssec:formasp}, i.e., it is generated by the basis
vectors corresponding to the elements of ${\mathcal {K}}$ together
with their $\partialaa _{\DD '}$--images, and then we take the
quotient complex $(Q, \partial _Q)$.  Consider now the map $F\colon
\CFaa (\DD)\to Q$ defined by
\[
\x \mapsto \x +K.
\] 
Since by Lemma~\ref{l:specif} any vector in $K$ has a component which
contains one of $f_i $ or $e_j$ as a coordinate, $F$ is clearly
injective. Now, an element of $\Gen '$ is either in $\Gen $ or
contains an $f_i$-- or an $e_i$--coordinate, hence $\dim \CFaa (\DD) +
\dim K = \dim \CFaa (\DD ')$. Thus, the map $F$ is a vector space
isomorphism.  Next we want to show that the map $F$ is a chain map,
that is, $F(\partialaa _{\DD}(\x ))=\partialaa _Q (F(\x ))$.  In order
to achieve this, we need a geometric description of the boundary map
$\partialaa _Q$. We start with a definition.
\begin{defn}
  \label{def:chain} {\whelm Fix $\x, \y \in \Gen \subset \Gen '$ and
    call a sequence $C=({\mathcal D}_1', {\mathcal D}_2', \ldots ,
    {\mathcal D}_{n}')$ of domains in $\DD '$
a \emph{chain of length $n$ connecting $\x$
      and $\y$} if for $i=1, \ldots , n-1 $ we have $\kaa _i=f_i\kaa
    _i' \in {\mathcal {K}}$, $\laa _i=J(\kaa _i) = e_i\kaa _i'\in
    {\mathcal {L}}$ and 
\[
{\mathcal D}'_1\in {\mathfrak{M}}^{\DD '}_{\x
      \laa_1}, {\mathcal D}'_2\in {\mathfrak{M}}^{\DD '}_{\kaa_1 \laa
      _2}, \ldots , {\mathcal D}'_{n-1}\in {\mathfrak {M}}^{\DD
      '}_{\kaa _{n-2} \laa _{n-1}}, {\mathcal D}'_{n}\in
    {\mathfrak{M}}^{\DD '}_{\kaa _{n-1}, \y} .
\]
 The definition allows $n=1$,
    when the chain consists of a single element ${\mathcal D}'\in
    {\mathfrak{M}}_{\x,\y}$.  A domain ${\mathcal D}_C'$ can be
    associated to a chain $C$ by adding the domains ${\mathcal D}_i'$
    appearing in $C$  and subtracting the bigons in
    ${\mathfrak {M}}^{\DD '}_{\kaa _i , J (\kaa _i)}$ for $\kaa _i$ appearing
    in the chain.}
\end{defn}
The interpretation of the product in Lemma~\ref{l:qleir} then easily
implies
\begin{prop}\label{p:faktor1}
For $\x,\y \in \CFaa (\DD )$ the matrix element $\langle
\partialaa _Q(\x +K), \y +K\rangle$ is equal to the (mod 2) number of
chains connecting $\x$ and $\y$. 
\end{prop}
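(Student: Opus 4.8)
The plan is to unwind the general linear-algebra statement of Lemma~\ref{l:qleir} in the present geometric setting. Recall that in Subsection~\ref{ssec:formasp} we showed that the matrix of $\partialaa _Q$ in the basis $\{\x + K \mid \x \in \Gen\}$ is
\[
N_{\Gen, \Gen} - \sum_{k=0}^{\infty} N_{\Gen, {\mathcal {L}}}\cdot T^k\cdot N_{{\mathcal {K}}, \Gen},
\]
where $N$ is the matrix of $\partialaa _{\DD '}$, where ${\mathcal {B}}_1 = \Gen$, and where $Z = N_{{\mathcal {K}},{\mathcal {L}}} = I + T$ with $T$ strictly lower triangular with respect to the filtration on ${\mathcal {L}}$ given by the linear order of the points $e_i$ along $\alpha_1'$. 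So the first step is simply to identify each term of this sum with a count of chains of a fixed length.

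First I would observe that $N_{\Gen,\Gen}$ is, by definition of $\partialaa _{\DD '}$, the count of empty bigons and rectangles in $\DD '$ connecting two generators of $\Gen$; these are exactly the chains of length $n=1$ in the sense of Definition~\ref{def:chain}. Next, for the $k$-th term of the sum, I would read off the matrix product on a pair $\x, \y \in \Gen$: an entry of $N_{\Gen, {\mathcal {L}}} \cdot T^k \cdot N_{{\mathcal {K}}, \Gen}$ is a sum over intermediate basis vectors $\laa_1, \kaa_1, \laa_2, \dots$ of products of matrix entries, where the leftmost factor $\langle \partialaa_{\DD'}\x, \laa_1\rangle$ counts domains ${\mathcal D}_1' \in {\mathfrak M}^{\DD'}_{\x \laa_1}$, the rightmost factor $\langle \partialaa_{\DD'}\kaa_{n-1}, \y\rangle$ counts domains in ${\mathfrak M}^{\DD'}_{\kaa_{n-1}\y}$, and the factors of $T = Z - I$ in the middle count the ``off-diagonal'' part of $\langle \partialaa_{\DD'}\kaa_i, \laa_{i+1}\rangle$. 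The point is that by Lemma~\ref{l:specif} the map $J$ is the \emph{identity} part of $Z$ (i.e.\ ${\mathfrak M}_{f_i\y, e_i\y}$ is a single bigon and there is nothing else on the diagonal), so $T$ records precisely the contributions of domains in ${\mathfrak M}^{\DD'}_{\kaa_i \laa_{i+1}}$ with $\kaa_i \ne J^{-1}(\laa_{i+1})$ — but since every $\laa_{i+1} = e_{i+1}\kaa_{i+1}'$ and every $\kaa_i = f_i \kaa_i'$, and a domain from $\kaa_i$ to $\laa_{i+1}$ uses a nontrivial bigon or rectangle, the strict-lower-triangularity of $T$ (already established in Subsection~\ref{ssec:formasp}) guarantees these are genuinely ``new'' steps of the chain. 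Matching indices, the $k$-th term is exactly the count (mod $2$) of chains of length $k+1$ connecting $\x$ and $\y$, and I would note that the associated domain ${\mathcal D}_C'$ of Definition~\ref{def:chain}, obtained by adding the ${\mathcal D}_i'$ and subtracting the bigons in ${\mathfrak M}^{\DD'}_{\kaa_i, J(\kaa_i)}$, is precisely the bookkeeping that makes the telescoping in the product $N_{\Gen,{\mathcal L}} T^k N_{{\mathcal K},\Gen}$ consistent.

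Summing over $k \ge 0$ then gives that $\langle \partialaa_Q(\x+K), \y+K\rangle$ equals the total number, mod $2$, of chains of all lengths connecting $\x$ and $\y$, which is the assertion of Proposition~\ref{p:faktor1}. I expect the only real subtlety — and hence the main obstacle — to be the careful matching of the combinatorial data: verifying that an intermediate basis vector appearing in the matrix product really is of the form $\kaa_i = f_i \kaa_i'$ or $\laa_i = e_i\kaa_i'$ (rather than an arbitrary generator of $\Gen'$), and that consecutive domains in the product chain link up correctly at the shared $f_i, e_i$ coordinates as demanded by Definition~\ref{def:chain}. This is exactly where Lemma~\ref{l:specif} does the heavy lifting: because ${\mathfrak M}_{f_i\y, e_j\y'}$ is empty unless $i=j$ and $\y=\y'$, the only way the matrix entry $\langle \partialaa_{\DD'}\kaa_i, \laa_{i+1}\rangle$ can contribute through a factor of $T$ is via a domain that genuinely moves a coordinate off of some $f$-point, so the chain structure is forced, and the identification with Lemma~\ref{l:qleir} is then a bookkeeping exercise rather than a new idea.
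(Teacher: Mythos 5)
Your approach is exactly the paper's: unwind the block-matrix formula of Lemma~\ref{l:qleir} and identify each summand with a count of chains. The one slip is an off-by-one in the bookkeeping. The product $N_{\Gen,{\mathcal L}}\cdot T^{k}\cdot N_{{\mathcal K},\Gen}$ involves $k+2$ factors of $N$-type matrices (one from $N_{\Gen,{\mathcal L}}$, $k$ from $T^{k}$, one from $N_{{\mathcal K},\Gen}$), hence $k+2$ domains with $k+1$ intermediate $\laa$'s; so the $k$-th summand counts chains of length $k+2$, not $k+1$. Taken literally, your version would make the $k=0$ term overlap with $N_{\Gen,\Gen}$ (both counting length-$1$ chains, cancelling mod $2$) and would miss length-$1$ chains in the total. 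The conclusion you state is nonetheless the correct one — $N_{\Gen,\Gen}$ gives length $1$, and the summand indexed by $k\geq 0$ gives length $k+2$, so together they exhaust all lengths $\geq 1$ — but the intermediate index should be corrected. Everything else, including the use of Lemma~\ref{l:specif} to pin down the bijection $J$ and the role of the subtracted bigons in ${\mathcal D}'_C$, matches the paper's argument.
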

\begin{proof}
The number of chains is equal to the cardinality of the set
\[
\mxy ^{\DD '}\cup \bigcup _{(\laa_1, \ldots , \laa _n)}{\mathfrak
  {M}}^{\DD '}_{\x, \laa_1}\times {\mathfrak{M}}^{\DD '}_{\kaa_1,\laa _2}\times \cdots
\times {\mathfrak{M}}^{\DD '}_{\kaa _n,\y}.
\]
By the definition of the matrix elements, this number is (mod 2) equal
to the $(\x,\y)$--element of the matrix $N_{{\mathcal {B}}_1,
  {\mathcal {B}}_1}- \sum _{k=0}^{\infty} N_{{\mathcal {B}}_1,
  {\mathcal {L}}}\cdot T^k\cdot N_{{\mathcal {K}}, {\mathcal {B}}_1}$,
which, by Lemma~\ref{l:qleir} is equal to the matrix element of the
boundary operator $\partialaa _Q$. This identity verifies the
statement.
\end{proof}

\begin{rem}
{\whelm Notice that according to Lemma~\ref{l:specif}, the space
  ${\mathcal {M}}^{\DD '}_{\kaa_1, \laa _2}$ for $\laa _2=J (\kaa _2)$
  (and $\kaa _1\neq \kaa _2$) is necessarily empty, which implies that
  in fact any chain is of length one or two.}
\end{rem}

Our final aim in this subsection is to relate, for any
$\x,\y\in\Gen $, the set $\mxy ^{\DD}$ with the set of chains in
$\DD '$ connecting $\x$ and $\y$.  As we already explained, the
intersection points $\x, \y$ can be regarded as elements of $\Gen '$;
the set of domains connecting them therefore will be denoted by 
$\pi_2^{\DD }(\x,\y)$ and $\pi_2^{\DD '}(\x,\y)$, respectively,
indicating the diagram we are working in.  

Given $\x,\y\in{\mathcal{S}}$, we define a map
\[
\Phi \colon \pi_2 ^{\DD '}(\x,\y ) \to \pi_2 ^{\DD}(\x,\y )
\]
as follows. Recall that the nice isotopy is defined by a nice arc
$\gamma$; let us consider an $\epsilon$--neighbourhood of $\gamma$ and
suppose that the finger move took place in this neighbourhood.  Let
$\{D_i\}_{i=1}^m$ denote the set of elementary domains for $\DD $.
For each $i=1,\dots, m$, choose a point $p_i$ in $D_i$, which is not
in the $\epsilon$-neighborhood of $\gamma$.  Consider a domain $\Delta
'\in\pi_2^{\DD '}(\x,\y )$ and define
\[
\Phi(\Delta ')=\sum_{i=1}^m n_{p_i}(\Delta ')\cdot D_i.
\]
According to the next lemma, the map $\Phi$ is well-defined, i.e.  is
independent of the choice of $p_i$.
\begin{lem}
  \label{l:PhiWellDefined}
  Let $p$ and $q$ be two points in the Heegaard surface
  which can be connected by a path $\eta$
  which crosses none of the $\alphak$-- or $\betak$--circles for
  $\DD $; i.e. $p$ and $q$ lie in the same elementary domain of  $\DD $
  (but the path $\eta$ might cross $\gamma$, and hence $p$ and $q$
  might be in different elementary domains of $\DD '$). Then, given
  $\x,\y\in{\mathcal{S}}$, and $\Delta '\in\pi_2^{\DD '}(\x,\y)$,
  we have that
  $n_{p}(\Delta ')=n_{q}(\Delta ')$.
\end{lem}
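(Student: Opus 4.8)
The plan is to reduce the statement to a purely local computation near the arc $\gamma$, since away from an $\epsilon$--neighbourhood of $\gamma$ the two Heegaard diagrams $\DD$ and $\DD'$ literally agree. First I would observe that the path $\eta$ from $p$ to $q$ may be chosen to lie inside a single elementary domain $D$ of $\DD$; in particular $\eta$ crosses no $\alphak$-- or $\betak$--curve of $\DD$. The only curve of $\DD'$ that $\eta$ can cross is the new arc of $\alpha_1'$ produced by the finger move (the rest of $\alpha_1'$ coincides with $\alpha_1$, and all other curves are unchanged). So the local multiplicity $n_\bullet(\Delta')$ can change along $\eta$ only when $\eta$ crosses one of the two parallel strands of $\alpha_1'$ bounding the finger.

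The key step is then a balancing argument. Since $\Delta'\in\pi_2^{\DD'}(\x,\y)$ and the coordinates $\x,\y$ lie on $\alpha_1'$ only at points which are also on $\alpha_1$ (the generators of $\DD$ sit inside $\DD'$ with no $f_i$-- or $e_i$--coordinate), the boundary $\partial(\partial\Delta'\cap\alpha_1')$ restricted to the finger region is trivial: $\Delta'$ has no corner at any of the points $f_i,e_i$. Equivalently, the two strands of $\alpha_1'$ that $\eta$ must cross to pass from the "outside" of the finger to the region $D$ are, up to a corner-free reparametrisation, two sub-arcs of $\alpha_1'$ with no moving coordinate of $\Delta'$ between them. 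I would make this precise by noting that the finger region is an immersed strip (a thin bigon with corners $f_i,e_i$ removed of combinatorial relevance, since no coordinate of $\x$ or $\y$ lies there), and that crossing its two boundary strands changes the local multiplicity of $\Delta'$ by the multiplicity of $\Delta'$ along those strands as measured through $\partial\Delta'\cap\alpha_1'$; because $\partial(\partial\Delta'\cap\alpha_1')=\y-\x$ has no contribution inside the finger, the net change across the pair of strands cancels. Hence $n_p(\Delta')=n_q(\Delta')$.

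Concretely, I would argue as follows. Write $\eta$ as a concatenation of segments, each either disjoint from $\alpha_1'$ or crossing it transversally once; along each such crossing the value of $n_\bullet(\Delta')$ jumps by $\pm 1$ times the "flux" of $\partial\Delta'$ through $\alpha_1'$ at that point, i.e. by the coefficient of the relevant elementary $\alphak$--arc of $\alpha_1'$ in $a:=\partial\Delta'\cap\alphas$. But since $\Delta'$ connects $\x$ to $\y$ and these generators have no coordinate on the new part of $\alpha_1'$, the path $a$ has no endpoint on the new strand, so it enters and exits the finger region the same number of times weighted with the same total; concretely the two strands of $\alpha_1'$ that $\eta$ crosses carry equal coefficients in $a$ (they are joined by a sub-arc of $\alpha_1'$ containing no vertex of $a$), and since $\eta$ crosses them with opposite signs, the total jump in $n_\bullet(\Delta')$ along $\eta$ is zero. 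Therefore $n_p(\Delta')=n_q(\Delta')$, as claimed.

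The main obstacle I anticipate is making the "equal coefficients on the two strands of the finger" claim airtight: one must check that a path $\eta$ lying in a single elementary domain $D$ of $\DD$ and crossing the finger strands crosses them in such a way that the enclosed sub-arc of $\alpha_1'$ genuinely contains no moving coordinate — in other words that $\eta$ does not "sneak around" the tip of the finger in a way that picks up an $f_i$ or $e_i$. Since $\gamma$ is a nice (embedded) arc and the $\epsilon$--neighbourhood is a genuine strip, $\eta$ can be homotoped rel endpoints, staying inside $D$, to cross the finger transversally in a controlled way; I would spell this out with a picture of the strip and the two relevant strands of $\alpha_1'$, reducing the verification to the elementary fact that along $\alpha_1'$ between the two crossing points of $\eta$ there are no intersection points with $\betak$--curves other than the $f_i,e_i$ pairs, none of which is a coordinate of $\x$ or $\y$.
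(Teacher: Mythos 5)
Your proof is correct and is essentially the same argument as the paper's: the path $\eta$ crosses $\alpha_1'$ only in the finger region with net intersection number zero, and since $\Delta'$ has no corners at the new intersection points $f_i,e_i$, the coefficient of $\partial\Delta'\cap\alpha_1'$ is constant along that portion of $\alpha_1'$, so the multiplicity jumps across the strands cancel. The paper compresses this into two sentences; you have supplied the details (in particular the reduction to the sign/flux bookkeeping) that those two sentences leave to the reader.
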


\begin{proof}
  The path $\eta$ crosses $\alpha_1'$ some even number of times
  (with intersection number zero), in the support of the
  isotopy. However, by the hypothesis on $\x$ and $\y$, we know that
  $\Delta '$ has no corners on this portion of $\alpha_1'$, hence the result
  follows readily.
\end{proof}

It is easy to see that $\Phi(\Delta ')$ does indeed give an element of
$\pi_2^{\DD}(\x,\y)$: thinking of the condition that $\Delta
'\in\pi_2^{\DD '}(\x,\y)$ as a system of linear equations
parameterized by intersection points between the circles of $\DD '$
(Equation~\eqref{eq:DomainFromXtoY}), those equations are easily seen
to hold at the intersection points between the circles of $\DD$ as
well.

\begin{lem}\label{l:bijmas}
The map $\Phi$ is a bijection between the two sets of connecting domains.
In addition, $\mu (\Phi(\Delta '))=\mu (\Delta ')$ for all $\Delta '\in 
\pi_2 ^{\DD '}(\x,\y)$.
\end{lem}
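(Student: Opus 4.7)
My plan is to prove bijectivity of $\Phi$ by constructing an explicit inverse $\Psi$, and then verify the Maslov index equality by combining preservation of the point measure with a direct combinatorial analysis of the Euler measure.

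To begin, I check that $\Phi(\Delta')$ actually lies in $\pi_2^{\DD}(\x,\y)$. Since $\gamma$ is a nice arc, for $\epsilon$ sufficiently small every crossing of $\alphak\cup\betak$ in the diagram $\DD$ lies outside the tubular neighborhood $N$ of $\gamma$; at any such crossing the four local multiplicities of $\Phi(\Delta')$ coincide with those of $\Delta'$, so Equation~\eqref{eq:DomainFromXtoY} passes from $\Delta'$ to $\Phi(\Delta')$. To invert $\Phi$, I construct $\Psi\colon\pi_2^{\DD}(\x,\y)\to\pi_2^{\DD'}(\x,\y)$ as follows. Given $\Delta$, each elementary $\DD'$-domain either meets $\Sigma\setminus\overline{N}$---in which case it is paired with the unique $\DD$-domain containing any of its outside-$N$ points and inherits that multiplicity (well-defined by Lemma~\ref{l:PhiWellDefined})---or it is one of the small cells cut out in $N$ by the finger of $\alpha_1'$ together with the $\betak$-arcs. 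For the latter, the multiplicities are uniquely forced by walking along the finger and applying Equation~\eqref{eq:DomainFromXtoY} successively at each new crossing $f_i, e_i$: three of the four quadrants at each such crossing are already pinned down from outside-$N$ data, so the constraint determines the fourth. A short induction verifies consistency, yielding $\Psi(\Delta)\in\pi_2^{\DD'}(\x,\y)$ with $\Phi\circ\Psi=\mathrm{id}$. Injectivity of $\Phi$ follows because any element of $\ker\Phi$ is a periodic $\DD'$-domain supported inside $N$; since $\alpha_1'\cap N$ is a single arc rather than a closed loop, no such nonzero domain exists.

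For the Maslov index, the point measure is preserved immediately: every coordinate of $\x$ and $\y$ lies outside $N$ where the two diagrams coincide, so the four quadrants and their multiplicities around each such coordinate are identical for $\Delta'$ and $\Phi(\Delta')$. For the Euler measure I pass to the common refinement of $\DD$ and $\DD'$ (the cell structure obtained by drawing both $\alpha_1$ and $\alpha_1'$) and invoke the additivity of $e$ under cell subdivision; the difference $e_{\DD'}(\Delta')-e_{\DD}(\Phi(\Delta'))$ then equals the Euler measure of an explicit correction $2$-chain supported in the finger region, whose cells are the finger bigons and rectangles listed in the construction of $\Psi$. The main obstacle---and the place where the most bookkeeping will be required---is to show that these corrections cancel cell by cell, matching the corners contributed by the new crossings $f_i, e_i$ against the corners introduced when $\alpha_1$ cuts the common-refinement cells; once this cancellation is verified, $\mu(\Phi(\Delta'))=\mu(\Delta')$ follows.
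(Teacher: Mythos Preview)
Your bijectivity argument is essentially the paper's: construct an inverse $\Psi$ by noting that the local multiplicities at the small cells inside the finger are forced by Equation~\eqref{eq:DomainFromXtoY} at the new crossings $e_i,f_i$ (since $\x,\y\in\Gen$ contribute no corners there). Your separate injectivity argument via the kernel is a fine alternative. The point-measure step is likewise identical to the paper's.

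The gap is in the Euler-measure step. You set up a common-refinement framework and reduce the question to a ``correction $2$-chain'' whose Euler measure should vanish, but you explicitly stop short of the computation (``once this cancellation is verified''). That verification \emph{is} the content of the lemma, and your framework does not by itself make it obvious: the correction chain is not a cycle, it has corners at all the $e_i,f_i$, and there is no general principle forcing its Euler measure to vanish. The paper avoids this abstraction and computes directly. Only three elementary domains of $\DD$ change shape under the isotopy --- $D_1$, $D_2$, and $D_f$ --- each acquiring two extra corners in $\DD'$, so $e(D_i')=e(D_i)-\tfrac12$ for $i=1,2,f$; the only new contribution is the bigon $B'$ with $e(B')=\tfrac12$. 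The key identity, coming from Equation~\eqref{eq:DomainFromXtoY} at the new crossings (where $\x,\y$ have no corners), is
\[
b-n_f \;=\; n_1-n_2,
\]
where $b$ is the multiplicity of $B'$ and $n_i$ that of $D_i'$. This single relation makes the three losses of $\tfrac12$ and the gain of $\tfrac12\cdot b$ cancel, giving $e(\Delta')=e(\Phi(\Delta'))$. Your common-refinement approach would ultimately have to rediscover exactly this relation; it is cleaner to isolate it at the outset.
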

\begin{proof}
  Recall that the nice arc $\gamma$, thought of as a curve in $\DD$,
  starts out at $\gamma (0)\in \alpha _1$, and we denoted the
  elementary domain having $\gamma (0)$ on its boundary (but disjoint
  from $\gamma (t)$ for small $t$) by $D_1$. Then $\gamma$ proceeds
  through a domain $D_2$, crosses some further collection of domains
  $\{ D_i\}_{i=3}^{f-1}$, and then terminates in the interior of an
  elementray domain which we  label $D_f$.  Note that the domains
  $D_i$ for $i=1,\dots,f$ are not necessarily distinct. The elementary
  domains $D_1$ and $D_f$ are replaced by domains $D_1'$ and $D_f'$ in
  $\DD '$.  The new diagram $\DD '$ contains also a sequence of new
  elementary domains, which consist of a sequence of rectangles
  $\{R_i '\}_{i=1}^{n}$ (in a neighborhood of $\gamma$), terminating in
  a bigon $B'$ (which contains $\gamma(1)$ in its interior). Other
  elementary domains in $\DD '$ correspond to connected components of
  $D_i\setminus \gamma$, where here $D_i$ is an elementary
  domain for $\DD$. 

Note that the local multiplicities of a domain in $\pi^{\DD
  '}_2(\x,\y)$ (with no corners among the $\{e_i,f_i\}$) at these new
domains $\{R_i '\}_{i=1}^n$ and $B'$ are uniquely determined by their
local multiplicites at all the other outside regions. This follows
easily from a local analysis (using Equation~\eqref{eq:DomainFromXtoY}
at the new intersection points, along with the hypothesis that none of
these intersection points is a corner); see
Figure~\ref{f:bijmas}. This gives a map $\Psi\co
\pi^{\DD}_2(\x,\y)\longrightarrow\pi_2 ^{\DD '}(\x,\y)$ which is an
inverse to $\Phi$, proving that $\Phi$ is a bijection.

We now check that $\mu(\Phi(\Delta '))=\mu(\Delta ')$ for any
$\x,\y\in{\mathcal S}$ and $\Delta '\in \pi _2 ^{\DD '}(\x , \y )$.  Since
$\x$ and $\y$ are both in $\Gen $, the point measures at $\x$ (or $\y$) of
$\Delta '$ and $\Phi(\Delta ')$ are equal. Therefore to check that $\mu (\Phi
(\Delta '))=\mu (\Delta ')$ we only need to deal with the Euler measures. To
this end, we compare domains in $\DD$ and $\DD '$.  Recall that the arc
(thought of as supported in $\DD$) specifying the isotopy started at the
elementary domain $D_1$, crossed its first domain labelled $D_2$, and
terminated in $D_f$. Let us assume for notational simplicity that the three
elementary domains $D_1$, $D_2$, and $D_f$ are distinct. Then, in $\DD '$, the
corresponding domains $D_1'$, $D_2'$, and $D_f'$ acquire two additional
corners. (Note that $D_2'$ is not an elementary domain, as $\gamma$
disconnects $D_2$; but all its elementary components appear with the same
local multiplicity in $\Delta '$.)  Hence, for $i=1$, $2$, or $f$, we have
that
\[
e(D_i')=e(D_i)-\frac{1}{2}.
\]
Moreover, the diagram $\DD '$ contains also a new bigon $B'$, with
$e(B ')=\frac{1}{2}$. By analyzing corners, we see that if $n_i$
denotes the local multiplicities of $D_i$, and if $b$ denotes the
local multiplicity of $B'$ in $\Phi(\Delta ')$, then
$b-n_f=n_1-n_2$. All other elementary domains in $\DD '$ either are
the new rectangles, which have Euler measure zero, or they are
components of the complement $D_i\setminus \gamma$ in
$\DD$. In $\Phi(\Delta ')$ each such elementary domain appears with
the same local multiplicity as $D _i'$ had in $\Delta '$; moreover the
sum of the Euler measures of these componets add up to the Euler
measure of $\Delta '$.  Putting these observations together, we
conclude that $e(\Phi(\Delta ' ))=e(\Delta ')$ (see
Figure~\ref{f:bijmas} for an illustration).  Note that we have assumed
that $D_1$, $D_2$, and $D_f$ are all distinct.  The above discussion
can be readily adapted to the case where this does not hold (e.g. if
$D_1=D_f\neq D_2$, then $D_1'=D_f'$ acquires four extra corner points,
and $e(D _1')=e(D_1)-1$).
\end{proof}
\begin{figure}[ht]
\begin{center}
\input{bijmas.pstex_t}
\end{center}
\caption{{\bf Bijectivity of domains.}}
\label{f:bijmas}
\end{figure}
Our next proposition will make use of the following result of Sarkar.
Note that Sarkar's proof is combinatorial, derived using properties of
the Euler measure and the point measure (i.e. taking the definition of
Maslov index as in Equation~\eqref{eq:MaslovIndex}).

\begin{thm}(Sarkar, \cite[Theorems~3.2~and~4.1]{sarka})\label{thm:sark}
Suppose that $\DD =(\Sigma , \alphak , \betak , \w )$ is a nice
diagram, and define the Maslov index of a domain using the
combinatorial formula of Equation~\eqref{eq:MaslovIndex}.  Then the
Maslov index $\mu$ is additive, that is, if ${\mathcal {D}}_1\in \pi_2
(\x, \y )$ and ${\mathcal {D}}_2 \in \pi_2 (\y ,\z )$ then for 
${\mathcal {D}}_1 +
{\mathcal {D}}_2 \in \pi_2 (\x ,\z )$ we have
\[
\mu ({\mathcal D}_1 + {\mathcal D}_2 ) =\mu ({\mathcal D}_1) + 
\mu ({\mathcal D}_2).
\]
\qed
\end{thm}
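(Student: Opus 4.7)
The strategy is to isolate the additivity obstruction in a local identity at the intermediate generator $\y$ and to verify it combinatorially using Equation~\eqref{eq:DomainFromXtoY}. Both the Euler measure $e$ and each individual point-measure $p_q$ (for a fixed intersection point $q$) are $\Z$-linear on 2-chains, while the total point-measure $p(D)$ is not linear in $D$ alone because its summation range varies with the endpoints of $D$. Expanding the defect
\[
\Delta := \mu(\mathcal{D}_1+\mathcal{D}_2) - \mu(\mathcal{D}_1) - \mu(\mathcal{D}_2)
\]
and cancelling all manifestly linear pieces leaves
\[
\Delta \;=\; \bigl[p_{\x}(\mathcal{D}_2) - p_{\y}(\mathcal{D}_2)\bigr] + \bigl[p_{\z}(\mathcal{D}_1) - p_{\y}(\mathcal{D}_1)\bigr],
\]
where for a generator $\mathbf{w}$ I write $p_{\mathbf{w}}(D) := \sum_{q\in\mathbf{w}} p_q(D)$. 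Each bracket depends on only one of the two domains and on an ordered pair of generators.

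Next, I would rewrite each bracket via the quadrant data of Equation~\eqref{eq:DomainFromXtoY}. For $D \in \pi_2(\mathbf{w}_1,\mathbf{w}_2)$ and an intersection point $q$, the relation $a_q+c_q = b_q+d_q - \delta(q,\mathbf{w}_1) + \delta(q,\mathbf{w}_2)$ lets one express $p_q(D) = (a_q+b_q+c_q+d_q)/4$ in terms of a one-sided quadrant sum (e.g.\ $a_q+c_q$) plus an explicit boundary correction, and therefore rewrites the difference $p_{q_1}(D)-p_{q_2}(D)$ at a moving-coordinate pair purely in terms of one-sided multiplicities of $D$ at $q_1$ and $q_2$. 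Applying this recipe to both brackets of $\Delta$ reduces the defect to a sum of local one-sided-quadrant expressions at the coordinates of $\y$. Combining the three instances of Equation~\eqref{eq:DomainFromXtoY} at each $y_i\in\y$ (one for $\mathcal{D}_1$, one for $\mathcal{D}_2$, and one for $\mathcal{D}_1+\mathcal{D}_2$) then produces the identity that forces the two brackets to yield equal-and-opposite contributions at each $y_i$, so $\Delta = 0$.

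The main obstacle is the case analysis at each $y_i$: the mult-1 quadrants of $\mathcal{D}_1$ and $\mathcal{D}_2$ can combine in several combinatorially distinct ways (adjacent, diagonal, or producing a single mult-2 quadrant), with the sub-cases varying according to whether the $i$-th coordinate moves in $\mathcal{D}_1$, in $\mathcal{D}_2$, or in both. Niceness of $\DD$ is what keeps this check manageable: adjoining elementary domains are bigons or rectangles, so the Euler-measure formula $e(D_i)=1-n/2$ applies uniformly and only finitely many local configurations need to be inspected around each $y_i$.
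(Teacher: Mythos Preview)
The paper does not give its own proof of this statement: the theorem is stated with a citation to Sarkar's paper and closed with \qed. So there is no in-paper argument to compare against; what matters is whether your sketch actually establishes additivity.

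Your reduction of the defect to
\[
\Delta = \bigl[p_{\x}(\mathcal D_2)-p_{\y}(\mathcal D_2)\bigr] + \bigl[p_{\z}(\mathcal D_1)-p_{\y}(\mathcal D_1)\bigr]
\]
is correct, and rewriting each $p_q(D)$ via Equation~\eqref{eq:DomainFromXtoY} as $\tfrac12(a_q+c_q)+\tfrac14(\delta(q,\cdot)-\delta(q,\cdot))$ is fine. The gap is the next sentence: you assert that this ``reduces the defect to a sum of local one-sided-quadrant expressions at the coordinates of $\y$.'' It does not. After your rewriting, the first bracket still contains the diagonal sums $(a_{x_i}+c_{x_i})(\mathcal D_2)$ at the points $x_i\in\x$, and the second bracket still contains $(a_{z_i}+c_{z_i})(\mathcal D_1)$ at $z_i\in\z$. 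These are genuine global quantities: they record multiplicities of $\mathcal D_2$ at points that are \emph{not} corners of $\mathcal D_2$ (and similarly for $\mathcal D_1$), possibly far from any coordinate of $\y$. Equation~\eqref{eq:DomainFromXtoY} at $y_i$ tells you nothing about these numbers, so ``combining the three instances of \eqref{eq:DomainFromXtoY} at each $y_i$'' cannot by itself force the two brackets to cancel. Your final paragraph about a finite case analysis near each $y_i$ is therefore attacking the wrong thing; niceness is also a red herring here, since Sarkar's additivity holds for arbitrary diagrams and its proof makes no use of the bigon/rectangle restriction.

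What is actually needed is a global step that converts each bracket into an intersection number of $1$-chains on $\Sigma$. Concretely, one shows an identity of the type $p_{\z}(\mathcal D_1)-p_{\y}(\mathcal D_1)=\partial_\alpha\mathcal D_2\cdot\partial_\beta\mathcal D_1$ (with a suitable fractional convention at shared endpoints), obtained by moving from $y_i$ to $z_i$ along the $\alpha$-arcs of $\partial\mathcal D_2$ and tracking how the local multiplicities of $\mathcal D_1$ jump across $\betak$. The companion bracket becomes $\partial_\alpha\mathcal D_1\cdot\partial_\beta\mathcal D_2$, and the two cancel by (anti)symmetry of the intersection pairing together with $\partial_\alpha\mathcal D_i\cdot\partial_\alpha\mathcal D_j=\partial_\beta\mathcal D_i\cdot\partial_\beta\mathcal D_j=0$. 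This path/intersection argument is the missing idea; once you supply it, the local $\delta$-corrections you wrote down from \eqref{eq:DomainFromXtoY} are exactly what make the endpoint bookkeeping come out right.
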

Lemma~\ref{l:bijmas} has then the following refinement:
\begin{prop}
  \label{prop:IdentifyChains}
  Given $\x,\y\in\Gen$, there is a (canonical) identification
  between ${\mathfrak{M}}^{\DD }_{\x,\y}$ and the set of chains in $\DD '$
  connecting $\x$ to $\y$ (in the sense of Definition~\ref{def:chain}).
\end{prop}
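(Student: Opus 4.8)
The plan is to promote the bijection $\Phi$ of Lemma~\ref{l:bijmas} to the desired identification by restricting it to the relevant subsets, using Theorem~\ref{thm:sark} (Sarkar's additivity of the Maslov index on nice diagrams) to control how $\mu$ behaves under the operations that build chains out of their constituent bigons and rectangles. Recall that Proposition~\ref{p:faktor1} already expresses $\langle\partialaa_Q(\x+K),\y+K\rangle$ as the mod~2 count of chains connecting $\x$ to $\y$, and by the remark following it every such chain has length one or two; moreover Proposition~\ref{p:atfog} identifies ${\mathfrak{M}}^{\DD}_{\x,\y}$ with $\{\Delta\in\pi_2^{\DD}(\x,\y)\mid\Delta\geq 0,\ n_{\w}(\Delta)=0,\ \mu(\Delta)=1\}$, and likewise for $\DD'$. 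So everything is phrased in terms of the combinatorial quantities $\mu$, $n_{\w}$, and nonnegativity, all of which $\Phi$ and $\Psi=\Phi^{-1}$ interact with well.

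\emph{First} I would observe that to a chain $C=({\mathcal D}_1',\dots,{\mathcal D}_n')$ (with $n=1$ or $2$) one associates, as in Definition~\ref{def:chain}, the domain ${\mathcal D}_C'\in\pi_2^{\DD'}(\x,\y)$ obtained by summing the ${\mathcal D}_i'$ and subtracting the bigon in ${\mathfrak{M}}^{\DD'}_{\kaa_i,J(\kaa_i)}$; I then set $\Theta(C)=\Phi({\mathcal D}_C')$. For the length-one case ${\mathcal D}_C'={\mathcal D}_1'\in{\mathfrak{M}}^{\DD'}_{\x,\y}$ has $\mu=1$ and $n_{\w}=0$, so $\Phi({\mathcal D}_1')\in{\mathfrak{M}}^{\DD}_{\x,\y}$ by Lemma~\ref{l:bijmas}, Proposition~\ref{p:atfog}, and the fact that $\Phi$ preserves nonnegativity and $n_{\w}$ (the basepoints lie outside the $\epsilon$--neighbourhood of $\gamma$, so their local multiplicities are literally unchanged). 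For the length-two case $C=({\mathcal D}_1',{\mathcal D}_2')$ with ${\mathcal D}_1'\in{\mathfrak{M}}^{\DD'}_{\x,\laa_1}$, ${\mathcal D}_2'\in{\mathfrak{M}}^{\DD'}_{\kaa_1,\y}$, and $\laa_1=J(\kaa_1)=e_1\kaa_1'$, the subtracted bigon is the one of Lemma~\ref{l:specif} from $f_1\kaa_1'$ to $e_1\kaa_1'$, which has $\mu=1$ (it is an elementary bigon plus rectangles). Then ${\mathcal D}_C'={\mathcal D}_1'+{\mathcal D}_2'-(\text{bigon})\in\pi_2^{\DD'}(\x,\y)$, and Theorem~\ref{thm:sark} applied in $\DD'$ gives $\mu({\mathcal D}_C')=1+1-1=1$; nonnegativity of ${\mathcal D}_C'$ follows because the subtracted bigon is exactly the $\alpha_1'$--part that ${\mathcal D}_1'$ and ${\mathcal D}_2'$ overlap along (near $\kaa_1$, by the orientation convention and Lemma~\ref{l:specif}), and $n_{\w}({\mathcal D}_C')=0$ since all three pieces avoid the basepoints. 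Hence $\Phi({\mathcal D}_C')\in{\mathfrak{M}}^{\DD}_{\x,\y}$ again, and $\Theta$ is well-defined.

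\emph{Next} I would construct the inverse. Given $\Delta\in{\mathfrak{M}}^{\DD}_{\x,\y}$, set $\Delta'=\Psi(\Delta)=\Phi^{-1}(\Delta)\in\pi_2^{\DD'}(\x,\y)$; by Lemma~\ref{l:bijmas} it is nonnegative with $n_{\w}=0$ and $\mu(\Delta')=1$, so $\Delta'\in{\mathfrak{M}}^{\DD'}_{\x,\y}$, i.e. it is already an empty bigon or rectangle in $\DD'$ --- a chain of length one --- \emph{provided} $\Delta'$ has no corner among the $\{e_i,f_i\}$. If instead $\Delta'$ acquires such a corner, one reads off from the local picture near $\alpha_1'$ which new intersection point it is (it can only be one of the newly created points of the finger move), and one splits $\Delta'$ at that point into ${\mathcal D}_1'$ and ${\mathcal D}_2'$ by cutting along the appropriate $\alpha_1'$--arc and re-adding the Lemma~\ref{l:specif} bigon; Theorem~\ref{thm:sark} again forces the two pieces to have $\mu=1$, so they are empty bigons/rectangles, yielding a length-two chain. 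Checking that this splitting is uniquely determined and is a two-sided inverse to $\Theta$ is then a finite local verification using Equation~\eqref{eq:DomainFromXtoY} at the new intersection points, exactly in the style of Lemma~\ref{l:bijmas}. I expect \textbf{the main obstacle} to be precisely this bookkeeping of corners: verifying that a domain in $\pi_2^{\DD'}(\x,\y)$ with $\mu=1$, $n_{\w}=0$, $\Delta'\geq 0$ either is an honest empty bigon/rectangle or decomposes in exactly one way into a chain of two such --- in other words, that $\Theta$ hits every chain exactly once --- since this is where one must rule out spurious configurations and use that the only available corners on the finger are the $e_i,f_i$ with a basepoint blocking the ``long'' side of $\alpha_1'$ (Lemma~\ref{l:mindket}). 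Once this is in hand, $\Theta$ is the desired canonical identification and the proposition follows.
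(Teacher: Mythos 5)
Your overall strategy — push everything through $\Phi$, use Theorem~\ref{thm:sark} to control $\mu$, and appeal to Proposition~\ref{p:atfog} to recognize empty bigons and rectangles — is the same as the paper's, and the forward direction is essentially right: the paper shows $\Phi({\mathcal D}'_C)\in\mxy^{\DD}$ exactly by verifying $\mu(\Phi({\mathcal D}'_C))=1$, $n_{\w}(\Phi({\mathcal D}'_C))=0$ and $\Phi({\mathcal D}'_C)\ge 0$. But note that the nonnegativity you need is of $\Phi({\mathcal D}'_C)$, not of ${\mathcal D}'_C$ itself; in fact ${\mathcal D}'_C$ is \emph{not} nonnegative in general (this is exactly the $b=-1$ phenomenon in the paper's Case~1, where the associated domain has multiplicity $-1$ at the new elementary bigon at the tip of the finger). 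The reason $\Phi({\mathcal D}'_C)\ge 0$ is that $\Phi$ reads off local multiplicities only at points outside the $\epsilon$-neighbourhood of $\gamma$, where the subtracted bigon has support zero.

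The backward direction contains the genuine gap. From $\Delta\in\mxy^{\DD}$ you set $\Delta'=\Phi^{-1}(\Delta)$ and claim that Lemma~\ref{l:bijmas} gives $\Delta'\ge 0$; that lemma asserts only that $\Phi$ is a bijection preserving $\mu$, and $\Phi^{-1}$ does \emph{not} preserve nonnegativity. Indeed $\Delta'$ typically has negative multiplicity at the new bigon $B'$ when the chain has length two, and distinguishing length-one from length-two chains is governed by the local multiplicities $n_1$, $n_f$, $b$ at the two endpoints of the nice arc and at $B'$ — not by whether $\Delta'$ ``acquires a corner among $\{e_i,f_i\}$.'' Since $\x,\y\in\Gen$, the domain $\Delta'\in\pi_2^{\DD'}(\x,\y)$ never has corners at $e_i$ or $f_i$; the $e_i,f_i$ appear only as corners of the intermediate generators in a chain, which you must construct, not observe. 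The real work of the proposition is precisely the case analysis you defer: showing that for each $\Delta$ (split into the cases $n_1=n_f=0$, $n_1=0$ and $n_f=1$, $n_1=1$ and $n_f=0$, and the sub-cases for $b$) there exists a unique chain whose associated domain is $\Delta'$. Your sketch of ``split at the corner and invoke Sarkar additivity'' captures the spirit, but without this case analysis (in particular, locating the canonical bigon ${\mathcal B}'$ and proving any competing chain must coincide with the canonical one) the proposition is not established, and the nonnegativity claims you lean on to avoid it are false.
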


\begin{proof}
  Recall that a chain $C$ connecting $\x, \y \in {\mathcal {S}}\subset
  \Gen '$ naturally defines a domain ${\mathcal D}'_C\in \pi _2 ^{\DD
    '}(\x, \y)$ by taking the domains of the chain with multiplicty
  one, and the bigons of ${\mathfrak{M}}^{\DD '}_{\kaa_i,\laa_i}$ with
  multiplicity $-1$. According to Theorem~\ref{thm:sark} we have that
  $\mu ({\mathcal D}'_C)=1$, hence by Lemma~\ref{l:bijmas} the domain
  $\Phi ({\mathcal D}'_C)$ also satisfies $\mu (\Phi ({\mathcal
    D}'_C))=1$. The construction of $\Phi$ and the fact that
  ${\mathcal D}'_C$ is derived from a chain implies that $\Phi
  ({\mathcal D}'_C)\geq 0$, and since $\gamma$ avoids all the
  basepoints we also get that $n_{\w} (\Phi ({\mathcal
    D}'_C))=0$. Therefore, by Proposition~\ref{p:atfog}
    we conclude that $\Phi ({\mathcal D}'_C)\in \mxy
  ^{\DD}$.

  Conversely, we start with a domain $\Delta\in \mxy ^{\DD
  }$. According to Lemma~\ref{l:bijmas}, there is a corresponding
  domain $\Delta '\in\pi_2^{\DD '}(\x,\y)$ with $\Phi(\Delta
  ')=\Delta$.  We must argue that this domain $\Delta '$ is in fact
  the domain associated to a chain $C$ (in the sense of
  Definition~\ref{def:chain}); and indeed this chain is uniquely
  determined by its underlying domain $\Delta '={\mathcal D}' _C$.  We
  continue with the notation from Lemma~\ref{l:bijmas}. The nice arc
  $\gamma$ starts at the elementary domain $D_{1}$ (for $\DD$),
  immediately crosses $D_2$, and terminates in $D_f$; $n_i$ denotes
  the local multiplicity of $\Delta '\in\pi_2^{\DD '}(\x,\y)$ at
  $D_i'$, while the local multiplicity of $\Delta '$ at $B'$ is $b$.

  {\bf{Case 1: Assume that $n_1=n_f=0$.}} 
    In this case the length of the chain is determined by its domain: if
  ${\mathcal D}'$ is the domain associated to a chain connecting $\x$ to $\y$,
  and $b$ denotes the local multiplicity of this domain at the new bigon
  $B'$, then the length of the chain (as in Definition~\ref{def:chain})
  is given by $1-b$. Since $\x,\y\in\Gen $, we have that
  $n_1-n_2=b-n_f$. Thus, since $n_2$ is at most $1$, and
  $n_1=n_f=0$, we conclude that $b=0$ or $b=-1$.
  
  Now, suppose that $\Delta \in{\mathfrak{M}}^{\DD}(\x,\y)$, and the
  corresponding $\Delta '$ (with $\Phi (\Delta ') = \Delta$) has
  $b=0$. Then, this implies that $n_2=0$, and in fact that any chain
  representing $\Delta '$ has length one.
  
  Next suppose that $\Delta '\in\pi_2^{\DD '}(\x,\y)$ has $b=-1$.  Then
  we claim that there is a unique chain whose domain coincides with
  $\Delta '$.  This follows from a case-by-case analysis, considering
  the various possibilities for the starting domain $\Delta \in
  \pi_2^{\DD}(\x,\y)$, as we will explain below.

  We start with the case where $\Delta $ is a rectangle. Any rectangle
  (such as $\Delta$) is tiled by elementary domains, each of which is
  a rectangle. Equivalently, $\Delta$ contains a grid of parallel
  $\alphak$-- and parallel $\betak$--arcs. The nice arc $\gamma$
  enters the $\alpha_1$-labelled boundary arc of $\Delta$, where it
  cannot cross any of the other parallel $\alphak$--circles, $\gamma$
  possibly crosses some of the $\betak$--arcs, and then it exits on one
  of the two $\betak$--arcs on the boundary. There are two subcases,
  according to which direction $\gamma$ turns.  More precisely, let
  $\gamma_0$ be the connected subarc of $\gamma\cap \Delta$ which
  contains the initial point of $\gamma$.  Then, $\gamma_0$ separates
  $\Delta$ into two components, one of which contains three corners of
  $\Delta$, and the other one contains only one. The component
  containing one of the corners of $\Delta$ might contain a coordinate
  of $\x$ or a coordinate of $\y$. We assume the latter case (the
  former case follows similarly). Moving $\alpha_1$ along $\gamma$ to
  get $\alpha'_1$, we find two intersection points $e_i$ and $f_i$
  which are nearest to the terminal point of $\gamma_0$. In the case
  we are considering, there is a bigon (supported inside $\Delta$)
  connecting some coordinate of $\y$, which we label $y_1$, to $f_i$.
  A diagram describing this possibility is shown by Figure~\ref{f:iso}.
  In fact, writing $\y=y_1\kaa '$, we can consider the generator $\kaa
  =f_i \kaa '$. We claim that:
  \begin{itemize}
  \item There is a chain whose underlying domain is
    $\Delta'$, gotten by a rectangle connecting
    $\x$ to $e_i \kaa '$, supported inside $\Delta$,
    followed by the bigon
    ${\mathcal B}'$ from $f_i\kaa '$ to $\y$. We call this the {\em canonical
      chain for $\Delta '$}.
  \item 
    The aforementioned canonical chain is the only chain
    connecting $\x$ to $\y$, and whose support is $\Delta '$.
  \end{itemize}

  The first claim is straightforward.  Suppose we have any chain whose
  domain coincides with $\Delta '$.  We have seen that this chain has
  length two; i.e. we can write ${\mathcal D}_1'
  \in{\mathfrak{M}}^{\DD '}_{\x,e_j{\mathbf t}}$ and ${\mathcal
    D}_2'\in{\mathfrak{M}}^{\DD '}_{f_j{\mathbf t},\y}$.  Our goal is
  to show that this chain coincides with the canonical one. To this
  end, consider the bigon ${\mathcal B}'$ (in the canonical chain)
  connecting $f_i$ to $y_1$. This in turn must contain an elementary
  bigon $B_0'$. Clearly, one of ${\mathcal D}_1'$ or ${\mathcal D}_2'$,
  call it ${\mathcal D}_i'$, must contain $B_0'$ as well; and hence
  ${\mathcal D}_i'$ must be a bigon. We argue that ${\mathcal D}_i'$
  must be supported inside ${\mathcal B}'$. To see this, note that
  ${\mathcal B}'$ has a point on its $\alphak$--boundary and another
  point on its $\betak$--boundary, which have push-offs lying outside
  the support of ${\mathcal D}_i'$ (since they are both supported
  outside $\Delta$, but not in the finger move region).  Moreover,
  since each ${\mathcal D}_i'$ contains corner points of $\Delta$,
  this actually forces ${\mathcal D}_i'$ and ${\mathcal B}'$ to have the
  same support. This also forces $i=2$, since the terminal points
  coincide, giving ${\mathcal D}_2'={\mathcal B}'$ as domains connecting
  intersection points.  It follows easily now that our chain coincides
  with the canonical chain.
  
  \begin{figure}[ht]
    \begin{center}
      \input{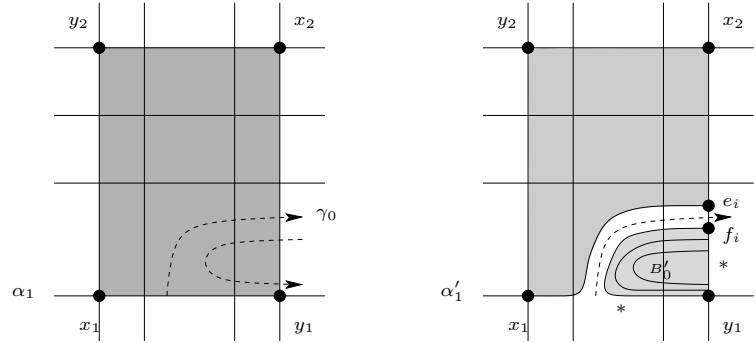}
    \end{center}
    \caption{{\bf A rectangle ${\Delta}$ turning into a chain.} At the left,
      we have a rectangle from $\x$ to $\y$, which is cut across by
      some collection of $\gamma$-arcs,  labelled by
      oriented, dashed arcs. (Here, we have chosen to illustrate the
      case of two such arcs.) The initial one is labelled $\gamma_0$.
      After the finger move is performed, we arrive at the new domain
      pictured on the right. Note the small elementary domain $B_0'$,
      which is a bigon; this is contained in the larger (shaded) bigon
      called ${\mathcal B}'$ in the text. The two regions near
      $\Delta '$ where the local multiplicity of ${\mathcal D}'_i$ (from
      the text) is guaranteed to be zero are indicated by stars.}
    \label{f:iso}
  \end{figure}
  
  A similar analysis holds in the case where $\gamma$ turns the other
  direction (except in this case the bigon ${\mathcal B}'$ connects $\x$
  to $e_i\kaa'$). Indeed, a similar analysis can be done in the case
  where $\Delta$ is a bigon, instead of a rectangle. This concludes
  the proposition, provided $n_1=n_f=0$.
  
  When the nice arc starts or terminates in a bigon, the above discussion
  requires some modifications. In this case, we no longer know that
  $n_1=n_f=0$; and indeed this means that the length of a chain is no longer
  necessarily given by $1-b$. However, we still know that $0\leq n_1+n_f\leq
  1$: $n_1=n_f=1$ would mean that both $D_1$ and $D_f$ are bigons, contained
  by $\Delta$, which can contain at most one elementary bigon. The cases
  where $n_1+n_f=0$ was treated before; so it remains to consider cases where
  $n_1=0$ and $n_f=1$ or $n_1=1$ and $n_f=0$.

  {\bf {Case 2: Suppose $n_1=0$ and $n_f=1$.}}  We consider now $\Delta\in
  \pi_2 ^{\DD }(\x,\y)$. The fact that $n_f=1$ ensures that the region $D_f$
  is contained in $\Delta$. Thus, $D_f$ cannot contain a basepoint, and
  hence it must be a bigon.  Now, considering Euler measures, we can conclude
  that $\Delta$ is a bigon as well, and the nice arc $\gamma$ starts on the
  boundary of $\Delta$. Since $\Delta$ is a bigon, we can write $\x=x_1
  \kaa'$ and $\y=y_1 \kaa'$.

  We have the following subcases (cf. also Figure~\ref{f:iso2}):
  \begin{list}
    {(2-\alph{bean})}{\usecounter{bean}\setlength{\rightmargin}{\leftmargin}}
  \item 
    \label{item:FingerLeaves}
    The nice arc $\gamma$ terminates outside of $\Delta$.
  \item 
    \label{item:FingerStays}
    The nice arc $\gamma$ is supported entirely inside $\Delta$,
    terminating in its elementary bigon.
  \item 
    \label{item:FingerLeavesButReturns}
    The nice arc $\gamma$ crosses $\Delta$, and eventually
    reenters it, terminating in its elementary bigon.
  \end{list}

  Consider first Case~(2-a) (depicted by the upper diagrams of Figure~\ref{f:iso2}). 
  Clearly, $\Delta '$ in this case has
  negative local multiplicity somewhere, and hence we can conclude
  that $\Delta '$ represents a chain of length $2$. We argue that that
  chain is uniquely determined. To this end, let $\gamma_0$ denote the
  connected component of $\Delta \cap \gamma$ containing the initial
  point of $\gamma$. The arc $\gamma_0$ disconnects $\Delta$. When we
  thicken up $\gamma_0$, we see that the endpoint of $\gamma_0$ gives
  rise to two intersection points $e_i$ and $f_i$ of $\alpha_1'$ with
  $\beta_1$.  Indeed, inside the tiling of $\Delta $, we can find a
  new elementary bigon $B_0'$. This elementary bigon $B_0'$ is
  contained in a unique bigon ${\mathcal B}'$ which contains one of
  the corners of $\Delta$: either the initial corner $x_1$ or the
  terminal one $y_1$. Assume it is the terminal corner.  Then,
  ${\mathcal B}'$ is a bigon connecting $f_i \kaa'$ to $y_1 \kaa'$.
  We claim:
  \begin{itemize}
    \item There is a chain whose underlying domain is $\Delta '$, originating from
      the bigon which connects $\x=x_1\kaa'$ to $e_i\kaa'$, supported inside $\Delta$,
      followed by the above bigon ${\mathcal B}'$ from $f_i\kaa'$ to $\y$. We
      call this the {\em canonical chain for $\Delta '$}.
    \item The canonical chain is the only chain connecting $\x$ to
      $\y$ and whose support is $\Delta '$.
  \end{itemize}
  
  The first claim is straightforward.  For the second, consider any
  chain ${\mathcal D}_1'$ and ${\mathcal D}_2'$ with the stated
  support.  Note that $B_0'$ is contained in one of ${\mathcal D}_1'$
  or ${\mathcal D}_2'$; denote the one it is contained in ${\mathcal
    D}_i'$. A geometric argument as before (using the properties that
  ${\mathcal D}_i'$ contains $B_0'$, and it has one of $x_1$ or $y_1$
  as a corner) shows that $i=2$ and indeed ${\mathcal D}_2'=B'$.  It
  is easy to conclude that the chain coincides with the canonical
  chain.

  Consider next Case~(2-b) (see the lower diagrams of Figure~\ref{f:iso2}). 
  In this case, it is straightforward to see
  that $\Delta '$ is an embedded bigon. As such, we cannot find any
  decomposition of it as a length $2$ chain; i.e. it corresponds to a
  length $1$ chain. 
  
  Finally, Case~(2-c) follows the same way as Case (2-a).
  \begin{figure}[ht]
    \begin{center}
      \input{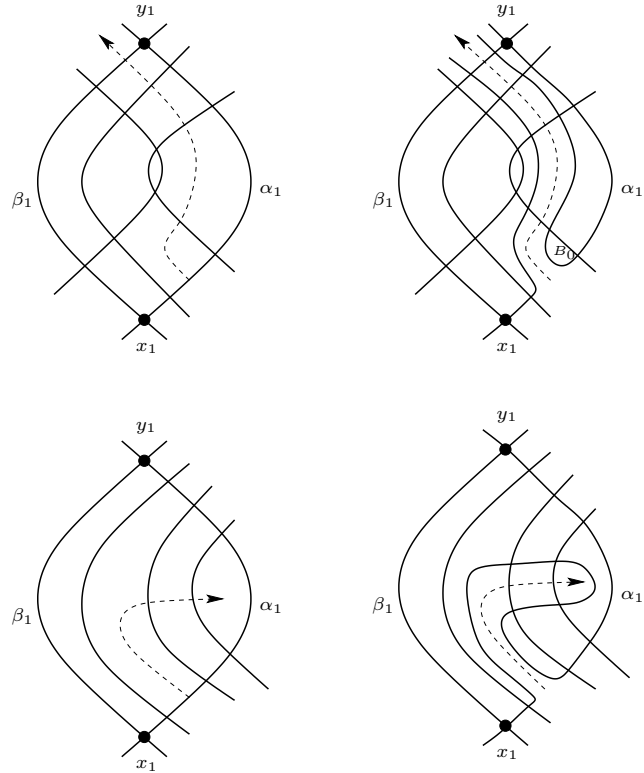}
    \end{center}
    \caption{{\bf Cases (2-a) and (2-b) of the proof of
        Proposition~\ref{prop:IdentifyChains}.}}
    \label{f:iso2}
  \end{figure}

  {\bf{Case 3: Suppose that $n_1=1$ and $n_f=0$.}}
  We can assume that $n_2=0$ (for otherwise $\Delta$ and $\Delta '$
  agree: both are bigons). 

  We have that $\Delta$ is a bigon, as it contains the elementary
  domain $D_1$ (which in turn must be a bigon).  But in this case,
  $\Delta '$ is a nonnegative domain with $\Mas(\Delta ')=1$, which
  contains the elementary bigon $B_0'$. Evidently, this forces $\Delta '$
  to be a bigon, as well.  Thus, $\Delta '$ is the domain of a length
  $1$ chain. Since it is an embedded bigon, it cannot be realized as
  the domain of a length $2$ chain.
\end{proof}

\begin{lem}\label{l:isochain}
The map $F\colon (\CFaa (\DD), \partialaa _{\DD})\to (Q ,
\partialaa _Q)$ is an isomorphism of chain complexes.
\end{lem}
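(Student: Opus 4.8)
The plan is simply to assemble the two propositions just proved. Recall that $F$ has already been shown to be an isomorphism of vector spaces: it is injective by Lemma~\ref{l:specif}, and the dimension count $\dim \CFaa(\DD) + \dim K = \dim \CFaa(\DD')$ shows it is onto. So the only thing left to establish is that $F$ is a chain map, i.e.\ that $F(\partialaa_{\DD}\x) = \partialaa_Q(F(\x))$ for every generator $\x \in \Gen$.

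First I would reduce this identity to a comparison of matrix elements. Both $\partialaa_{\DD}\x$ and $\partialaa_Q(F(\x))$ are linear combinations of the basis vectors indexed by $\Gen$ (the latter because, by Proposition~\ref{p:faktor1}, $\partialaa_Q$ is described in the basis $\{\y + K \mid \y\in\Gen\}$), so it suffices to check that $\langle \partialaa_{\DD}\x, \y\rangle$ and $\langle \partialaa_Q(\x+K), \y+K\rangle$ agree mod $2$ for all $\x, \y\in\Gen$. By the definition of $\partialaa_{\DD}$, the first quantity is the cardinality mod $2$ of $\mxy^{\DD}$; by Proposition~\ref{p:faktor1}, the second quantity is the number mod $2$ of chains in $\DD'$ connecting $\x$ to $\y$ in the sense of Definition~\ref{def:chain}.

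Then I would invoke Proposition~\ref{prop:IdentifyChains}, which furnishes a canonical bijection between $\mxy^{\DD}$ and the set of such chains. This bijection equates the two counts, so $F$ intertwines the boundary operators; combined with the fact that $F$ is a vector-space isomorphism, this gives that $F$ is an isomorphism of chain complexes, as claimed. There is no real obstacle remaining at this point: all of the geometric content has already been absorbed into Propositions~\ref{p:faktor1} and~\ref{prop:IdentifyChains}, and the present lemma is just the bookkeeping that ties them together.
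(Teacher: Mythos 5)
Your proposal is correct and follows essentially the same route as the paper's proof: establish that $F$ is a vector-space isomorphism, reduce to comparing matrix elements, identify the matrix elements of $\partialaa_Q$ with chain counts via Proposition~\ref{p:faktor1}, and then apply the bijection of Proposition~\ref{prop:IdentifyChains}. Nothing is missing.
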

\begin{proof}
Recall that $F$ is a vector space isomorphism, therefore we only need
to verify that the matrix elements $\langle \partialaa _{\DD}\x ,
\y\rangle$ and $\langle \partialaa _Q (\x +K) , \y +K \rangle$ are
equal. By Proposition~\ref{p:faktor1} the latter number has been
identified as the number of chains connecting $\x$ and $\y$ in $\DD
'$. Proposition~\ref{prop:IdentifyChains} then allows us to conclude
the proof.
\end{proof}

Now we are ready to show the isomorphism of the groups of
$\HFaa (\DD)$ and $\HFaa  (\DD ')$:

\begin{prop}\label{p:niceisso}
The homology of $(Q, \partialaa _Q)$ is isomorphic to both
\begin{enumerate}
\item $H_* (\CFaa (\DD) , \partialaa _{\DD})$ and to
\item $H_* (\CFaa (\DD ') , \partialaa _{\DD '})$.
\end{enumerate}
Consequently, if the nice diagrams $\DD$ and $\DD '$ differ by a
nice isotopy then $\HFaa (\DD )\cong \HFaa (\DD ')$.
\end{prop}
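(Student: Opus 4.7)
The plan is to assemble the proposition directly from the formal machinery already set up in Subsection~\ref{ssec:formasp} together with the explicit chain-level identification from Lemma~\ref{l:isochain}. First, I would verify that the partition $\Gen'=\Gen \cup \mathcal{K}\cup\mathcal{L}$, the bijection $J\colon \mathcal{K}\to\mathcal{L}$ given by $J(f_i\y)=e_i\y$, and the filtration on $\mathcal{L}$ induced by the linear order of the $e_i$ along $\alpha_1'$ fit the abstract framework of Subsection~\ref{ssec:formasp}. The key hypothesis~\eqref{eq:filt} is precisely what Lemma~\ref{l:specif} supplies: for $\kaa = f_i\y\in\mathcal{K}$, the only empty bigon/rectangle landing in $\mathcal{L}$ is the small bigon to $J(\kaa)=e_i\y$, so $\partialaa_{\DD'}\kaa = J(\kaa) + (\text{terms in }\Gen\cup\mathcal{K})$, with no ``higher filtration'' corrections even needed.

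With these hypotheses in place, Proposition~\ref{p:homeq} gives immediately that the quotient complex $(Q,\partialaa_Q)$ is chain homotopy equivalent to $(\CFaa(\DD'),\partialaa_{\DD'})=(B,\partial_B)$, which yields assertion~(2). For assertion~(1), I would simply invoke Lemma~\ref{l:isochain}, which tells us that the map $F\colon \CFaa(\DD)\to Q$ sending $\x\mapsto \x+K$ is an isomorphism of chain complexes; taking homology gives $H_*(\CFaa(\DD),\partialaa_\DD)\cong H_*(Q,\partialaa_Q)$.

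Combining (1) and (2) yields a chain of isomorphisms
\[
\HFaa(\DD)=H_*(\CFaa(\DD),\partialaa_\DD)\cong H_*(Q,\partialaa_Q)\cong H_*(\CFaa(\DD'),\partialaa_{\DD'})=\HFaa(\DD'),
\]
establishing the ``consequently'' statement. There is essentially no obstacle here: all the genuine work has already been done, in the combinatorial analysis of chains (Proposition~\ref{prop:IdentifyChains}), the identification of matrix elements (Proposition~\ref{p:faktor1}), and the abstract homological algebra of Subsection~\ref{ssec:formasp}. The only thing worth double-checking is that the filtration hypothesis of Subsection~\ref{ssec:formasp} is satisfied, but Lemma~\ref{l:specif} actually shows something stronger (no $\mathcal{L}$-corrections at all), so the hypothesis holds trivially. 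Thus the proof reduces to a two-line citation of prior results.
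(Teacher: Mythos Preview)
Your proposal is correct and essentially matches the paper's argument. The only cosmetic difference is that for part~(2) the paper spells out the short exact sequence $0\to K\to\CFaa(\DD')\to Q\to 0$ and uses Lemma~\ref{l:specif} to conclude $H_*(K)=0$ directly, whereas you cite the packaged chain-homotopy equivalence of Proposition~\ref{p:homeq}; both rest on the same input and the paper itself remarks (immediately after the proposition) that Proposition~\ref{p:homeq} yields the stronger chain-homotopy statement.
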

 \begin{proof}
   According to Lemma~\ref{l:isochain} the map $F$ provides an
   isomorphism between the chain complexes $(\CFaa (\DD ), \partialaa
   _{\DD})$ and $(Q, \partialaa _Q)$, and hence induces an
   isomorphism between their homologies. This verifies (1).

To prove (2) consider the exact triangle of homologies given by
the short exact sequence
\begin{equation}\label{e:hes}
0\to K \to \CFaa (\DD ' )\to Q \to 0
\end{equation}
of chain complexes. By Lemma~\ref{l:specif} the map $\partialaa _{\DD
  '}$ is injective on the basis vectors corresponding to the elements
of ${\mathcal {K}}$, and since it obviously surjects as a map from the
subspace spanned by these vectors to their $\partialaa _{\DD
  '}$--image $\partialaa _{\DD '} {\mathcal {K}}$, we get that $H_*
(K)=0$.  Exactness of the triangle associated to the short exact
sequence of \eqref{e:hes} now verifies (2).
\end{proof}

\begin{rem}
{\whelm
According to the adaptation of Proposition~\ref{p:homeq}, the 
chain complexes $(\CFaa (\DD ), \partialaa _{\DD})$ and 
$(\CFaa (\DD '), \partialaa _{\DD '})$ are, in fact, 
chain homotopy equivalent complexes.
}
\end{rem}

\subsection{Invariance  under nice handle slides}
Next we will consider the case of a nice handle slide.  The proof of
the invariance of the homology groups in this case will be formally
very similar to the case of nice isotopies.  

Let $\DD=(\Sigma,\alphak,\betak,\w)$ be a nice diagram, equipped
with an embedded arc $\delta$ connecting $\alpha_1$ to $\alpha_2$ in
an elementary rectangle $R$, and let $\DD '=(\Sigma,\alphak',\betak,\w)$
denote the diagram resulting from the nice handle slide of $\alpha_1$
over $\alpha_2$ along $\delta$. In particular, let $\alpha_1'$ denote
the curve replacing $\alpha_1$ in the new diagram. Recall that
$\alpha_1$, $\alpha_1'$ and $\alpha_2$ bound a pair-of-pants in the
Heegaard surface, which contains the handle slide arc $\delta$.

Orient $\alpha_2$ as the boundary of this pair-of-pants (which in turn
inherits an orientation from the Heegaard surface), and order the
intersection points with the $\betak$--curves according to this
orientation, starting with the point which follows the endpoint
$\delta(1)$ of the curve $\delta$.  Denote these intersection points
by $\{e_i\}_{i=1}^n$. Each intersection point $e_i\in
\alpha_2\cap\beta_{k(i)}$ has a corresponding nearest intersection
point $f_i\in \alpha_1'\cap\beta_{k(i)}$; see Figure~\ref{f:hslide}
for an illustration.

\begin{figure}[ht]
\begin{center}
\input{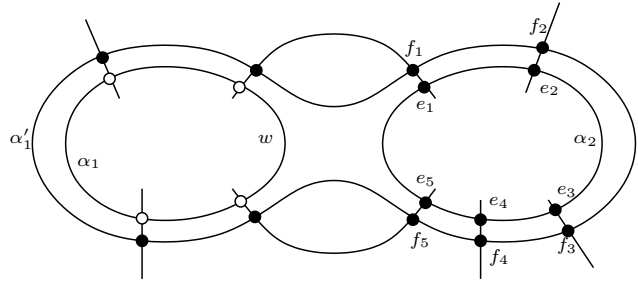}
\end{center}
\caption{{\bf {Nice handle slide.}} We have illustrated the
  pair-of-pants in a nice handle slide (of $\alpha_1$ over
  $\alpha_2$).  The transverse arcs are pieces of $\betak$--curves.  We
  have also shown here the numbering conventions for intersection
  points of the $\betak$--curves with $\alpha_2$, and with part of
  $\alpha'_1$. Intersection points of $\alpha_1$ with the $\betak$--arcs are
  indicated by hollow circles; each has a nearby matching solid circle
  (which is used in the one-to-one correspondence between generators
  for $\DD $ and certain generators in $\DD '$). }
\label{f:hslide}
\end{figure}

Let $\Gen$ and $\Gen '$ denote the set of generators for $\DD$ and $\DD '$.
Generators of $\DD '$ can be partitioned into two types:
\begin{itemize}
\item Those generators which do not contain any coordinate of the form
  $f_i$.  These generators are in one-to-one correspondence with the
  generators $\Gen$ of $\DD$ (via a one-to-one correspondence
  which moves the coordinate on $\alpha_1$ to its nearest intersection
  point on $\alpha_1'$, and which preserves all other coordinates).
  We will suppress this one-to-one correspondence from the notation,
  thinking of $\Gen$ as a subset of $\Gen '$.
\item Those generators which contain a coordinate of the form $f_i$.
  (Note that all the generators contain a coordinate of the form
  $e_j$.) We subdivide the set of these generators into two
  subsets. Let ${\mathcal{K}}$ denote those generators which contain
  $f_i$ and $e_j$ with $i>j$, and let ${\mathcal{L}}$ denote those
  generators which contain $f_i$ and $e_j$ with $i<j$.
\end{itemize}

The map $f_i e_j\x\mapsto f_j e_i\x$ determines a bijection 
$J\colon {\mathcal{K}}\longrightarrow{\mathcal{L}}$ (which, as we
shall see in Lemma~\ref{l:specifhandle}, satisfies the requirements
from Subsection~\ref{ssec:formasp}). There is a rectangle
supported in the pair-of-pants, with corners $f_i$, $e_i$, $e_j$ and
$f_j$, connecting $f_ie_j\x$ with $f_je_i\x$.  Let $K$ denote the
subspace of $\CFaa (\DD ')$ generated by the basis vectors
corresponding to the elements of ${\mathcal {K}}$ together with their
$\partialaa _{\DD '}$--images. By ordering the pairs $f_ie_j$ with the
lexicographic ordering (i.e. first according to the index of $f$,
then according to the index of $e$) we get a filtration on the vector
space spanned by the basis vectors corresponding to the elements of
${\mathcal {L}}$.

In the following we will need a more detailed understanding of the
sets ${\mathfrak{M}}^{\DD '}_{\kaa,\laa }$, leading us to the
appropriate version of Lemma~\ref{l:specif} in the context of handle
slides. Recall that a nice handle slide is defined by an arc $\delta$
contained by a single elementary rectangle $R$, with the assumption
that $D_{1}$, the domain containing $\delta (0)$ on its boundary, but
different from $R$, contains a basepoint. Let $D_{f}$ denote the
domain having $\delta (1) $ on its boundary (and different from $R$).

\begin{lem}\label{l:specifhandle}
  Suppose that $i>j, l>k$ and let $\kaa =f_ie_j\x, \laa =f_ke_l\y$
  denote elements of ${\mathcal {K}}$ and ${\mathcal {L}}$, resp.
  Then the set ${\mathfrak{M}}^{\DD '}_{\kaa , \laa}$ is nonempty if
  and only if either $i=l, j=k$ and $\x =\y$, or if $\laa $ is in a
  higher filtration level than $J(\kaa )=f_je_i\x $.  In addition, the
  set ${\mathfrak{M}}^{\DD '}_{f_i e_j \x , f_j e_i \x }$ contains a
  single element, and for all $\laa \neq f_je_i\y$ any domain
  ${\mathcal D}'\in {\mathfrak {M}}^{\DD '}_{\kaa , \laa }$ contains
  the elementary domain $D_{f}=D_{f}'$ with multiplicity 1.
\end{lem}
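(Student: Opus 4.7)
The proof will rely on analyzing the geometry of $\DD'$ near the pair-of-pants $P \subset \Sigma$ bounded by $\alpha_1, \alpha_1', \alpha_2$, which contains the handle-slide arc $\delta$. By Proposition~\ref{p:atfog}, any $D \in \mathfrak{M}^{\DD'}_{\kaa,\laa}$ is an embedded, Maslov index one bigon or rectangle with $D \geq 0$ and $n_{\w}(D) = 0$. Since $D_1$ contains a basepoint by the definition of a nice handle slide, the support $s(D)$ must avoid $D_1$; this is the principal geometric constraint driving the proof.

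I will first construct the canonical rectangle from $\kaa = f_i e_j \x$ to $J(\kaa) = f_j e_i \x$ and prove it is the unique element of $\mathfrak{M}^{\DD'}_{\kaa, J(\kaa)}$. Inside $P$, the four points $f_i, e_j, e_i, f_j$ bound a region whose boundary consists of arcs on $\beta_{k(i)}, \alpha_2, \beta_{k(j)}, \alpha_1'$; the hypothesis $i > j$ together with the orientation convention for the $\{e_i\}$ (ordered along $\alpha_2$ starting immediately after $\delta(1)$) guarantees that this region avoids $D_1$, and a local check shows it is an embedded empty rectangle of Maslov index one from $\kaa$ to $J(\kaa)$. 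Uniqueness follows because any rectangle with these prescribed four corners either coincides with the canonical one or must wrap around $P$ and contain $D_1$, which is forbidden.

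Next I will prove that every $D \in \mathfrak{M}^{\DD'}_{\kaa,\laa}$ with $\laa \neq J(\kaa)$ contains $D_f$ with multiplicity one, and that the indices $(k,l)$ of $\laa$ are strictly lexicographically greater than $(j,i)$. The strategy is a case analysis based on which of $\{f_i, e_j\}$ are moving corners of $D$ (with remaining moving coordinates potentially coming from $\x$ versus $\y$). In each case I follow the $\alpha_1'$- and $\alpha_2$-portions of the boundary of $D$ outward from the moving corners: since $D_1$ is forbidden, the only alternative to fitting into the canonical rectangle is to exit $P$ through the handle-slide rectangle $R$, and the constraint that $D$ be embedded then forces $D_f$ to appear in $s(D)$ with multiplicity one. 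Tracking the next corners of $D$ along $\alpha_2$ past $e_i$ and along $\alpha_1'$ past $f_i$ produces indices $(k,l)$ that are strictly greater than $(j,i)$ in lexicographic order, which is exactly the condition that $\laa$ lies at a higher filtration level than $J(\kaa)$.

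The main obstacle will be the combinatorial bookkeeping in the cases where $\x \neq \y$ and one moving corner of $D$ lies outside $P$. Here one must verify that no configuration closes up as an embedded bigon or rectangle unless $D_f$ is traversed, which requires careful tracking of the boundary of $D$ around $P$, using $n_{\w}(D) = 0$ to exclude every way $D$ could wrap back into $D_1$ or into any other basepoint-containing domain adjacent to the $\betak$-arcs crossing $P$. Once this case analysis is complete, the three assertions of the lemma --- the characterization of when $\mathfrak{M}^{\DD'}_{\kaa,\laa}$ is nonempty, the uniqueness of the canonical rectangle from $\kaa$ to $J(\kaa)$, and the presence of $D_f$ with multiplicity one in every other case --- follow at once.
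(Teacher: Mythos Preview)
Your approach is essentially the same as the paper's: a case analysis on which coordinates of $\mathcal{D}'$ move, using the basepoint in $D_1$ as the principal constraint, ending either with the canonical rectangle or with $D_f \subset \mathcal{D}'$ and a strict increase in filtration. The paper organizes the cases by (bigon vs.\ rectangle) $\times$ (which of $\alpha_1', \alpha_2$ carry moving coordinates) and handles the canonical rectangle last rather than first, but the substance is identical.

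One geometric point to sharpen: the mechanism is not that $\mathcal{D}'$ ``exits $P$ through the rectangle $R$''. The paper's first move is to observe that of the two arcs on $\alpha_1'$ joining $f_i$ to $f_k$, one passes two elementary bigons on one side and a basepoint on the other, so only the short arc can appear in $\partial\mathcal{D}'$. After that, the orientation convention on the $\alpha_2$-arc from $e_j$ to $e_l$ decides whether it passes through $\delta(1)$; when it does, $D_f$ lies on the correct side and is contained in $\mathcal{D}'$. In particular, some configurations (e.g.\ the bigon on $\alpha_2$ in the paper's second case) live entirely outside $P$ and never meet $B_u', B_d'$ at all. You should also make explicit that certain cases (the bigon with moving coordinate on $\alpha_1'$, and one subcase of the $\alpha_2$-bigon) are simply impossible because the resulting domain fails to be empty --- these exclusions are what give the ``only if'' direction.
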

\begin{proof}
  We will proceed by a case-by-case analysis of possibilities for a
  domain ${\mathcal D}'\in {\mathfrak{M}}^{\DD '}_{\kaa,\laa}$. Since
  $\kaa\in{\mathcal{K}}$ and $\laa\in{\mathcal{L}}$, one of the
  coordinates (or both) on $\alpha_1'$ and $\alpha_2$ must be
  different in these intersection points.  Notice first that there are
  two arcs on $\alpha_1'$ connecting any two $f_i$ and $f_k$, but one
  of them passes by two bigons on one side and a basepoint on the
  other, hence only one of these two arcs is allowed to appear in the
  boundary of any ${\mathcal D}'\in{\mathfrak{M}}^{\DD '}_{\kaa,\laa}$
  (since ${\mathcal D}'$ contains at most one elementary bigon and no
  basepoint).

  \begin{figure}[ht]
    \begin{center}
      \input{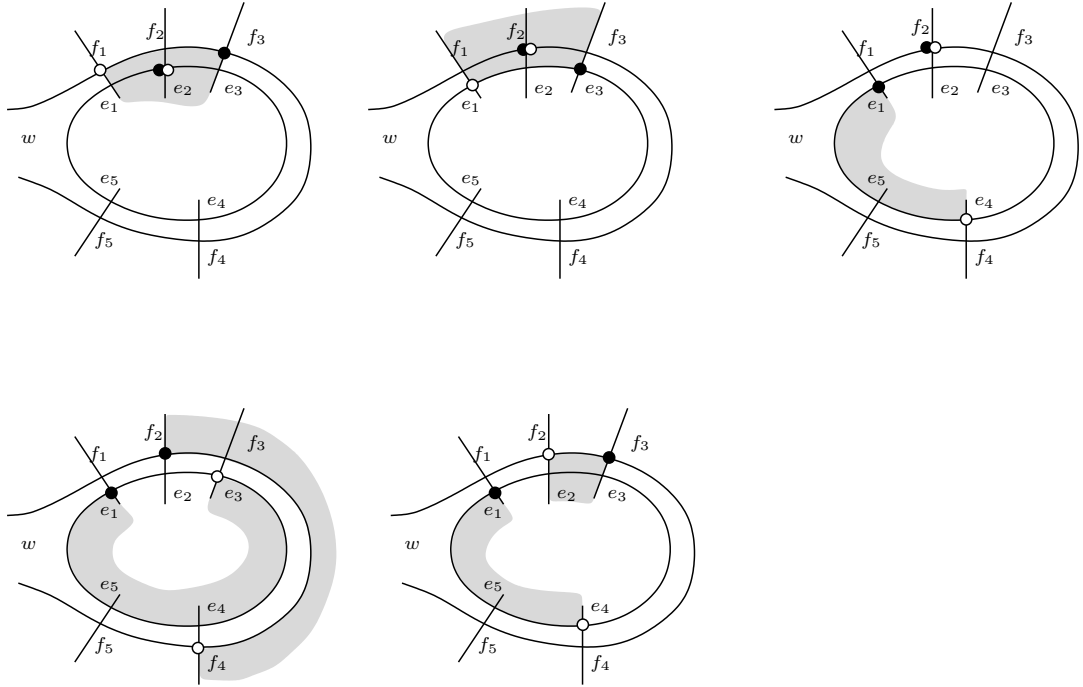}
    \end{center}
    \caption{{\bf An illustration of Lemma~\ref{l:specifhandle}}
    The shaded regions represent parts of the domain ${\mathcal D}'$.}
    \label{f:ExcludeRegions}
  \end{figure}

  Assume first that ${\mathcal D}'$ is a bigon, and the differing
  coordinate is on the curve $\alpha_1'$. The relevant moving
  coordinates are therefore $f_i$ and $f_k$, while $e_j=e_l$, and
  hence $i>j=l>k$. Considering the orientation conventions in the
  picture (and the fact that ${\mathcal D}'$ does not contain two
  elementary bigons or a basepoint), we deduce that any positive bigon
  from $f_i$ to $f_k$ with $i>k$ contains all $e_m$ with $i>m>k$. But
  this violates the condition that ${\mathcal D}'$ is an empty
  bigon. (See the first picture in Figure~\ref{f:ExcludeRegions}.)

  As the next case, assume now that ${\mathcal D}'$ is still a
  bigon, but the moving coordinates are on $\alpha_2$. If the bigon
  contains $f_j$ and $f_l$ on its boundary, then (by the orientation
  convention, together with the fact that ${\mathcal D}'$ does not
  contain the baspoint) we must have $j<l$ and for $\kaa$ and $\laa$
  to be in ${\mathcal {K}}$ and ${\mathcal {L}}$ resp., we need
  $j<i=k<l$. In this case, however, $f_i=f_k$ will be a coordinate
  contained in ${\mathcal D}'$, contradicting the fact that
  it is an empty bigon.  (See the second picture in
  Figure~\ref{f:ExcludeRegions}.)  Otherwise, if ${\mathcal D}'$ does
  not contain $f_j$ and $f_l$ (on its boundary), then either $l<j$ and
  we cannot choose $f_i=f_k$ to satisfy the constraints, or $j<l$. In
  this case, the orientation convention for ${\mathcal D}'$ going from
  $\kaa $ to $\laa$ implies that $D_{f}=D_{f}'$ is in ${\mathcal D}'$, and
  furthermore $j<i=k<l$, hence the filtration level of $\laa$ is
  higher than that of $J(\kaa )$. (See the third picture in
  Figure~\ref{f:ExcludeRegions}.)

  Assume now that ${\mathcal D}'$ is an empty rectangle, hence there
  are two coordinates which move.  If only one of them is on the
  curves $\alpha_1'$ or $\alpha_2$, then the arguments above apply
  verbatim. So consider the case when both coordinates on $\alpha_1'$
  and $\alpha_2$ move.  If $i<k$ then by the assumption on $\kaa$ and
  $\laa$ we have $j<i<k<l$, and by the orientation convention (which
  dictates that we should move from $e_j$ to $e_l$) it follows that
  (in order to keep the domain empty) ${\mathcal D}'$ must contain
  $D_{f}=D_{f}'$. (See the fourth picture in
  Figure~\ref{f:ExcludeRegions}.)  Assume now that $k<i$, so that
  ${\mathcal D}'$ contains the arc in $\alpha_1'$ between $f_k$ and
  $f_i$. This implies that ${\mathcal D}'$ also contains the arc in
  $\alpha_2$ connecting $e_k$ to $e_i$.  The emptyness of ${\mathcal
    D}'$ dictates $j\leq k<i\leq l$. If at one end we have strict
  inequality, then by the fact that ${\mathcal D}'$ has multiplicity 0
  or 1 for each elementary domain, we get that we pass on $\alpha_2$
  from $e_j$ to $e_l$ through the point $\delta(1)$.  Notice that the
  claim on the filtration level also follows at once. (See the fifth
  picture in Figure~\ref{f:ExcludeRegions}.) The last case to examine
  is when $j=k<i=l$. In this case there is a single rectangle in
  ${\mathfrak{M}}^{\DD '}_{\kaa,\laa }$ (any other domain which has
  these four corners must contain two elementary bigons).  This
  completes the proof.
\end{proof}

Notice that Lemma~\ref{l:specifhandle} verifies the property of the
map $J$ required by Equation~\eqref{eq:filt}. As before, the
subspace $K$ defined above is a subcomplex of $\CFaa (\DD ')$, and
therefore we can consider the quotient complex $(Q, \partialaa _Q)$.
The map $F\colon \CFaa (\DD ) \to Q$ is again defined by the simple
formula
\[
\x \mapsto \x+K .
\]
As for nice isotopies, we define the
chains in $\DD '$ as before:
\begin{defn}
\label{def:chain2}
{\whelm For $\x, \y \in \Gen \subset \Gen '$ a sequence $C=({\mathcal
    D}'_1, {\mathcal D}'_2, \ldots , {\mathcal D}'_{n})$ of domains in
  $\DD '$ is a \emph{chain (of length $n$) connecting $\x$ and $\y$}
  if $\kaa _i=f_ie_j\kaa _i' \in {\mathcal {K}}$, $\laa _i=J(\kaa _i)
  = f_je_i\kaa _i'\in {\mathcal {L}}$ ($i=1, \ldots , n-1$), and
\[  
{\mathcal D}_1'\in {\mathfrak{M}}^{\DD '}_{\x,\laa_1}, {\mathcal
    D}_2'\in {\mathfrak{M}}^{\DD '}_{\kaa_1,\laa_2}, \ldots ,
  {\mathcal D}_{n-1}'\in {\mathfrak{M}}^{\DD
    '}_{\kaa_{n-2},\laa_{n-1}}, {\mathcal D}_{n}'\in
  {\mathfrak{M}}^{\DD '}_{\kaa _{n-1}, \y}.
\]
  As before, the definition allows $n=1$, when the chain consists of a
  single element ${\mathcal D}'\in {\mathfrak{M}}^{\DD '}_{\x ,\y}$.
  A domain ${\mathcal D}'_C$ can be associated to a chain $C$ by
  adding the domains ${\mathcal D}'_i$ appearing in $C$ together and
  subtracting the rectangles in ${\mathfrak {M}}^{\DD '}_{\kaa _i , J
    (\kaa _i)}$ for $\kaa _i$ appearing in the chain.}
\end{defn}

The adaptation of Proposition~\ref{p:faktor1} shows that the matrix
element $\langle \partialaa _Q (\x +K), \y +K\rangle$ is determined by
the number of chains connecting $\x$ and $\y$ in $\DD '$:
\begin{prop}\label{p:faktor2}
For $\x,\y \in \CFaa (\DD )$ the matrix element $\langle \partialaa
_Q(\x +K), \y +K\rangle$ in $(Q , \partialaa _Q)$ is equal to the (mod
2) number of chains connecting $\x$ and $\y$. \qed
\end{prop}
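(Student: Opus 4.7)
The plan is to apply the formal algebraic framework of Subsection~\ref{ssec:formasp} verbatim, replacing the data used in the proof of Proposition~\ref{p:faktor1} by its handle slide analogue, and then to read off the combinatorial meaning of the resulting matrix expression in terms of chains (in the sense of Definition~\ref{def:chain2}). Concretely, we take $B=\CFaa(\DD')$ with the preferred basis $\Gen'$ partitioned as $\Gen'=\Gen\cup{\mathcal K}\cup{\mathcal L}$, where $\Gen\subset\Gen'$ plays the role of ${\mathcal B}_1$ and ${\mathcal K}$, ${\mathcal L}$ are defined as in the discussion preceding Lemma~\ref{l:specifhandle}. The bijection $J\colon{\mathcal K}\to{\mathcal L}$ is given by $f_ie_j\x\mapsto f_je_i\x$, and the filtration on the subspace spanned by ${\mathcal L}$ is induced by the lexicographic ordering on the pairs $(i,j)$ with $i<j$ labeling $f_ie_j$.

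The second step is to verify that Equation~\eqref{eq:filt} holds in this setup, so that the algebraic machinery of Subsection~\ref{ssec:formasp} applies. This is precisely what Lemma~\ref{l:specifhandle} supplies: for $\kaa=f_ie_j\x\in{\mathcal K}$, the small embedded rectangle inside the pair-of-pants with corners $f_i,e_i,e_j,f_j$ contributes $J(\kaa)=f_je_i\x$ to $\partialaa_{\DD'}\kaa$ with coefficient one, while any other contribution $\laa=f_ke_l\y\in{\mathcal L}$ to $\partialaa_{\DD'}\kaa$ satisfies $\y=\x$ and lies strictly higher in the filtration than $J(\kaa)$ (the remaining contributions to $\partialaa_{\DD'}\kaa$ are in the span of $\Gen\cup{\mathcal K}$). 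In particular the block $N_{{\mathcal K},{\mathcal L}}$ of the matrix of $\partialaa_{\DD'}$ is lower triangular with $1$'s on the diagonal, and its inverse is $\sum_{k\geq 0}T^k$ where $T$ is strictly lower triangular and nilpotent, exactly as in Subsection~\ref{ssec:formasp}.

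The third step is to apply Lemma~\ref{l:qleir} and interpret the result. The lemma yields that the matrix element $\langle\partialaa_Q(\x+K),\y+K\rangle$ equals the $(\x,\y)$-entry of
\[
N_{\Gen,\Gen}-\sum_{k=0}^{\infty}N_{\Gen,{\mathcal L}}\cdot T^k\cdot N_{{\mathcal K},\Gen}.
\]
Working modulo $2$ so that the sign is irrelevant, the first summand counts pairs in ${\mathfrak M}^{\DD'}_{\x,\y}$, which are exactly the chains of length one connecting $\x$ to $\y$. Each summand $N_{\Gen,{\mathcal L}}\cdot T^k\cdot N_{{\mathcal K},\Gen}$ counts sequences
\[
{\mathcal D}'_1\in{\mathfrak M}^{\DD'}_{\x,\laa_1},\ \ {\mathcal D}'_2\in{\mathfrak M}^{\DD'}_{\kaa_1,\laa_2},\ldots,\ {\mathcal D}'_{k+1}\in{\mathfrak M}^{\DD'}_{\kaa_k,\y},
\]
where each $\kaa_i\in{\mathcal K}$ is paired with $\laa_i=J(\kaa_i)$ via the entries of $T$ (the off-diagonal part of $N_{{\mathcal K},{\mathcal L}}^{-1}$ expanded as a geometric series records precisely the requirement that $\laa_{i+1}$ lie above $J(\kaa_i)$ in the filtration, which by Lemma~\ref{l:specifhandle} is automatic whenever the domain in ${\mathfrak M}^{\DD'}_{\kaa_i,\laa_{i+1}}$ is nonempty and different from the small rectangle from $\kaa_i$ to $J(\kaa_i)$). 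These sequences are exactly the chains of length $k+2$ connecting $\x$ to $\y$ in the sense of Definition~\ref{def:chain2}, so summing over $k\geq 0$ yields the total (mod $2$) count of chains of all lengths from $\x$ to $\y$.

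The main point requiring care is the algebraic input that $N_{{\mathcal K},{\mathcal L}}=I+T$ is lower triangular with unit diagonal, so that the geometric series $(I+T)^{-1}=\sum T^k$ makes sense and matches the definition of a chain; all of this is encoded in Lemma~\ref{l:specifhandle} and is why that lemma was proved in the precise form given. Once this is in hand, the identification of the matrix expression with the number of chains is purely a matter of unwinding the definitions, exactly as in Proposition~\ref{p:faktor1}.
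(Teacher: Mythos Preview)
Your proposal is correct and follows essentially the same approach as the paper, which simply states that the result follows by adapting the proof of Proposition~\ref{p:faktor1}. One small slip: in your second step you write that any other contribution $\laa=f_ke_l\y$ to $\partialaa_{\DD'}\kaa$ ``satisfies $\y=\x$ and lies strictly higher in the filtration,'' but Lemma~\ref{l:specifhandle} only asserts the higher-filtration property for these off-diagonal terms (the condition $\y=\x$ is part of the diagonal case); this does not affect the argument, since all that is needed for Equation~\eqref{eq:filt} is the filtration statement.
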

There is a map
\[
\Phi \colon \pi_2 ^{\DD '}(\x,\y ) \to
\pi_2 ^{\DD}(\x,\y )
\]
defined analogously to the map $\Phi$ for the case of nice isotopies.
Specifically, in the present case, we have the {\em small domains} for
$\DD '$ which are those elementary domains which are supported inside
the pair-of-pants determined by $\alpha_1$, $\alpha_1'$, and
$\alpha_2$: these are the sequence of rectangles between $\alpha_1'$
and $\alpha_2$, and also the two bigons $B_u'$ and $B_d'$, formed from
the rectangle $R$ in $\DD$ containing the curve $\delta$. All other
elementary domains for $\DD '$ are called {\em large domains}. The
large domains in $\DD '$ are in one-to-one correspondence with the
domains of $\DD$.  

If $\Delta '=\sum m_i D_i'\in \pi_2^{\DD '}(\x,\y)$ is a domain in $\DD
'$, we let $\Phi(\Delta ')$ denote the sum gotten by dropping all the
terms belonging to small domains, taking the special rectangle $R$
with the same multiplicity as $B_u'$ had in $\Delta '$, and viewing
the result as a domain for $\DD$. Note that the multiplicity of $B_u'$
in any $\Delta ' \in\pi_2^{\DD '}(\x,\y)$ (for $\x,\y\in\Gen $)
coincides with the multiplicity of $B_d'$; this remark is analogous to
but somewhat simpler than Lemma~\ref{l:PhiWellDefined}, and is left to
the reader to verify.

\begin{lem}
  \label{l:bijmash}
  The map $\Phi$ is a bijection between $\pi_2 ^{\DD '}(\x,\y )$ and
  $\pi_2^{\DD }(\x,\y )$ and $\mu(\Phi(\Delta '))=\mu (\Delta ')$ for all
  $\Delta '\in\pi_2^{\DD '}(\x,\y)$.
\end{lem}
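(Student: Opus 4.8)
The statement is the exact analogue of Lemma~\ref{l:bijmas} for handle slides in place of isotopies, so the plan is to mirror that proof. First I would construct the candidate inverse map $\Psi\colon\pi_2^{\DD}(\x,\y)\to\pi_2^{\DD'}(\x,\y)$ explicitly. Given a domain $\Delta\in\pi_2^{\DD}(\x,\y)$, I take the same local multiplicities on all large domains of $\DD'$ (using the one-to-one correspondence between large domains and domains of $\DD$), and I declare the multiplicity of both small bigons $B_u'$ and $B_d'$ to equal the multiplicity of the special rectangle $R$ in $\Delta$. The only thing that requires an argument is that the multiplicities of the small rectangles between $\alpha_1'$ and $\alpha_2$ are then \emph{uniquely forced}: this follows from Equation~\eqref{eq:DomainFromXtoY} applied at each of the new intersection points $e_i,f_i$, together with the hypothesis that $\x,\y\in\Gen$ (hence none of the $e_i,f_i$ is a corner of the domain), exactly as in Lemma~\ref{l:bijmas}. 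A short local computation along $\alpha_2$ (reading the multiplicities off successive quadrants) shows the recursion determining the small-rectangle multiplicities has a unique solution given the value at $B_u'=B_d'$ at one end; hence $\Psi(\Delta)$ is a well-defined element of $\pi_2^{\DD'}(\x,\y)$ and $\Phi\circ\Psi=\mathrm{id}$, $\Psi\circ\Phi=\mathrm{id}$. This proves bijectivity.

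For the Maslov index equality, since $\x$ and $\y$ lie in $\Gen$ (so their coordinates on $\alpha_1',\alpha_2$ are ordinary intersection points not involved in the handle-slide bookkeeping), the point measures $p(\Phi(\Delta'))=p(\Delta')$ agree term by term; thus by Equation~\eqref{eq:MaslovIndex} it suffices to check $e(\Phi(\Delta'))=e(\Delta')$. Here I would compare the Euler measures region by region. The pair-of-pants bounded by $\alpha_1$, $\alpha_1'$, $\alpha_2$ in $\DD'$ contributes: the small rectangles have Euler measure $0$; the two small bigons $B_u',B_d'$ have Euler measure $\tfrac12$ each; and the large domain $D_1'$ (the one meeting $\delta(0)$, the analogue of $D_1$ in the isotopy case) together with $D_f'$ (meeting $\delta(1)$) acquire extra corners relative to their counterparts $D_1,D_f$ in $\DD$, lowering their Euler measures by $\tfrac12$ apiece, while the rectangle $R$ in $\DD$ loses the $\delta$-arc and is replaced by the bigons. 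Writing out the corner count and using the relation (analogous to $b-n_f=n_1-n_2$ in Lemma~\ref{l:bijmas}) among the local multiplicities of $B_u'$, $D_1'$, $D_f'$, and $R$, the $\pm\tfrac12$ contributions cancel in pairs, giving $e(\Phi(\Delta'))=e(\Delta')$. As in the isotopy case, I would note the bookkeeping must be adapted slightly when some of the relevant large domains coincide (e.g.\ $D_1'=D_f'$ acquiring four extra corners, contributing $-1$), but this is routine.

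The step I expect to be the main obstacle is the uniqueness/consistency argument for $\Psi$: unlike the isotopy case where $\gamma$ is an arc crossing finitely many regions once, here the small rectangles between $\alpha_1'$ and $\alpha_2$ form a whole strip, and one must check that the linear system from Equation~\eqref{eq:DomainFromXtoY} at the $e_i,f_i$ vertices has exactly one solution compatible with a prescribed value at $B_u'=B_d'$ and with the large-domain multiplicities inherited from $\Delta$. This amounts to verifying that the multiplicity of the $k$-th small rectangle is a telescoping sum of differences of large-domain multiplicities across $\alpha_2$ plus the boundary value, and that this is consistent at the far end (where the strip closes up near $\delta(1)$). I would handle this by the same kind of picture-based local analysis used in Lemma~\ref{l:specifhandle} and Figure~\ref{f:hslide}, walking around each crossing $e_i$ and $f_i$ and propagating the multiplicity constraint, which is unobstructed precisely because $\x,\y\in\Gen$ guarantees no corners occur among these points. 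Everything else is a direct transcription of the proof of Lemma~\ref{l:bijmas}.
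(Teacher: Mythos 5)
Your plan for the bijectivity part is sound and matches the paper: define $\Psi$ by prescribing the large-domain multiplicities and $b = n_{B_u'} = n_{B_d'}$ equal to the multiplicity of $R$, and then show the small-rectangle multiplicities are uniquely forced by the linear relations~\eqref{eq:DomainFromXtoY} at the $e_i$, $f_i$, which are never corners of $\Delta'$ when $\x,\y\in\Gen$.

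The Maslov index argument, however, contains a genuine error. You try to mirror the isotopy case by proving separately that $p(\Phi(\Delta'))=p(\Delta')$ and $e(\Phi(\Delta'))=e(\Delta')$; neither equation holds for a handle slide. The point measures do \emph{not} agree term by term: the coordinate of $\x$ (and of $\y$) on $\alpha_2$ is some $e_i$, and two of the four quadrants at $e_i$ lie in the strip between $\alpha_1'$ and $\alpha_2$, whose multiplicities in $\Delta'$ differ from those of the corresponding quadrants in $\Phi(\Delta')$ by a cumulative shift involving $n_1$ and $b$. Concretely $p_{e_i}(\Delta')=p_{e_i}(\Phi(\Delta'))+\tfrac{n_1-b}{2}$, where $n_1$ is the local multiplicity at $D_1'$ and $b$ is the local multiplicity at $B_u'$ (equivalently at $R$), and the same correction appears at the $\y$ coordinate on $\alpha_2$. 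Nor do the Euler measures agree: only $D_1$ changes combinatorial type under a nice handle slide (it acquires four extra corners, so $e(D_1')=e(D_1)-1$; your claim that both $D_1$ and $D_f$ lose $\tfrac12$ apiece does not match the geometry), and together with replacing the rectangle $R$ ($e=0$) by two bigons ($e=\tfrac12$ each) one gets $e(\Phi(\Delta'))=e(\Delta')+(n_1-b)$. There is no handle-slide analogue of the isotopy-case relation $b-n_f=n_1-n_2$ that would force $n_1=b$. The correct argument is that the $(n_1-b)$ excess in the Euler measure is exactly cancelled by the two $-\tfrac{n_1-b}{2}$ corrections to the point measure, so only the sum $\mu=e+p$ is invariant. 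Your proposal misses this cross-cancellation and would not go through.
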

\begin{proof}
  The proof of bijectivity is analogous ot the proof of
  Lemma~\ref{l:bijmas}. The key point is that the local multiplicities
  of any $\Delta ' \in\pi_2^{\DD '}(\x,\y)$ (with $\x,\y\in\Gen$) at
  the small domains are determined by the local multiplicities of
  $\Delta '$ at the large domains.

  \begin{figure}[ht]
    \begin{center}
      \input{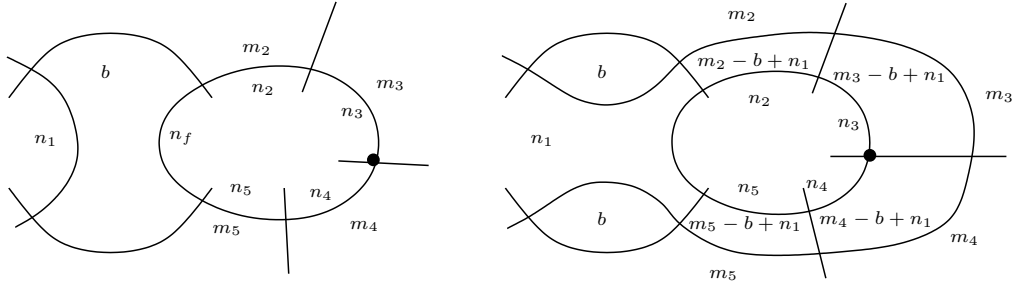}
    \end{center}
    \caption{{\bf Transforming domains under handle slides.}
    \label{f:hbijmas}
    The local multiplicities before the handle slide (on the left)
    determine the local multiplicities at all regions afterwards (on
    the right).  In particular, the generator on the left (indicated
    by the dark circle) which had point measure given by
    $\frac{n_3+n_4+m_3+m_4}{4}$ is taken to a generator (also
    indicated by the dark circle) which has point measure
    $\frac{n_3+n_4+m_3+m_4-2b+2n_1}{4}$.}
  \end{figure}

  The verification of $\mu (\Phi (\Delta '))=\mu(\Delta ')$ needs a
  little more care than was required in Lemma~\ref{l:bijmas}. It is
  not true in general that both the Euler and the point measures
  remain invariant.  Instead, we find that the elementary domain
  $D_{1}$ in $\DD $ is replaced by a new elementary domain $D_1'$ for
  $\DD '$, with $e (D_1')=e (D_1)-1$. Moreover, the rectangle $R$
  containing $\delta$ in $\DD $, which has Euler measure equal to
  zero, is replaced by two elementary bigons $B_u'$ and $B_d'$ with
  Euler measures $\frac{1}{2}$ each. Thus, if $b$ denotes the local
  multiplicity of $\Delta '\in\pi_2^{\DD '}(\x,\y)$ at $B_u'$, and
  $n_1$ is the local multiplicity of $D_1'$ in $\Delta '$, then we
  find that
  \[  
  e(\Phi(\Delta '))=n_1-b+e(\Delta ').
  \]
 Similarly, the point measure of $\Delta '$ at each coordinate of $\x$
 {\em other than the coordinate on $\alpha_2$} coincides with the
 point measure of $\Phi(\Delta ')$ at the corresponding coordinate.
 However, for the coordinate $e_i$ on $\alpha_2$, we find that
 $$n_{e_i}(\Delta ')=n_{e_i}(\Phi(\Delta '))+\left(\frac{n_1-b}{2}\right).$$
 (See Figure~\ref{f:hbijmas}.) Combining this with the analogous statement
 for the $\y$ generator, and adding, we conclude that
 $\Mas(\Phi(\Delta '))=\Mas(\Delta ')$, as claimed.
\end{proof}

Proposition~\ref{prop:IdentifyChains} has the following analogue for
handle slides (though the number of cases is slightly smaller):

\begin{prop}
  \label{prop:hIdentifyChains}
  Given $\x,\y\in\Gen$, there is a (canonical) identification
  between the elements of ${\mathfrak{M}}^{\DD}_{\x,\y}$ and the
  chains connecting $\x$ to $\y$ in $\DD '$, in the sense of
  Definition~\ref{def:chain2}.
\end{prop}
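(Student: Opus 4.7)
The plan is to mimic the strategy of Proposition~\ref{prop:IdentifyChains}, using Lemma~\ref{l:bijmash} (the handle-slide analogue of Lemma~\ref{l:bijmas}), the structure result Lemma~\ref{l:specifhandle}, Sarkar's additivity theorem (Theorem~\ref{thm:sark}), and the characterization of empty bigons and rectangles given by Proposition~\ref{p:atfog}.

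For the first direction, suppose $C=({\mathcal D}_1',\ldots,{\mathcal D}_n')$ is a chain connecting $\x$ and $\y$ in $\DD'$. Each ${\mathcal D}_i'$ is an empty bigon or rectangle, hence has Maslov index $1$ by Proposition~\ref{p:atfog}, and each subtracted rectangle in ${\mathfrak{M}}^{\DD'}_{\kaa_i, J(\kaa_i)}$ also has Maslov index $1$. Additivity (Theorem~\ref{thm:sark}) then yields $\mu({\mathcal D}_C')=n-(n-1)=1$. By construction ${\mathcal D}_C'\geq 0$ and $n_{\w}({\mathcal D}_C')=0$ (the handle slide arc $\delta$ sits in an elementary rectangle, so none of the bigons, rectangles, or subtracted rectangles can contain a basepoint). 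Lemma~\ref{l:bijmash} gives $\mu(\Phi({\mathcal D}_C'))=1$, while nonnegativity and vanishing of $n_{\w}$ pass to $\Phi({\mathcal D}_C')$; Proposition~\ref{p:atfog} then places $\Phi({\mathcal D}_C')\in {\mathfrak{M}}^{\DD}_{\x,\y}$.

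For the converse, start with $\Delta\in {\mathfrak{M}}^{\DD}_{\x,\y}$. Lemma~\ref{l:bijmash} supplies the unique $\Delta'\in \pi_2^{\DD'}(\x,\y)$ with $\Phi(\Delta')=\Delta$, and it satisfies $\Delta'\geq 0$, $n_{\w}(\Delta')=0$, and $\mu(\Delta')=1$. The task is to show $\Delta'$ arises as ${\mathcal D}_C'$ for a unique chain $C$. I would read the length of the chain off the multiplicity $b$ of the small bigon $B_u'$ (equivalently $B_d'$) in $\Delta'$: when $b=0$, the domain $\Delta'$ is itself an embedded bigon or rectangle, so the chain has length one; when $b\geq 1$, each unit of multiplicity in the small rectangles between $\alpha_1'$ and $\alpha_2$ contributes one additional step to the chain, and the corresponding corners $\kaa_i=f_ie_j\kaa_i'$ and $\laa_i=f_je_i\kaa_i'$ are determined by which $\alpha_1'\cap\beta$-- and $\alpha_2\cap\beta$--arcs bound $\Delta'$ inside the pair-of-pants. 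Uniqueness of the decomposition follows from Lemma~\ref{l:specifhandle}, which forces each intermediate piece to belong to ${\mathfrak{M}}^{\DD'}_{\kaa_i,\laa_i}$ of the canonical rectangle type and to have strictly increasing lexicographic filtration level.

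The main obstacle will be verifying the second direction in full: unlike the isotopy case, where chains had length at most two, here chains can be arbitrarily long because the local multiplicities inside the pair-of-pants determined by $\alpha_1,\alpha_1',\alpha_2$ can be large. I plan to handle this by a case analysis on whether $\Delta$ is an empty bigon or rectangle in $\DD$, tracking how its boundary crosses $\alpha_1$ (and hence how $\Delta'$ enters the pair-of-pants), and showing inductively that the only way to split $\Delta'$ compatibly with Lemma~\ref{l:specifhandle} is to peel off one ${\mathcal D}_i'$ at a time starting from the pair $(e_j,f_i)$ of extremal filtration level. The strictness of the filtration increase forbids reorderings and guarantees canonicity.
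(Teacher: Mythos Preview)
Your first direction is essentially the paper's argument, with one slip: the claim ``${\mathcal D}_C'\geq 0$'' is false in general, since the chain domain subtracts the canonical rectangles in ${\mathfrak M}^{\DD'}_{\kaa_i,J(\kaa_i)}$. What you need (and what the paper uses) is that $\Phi({\mathcal D}_C')\geq 0$, which follows because the subtracted rectangles are small domains killed by $\Phi$.

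The real problem is in your second direction. You assert that chains ``can be arbitrarily long'' and propose an inductive peeling argument. This is a misreading of the situation. Since $\Delta$ is an empty bigon or rectangle in $\DD$, every local multiplicity of $\Delta$ is $0$ or $1$; in particular the multiplicity $b$ at the rectangle $R$ (which becomes the multiplicity of $B_u'$ in $\Delta'$) and the multiplicity $n_f$ at $D_f$ are each in $\{0,1\}$. Now Lemma~\ref{l:specifhandle} says that every intermediate step ${\mathcal D}_i'$ with $2\leq i\leq n-1$ in a chain (i.e.\ an element of ${\mathfrak M}^{\DD'}_{\kaa_{i-1},\laa_i}$ with $\laa_i\neq J(\kaa_{i-1})$) must contain $D_f'$ with multiplicity one, while the subtracted canonical rectangles lie inside the pair-of-pants and never contain $D_f'$. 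Hence $n-2\leq n_f\leq 1$, so $n\leq 3$. The paper therefore does a finite case analysis on the pair $(n_f,b)\in\{0,1\}^2$, obtaining chains of length $1$, $2$, $1$, $3$ respectively, and verifies uniqueness in each case directly. Your proposed rule ``length $=1+$ (multiplicity in the small rectangles)'' is also off: the length depends on both $b$ and $n_f$, not on $b$ alone (for instance $(n_f,b)=(1,0)$ gives length $1$ while $(n_f,b)=(1,1)$ gives length $3$).

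So the inductive scheme you outline is aimed at a difficulty that does not exist, and the actual uniqueness argument is a short explicit check in four cases rather than a filtration-peeling induction.
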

\begin{proof}
  As before, for given $\x,\y\in\Gen \subset \Gen '$, a chain $C$
  connecting $\x$ to $\y$ naturally defines a domain ${\mathcal
    D}'_C\in\pi_2^{\DD '}(\x,\y)$. Consider $\Phi({\mathcal D}'_C)$
  for this chain $C$. By Lemma~\ref{l:bijmash} combined with
  Theorem~\ref{thm:sark}, we see that $\Phi({\mathcal D}'_C)$ is an
  element in ${\mathfrak{M}}^{\DD}_{\x,\y}$.

  Conversely, start with $\Delta\in{\mathfrak{M}}^{\DD}_{\x,\y}$.
  According to Lemma~\ref{l:bijmash}, there is 
  $\Delta '\in\pi_2^{\DD '}(\x,\y)$ with $\Phi(\Delta ')=\Delta$.
  We claim that $\Delta '$ is the domain associated to a chain, and indeed
  that the chain is uniquely determined by its underlying domain.
  
  Continuing with notation from Lemma~\ref{l:bijmash}, there are
  domains $D_{1}$ and $D_{f}$ which contain $\delta
  (0)$ and $\delta (1)$ on their boundary, but are different from
  the rectangle $R$ containing $\delta$. By hypothesis,
  the local multiplicity $n_1$ of $\Delta '$ at $D_{1}'$ vanishes. 
  We will also consider the local multiplicity $b$ at 
  the two bigons $B_u'$ and $B_d'$.

  {\bf {Case 1: $n_f=0$ and $b=0$}.}  The condition that $b=0$ ensures
  that the length of the chain is one. Thus, in this case, $\Delta '$
  is the domain of a chain of length one connecting $\x$ to $\y$.
  
  {\bf {Case 2: $n_f=0$ and $b=1$}.}  Again, $n_f=0$ ensures that the
  length is at most two.  Consider $\Delta$. Letting $R$ be the domain
  in $\DD$ containing the nice arc $\delta$, the fact that $b=1$
  ensures that the local multiplicity of $\Delta $ at $R$ is
  $1$. Moreover, the local multiplicity of $\Delta $ at $D_1$ and
  $D_f$ are both zero. It follows that $\Delta $ is a rectangle with
  boundary on $\alpha_1$ and $\alpha_2$. The top 
  two diagrams of Figure~\ref{f:hIdentifyChains}  illustrate this case.

  Note that $\Delta '$ contains two elementary bigons ($B_u'$ and
  $B_d'$), and hence it follows that it must correspond to a length
  two chain: ${\mathcal D}_1'$ contains one of the bigons and
  ${\mathcal D}_2'$ contains the other one.
  
  {\bf {Case 3: $n_f=1$ and $b=0$}.}
  The condition that $b=0$ ensures that the length of the chain is one
  (i.e. this case is formally just like Case 1).

  {\bf {Case 4: $n_f=1$ and $b=1$}.}  Since $n_f=1$, the corresponding
  domain $D_f$ must be either an elementary bigon or an elementary
  rectange. Assume first that $D_f$ is an elementary bigon. It
  follows that $\Delta$, which contains $D_f$, must also be a
  bigon. Since $n_1=0$, this in fact is a bigon connecting two points
  on $\alpha_1$.  Correspondingly, $\Delta '$ contains three
  elementary bigons: $B_u'$, $B_d'$, and $D_f=D_f'$. Thus, it must
  correspond to a chain of length at least three. The length of the
  chain can be no longer than three, in view of
  Lemma~\ref{l:specifhandle}.
    \begin{figure}[!ht]
    \begin{center}
      \input{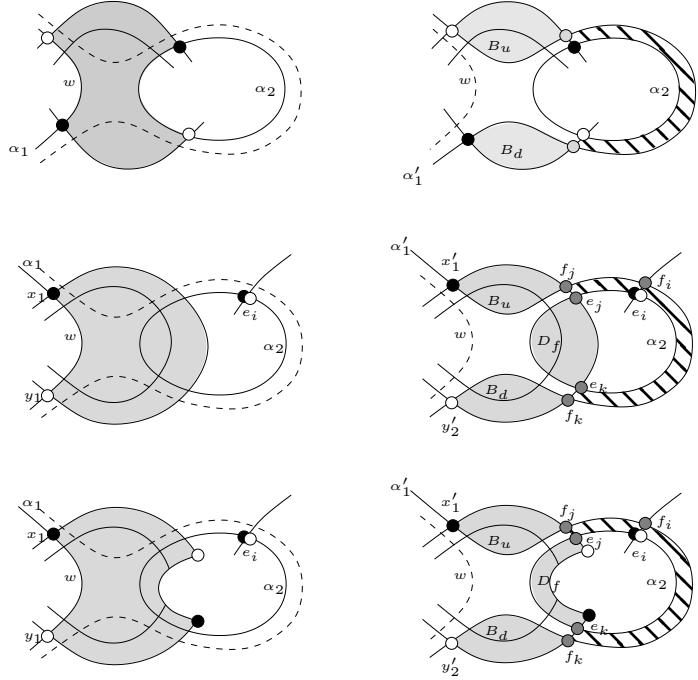}
    \end{center}
    \caption{{\bf An illustration of Proposition~\ref{prop:hIdentifyChains}.}
    \label{f:hIdentifyChains}
    At the left, shaded regions represent the domain $\Delta$ in the
    diagram $\DD$ before the handleslide; these get transformed to
    chains for the diagram $\DD '$ after the handleslide, as indicated
    on the right. (Regions with local multiplicity $-1$ are hatched,
    rather than shaded.) Components of the initial point $\x$ are
    indicated by dark circles, and components of the terminal point
    $\y$ are indicated by white circles. Components of intermediate
    generators appearing in the corresponding chains are indicated by
    gray circles. (For the reader's convenience, we have indicated the
    $\alpha$-circle {\em not} part of the diagram by a dashed arc.)
    The top two diagrams correspond to Case 2 of the proposition,
    the middle two diagrams illustrate Case 4 of
    Proposition~\ref{prop:hIdentifyChains} when $D_f$ is a bigon,
    and the bottom two illustrate Case 4 when $D_f$ is a rectangle.}
  \end{figure}
  Let $x_1$ resp. $y_1$ denote the coordinate of $\x$ resp. $\y$ on
  $\alpha_1$. Let $e_i$ denote the coordinate of $\x$ (and hence also
  $\y$) on $\alpha_2$. Thus, we have some tuple ${\mathbf t}$ with the
  property that $\x=x_1 e_i {\mathbf t}$ and $\y=y_1 e_i {\mathbf t}$,
  cf. the bottom diagrams of Figure~\ref{f:hIdentifyChains} for an
  illustration of this case.

  The $\betak$--arc on the boundary of the bigon $\Delta$ from $\x$ to
  $\y$ also crosses $\alpha_2$ in a pair of points $e_j$ and $e_k$,
  which we order so that $j<k$. Indeed, the fact that the bigon is
  empty ensures that $j<i<k$. There is now a chain:
  \begin{eqnarray*}
    \begin{diagram}
     \x= x_1 e_i {\mathbf t} & \qquad & f_i e_j {\mathbf t} & \qquad \qquad & f_k e_i {\mathbf t} & \qquad & \\
      & \rdTo^{{\mathcal D}'_1} & \dTo & \rdTo^{{\mathcal D}'_2} & \dTo & \rdTo^{{\mathcal D}'_3} & \\
      &  & f_j e_i {\mathbf t} & & f_i e_k {\mathbf t} &  & y_1 e_i {\mathbf t}=\y \\
    \end{diagram}
  \end{eqnarray*}

  Here, by Lemma~\ref{l:specifhandle}, ${\mathcal D}'_2$ must contain
  the bigon $D_f$. Moreover, by orderings, we see that ${\mathcal
    D}'_1$ contains $B_u'$ and ${\mathcal D}'_3$ contains $B_d'$. These 
  properties, along with the fact that ${\mathcal D}'_1$ has an initial
  corner $x_1$ while ${\mathcal D}'_3$ has terminal corner $y_1$, ensure
  that the chain is unqiuely determined by the domain.

  Finally, in the case when $D_f$ is an elementary rectangle, a simple
  adaptation of the above argument provides the result.
\end{proof}

\begin{lem}\label{l:isomegint}
The map $F\colon \CFaa (\DD ) \to Q$ is an isomorphism of chain complexes.
\end{lem}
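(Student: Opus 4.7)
The plan is to mirror the argument of Lemma~\ref{l:isochain} in the handle-slide setting, invoking the analogues of the ingredients used there: Proposition~\ref{p:faktor2} in place of Proposition~\ref{p:faktor1}, and Proposition~\ref{prop:hIdentifyChains} in place of Proposition~\ref{prop:IdentifyChains}.

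First, I would check that $F$ is a vector space isomorphism. Injectivity is clear from the decomposition $\Gen' = \Gen \sqcup \mathcal{K} \sqcup \mathcal{L}$: every element of $K$ is a linear combination of basis vectors corresponding to generators containing some $f_i$--coordinate (these span $\mathcal{K}$ together with their $\partialaa_{\DD'}$-images, which by Lemma~\ref{l:specifhandle} lie in the span of $\mathcal{K}\cup\mathcal{L}$), so no nonzero linear combination of elements of $\Gen$ lies in $K$. A dimension count then gives $\dim \CFaa(\DD) + \dim K = \dim \CFaa(\DD')$, so $F$ is surjective onto $Q = \CFaa(\DD')/K$.

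Next I would show that $F$ is a chain map, i.e.\ that $\langle \partialaa_{\DD} \x, \y\rangle = \langle \partialaa_Q(\x+K), \y+K\rangle$ for all $\x, \y \in \Gen$. By Proposition~\ref{p:faktor2}, the right-hand side equals (mod $2$) the number of chains in $\DD'$ connecting $\x$ to $\y$ in the sense of Definition~\ref{def:chain2}. By Proposition~\ref{prop:hIdentifyChains}, the set of such chains is in canonical bijection with ${\mathfrak{M}}^{\DD}_{\x,\y}$, whose cardinality mod $2$ is precisely $\langle \partialaa_{\DD}\x, \y\rangle$. Combining these two identifications gives the required equality, completing the proof that $F$ is an isomorphism of chain complexes.

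Since the technical substance has already been packaged into Proposition~\ref{p:faktor2} and Proposition~\ref{prop:hIdentifyChains}, there is no real obstacle remaining; the argument is essentially formal. The only point that warrants a moment's care is confirming that the dimension of $K$ is $|\mathcal{K}| + |\mathcal{L}|$, which follows from the fact that, by the first part of Lemma~\ref{l:specifhandle}, the restriction of $\partialaa_{\DD'}$ to the span of $\mathcal{K}$ is injective (as the matrix $N_{\mathcal{K},\mathcal{L}}$ has the form $I + T$ with $T$ strictly lower triangular in the filtration ordering on $\mathcal{L}$), so the $\partialaa_{\DD'}$-images of basis vectors in $\mathcal{K}$ are linearly independent and lie entirely in the span of $\mathcal{L}\cup\mathcal{K}\cup\Gen$ but project isomorphically onto the span of $\mathcal{L}$ modulo lower-order terms.
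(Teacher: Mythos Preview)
Your proof is correct and follows exactly the same route as the paper: verify that $F$ is a vector space isomorphism, then invoke Proposition~\ref{p:faktor2} and Proposition~\ref{prop:hIdentifyChains} to match the boundary matrix elements. One small slip: in your injectivity paragraph you assert that the $\partialaa_{\DD'}$-images of elements of $\mathcal{K}$ lie in the span of $\mathcal{K}\cup\mathcal{L}$, but Lemma~\ref{l:specifhandle} only controls the $\mathcal{L}$-component of such images---they may well have nonzero $\Gen$-components; your final paragraph gives the correct argument (the projection to the span of $\mathcal{L}$ is injective since $N_{\mathcal{K},\mathcal{L}}=I+T$), so the conclusion stands.
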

\begin{proof}
As before, it follows from the construction that $F$ is a vector space
isomorphism. In order to show that it is an isomorphism of chain
complexes, by Proposition~\ref{p:faktor2} it is enough to show that
for generators $\x, \y \in \Gen $ the elements of the set
$\mxy^{\DD}$ are in one-to-one correspondence with the chains
connecting $\x$ and $\y$ in $\DD '$, which is exactly the content of
Proposition~\ref{prop:hIdentifyChains}.
\end{proof}

\begin{prop}\label{p:nicehs}
The homology of $(Q, \partialaa _Q)$ is isomorphic to both
\begin{enumerate}
\item $H_* (\CFaa (\DD ) , \partialaa _{\DD})$ and to
\item $H_* (\CFaa (\DD ') , \partialaa _{\DD '})$.
\end{enumerate}
Consequently, if the nice diagrams $\DD $ and $\DD '$ differ by a
nice handle slide then $\HFaa (\DD )\cong \HFaa (\DD ')$.
\end{prop}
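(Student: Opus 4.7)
The plan is to proceed in exact parallel with the proof of Proposition~\ref{p:niceisso}, since all the preparatory work has already been carried out in the handle slide setting. For part (1), Lemma~\ref{l:isomegint} has already established that the map $F\colon (\CFaa(\DD), \partialaa_\DD) \to (Q, \partialaa_Q)$ is an isomorphism of chain complexes, so it descends to an isomorphism on homology. Hence $H_*(Q, \partialaa_Q) \cong H_*(\CFaa(\DD), \partialaa_\DD)$.

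For part (2), the plan is to exploit the short exact sequence
\[
0 \to K \to \CFaa(\DD') \to Q \to 0
\]
of chain complexes, and the associated long exact triangle in homology. It suffices to show that $H_*(K) = 0$. By definition, $K$ is spanned by the basis vectors corresponding to elements of $\mathcal{K}$ together with their $\partialaa_{\DD'}$-images, so I need to verify that $\partialaa_{\DD'}$ is injective on the span of $\mathcal{K}$. This is where Lemma~\ref{l:specifhandle} enters: it tells us that for $\kaa \in \mathcal{K}$, we have
\[
\partialaa_{\DD'}\kaa = J(\kaa) + (\text{terms in }\mathcal{L}\text{ of strictly higher filtration}) + (\text{terms in }\mathcal{B}_1\cup \mathcal{K}),
\]
which is precisely Property~\eqref{eq:filt} of the formal setup of Subsection~\ref{ssec:formasp}. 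Ordering $\mathcal{K}$ by the corresponding filtration on $\mathcal{L}$ via the bijection $J$, the induced map from the span of $\mathcal{K}$ to the span of $\mathcal{L}$ is upper triangular with $1$'s on the diagonal, hence injective. This forces $\partialaa_{\DD'}$ to be injective on the span of $\mathcal{K}$ (since any nontrivial kernel element would, in particular, have vanishing $\mathcal{L}$-component, contradicting the triangular form). Consequently $K$ is the cone on this injection and has trivial homology.

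With $H_*(K) = 0$ in hand, the long exact sequence immediately yields $H_*(\CFaa(\DD'), \partialaa_{\DD'}) \cong H_*(Q, \partialaa_Q)$, proving (2). Combining (1) and (2) gives $\HFaa(\DD) \cong \HFaa(\DD')$. The only step requiring any care is the vanishing of $H_*(K)$, and the ``main obstacle'' is really just the verification that the filtration provided by the linear ordering of intersection points along $\alpha_2$ interacts correctly with the structure of $\partialaa_{\DD'}$ on $\mathcal{K}$; but this is exactly what Lemma~\ref{l:specifhandle} records, so no new combinatorial work is needed. As in the isotopy case, one in fact obtains that $(\CFaa(\DD), \partialaa_\DD)$ and $(\CFaa(\DD'), \partialaa_{\DD'})$ are chain homotopy equivalent, by invoking Proposition~\ref{p:homeq}.
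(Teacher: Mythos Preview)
Your proposal is correct and follows essentially the same approach as the paper: part (1) via Lemma~\ref{l:isomegint}, and part (2) via the short exact sequence $0\to K\to \CFaa(\DD')\to Q\to 0$ together with $H_*(K)=0$, which the paper declares ``obviously $0$'' while you spell out the triangular-matrix argument coming from Lemma~\ref{l:specifhandle} and the formal setup of Subsection~\ref{ssec:formasp}. The only minor quibble is that the paper's convention makes the block $N_{\mathcal{K},\mathcal{L}}$ \emph{lower} triangular rather than upper, but this is immaterial to the injectivity conclusion.
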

\begin{proof}
Since the property verified by Lemma~\ref{l:isomegint} (together with
the result of Proposition~\ref{p:faktor2}) shows that $F$ is a chain
map, and simple dimension reasons show that it is a vector space
isomorphism, we get that $F$ induces an isomorphism on homologies. On
the other hand, $H_*(Q, \partialaa _Q)$ is isomorphic to $H_* (\CFaa
(\DD '), \partialaa _{\DD '})$, since in the exact triangle of
homologies induced by the short exact sequence $0\to K \to \CFaa (\DD
') \to Q \to 0$ the homology groups of $K$ are obviously 0. This last
observation concludes the proof of the invariance under nice handle
slides.
\end{proof}

\begin{rem}
{\whelm Once again, according to the adaptation of
  Proposition~\ref{p:homeq}, the chain complexes $(\CFaa (\DD ),
  \partialaa _{\DD})$ and $(\CFaa (\DD '), \partialaa _{\DD '})$ are,
  in fact, chain homotopy equivalent complexes.  }
\end{rem}

\subsection{Invariance under nice stabilizations}
\label{ssec:cpxstab}

Recall that we defined two types 
(type-$b$ and type-$g$) of nice stabilizations, depending on 
whether the stabilization increased the number of basepoints 
or the genus of the Heegaard surface. In this subsection we examine
the effect of these operations on the chain complex associated to a 
nice diagram. A nice type-$g$ stabilization is rather simple in this respect,
so we start our discussion with that case.

\begin{thm}\label{t:nicestabg}
Suppose that $\DD$ is a given nice diagram, and $\DD '$ is given 
as a nice type-$g$ stabilization on $\DD$. Then
the chain complexes $(\CFaa (\DD), \partialaa _{\DD})$ and 
$(\CFaa (\DD '), \partialaa _{\DD '})$ 
are isomorphic, and consequently
the Heegaard Floer groups $\HFaa (\DD )$ and $\HFaa (\DD ')$ are
also isomorphic.
\end{thm}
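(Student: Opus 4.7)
The plan is to exhibit an explicit chain-complex isomorphism $F \colon (\CFaa(\DD), \partialaa_{\DD}) \to (\CFaa(\DD'), \partialaa_{\DD'})$ coming from a natural one-to-one correspondence of generators induced by the connected sum construction.

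First I would analyze generators. Recall that in a nice type-$g$ stabilization we connect sum $\DD$ (in the interior of a domain $D$ containing a basepoint $w$) with the standard toroidal diagram $(T^2,\alpha,\beta)$ of $S^3$, where $\alpha$ and $\beta$ meet transversely at a single point $c$. Call the resulting diagram $\DD' = (\Sigma', \alphak', \betak', \w)$ where $\alphak' = \alphak \cup \{\alpha\}$ and $\betak' = \betak \cup \{\beta\}$. Since $\alpha$ intersects only $\beta$ (and only at $c$) and vice versa, every generator $\x' \in \Gen'$ must have $c$ as its coordinate on both $\alpha$ and $\beta$. Thus the map $F(\x) = \x \cup \{c\}$ is a bijection $\Gen \to \Gen'$ and extends to a vector-space isomorphism $\CFaa(\DD) \to \CFaa(\DD')$.

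Next I would identify the empty bigons and rectangles. The connected-sum operation removes an open disk from $D$ and from the (unique) elementary domain of $(T^2, \alpha, \beta)$, then glues along the boundary. After doing so, the ``new'' elementary domain of $\DD'$ in the stabilization region is a single component which contains $w$ (the basepoint of $D$ was retained, while the basepoint of the toroidal piece was dropped). All other elementary domains of $\DD'$ are in natural bijection with the elementary domains of $\DD$. The key geometric observation is then: any nonnegative domain in $\DD'$ whose support meets the stabilization region (i.e.\ is not supported in $\Sigma \setminus (\text{torus piece})$) necessarily has positive multiplicity at this merged region and therefore satisfies $n_{\w} > 0$. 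Consequently, any empty bigon or rectangle in $\DD'$ from $\x \cup \{c\}$ to $\y \cup \{c\}$ must have its support disjoint from the stabilization region, and in particular cannot have $c$ as a moving corner. Such a domain is therefore pulled back from an empty bigon or rectangle in $\DD$ from $\x$ to $\y$, and conversely every empty bigon or rectangle in $\DD$ is realized in $\DD'$.

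The upshot is the equality ${\mathfrak M}^{\DD'}_{F(\x), F(\y)} = {\mathfrak M}^{\DD}_{\x, \y}$, so $F \circ \partialaa_{\DD} = \partialaa_{\DD'} \circ F$, and $F$ is a chain isomorphism, giving $\HFaa(\DD) \cong \HFaa(\DD')$. The only delicate point — and the step most worth being careful about — is verifying that $c$ cannot be a moving coordinate of an empty bigon or rectangle; this reduces to a purely local inspection of the connect-sum neighborhood, using the fact that on the toroidal side any nonzero domain having $c$ as a corner must occupy the unique elementary region of $T^2 \setminus (\alpha \cup \beta)$, which after the connected sum becomes identified with the domain of $\DD$ containing $w$.
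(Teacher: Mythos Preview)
Your argument is correct and essentially identical to the paper's: both exhibit the bijection $\x \mapsto \x \cup \{c\}$ and then observe that $c$ can never be a moving coordinate because all four quadrants at $c$ lie in the basepointed domain, so any empty bigon or rectangle in $\DD'$ corresponds exactly to one in $\DD$. The paper phrases the key step as ``all four quadrants meeting at $x_{\mathrm{new}}$ contain a basepoint, hence $p_{x_{\mathrm{new}}}(\mathcal D)=0$,'' which is exactly your local inspection.
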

\begin{proof}
  Let $D$ denote the elementary domain in which the nice type-$g$
  stabilization takes place, and denote the newly introduced curves by
  $\alpha _{new}$ and $\beta _{new}$. By the definition of nice
  type-$g$ stabilization, the unique $\betak$--curve intersecting
  $\alpha _{new}$ is $\beta _{new}$, and $\alpha_{new}\cap \beta
  _{new}$ comprises a single point, which we will denote by $x_{new}$.

Suppose now that $\x=\{ x_1, \ldots , x_k \}$ is a generator in $\DD
$. Since on $\alpha_{new}$ of $\DD '$ we can only choose $x_{new}$ as a
coordinate of a point in $\Gen '$, the augmentation map
$\phi \colon {\mathcal  {S}}\to \Gen '$ defined on the
generator $\x =\{ x_1, \ldots , x_k\} $ as
\[
\{ x_1, \ldots , x_k\} \mapsto \{ x_1, \ldots , x_k , x_{new}\} 
\]
provides a bijection between $\Gen $ and ${\mathcal {S}}'$.  Since
all four quadrants meeting at $x_{new}$ contain a basepoint (since all
are part of the domain derived from the chosen $D$ where the
stabilization has been performed), we get that for any $\x,\y \in
\Gen '$ and any ${\mathcal D}\in \mxy$ we have that
$p_{x_{new}}({\mathcal D})=0$, hence the coordinate on $\alpha _{new}$
and $\beta _{new}$ never moves. This verifies that the linear
extension of $\phi $ from the basis $\Gen $ to $\CFaa (\DD )$
provides an isomorphism
\[
f\colon \CFaa (\DD ) \to \CFaa (\DD ')
\]
which, in addition, is a chain map. Consequently the induced map $f
_*\colon \HFaa (\DD ) \to \HFaa (\DD ')$ is an isomorphism,
concluding the proof.
\end{proof}

Suppose finally that $\DD '$ is given by a nice type-$b$ stabilization
of $\DD $.

\begin{thm}\label{thm:stabveg}
If $\DD '$ is given by a nice type-$b$ stabilization on 
$\DD$ then
the homologies of the chain complexes derived from $\DD$ and $\DD '$ 
satisfy the formula
\[
\HFaa (\DD ') \cong \HFaa (\DD )\otimes (\Field \oplus \Field ).
\]
\end{thm}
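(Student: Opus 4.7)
The plan is to show that $\CFaa(\DD')$ is isomorphic, as a chain complex, to $\CFaa(\DD) \otimes V$, where $V = \Field p \oplus \Field q$ carries the trivial differential, and $p, q$ denote the two intersection points of the added curves $\alpha'$ and $\beta'$. Since the coordinate of any generator of $\DD'$ on $\alpha'$ (and on $\beta'$) must be $p$ or $q$, there is a natural bijection between generators of $\DD'$ and pairs $(\y, r)$ with $\y\in \Gen_{\DD}$ and $r\in\{p,q\}$, which induces the vector-space isomorphism $\CFaa(\DD') \cong \CFaa(\DD) \otimes V$.

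Next, I would verify that under this identification the differential splits as $\partialaa_{\DD'} = \partialaa_{\DD} \otimes \mathrm{id}_V$. There are three kinds of empty bigons or rectangles in $\DD'$ to consider: those whose moving coordinates all lie in the $\y$-part of the generator, those with one moving coordinate in $\y$ and one in $\{p,q\}$ (only possible for rectangles), and those whose moving coordinates are entirely in $\{p,q\}$ (only possible for bigons). The middle kind cannot occur: such a rectangle would require two corners on $\{\alpha',\beta'\}$ and two on $\alphak\cup\betak$, but $\alpha'$ and $\beta'$ meet no other curves in $\DD'$. For the first kind, the original elementary domain $D\subset \DD$ contains the basepoint $w$, so any empty bigon or rectangle in $\DD$ is disjoint from $D$; hence its support persists in $\DD'$ without ever touching the stabilization region or the new basepoint $w'$, giving the expected bijection between such domains in $\DD$ and in $\DD'$.

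The remaining contributions come from empty bigons whose sole moving coordinate is $p$ or $q$, necessarily supported in the stabilization region. Its four elementary domains are the lens $D_{\alpha'}\cap D_{\beta'}$ (containing $w'$), the outer region $D\setminus(D_{\alpha'}\cup D_{\beta'})$ (containing $w$), and the two crescents $D_{\alpha'}\setminus D_{\beta'}$ and $D_{\beta'}\setminus D_{\alpha'}$; only the two crescents are empty. The technical heart of the argument, and the step I expect to be most delicate, is checking that both crescents, when equipped with the boundary orientation inherited from $\Sigma$, have their $\alpha'$-arc traversed in the same direction (both from $p$ to $q$, or both from $q$ to $p$). This can be confirmed either by a direct signed-area computation on each crescent, or by observing that the four stabilization regions appear in the cyclic order (lens, crescent, outer, crescent) around $p$, forcing $p$ to play the same role ($\x$-corner, say) for both crescents. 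Granting this, both crescents are empty bigons between $\y\cup\{p\}$ and $\y\cup\{q\}$ in the same direction, and their two contributions to the matrix coefficient cancel modulo $2$. Hence the local differential on $V$ vanishes, $(\CFaa(\DD'), \partialaa_{\DD'})$ is isomorphic to $(\CFaa(\DD),\partialaa_{\DD}) \otimes V$ as a chain complex, and passing to homology yields $\HFaa(\DD') \cong \HFaa(\DD) \otimes (\Field\oplus \Field)$.
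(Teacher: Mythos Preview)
Your proposal is correct and follows essentially the same approach as the paper: identify $\CFaa(\DD')\cong\CFaa(\DD)\otimes(\Field\oplus\Field)$ via the two new intersection points, observe that mixed rectangles are impossible since $\alpha',\beta'$ meet no old curves, match the domains with fixed last coordinate to those of $\DD$, and note that the two crescent bigons cancel modulo~2. If anything, you are more careful than the paper, which simply asserts that ``there are two bigons connecting $x_u$ to $x_d$'' without the orientation check you outline; your cyclic-order argument around $p$ is a clean way to see that both crescents go from the same source to the same target.
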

\begin{proof}
  Recall that a nice type-$b$ stabilization means the introduction of
  a pair of curves $(\alpha_{new}, \beta _{new})$ in an elementary
  domain $D$ of $\DD$ (containing a basepoint $w$) with the property
  that the two new curves are homotopically trivial and intersect each
  other in two points $\{ x_u, x_d \}$, together with the introduction
  of a new basepoint $w_{new}$ in the intersection of the two disks
  $D_{\alpha}, D_{\beta}$, with boundaries $\alpha_{new}$ and $\beta
  _{new}$.  Since $\alpha_{new}$ (and also $\beta _{new}$) contains
  only the two intersection points $x_u$ and $x_d$, any element $\x\in
  \Gen $ gives rise to two elements $\{\x , x_u\}$ and $\{\x , x_d\}$
  of $\Gen '$. In fact, any element of ${\mathcal {S}}'$ arises in
  this way, uniquely specifying the part which originates from
  $\Gen$. This shows that $\CFaa (\DD ')\cong \CFaa (\DD )\otimes
  (\Field \oplus \Field)$.  Now the spaces $\mxy $ considered in
  $\DD$ or $\DD '$ (which will be recoreded in an upper index) can
  be also easily related to each other. Suppose that $\x= \{ \x_1 ,
  x_{n}\} , \y = \{\y_1 , y_n\}\in \Gen '$ with $\x_1, \y_1\in \Gen$
  and $x_n, y_n \in \{ x_u, x_d\}$.
\begin{enumerate}
\item If $x_n = y_n$ then
(since the last coordinate does not move) we have that
$\mxy ^{\DD '}={\mathfrak{M}}_{\x_1, \y_1}^{\DD }$.
\item If $x_n \neq y_n$ then $\x$ and $\y$ can be connected only by a
  bigon with moving coordinates $x_n, y_n$. Hence, if $\mxy^{\DD '}$
  is nonempty, we must have that 
  $\x_1=\y_1$, and indeed
  $\mxy ^{\DD '}={\mathfrak {M}}_{x_u, x_d}$.
\end{enumerate}
Since there are two bigons connecting $x_u$ to $x_d$, the
moduli spaces in case (2) have even cardinality, showing that 
the chain complex $(\CFaa (\DD ' ), \partialaa _{\DD '})$
splits as a tensor product of 
$(\CFaa (\DD ), \partialaa _{\DD})$
 and $(\Field \oplus \Field, 0)$, implying the result. 
\end{proof}

\begin{proof}[Proof of Theorem~\ref{thm:nicemoves}]
The compilation of Propositions~\ref{p:niceisso} and \ref{p:nicehs},
together with Theorems~\ref{t:nicestabg} and \ref{thm:stabveg} provide
the result.
\end{proof}

\section{Heegaard Floer homologies}
\label{sec:hom}
Using the chain complex defined in the previous section for a
convenient diagram, we are ready to define the stable (combinatorial)
Heegaard Floer homology group of a 3--manifold $Y$. The definition
involves two steps, since we can apply our results about convenient
Heegaard diagrams only for 3--manifolds containing no $S^1\times
S^2$--summand.  Recall that we define $b(\DD )$ of a multi-ponted
Heegaard diagram $\DD =(\Sigma , \alphak , \betak , \w)$ as the
cardinality of the basepoint set $\w$.

\begin{defn}\label{def:hf}
{\whelm
\begin{itemize}
\item Suppose that $Y$ is a 3--manifold which contains no $S^1\times
  S^2$--summand. Let $(\Sigma , \alphak , \betak )$ denote an
  essential pair-of-pants diagram for $Y$, and let $\DD$ be a
  convenient diagram derived from $(\Sigma , \alphak , \betak )$ using
  Algorithm~\ref{algo:alg}, having $b (\DD )$ basepoints.  Define the
  stable Heegaard Floer group $\HFast (Y)$ as the equivalence class
\[
[\HFaa (\DD ), b(\DD )]
\]
of the vector space $\HFaa (\DD)$ and the integer
$b(\DD)$.
\item For a general 3--manifold $Y$ consider a decomposition $Y=Y_1\#
  n (S^1\times S^2)$ such that $Y_1$ contains no $S^1\times
  S^2$--summand. The stable Heegaard Floer homology group $\HFast (Y)$
  of $Y$ is then defined as
\[
[ \HFaa (\DD )\otimes (\Field \oplus \Field )^n, b(\DD )],
\]
where $\DD$ is a convenient Heegaard diagram 
derived from an essential pair-of-pants diagram of $Y_1$ using 
Algorithm~\ref{algo:alg}, having $b (\DD )$ basepoints.
\end{itemize}
}
\end{defn}

In order to show that the above definition is valid, first we need to
verify the statement that any 3--manifold admits a convenient Heegaard
diagram. In fact, any genus--$g$ Heegaard diagram with $g$ $\alphak$--
and $g$ $\betak$--curves (the existence of which follows from the
existence of a Morse function on a closed 3--manifold with a unique
minimum and maximum) can be first refined to an essential
pair-of-pants diagram by adding further essential curves to it, from
which the construction of a convenient diagram follows by applying
Algorithm~\ref{algo:alg}.

Next we would like to show that, in fact, the stable Heegaard Floer
homology defined above is a diffeomorphism invariant of the
3--manifold $Y$ and is independent of the chosen convenient Heegard
diagram. 

\begin{thm}\label{thm:invariance}
Suppose that $Y$ is a given closed, oriented 3--manifold.  The stable
Heegaard Floer homology group $\HFast (Y)$ given by
Definition~\ref{def:hf} is a diffeomorphism invariant of $Y$.
\end{thm}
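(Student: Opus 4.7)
The plan is to combine Theorem~\ref{thm:MainConv}, Corollary~\ref{c:nochange}, and the Kneser--Milnor prime decomposition theorem; essentially all the real work has been done, and what remains is a formal assembly.

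First I would handle the case when $Y$ contains no $S^1\times S^2$-summand. Let $\DD_1$ and $\DD_2$ be two convenient diagrams for $Y$, each produced by Algorithm~\ref{algo:alg} from some essential pair-of-pants diagram of some Heegaard decomposition of $Y$. (Such diagrams exist: starting from any Heegaard decomposition of $Y$, one extends the system of compressing curves to a marking on each side, obtaining an essential pair-of-pants diagram, and then runs Algorithm~\ref{algo:alg}.) By Theorem~\ref{thm:MainConv}, $\DD_1$ and $\DD_2$ are nicely connected, and then Corollary~\ref{c:nochange} yields
\[
[\HFaa(\DD_1),b(\DD_1)] \;=\; [\HFaa(\DD_2),b(\DD_2)]
\]
as equivalence classes. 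Hence $\HFast(Y)$ is independent of all the choices made in Algorithm~\ref{algo:alg} and in the auxiliary essential pair-of-pants diagram, so it is a well-defined invariant in this case.

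For an arbitrary closed, oriented 3-manifold $Y$, I would invoke the Kneser--Milnor prime decomposition theorem: $Y$ admits a connected sum decomposition $Y = Y_1 \# n(S^1\times S^2)$ where $Y_1$ has no $S^1\times S^2$-summand, and moreover the diffeomorphism type of $Y_1$ and the integer $n\ge 0$ are determined by the diffeomorphism type of $Y$. By the previous paragraph the class $[\HFaa(\DD),b(\DD)]$ is an invariant of $Y_1$ for any convenient diagram $\DD$ of $Y_1$. Tensoring the vector-space entry of a pair with the fixed vector space $(\Field\oplus\Field)^n$ clearly respects the equivalence relation of Definition~\ref{def:equiv}, since if $V_1\cong V_2\otimes(\Field\oplus\Field)^{b_1-b_2}$ then
\[
V_1\otimes(\Field\oplus\Field)^n \;\cong\; \bigl(V_2\otimes(\Field\oplus\Field)^n\bigr)\otimes (\Field\oplus\Field)^{b_1-b_2}.
\]
Therefore $\HFast(Y) = [\HFaa(\DD)\otimes(\Field\oplus\Field)^n,b(\DD)]$ depends only on the diffeomorphism type of $Y$.

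The hard part of this theorem is not in the present section: the substantive obstructions were already overcome in Theorem~\ref{thm:MainConv} (the bulk of Sections~\ref{sec:first}--\ref{sec:convdia}, resting on Luo's theorem and a careful local analysis in the three- and four-punctured sphere) and in Theorem~\ref{thm:nicemoves} of Section~\ref{sec:cpx} (which feeds into Corollary~\ref{c:nochange}). The only additional ingredient needed here is the uniqueness clause of Kneser--Milnor, so the proof of Theorem~\ref{thm:invariance} itself is just a two-step formal argument.
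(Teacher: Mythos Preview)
Your proposal is correct and follows essentially the same approach as the paper: reduce via Kneser--Milnor uniqueness to the case of no $S^1\times S^2$-summand, then for that case invoke Theorem~\ref{thm:MainConv} and Corollary~\ref{c:nochange} to conclude that any two convenient diagrams yield the same stable invariant. Your explicit check that tensoring by $(\Field\oplus\Field)^n$ respects the equivalence relation of Definition~\ref{def:equiv} is a small addition the paper leaves implicit.
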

\begin{proof}
According to the Kneser-Milnor Theorem the closed, oriented
3--manifold $Y$ admits a connected sum decomposition $Y=Y_1\#
n(S^1\times S^2)$, where $Y_1$ contains no $S^1\times S^2$--summand.
In addition, the Kneser-Milnor Theorem also shows that both $n$ and
$Y_1$ are (up to diffeomorphism) uniquely determined by $Y$. Since by
definition the stable Heegaard Floer homology group $\HFast (Y)$ of
$Y$ depends only on $\HFast (Y_1)$ and $n$, we only need to verify the
invariance of the stable Heegaard Floer homologies for 3--manifolds
with no $S^1\times S^2$--summand.

Suppose that the closed, oriented 3--manifold $Y$ contains no
$S^1\times S^2$--summand.  Consider two convenient Heegaard diagrams
$\DD_1$ and $\DD_2$ of $Y$ derived from the essential pair-of-pants
diagrams $(\Sigma_1, \alphak_1, \betak_1)$ and $(\Sigma_2, \alphak_2,
\betak_2)$. According to Theorem~\ref{thm:MainConv} any two such
convenient Heegaard diagrams are nicely connected. By
Corollary~\ref{c:nochange}, however, we know that nice moves do not
change stable Heegaard Floer homology. Therefore it implies that
\[
[\HFaa (\DD_1), b(\DD_1)]\cong[\HFaa (\DD_2), b(\DD_2)],
\]
concluding the proof of independence.
\end{proof}

\section{Heegaard Floer homology with twisted coefficients}
\label{s:twist}
It would be desirable to modify the definition of our invariant in such a 
way that we get well-defined vector spaces as opposed to equivalence
classes of pairs of vector spaces and integers. One way to achieve
this goal is to consider homologies with \emph{twisted coefficients},
as we will discuss in this section.

Suppose that $\DD=(\Sigma , \alphak , \betak , \w )$ is a
multi-pointed Heegaard diagram of the 3--manifold $Y$ with $b=b(\DD )$
basepoints. Suppose that $Y$ has no $S^1\times S^2$-summands.
Following \cite[Section~3.4]{OSzlinks}, we define $\pi _2 (\alphak ) $
(and similarly $\pi _2 (\betak )$) as the set of those domains $D=\sum
n _i D_i$ which satisfy that $\partial D=\sum m_i \alpha _i$, i.e. the
boundary of the domain $D$ is a linear combination of entire
$\alphak$--curves.  Elements of $\pi _2(\alphak )$ and $\pi _2 (\betak
)$ are also called $\alphak$-- (and respectively $\betak$--) {\em boundary
degenerations}. The map $m_{\w, \alpha }\colon \pi _2 (\alphak ) \to
\bfz ^b$ (and $m_{\w, \beta }\colon \pi _2 (\betak ) \to \bfz ^b$)
defined on $D \in \pi _2 (\alphak )$ by $m_{\w ,
  \alpha}(D)=(n_{w_1}(D), \ldots , n _{w_b}(D))$ provides an
isomorphism between $\pi _2 (\alphak )$ and $\bfz ^b$. Indeed, by
definition, a domain $D\in \pi _2 (\alphak )$ has constant multiplicity
on an $\alphak$--component, and since this multiplicity can be
arbitrary, and each $\alphak$--component contains a unique basepoint,
the above isomorphism follows.

More generally, for the generators $\x ,\y$ we can consider
\[
m_{\w}\colon \pi _2 (\x , \y ) \to \bfz ^b
\]
by mapping $D\in \pi _2 (\x , \y )$ into $(n_{w_1}(D), \ldots , n
_{w_b}(D))$.  Suppose that $\x = \y$. 
Notice that in this case $\pi _2 (\x , \x )$ admits a natural 
group structure. The kernel ${\mathcal {P}}$ of
the above map is then called the group of \emph{periodic domains}.

A map $\pi _2 (\x , \x )\to H_2 (Y; \bfz )$ can be defined by taking
the 2-chain in $\Sigma$ representing an element $D$ of $\pi _2 (\x ,
\x )$ and then (since its boundary can be written as a linear
combination of entire $\alphak$-- and $\betak$--curves) capping it off
with the handles attached along the $\alphak$-- and $\betak$--curves.
This map fits in the exact sequence
\[
0\to \bfz  \to \pi _2 (\alphak )\oplus \pi _2 (\betak )\to \pi _2 (\x , \x )
\to H_2(Y, \bfz )\to 0 .
\]

In a slightly different manner, distinguish a basepoint $w_1$ (say, in
$D_1$) and then connect the domain of any other basepoint to $D_1$ by
a tube and remove the other baspoint. The resulting once pointed
Heegaard diagram on the $(b-1)$--fold stabilization of $\Sigma$ now
presents the 3--manifold $Y\# _{b-1}S^1\times S^2$, and we get a
simpler version of the above exact sequence:
\[
0\to \bfz \to \pi _2' (\x , \x )\to H_2 (Y\# _{b-1}S^1\times S^2 ; \bfz
) \to 0 .
\]
Here $\pi _2 '(\x , \x )$ is taken in the Heegaard diagram we get
after the stabilizations, and the elements of $\pi _2 '(\x , \x) $
correspond to those elements of $\pi _2(\x, \x )$ which have the same
mulitplicity at the domains containing the basepoints.  The set
${\mathcal {P}}$ of periodic domains is therefore naturally a subset
of $\pi _2'(\x, \x )$, being the collection of those domains for which
the common multiplicity at the basepoints is zero.

Recall that the set $\pi _2 (\x , \y )$ is not always nonempty; in
fact this property induces an equivalence relation on the set of
generators. Let us fix a generator $\x \in {\mathcal {S}}$ in every
equivalence class, and denote the identification of $\pi _2' (\x , \x )$
(i.e. the set of domains in the Heegaard diagram providing $Y\#
_{b-1}S^1\times S^2$) with $H_2(Y\# _{b-1}S^1\times S^2; \bfz )\oplus
\bfz $ by $\phi$.  For any further generator in the same equivalence
class fix a domain $D_{\y}\in \pi _2 (\x , \y )$ with
$(n_{w_i}(D))={\bf 0}$. (By taking any element $D'\in \pi _2(\x ,\y)$
and the element $D''\in \pi _2(\alphak)$, regarded as an element in
$\pi _2 (\x , \x)$, with the property $m_{\w }(D')=-m_{\w , \alpha} (D'')$,
the sum $D'+D''$ will be such a choice.) These choices provide an
identification $\phi _{\y, \z}$ of $\pi '_2 (\y , \z )$ (for all $\y,
\z$ which can be connected to $\x $) with $H_2(Y \# _{b-1}S^1\times
S^2; \bfz )\oplus \bfz $, the last factor is given by $\sum n_{w_i
}(D)$: associate to $D\in \pi '_2 (\y , \z )$ with $(n_{w_i}(D))={\bf
  0}$ the $\phi$--image of the domain $D_{\y }+D-D_{\z}$ (which is
obviously an element of $\pi '_2(\x , \x )$).

In order to define the twisted theory, we need to modify the
definition of both the vector space and the boundary map acting on it.
Suppose that $\DD$ is a nice diagram for $Y$.  Define $\CFa _T(\DD )$
as the free module generated by the generators (the element of the set
${\mathcal {S}}$) over the group-ring $\Field [H_2 (Y\# _{b-1}S^1\times S^2;
  \bfz )]$. In particular, a generator of $\CFa _T (\DD) $, when
regarded as a vector space over $\Field$, is a pair $[\y , a]$, where
$\y\in {\mathcal {S}}$ is an intersection point and $a\in H_2 (Y \#
_{b-1}S^1\times S^2; \bfz )$.

Define
\[
\partiala _{T, \DD}[\y , a]=\sum _{\z \in \Gen}\sum _{D\in {\mathfrak {M}}_{\y \z}} [\z , a+\phi
  _{\y , \z}(D)]
\]
The sum is obviously finite, since there are only at most two elements
in ${\mathfrak {M}}_{\y \z}$, and there are finitely many intersection
points. The simple adaptation of the proof of Theorem~\ref{thm:chaincomplex}
then shows
\begin{prop}
Suppose that $\DD$ is a nice diagram for $Y$. Then
$\partiala _{T, \DD} ^2=0$. \qed
\end{prop}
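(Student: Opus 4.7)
The plan is to adapt the proof of Theorem~\ref{thm:chaincomplex}, this time tracking the $H_2$--labels carried by the twisted boundary operator. The crux is a cocycle identity relating the identifications $\phi_{\y,\z}$ to domain composition, together with the observation that the pairing constructed in Theorem~\ref{thm:chaincomplex} preserves the underlying $2$--chain in $\Sigma$.

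First I would verify that for any triple of generators $\x,\y,\z$ in one connectivity class and for any $D_1\in\pi_2'(\x,\y)$, $D_2\in\pi_2'(\y,\z)$ with $n_{\w}(D_i)=0$, one has
\[
\phi_{\x,\y}(D_1)+\phi_{\y,\z}(D_2)=\phi_{\x,\z}(D_1+D_2).
\]
This is immediate from the defining formula, since both sides reduce to applying the fixed isomorphism $\phi$ to the periodic domain $D_{\x}+D_1-D_{\y}+D_{\y}+D_2-D_{\z}=D_{\x}+(D_1+D_2)-D_{\z}$. Using this identity and the fact that empty bigons and empty rectangles satisfy $n_{\w}=0$ (so they belong to $\pi_2'$), the matrix entry of $\partiala_{T,\DD}^2$ from $[\x,a]$ to $[\z,a+h]$ is mod $2$ the number of triples $(\y,D_1,D_2)$ with $D_1\in\mxy$, $D_2\in\mathfrak{M}_{\y,\z}$, and $\phi_{\x,\z}(D_1+D_2)=h$. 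Thus it suffices to show that for each fixed $\x,\z$ and each $h$, this set has even cardinality.

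To obtain evenness, I would invoke the fixed-point-free involution on contributing pairs that was built case-by-case (Cases~1, 2, 3) in the proof of Theorem~\ref{thm:chaincomplex}. The key additional observation — implicit in Figures~\ref{f:part}, \ref{f:partcase2}, and \ref{f:partuj}, but not stated there — is that the paired pairs $(D_1,D_2)$ and $(D_1',D_2')$ actually give rise to identical underlying $2$--chains in $\Sigma$: either the two polygons have disjoint moving coordinates and one simply has $(D_1',D_2')=(D_2,D_1)$, or the configuration overlaps in such a way that the local multiplicities of $D_1+D_2$ and $D_1'+D_2'$ agree at every elementary domain. In either case $D_1+D_2=D_1'+D_2'$ in $\pi_2'(\x,\z)$, so the pairing preserves the twisting label $\phi_{\x,\z}(D_1+D_2)=h$ and restricts to a fixed-point-free involution inside each homology class.

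The main obstacle is therefore a uniform verification of the $2$--chain identity $D_1+D_2=D_1'+D_2'$ in every subcase enumerated in the proof of Theorem~\ref{thm:chaincomplex}. This is a routine (if slightly tedious) inspection of the possibilities depicted in the three figures mentioned above, since in each scenario the alternate intermediate generator $\y'$ is obtained by rechoosing coordinates on a curve whose arc lies entirely inside the common support of $D_1\cup D_2$, so no new elementary domains enter and no existing multiplicities change. Once this step is carried out, $\partiala_{T,\DD}^2=0$ follows directly, completing the proposition.
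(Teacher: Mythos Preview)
Your proposal is correct and is precisely the ``simple adaptation of the proof of Theorem~\ref{thm:chaincomplex}'' that the paper invokes (the paper gives no further argument). The two ingredients you isolate---the additivity $\phi_{\x,\y}(D_1)+\phi_{\y,\z}(D_2)=\phi_{\x,\z}(D_1+D_2)$ and the fact that the involution on $\mathfrak{N}_{\x,\z}$ built in Theorem~\ref{thm:chaincomplex} preserves the underlying $2$--chain $D_1+D_2$---are exactly what is needed, and both hold for the reasons you give.
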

With this result at hand we have
\begin{defn}
{\whelm Suppose that $Y$ is a given 3-manifold with $Y=Y_1\# _n S^1\times
  S^2$ (and $Y_1$ has no $S^1\times S^2$-summand).  Then define the
  \emph{twisted Heegaard Floer homology} $\HFa _{T}(Y)$ of $Y$ as
  $H_*(\CFa _T (\DD ) , \partiala _{T, \DD})$ for a convenient
  Heegaard diagram $\DD$ of $Y_1$.}
\end{defn}

Two simple examples will be useful in the proof of independence.
\begin{exas}\label{ex:simp}
{\whelm {\bf {(a)}} Suppose that $S^3$ is given by the twice
  pointed Heegaard diagram $\DD =(S^2, \alpha , \beta , w_1, w_2)$ of
  Figure~\ref{f:hd}(a). Then the
generators of $\CFa _T(\DD )$ are of the form $[x,n]$ and $[y,m]$
(where $x,y$ are the two intersection points and $n,m\in \Z$).
By definition $\partial _T [y,n]=0$ and $\partial _T[x,n]=[y,n]+[y,n+1]$,
hence every closed element of $\CFa _T (\DD )$ is homologous either to 0 or to 
$[y,0]$, showing that $\HFa _T(\DD )=\Field$.

\noindent {\bf {(b)}} The once pointed Heegaard diagram of $S^3$ given
by Figure~\ref{f:hd}(b) provides the chain complex $\CFa _T (\DD
)=\Field$, and since $\partiala _{T, \DD }=0$, we get that $\HFa _T
(\DD )=\Field$.}
\end{exas}

\begin{thm}
Suppose that $Y$ is a given 3-manifold. 
Then the combinatorially defined twisted Heegaard Floer homology $\HFa_{T}(Y)$ 
is a topological invariant of $Y$.
\end{thm}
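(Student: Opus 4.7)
The plan is to mirror the invariance proof for $\HFast(Y)$ given in Section~\ref{sec:hom}. By the Kneser--Milnor Theorem, write $Y = Y_1 \# n(S^1 \times S^2)$ with $Y_1$ containing no $S^1\times S^2$ summand. Since $\HFa_T(Y)$ is defined from a convenient diagram $\DD$ of $Y_1$, and by Theorem~\ref{thm:MainConv} any two such convenient diagrams are nicely connected, it suffices to verify that each of the four nice moves preserves $\HFa_T$ (as a module over $\Field[H_2(Y \#_{b-1} S^1\times S^2;\bfz)]$, up to canonical isomorphism).

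First I would handle nice isotopies and nice handle slides by adapting the subcomplex/quotient machinery of Subsection~\ref{ssec:formasp}. The key point is that the arguments there are purely algebraic over the coefficient ring, so they apply verbatim once one replaces $\Field$ by the twisted group ring, provided the bijection $\Phi$ between domains (Lemmas~\ref{l:bijmas} and~\ref{l:bijmash}) can be shown to intertwine the $H_2$-class assignments $\phi_{\y,\z}$. Since $\Phi$ is supported in a small neighborhood of the isotopy arc (respectively the handle-slide pair-of-pants) whose complement carries all of the homological information, capping off $D_{\y} + D - D_{\z}$ and $D_{\y} + \Phi(D) - D_{\z}$ yields the same class in $H_2(Y \#_{b-1} S^1\times S^2;\bfz)$, once reference domains are chosen compatibly in $\DD$ and $\DD'$. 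With this matching verified, the analogues of Lemmas~\ref{l:isochain} and~\ref{l:isomegint} give isomorphisms of twisted chain complexes $\CFa_T(\DD) \cong (Q, \partiala_{T,Q})$, and Propositions~\ref{p:niceisso} and~\ref{p:nicehs} carry over to yield $\HFa_T(\DD) \cong \HFa_T(\DD')$.

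Next I would handle the two types of nice stabilization. For a nice type-$g$ stabilization, the number of basepoints is unchanged and the new periodic domain group is canonically identified with the old one, so the chain-level isomorphism of Theorem~\ref{t:nicestabg} extends directly to the twisted setting. For a nice type-$b$ stabilization, the chain complex factors as
\[
\CFa_T(\DD') \cong \CFa_T(\DD) \otimes_{\Field} \CFa_T(\mathcal{T}),
\]
where $\mathcal{T}$ is the twice-pointed spherical $S^3$-diagram of Figure~\ref{f:hd}(a); the extra $\Field[\bfz]$-factor in the coefficient ring is absorbed into the second tensorand. By Example~\ref{ex:simp}(a) we have $\HFa_T(\mathcal{T}) = \Field$, so $\HFa_T(\DD') \cong \HFa_T(\DD)$. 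This is precisely the improvement of the twisted theory over the untwisted one: the factor $\Field \oplus \Field$ appearing in Theorem~\ref{thm:stabveg} is collapsed to $\Field$ by the twisting.

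The main obstacle will be keeping track of the identifications $\phi_{\y,\z}$ under the bijections $\Phi$ of domains, particularly for nice handle slides where an $\alphak$-curve is replaced. There, $\alpha_1$ is exchanged for $\alpha_1'$ with $[\alpha_1'] = [\alpha_1] + [\alpha_2]$, and the corresponding reparametrization of $\pi_2(\alphak)$, viewed through the exact sequence relating $\pi_2(\alphak)\oplus\pi_2(\betak)$ to $H_2(Y\#_{b-1}S^1\times S^2;\bfz)$, must be checked to be compatible with the reference-generator and reference-domain choices used to define $\phi_{\y,\z}$ in $\DD$ and $\DD'$. Once this bookkeeping is done, the entire argument of Section~\ref{sec:cpx} transfers without essential change, and invariance of $\HFa_T(Y)$ follows.
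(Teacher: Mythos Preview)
Your proposal is correct and follows essentially the same approach as the paper's own proof: Kneser--Milnor reduction, Theorem~\ref{thm:MainConv} to connect convenient diagrams by nice moves, then checking that each nice move preserves $\HFa_T$, with the type-$b$ stabilization handled via the model computation of Example~\ref{ex:simp}(a). Your formulation of the type-$b$ case as a tensor product over $\Field$ (with the new $\Field[\bfz]$ factor absorbed into the spherical tensorand) is equivalent to the paper's description of tensoring over the enlarged group ring with prescribed trivial actions.

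If anything, you are more explicit than the paper about the bookkeeping needed to match the identifications $\phi_{\y,\z}$ across a nice move; the paper simply asserts that the proof of invariance under nice isotopy and handle slide ``readily applies'' in the twisted setting, whereas you correctly isolate the compatibility of reference domains under $\Phi$ as the point requiring care. One small omission: the paper also notes (and you should too) that independence from the auxiliary choices of base generator $\x$ in each equivalence class and reference domains $D_{\y}$ must be checked, though this is indeed routine linear algebra.
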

\begin{proof}
By the Kneser-Milnor theorem the decomposition $Y=Y_1\# _n S^1\times
S^2$ is unique, hence we only need to verify the theorem for
3-manifolds with no $S^1\times S^2$-summand.

The independence of the choice of the intersection points $\x$ in their
equivalence classes, and from the choices of the connecting domains
$D_{\y }\in \pi _2 (\x , \y )$ is a simple linear algebra exercise.

Suppose now that $\DD _1$ and $\DD _2$ are two convenient Heegaard
diagrams for a manifold $Y$ with no $S^1\times S^2$-summand.  According to
Theorem~\ref{thm:MainConv} the two diagrams can be connected by a
sequence of nice isotopies, handle slides and the two types of nice
stabilizations. The proof of the invariance of the stable invariant
under nice isotopy and nice handle slide readily applies to show the
invariance of the twisted homology. When a type-$g$ stabilization (the
one increasing the genus, but leaving the number of basepoints
unchanged) is applied, the chain complex does not change, hence the
independence of that move is trivial. 

Finally we have to examine the effect of a type-$b$ stabilization.
Notice that in this case the base ring also changes, so we need to
apply more care. Suppose that we start with a diagram $\DD$.  The
result $\DD _{st}$ of the stabilization can be regarded as the
connected sum of the original diagram $\DD$ with the spherical diagram
$\DD _0$ of $S^3$ shown by Figure~\ref{f:hd}(a).  According to
Example~\ref{ex:simp}(a), the twisted Heegaard Floer homology of that
(nice) spherical Heegaard diagram is $\Field=\Z /2\Z$. Therefore we
get that the chain complex $(\CFa _T(\DD _{st}) , \partiala _{T, \DD
  _{st}})$ is the tensor product of $(\CFa _T (\DD ), \partiala _{T,
  \DD})$ and of $(\CFa _T(\DD _0), \partiala _{T, \DD _0})$ over the
ring $\Field [H_2 (Y\# _b S^1\times S^2; \Z)]$, where the ring acts on
the first chain complex by the requirement that the new element of
$H_2(Y\# _b S^1\times S^2; \Z)$ corresponding to the stabilization
acts trivially, while the new element is the only one with nontrivial
action on the chain complex of the spherical diagram $\DD _0$. Now the
model computation verifies the result.
\end{proof}

The group $H_2(Y\# _{b-1}S^1\times S^2)$ does not split in general
canonically as a sum of $H_2(Y)$ and $H_2(\# _{b-1}S^1\times
S^2)$. The splitting is, however, canonical in the simple case when
$Y$ is a rational homology 3--sphere, implying that $H_2(Y; \bfz
)=0$. In this case the above defined group $\HFa _T (Y)$ is isomorphic
to the conventional Heegaard Floer group $\HFa (Y)$, as it is defined
in \cite{OSzF1}, cf. Theorem~\ref{thm:twisted}. Therefore we get a
combinatorial proof of the following:

\begin{thm}
  For a rational homology spheres 3-manifold $Y$, the
  invariant $\HFa(Y)$ is a topological invariant of $Y$. \qed
\end{thm}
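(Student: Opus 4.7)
The plan is to deduce this theorem from two ingredients that are already in place at this point in the paper: the topological invariance of the combinatorial twisted theory $\HFa_T(Y)$ proved earlier in Section~\ref{s:twist}, and the identification of $\HFa_T(Y)$ with the conventional hat-theory $\HFa(Y)$ of \cite{OSzF1} in the rational homology sphere case.

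First, I would observe that the hypothesis $b_1(Y)=0$ (equivalently, $H_2(Y;\bfz)$ is finite, hence zero with $\Field$-coefficients; more to the point, $H_2(Y;\bfz)$ has no free part) ensures that there is no ambiguity in the base ring used to define $\HFa_T$. Specifically, for a convenient Heegaard diagram $\DD$ of $Y$ with $b=b(\DD)$ basepoints, the twisted chain complex $\CFa_T(\DD)$ is defined over $\Field[H_2(Y\#_{b-1}(S^1\times S^2);\bfz)]$. When $Y$ is a rational homology sphere, $H_2(Y;\bfz)$ is torsion and the extra $S^1\times S^2$ summands contribute only the canonical free part $\bfz^{b-1}$, so the construction carries no non-canonical choice that could obstruct comparison with a singly-pointed theory.

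Next I would invoke the invariance theorem for $\HFa_T$ established immediately before the statement under consideration: any two convenient diagrams for $Y$ are nicely connected, and the arguments in Section~\ref{s:twist} show that the twisted homology is preserved under nice isotopies, nice handle slides, and both types of nice stabilization (the type-$b$ stabilization being the case requiring the ring-change analysis, handled via Example~\ref{ex:simp}(a)). This gives us $\HFa_T(Y)$ as a well-defined topological invariant in complete generality.

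Finally, I would apply Theorem~\ref{thm:twisted} from the Appendix, which identifies the combinatorial twisted invariant $\HFa_T(Y)$ with the analytically-defined $\HFa(Y)$ of \cite{OSzF1} under the hypothesis $b_1(Y)=0$. Transporting invariance across this isomorphism yields the conclusion. In this sense the ``work'' of the theorem is done once the twisted invariance theorem is in hand; the only real obstacle is the identification in Theorem~\ref{thm:twisted}, which is the role of the Appendix, and beyond that the argument amounts to a two-line deduction. I would therefore present the proof as a short corollary, being careful only to flag that $H_2(Y;\bfz)=0$ is precisely what makes the splitting of $H_2(Y\#_{b-1}(S^1\times S^2);\bfz)$ canonical, since without this one would merely obtain invariance of $\HFa_T(Y)$ up to the choice of identification of coefficient rings.
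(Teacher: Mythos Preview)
Your proposal is correct and matches the paper's approach exactly: the theorem is presented with a \qed\ because the proof is the paragraph immediately preceding it, which deduces the result from the invariance of $\HFa_T(Y)$ together with the identification $\HFa_T(Y)\cong\HFa(Y)$ from Theorem~\ref{thm:twisted}. One small correction: for a rational homology sphere $H_2(Y;\bfz)$ is not merely torsion but actually zero (by Poincar\'e duality $H_2(Y;\bfz)\cong H^1(Y;\bfz)\cong\Hom(H_1(Y;\bfz),\bfz)=0$), which is what the paper uses.
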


We point out that the twisted group $\HFa _T(Y)$ admits a natural
relative $\bfz$--grading: consider
\[
gr ([\x , a])-gr([\y , b])=\mu (D)
\]
for the domain $D\in \pi _2 (\x , \y )$ with the property
$a+D=b$. (Here $\mu (D) $ is the Maslov index of the domain $D$.
Since $D$ is unique, the above quantity is well-defined.)

\section{Appendix: The relation between $\HFaa (\DD )$ and $\HFa (Y)$}
\label{subsec:Appendix}

In this section we will identify $\HFaa (\DD )$ with an appropriately
stabilized version of $\HFa (Y)$ (which group was defined in
\cite{OSzF1} using the holomorphic theory of Lagrangian Floer
homologies). Notice that in the proof of invariance of $\HFast (Y)$ in
Theorem~\ref{thm:invariance} we used only the
combinatorial/topological arguments discussed in this paper and did
not refer to any parts of the holomorphic theory.

Suppose that $\DD =(\Sigma , \alphak , \betak , \w ) $ is an
admissible, genus-$g$ multi-pointed Heegaard diagram for a 3--manifold
$Y$. (Let $\vert \alphak \vert =\vert \betak \vert =k$ and $\vert \w
\vert =b(\DD )$.)  Following~\cite{OSzlinks} a chain complex
$(\CFa(\DD ), \partiala _{\DD} )$ can be associated to $\DD $ using
Lagrangian Floer homology. Specifically, consider the $k$--fold
symmetric power $\Sym ^k (\Sigma )$ with the symplectic form $\omega$
provided by \cite{perutz} having the property that $\Ta =\alpha _1
\times \ldots \times \alpha _k$ and $\Tb = \beta _1 \times \ldots \times
\beta _k$ are Lagrangian submanifolds of $(\Sym ^k (\Sigma ), \omega
)$.  Then $\CFa (\DD )$ is generated over $\Field=\bfz /2\bfz$ by the
set of intersection points $\Ta \cap \Tb \subset \Sym ^k (\Sigma )$.
Since $\x \in \Ta \cap \Tb$ is an unordered $k$--tuple of points of
$\Sigma$ having exactly one coordinate on each $\alpha _i$ and on each
$\beta _j$, in the case where $\DD$ is a Heegaard diagram, we clearly
have 
\begin{lem}\label{l:izomm}
The $\bfz/2\bfz$-vector spaces $\CFa(\DD)$ and
$\CFaa(\DD)$ are isomorphic under the above identification map. \qed
\end{lem}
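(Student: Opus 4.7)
The plan is to observe that the two vector spaces in question are, by definition, freely generated over $\Field = \bfz/2\bfz$ by the same set of unordered $k$-tuples in $\Sigma$, and then simply match up the definitions. No analysis or combinatorics enters at this stage; the genuinely nontrivial comparison of the two theories (namely, matching the boundary operators) happens elsewhere in the appendix via the Sarkar--Wang identification of holomorphic disks in a nice diagram with embedded bigons and rectangles, and is not part of this lemma's statement.

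First I would unpack the definition of $\Ta \cap \Tb \subset \Sym^k(\Sigma)$. Since $\Ta = \alpha_1 \times \cdots \times \alpha_k$ and $\Tb = \beta_1 \times \cdots \times \beta_k$ (descended to the symmetric product), a point of $\Ta \cap \Tb$ is an unordered $k$-tuple $\{x_1,\ldots,x_k\} \subset \Sigma$ with the property that, up to relabeling, $x_i \in \alpha_i$ for each $i$ and also, up to a (possibly different) relabeling, $x_j \in \beta_j$ for each $j$. Equivalently, such a tuple contains exactly one coordinate on each attaching circle $\alpha_i$ and exactly one coordinate on each $\beta_j$. This is word for word the definition of a generator in the set $\Gen$ used to define $\CFaa(\DD)$ in Section~\ref{sec:hfhom}.

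Next I would invoke the definitions of the two vector spaces: $\CFaa(\DD)$ was defined as $\oplus_{\x \in \Gen} \Field$, while $\CFa(\DD)$ is defined as the free $\Field$-module on $\Ta \cap \Tb$. Because the two indexing sets coincide under the canonical identification described above, the identity on generators extends uniquely to an $\Field$-linear isomorphism $\CFa(\DD) \xrightarrow{\cong} \CFaa(\DD)$.

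Since there is essentially no obstacle here — the content of the lemma is purely that two definitions produce the same generating set — the only point requiring a line of care is the transversality/reordering conventions for the tori in the symmetric product, which is handled by our standing hypothesis that the $\alpha_i$ and $\beta_j$ meet transversally. The statement then follows at once, and leaves the substantive identification of differentials (Theorem~\ref{thm:uaz}) to be carried out separately.
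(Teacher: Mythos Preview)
Your proposal is correct and matches the paper's approach exactly: the paper regards this lemma as immediate from the preceding sentence (marking it with a \qed\ and no further argument), precisely because both vector spaces are, by definition, free $\Field$-modules on the same set of unordered $k$-tuples meeting each $\alpha_i$ and each $\beta_j$ once.
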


Given generators $\x,\y\in\Ta\cap\Tb$, one can consider
pseudo-holomorphic Whitney disks which connect them. To this end, fix
an almost-complex structure $J$ on $\Sym ^k (\Sigma )$ compatible with
the symplectic structure $\omega$, and denote the unit complex disk
$\{ z \in \bfc \mid z{\overline {z}}\leq 1\}$ by ${\mathbb {D}}$. Let
$e_{\alpha}= \{ z \in \bfc \mid z{\overline {z}}= 1, Re (z)\leq 0\}$
and $e_{\beta}= \{ z \in \bfc \mid z{\overline {z}}= 1, Re (z)\geq
0\}$. Define the space ${\mathcal {M}}_{\x ,\y}$ as the set of maps
$u\colon {\mathbb {D}}\to \Sym ^k (\Sigma )$ with the properties
\begin{itemize}
\item $u(i)=\x$ and $u(-i)=\y$,
\item $u(e_{\alpha })\subset \Ta$ and $u(e_{\beta })\subset \Tb$,
\item $u({\mathbb {D}})\cap (\{ w_i \} \times \Sym ^{k-1}(\Sigma ))=\emptyset$ 
for all $w_i \in \w$, and finally 
\item $u$ is $J$--holomorphic, that is, $du(iv)=Jdu(v)$ for all $v\in
  T{\mathbb {D}}$.
\end{itemize}
To each map $u$ as above, one can associate a domain ${\mathcal D}(u)$, which
is a domain connecting $\x$ to $\y$ as in Definition~\ref{def:Domain}
(see~\cite{OSzF1}).  Indeed, it is convenient to consider moduli spaces
${\mathcal M}({\mathcal D})$, the moduli space of pseudo-holomorphic disks $u$
which induce the given domain ${\mathcal D}$. The moduli space ${\mathcal
  M}({\mathcal D})$ has a formal dimension $\Mas({\mathcal D})$ which, as the
notation suggests, depends only on the underlying domain.  For generic $J$ and
$\Mas({\mathcal D})=1$, this moduli space is a smooth 1--manifold with a
free ${\mathbb R}$--action on it.  The number $\#\left(\frac{{\mathcal
    M}({\mathcal D})}{{\mathbb R}}\right)$ denotes the $\pmod{2}$ count of
points in this quotient space (which is compact, and hence a finite collection
of points).

With the help of the moduli spaces ${\mathcal M}({\mathcal D})$ one can now
define a chain complex (provided $J$ is sufficiently generic), as follows.  We
define the boundary map $\partiala \colon \CFa(\DD)\to \CFa (\DD )$ for given
$\x\in \Ta \cap \Tb$ by
\[
\partiala \x =\sum _{\y \in \Ta \cap \Tb }
\sum_{\{{\mathcal D}\in\pi_2(\x,\y)\big|n_{\ws}({\mathcal D})=0, \Mas({\mathcal D})=1\}}
\#\left(\frac{{\mathcal M}({\mathcal D})}{{\mathbb R}}\right)\cdot \y.
\]

In the case where $b(\DD)=1$, the homology of the above chain complex
is the 3--manifold invariant $\HFa(Y)$ from~\cite{OSzF1}. 
More generally, we have the following result from~\cite{OSzlinks}:

\begin{thm}
  \label{thm:MultipleBasepoints}
  If $\DD$ is an admissible, multi-pointed Heegaard diagram for a
  3--manifold $Y$, then the homology of the above complex is related
  to the 3--manifold invariant $\HFa(Y)$ by
  \[ H_*(\CFa(\DD))\cong \HFa(Y)\otimes (\Field\oplus\Field)^{b(\DD)-1}.\]
\qed
\end{thm}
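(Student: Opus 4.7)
The plan is to argue by induction on $b=b(\DD)$. When $b=1$, the chain complex $(\CFa(\DD),\partiala_{\DD})$ is, by definition, the chain complex used in~\cite{OSzF1} to compute $\HFa(Y)$, so the statement is tautological. For the inductive step, it suffices to combine two invariance-type properties of the pseudo-holomorphic multi-pointed theory with one structural reduction.

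The two invariance properties are the following. First, $(\CFa(\DD),\partiala_{\DD})$ should be chain-homotopy invariant under multi-pointed isotopies and handleslides of the $\alphak$- and $\betak$-curves supported in the complement of $\w$, and under ordinary (genus-increasing) stabilization. This follows from the usual pseudo-holomorphic continuation, triangle, and associativity arguments of~\cite{OSzF1,OSzF2}, carried out now in $\Sym^k(\Sigma)$ rather than $\Sym^g(\Sigma)$; the only new feature is that Whitney disks must avoid each divisor $\{w_i\}\times\Sym^{k-1}(\Sigma)$, which is precisely the condition built into $\CFa(\DD)$. Second, if $\DD'$ is obtained from $\DD$ by a type-$b$ stabilization (introducing a homotopically trivial pair $(\alpha',\beta')$ meeting in two points $x_u,x_d$ together with a new basepoint between them), then $\CFa(\DD')$ is chain-homotopy equivalent to $\CFa(\DD)\otimes V$, where $V=\Field\oplus\Field$ carries the zero differential. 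This is a direct local calculation: the two small Maslov-index-one bigons from $x_u$ to $x_d$ cancel modulo $2$, and every other domain relevant to the boundary factors as a product of a domain in $\DD$ with the trivial domain at the new pair. This is the pseudo-holomorphic counterpart of the combinatorial Theorem~\ref{thm:stabveg} and gives the same answer.

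The structural reduction is the claim that any admissible multi-pointed diagram $\DD$ with $b\geq 2$ can be connected, via the moves above, to a diagram which is a type-$b$ stabilization of some admissible $(b-1)$-pointed diagram $\DD_0$. To see this, fix a basepoint $w\in\w$, and use handleslides and isotopies (supported in the complement of $\w\setminus\{w\}$) to arrange that $w$ sits inside a small standard region whose boundary consists of a pair $(\alpha',\beta')$ of curves, each bounding a disk in its respective handlebody, meeting no other attaching circle, and intersecting each other transversely in exactly two points. Undoing this type-$b$ stabilization produces $\DD_0$ with $b(\DD_0)=b-1$; applying the inductive hypothesis to $\DD_0$ and combining with the two invariance statements yields the desired tensor factor of $V$, completing the induction.

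The main obstacle is the structural reduction: one must realize the local standard model around $w$ using only moves that avoid the remaining basepoints $\w\setminus\{w\}$. The subtlety is that a naive appeal to the Reidemeister–Singer theorem ignores the other basepoints, so one needs a parametrized version guaranteeing that any two admissible multi-pointed diagrams for $Y$ with a common basepoint set become equivalent after genus stabilizations through diagrams in which all moves are disjoint from $\w\setminus\{w\}$. Once this local normal form is in place, the rest of the argument is essentially formal, and the proof reduces to the two model computations described above.
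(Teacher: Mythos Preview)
The paper does not actually prove this theorem: it is stated with a \qed and attributed to~\cite{OSzlinks}. Your inductive outline is essentially the argument used there, so in that sense your approach and the paper's ``proof'' (i.e., the cited reference) agree.

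A few remarks on the details. The two invariance properties you isolate are exactly the right ingredients, and your type-$b$ stabilization computation is the pseudo-holomorphic analogue of Theorem~\ref{thm:stabveg}, proved the same way. You have also correctly located the only genuine content: the structural reduction. The cleanest way to phrase it is not to try to manipulate a given $\DD$ into a type-$b$ stabilization directly, but rather to observe that \emph{some} $b$-pointed diagram for $Y$ arises as a type-$b$ stabilization of a $(b-1)$-pointed one, and then invoke the multi-pointed analogue of Reidemeister--Singer: any two admissible $b$-pointed diagrams for $Y$ are connected by isotopies, handleslides, and genus (de)stabilizations supported in the complement of $\w$. This avoids having to produce the local model around a specific basepoint by hand, which is the awkward part of your formulation. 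With that adjustment, your argument is complete and matches the standard one.
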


In view of this, the main theorem from~\cite{SW} can quickly be
adapted to prove the following:

\begin{thm}\label{thm:uaz} 
Suppose that $\DD$ is a nice multi-pointed Heegaard diagram of 
$Y$. Then 
\[
\HFaa (\DD)\cong \HFa(Y)\otimes (\Field\oplus\Field)^{b(\DD)-1}.
\]
\end{thm}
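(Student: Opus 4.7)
The plan is to identify $(\CFaa(\DD),\partialaa_{\DD})$ with the Lagrangian Floer chain complex $(\CFa(\DD),\partiala_{\DD})$ defined via pseudo-holomorphic disks, and then invoke Theorem~\ref{thm:MultipleBasepoints}. By Lemma~\ref{l:izomm}, the underlying $\Field$-vector spaces $\CFaa(\DD)$ and $\CFa(\DD)$ are already canonically identified via the natural bijection between generators in the combinatorial sense and intersection points $\Ta \cap \Tb \subset \Sym^k(\Sigma)$, so the content of the theorem is the identification of the two differentials.

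First, I would verify that the given nice diagram $\DD$ is weakly admissible (so that the pseudo-holomorphic boundary operator $\partiala_{\DD}$ is well-defined and counts only finitely many domains in each Maslov index one class); if $\DD$ is not already weakly admissible, one achieves this by an isotopy supported away from the basepoints which, by the usual argument (cf.\ the proof in~\cite{OSzF1}), does not change the count of Maslov index one rigid holomorphic disks representing a given homotopy class. Next, the key ingredient is the multi-pointed analogue of the Sarkar--Wang theorem~\cite[Theorem~3.3]{SW}: for a nice multi-pointed Heegaard diagram, any Maslov index one pseudo-holomorphic disk representing a domain $\mathcal{D}\in\pi_2(\x,\y)$ with $n_{\w}(\mathcal{D})=0$ is an embedded empty bigon or rectangle, and conversely each such embedded empty bigon or rectangle contributes exactly one (mod $2$) to the count of rigid holomorphic disks. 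This was established in the once-pointed setting in~\cite{SW}; its extension to the multi-pointed case proceeds by exactly the same argument, since the analysis is local in $\Sigma$ and the hypothesis that every basepoint-free elementary domain is a bigon or rectangle controls the shape of the holomorphic representative via the combinatorial formula~\eqref{eq:MaslovIndex} for $\Mas(\mathcal{D})$ (our Proposition~\ref{p:atfog} provides precisely the combinatorial side of this correspondence).

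Combining the two steps, the matrix coefficient $\langle \partiala_{\DD}\x,\y\rangle$ equals the mod $2$ cardinality of $\mxy$, which by definition equals $\langle \partialaa_{\DD}\x,\y\rangle$. Thus the two chain complexes $(\CFa(\DD),\partiala_{\DD})$ and $(\CFaa(\DD),\partialaa_{\DD})$ are identical, and Theorem~\ref{thm:MultipleBasepoints} gives
\[
\HFaa(\DD)=H_*(\CFaa(\DD),\partialaa_{\DD})\cong H_*(\CFa(\DD),\partiala_{\DD})\cong \HFa(Y)\otimes (\Field\oplus\Field)^{b(\DD)-1},
\]
as claimed.

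The main obstacle I anticipate is the second step: passing from the once-pointed Sarkar--Wang argument to a rigorous multi-pointed statement. One has to check that the Euler-measure/point-measure analysis underlying the proof of~\cite[Theorem~3.3]{SW} is insensitive to the presence of several basepoints (which is plausible since the argument is genuinely local), and that the domain-to-holomorphic-representative direction (producing an actual holomorphic map from an embedded bigon or rectangle) goes through for a generic admissible almost-complex structure on $\Sym^k(\Sigma)$. Admissibility bookkeeping --- namely perturbing $\DD$ to a weakly admissible diagram without destroying niceness, or working directly with a version of admissibility adapted to multi-pointed nice diagrams as in~\cite{OSzlinks} --- is a secondary technical point that must be handled carefully to keep the Floer homology well-defined on the nose.
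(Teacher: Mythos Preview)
Your proposal is correct and follows essentially the same route as the paper: reduce to identifying the two differentials via Lemma~\ref{l:izomm} and Theorem~\ref{thm:MultipleBasepoints}, then use the Sarkar--Wang analysis (packaged in the paper as Lipshitz's index formula, positivity of holomorphic domains, Proposition~\ref{p:atfog}, and the unit count for embedded polygons). Your admissibility detour is unnecessary, since a nice diagram is automatically weakly admissible (a nonzero nonnegative periodic domain would contain all elementary domains on one side of some $\alpha_i$, hence a basepoint by Lemma~\ref{l:mindket}); the paper accordingly omits this step.
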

\begin{proof}
  In view of Lemma~\ref{l:izomm} and
  Theorem~\ref{thm:MultipleBasepoints}, it suffices to identify the
  boundary operator of $\CFa(\DD)$ with the boundary operator of
  $\CFaa(\DD)$.

  The argument for the above identification uses the following facts:
  \begin{enumerate}
    \item A theorem of Lipshitz~\cite{Lipshitz}, according to which
      the Maslov index $\mu({\mathcal D})$ in the holomorphic theory
      is, indeed, given by Equation~\eqref{eq:MaslovIndex}.
    \item A simple principle, according to which one can choose generic 
      $J$ so that ${\mathcal M}({\mathcal D})$ is empty unless
      ${\mathcal D}\geq 0$.
    \item The fact that, for a nice Heegaard diagram, 
      the nonnegative domains with Maslov index one are precisely
      bigons or rectangles (cf. Proposition~\ref{p:atfog}).
    \item An observation that in the case where ${\mathcal D}$ is a polygon,
      $\#\left(\frac{{\mathcal M}({\mathcal D})}{{\mathbb R}}\right)=1$ (mod 2), 
      see~\cite{KT, Rasmussen}.
  \end{enumerate}
\end{proof}

In addition, the same principle shows that for the twisted
theory we have the following partial identification of the resulting
groups:

\begin{thm}\label{thm:twisted}
Suppose that $Y$ is a rational homology 3-sphere, that is, its first
Betti number $b_1(Y)$ vanishes. The twisted (topological) Heegaard
Floer homology $\HFa _T (Y)$ (as it is defined in
Section~\ref{s:twist}) is isomorphic to $\HFa (Y)$ (as it is defined in 
\cite{OSzF1}, using the holomorphic theory).  \qed
\end{thm}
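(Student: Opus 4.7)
The plan is to follow the strategy sketched at the end of Section~\ref{s:twist}: for $Y$ a rational homology sphere, the Mayer--Vietoris sequence furnishes a \emph{canonical} splitting
\[
H_2\bigl(Y\#_{b-1}(S^1\times S^2);\bfz\bigr)\;\cong\;H_2(Y;\bfz)\oplus\bfz^{b-1}\;=\;\bfz^{b-1},
\]
and I will use this splitting to reduce the twisted theory on a convenient diagram to the untwisted theory on a one-pointed diagram for $Y$. Fix such a convenient diagram $\DD=(\Sigma,\alphak^c,\betak^c,\w)$ with $b=b(\DD)$ basepoints and distinguished basepoint $w_1\in\w$.

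First I would identify the combinatorial twisted chain complex $(\CFa_T(\DD),\partiala_{T,\DD})$ with the corresponding \emph{holomorphic} universally twisted chain complex on the same multi-pointed diagram $\DD$. By Lemma~\ref{l:izomm} the underlying $\Field$-vector spaces coincide; by Proposition~\ref{p:atfog} and the argument of Theorem~\ref{thm:uaz}, the domains responsible for the combinatorial boundary (empty bigons and rectangles) are precisely those with $\mu=1$, $n_{\w}=0$, $D\geq 0$, and in a nice diagram these are in bijective correspondence (mod $2$) with the holomorphic Maslov-index-one flow lines. Keeping track of the additional $H_2$-decoration $\phi_{\y,\z}(D)$ then identifies $\partiala_{T,\DD}$ with the universally twisted holomorphic boundary $\uDel$ in the sense of~\cite{OSzF1,OSzlinks}.

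Second, I would invoke the standard tubing reduction for multi-pointed diagrams (the technical input driving Theorem~\ref{thm:MultipleBasepoints}): after connecting the domains of $w_2,\dots,w_b$ to that of $w_1$ by $1$-handles, the diagram $\DD$ becomes a one-pointed diagram for $Y\#_{b-1}(S^1\times S^2)$, and the universally twisted chain complex on $\DD$ computes the universally twisted Heegaard Floer homology $\uHFa(Y\#_{b-1}(S^1\times S^2))$ with coefficients in $\Field[\bfz^{b-1}]$. Combining the connected-sum K\"unneth formula for universally twisted Heegaard Floer homology with the canonical $H_2$-splitting above gives
\[
\uHFa\bigl(Y\#_{b-1}(S^1\times S^2)\bigr)\;\cong\;\uHFa(Y)\otimes_{\Field}\uHFa(S^1\times S^2)^{\otimes(b-1)}.
\]
Since $H_2(Y;\bfz)=0$, the twisting on the $Y$-factor is trivial, so $\uHFa(Y)=\HFa(Y)$; and the model computation of Example~\ref{ex:simp} together with the type-$b$ stabilization analysis of Subsection~\ref{ssec:cpxstab} (which showed that adjoining the model diagram $\DD_0$ has the effect of tensoring with its twisted homology $\Field$) gives $\uHFa(S^1\times S^2)=\Field$. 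Stringing these identifications together yields $\HFa_T(Y)\cong\HFa(Y)$.

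The hard part will be making the K\"unneth decomposition at the universally twisted level rigorous, and matching module structures. The splitting $H_2\cong H_2(Y)\oplus\bfz^{b-1}$ is canonical \emph{only} because $b_1(Y)=0$, which is exactly where the rational-homology-sphere hypothesis enters; without it, the twisting on the $Y$-piece cannot be separated from the twisting on the stabilizing summands. One must also verify that the $\Field[\bfz^{b-1}]$-action on $\CFa_T(\DD)$ coming from the basepoints $w_2,\dots,w_b$ via $\phi_{\y,\z}$ is carried, under the tubing identification, to the $\Field[\bfz^{b-1}]$-action on $\uCFa$ of the stabilized manifold coming from the $H_2$-generators of the $b-1$ new $S^1\times S^2$ summands. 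Once this matching of module structures is in place, the theorem follows from the model computations already established in the paper.
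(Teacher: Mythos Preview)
The paper does not give a detailed proof of this theorem: it is stated with a \qed\ immediately after the sentence ``the same principle shows that for the twisted theory we have the following partial identification.'' The ``same principle'' refers to the argument of Theorem~\ref{thm:uaz}, i.e.\ the Sarkar--Wang identification of the combinatorial boundary on a nice diagram with the holomorphic boundary. Your first step is exactly this identification carried over to the twisted setting, so at that level you and the paper agree.

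Where you go further is in your second step. The paper leaves implicit how one passes from the multi-pointed twisted theory on $\DD$ to $\HFa(Y)$; you propose to make this explicit via the tubing construction, the twisted K\"unneth formula for $Y\#_{b-1}(S^1\times S^2)$, and the model computation $\uHFa(S^1\times S^2)\cong\Field$. This is a correct and natural way to fill in the sketch, and you have correctly isolated both where the rational-homology-sphere hypothesis is used (the canonical splitting of $H_2$) and where the genuine technical work lies (matching the $\Field[\bfz^{b-1}]$-module structures under tubing, and the twisted K\"unneth formula). One small caution: the tubed diagram is no longer nice, so the passage from the holomorphic twisted complex on $\DD$ to that on the tubed diagram is not an identity of chain complexes but an invariance statement in the holomorphic theory; you should phrase it that way rather than as a direct identification. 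With that adjustment your outline is sound and is more explicit than what the paper provides.
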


\section{Appendix: Handlebodies and pair-of-pants decompositions}
\label{appendix}
For the sake of completeness, in this Appendix we verify a slightly
weaker version of Theorem~\ref{t:luo} of Luo, which is still
sufficient for the applications in this paper.  Let us assume that
$\Sigma $ is a genus-$g$ surface with $g>1$, and suppose that
$\alphak$ and $\alphak '$ are two markings of the surface $\Sigma$.
Recall from Definition~\ref{def:flip} that the two pair-of-pants
decompositions $\alphak$ and $\alphak '$ differ by a generalized flip
(or g-flip) if $\alphak = \alphak _0\cup\{ \alpha \}, \alphak =
\alphak _0\cup\{ \alpha '\}$, and $\alpha, \alpha '$ are both
contained by the 4-punctured component of $\Sigma -\alphak _0$.
Decompositions differing by a sequence of g-flips are called
\emph{g-flip equivalent}. Then the main result of this Appendix is:

\begin{thm}\label{thm:luohely}
  Suppose that $\alphak $ and $\alphak '$ are two markings on the
  surface $\Sigma$. The markings determine the same handlebody if and
  only if the markings $\alphak $ and $\alphak '$ are g-flip
  equivalent.
\end{thm}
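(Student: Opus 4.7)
The plan is to prove the two implications separately, with the reverse direction carrying the bulk of the argument.

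For the forward direction (g-flip equivalent markings determine the same handlebody), I would use a direct disk-surgery argument. Suppose $\alphak = \alphak_0 \cup \{\alpha\}$ determines $H$ and $\alpha'$ is another homologically essential simple closed curve in the 4-punctured sphere component $S$ of $\Sigma - \alphak_0$ separating $S$ into two pairs-of-pants. Cutting $H$ along the disjoint compressing disks bounded by $\alphak_0$ produces a 3-manifold one of whose boundary components is $S$ capped off by its four associated disks, i.e. a 2-sphere bounding a 3-ball $B$ in the cut-open handlebody. Since $\alpha' \subset S \subset \partial B$, the Jordan curve theorem provides an embedded disk $D' \subset B$ with $\partial D' = \alpha'$, automatically disjoint from all the $\alphak_0$-disks. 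Hence $\alphak_0 \cup \{\alpha'\}$ bounds a disjoint compressing disk system in $H$ and therefore determines the same handlebody.

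For the reverse direction I would induct on the minimal geometric intersection number $n = i(\alphak, \alphak')$. In the base case $n = 0$, each $\alpha_j' \in \alphak'$ sits in some pair-of-pants component of $\Sigma - \alphak$, and homological essentiality forces $\alpha_j'$ to be boundary-parallel, hence isotopic to some $\alpha_{\sigma(j)} \in \alphak$. Pairwise non-parallelism of the $\alpha_j'$ together with the cardinality count $|\alphak| = |\alphak'| = 3g - 3$ make $\sigma$ a bijection, so $\alphak$ and $\alphak'$ agree up to isotopy (implemented as trivial g-flips, in which the two curves in the 4-punctured sphere happen to be isotopic).

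For the inductive step with $n > 0$, the goal is to produce a g-flip $\alphak \to \alphak''$ with $i(\alphak'', \alphak') < n$, whereupon the inductive hypothesis applied to $\alphak''$ and $\alphak'$ completes the argument. Given some $\alpha_i \in \alphak$ that meets $\alphak'$, assume first that $\alphak_0 := \alphak \setminus \{\alpha_i\}$ has a 4-punctured sphere component $S \ni \alpha_i$. The arc system $\alphak' \cap S$ partitions $S$ into polygonal regions, and when $\alpha_i$ fails to minimize intersection with $\alphak' \cap S$ over all pants-separating essential simple closed curves in $S$, a bigon or innermost-arc configuration between $\alpha_i$ and $\alphak' \cap S$ yields a reducing replacement $\alpha_i''$. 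The modified collection $\alphak'' = \alphak_0 \cup \{\alpha_i''\}$ is a g-flip of $\alphak$, still determines $H$ by the forward direction, and satisfies $i(\alphak'', \alphak') < n$.

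The main obstacle will be the case where every intersection-contributing curve $\alpha_i \in \alphak$ has the property that $\alphak \setminus \{\alpha_i\}$ has a once-punctured torus component (rather than a 4-punctured sphere) containing $\alpha_i$, so that no direct g-flip on $\alpha_i$ is available. In this situation one must first perform an auxiliary g-flip on a neighboring curve of $\alphak$, using a different 4-punctured sphere of the pants decomposition, in order to reconfigure the combinatorics so that $\alpha_i$ is exposed inside some 4-punctured sphere. Controlling the intersection with $\alphak'$ through this preparatory move, and exhibiting a composite complexity measure (perhaps lexicographic in $n$ together with an auxiliary count of ``torus-blocked'' curves in $\alphak$) that strictly decreases under the combined operation, is the technical heart of the argument.
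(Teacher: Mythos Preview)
Your forward direction is fine. The reverse direction, however, has a real gap: the inductive step on intersection number is not established, and it is not clear it can be.

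Even in the 4-punctured sphere case you only treat the sub-case where $\alpha_i$ \emph{fails} to minimize intersection with $\alphak'\cap S$ among separating curves in $S$. You never argue that such a non-optimal $\alpha_i$ must exist. If every $\alpha_i$ that sits in a 4-punctured sphere already realizes the local minimum, no single g-flip decreases $i(\alphak,\alphak')$, and your induction stalls; nothing in the hypotheses rules this out. (Note too that since you have already put $\alphak$ and $\alphak'$ in minimal position globally, there are no bigons between $\alpha_i$ and arcs of $\alphak'\cap S$, so the ``bigon'' mechanism you invoke is vacuous; what remains is simply replacing $\alpha_i$ by the local minimizer, which may well be $\alpha_i$ itself.) The torus-blocked case you explicitly leave open, and the suggested lexicographic complexity is only a hope --- the preparatory g-flip on a neighbouring curve can easily \emph{increase} $i(\alphak,\alphak')$, and you give no reason the auxiliary count compensates. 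Connectivity statements of this type (compare Hatcher--Thurston for the pants complex) generally do not yield to raw intersection-number descent.

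The paper avoids this entirely. It first proves the special case where $\alphak$ and $\alphak'$ share a \emph{spanning $g$-tuple} of homologically independent curves, by an induction on the number of curves common to the two markings (so the complement is a union of punctured spheres, and the argument is a surface-topology normalization of ``minimal'' curves in those spheres). For the general case it invokes the classical fact that any two $g$-tuples bounding disjoint disks in the same handlebody are connected by handle slides and isotopies; a single handle slide $\alpha_1\mapsto\alpha_1'$ over $\alpha_2$ produces a pair-of-pants bounded by $\alpha_1,\alpha_2,\alpha_1'$, hence an intermediate marking containing \emph{both} spanning $g$-tuples, to which the special case applies twice. This sidesteps any need to monotonically reduce intersection number.
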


We start with some preparatory constructions.  Suppose
that $\Sigma $ is of genus $g>1$ and $\alphak$ is a given marking on
$\Sigma$.  Recall that then $\alphak$ contains $3g-3$ curves.  The set
$\{ \alpha _1, \ldots , \alpha _g\} \subset \alphak$ of curves of the
marking is called a \emph{spanning $g$--tuple} for the pair-of-pants
decomposition if the subspace spanned by $\{ \alpha _1, \ldots ,
\alpha _g \}$ in $H_1 (\Sigma ; \Z/2\Z)$ is $g$--dimensional, i.e.
the curves are homologically independent. 

We will prove  Theorem~\ref{thm:luohely} in two steps:
first we assume that $\alphak$ and $\alphak '$ admit a common
spanning $g$--tuple, and in the second step we treat the general
case. (This second argument will be considerably shorter and simpler
than the first.)

\begin{prop}\label{prop:common}
Suppose that $\alphak$ and $\alphak ' $ are two markings with
identical spanning $g$--tuples. Then $\alphak$ and $\alphak'$ can be
connected by a sequence of g-flips and isotopies through markings.
\end{prop}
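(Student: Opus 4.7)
The plan is to cut $\Sigma$ along the common spanning tuple, reducing the question to a problem on a planar surface, and then exploit the extra flexibility of g-flips (as opposed to strict flips) to preserve the marking condition.

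Let $\mathcal{A}_0 = \{\alpha_1,\ldots,\alpha_g\} \subset \alphak \cap \alphak'$ denote the common spanning $g$-tuple. Since its members are homologically independent, cutting $\Sigma$ along $\mathcal{A}_0$ yields a $2g$-punctured sphere $P$, whose $2g$ boundary components come in matched pairs $\sigma=\{(b_i^+, b_i^-)\}_{i=1}^g$ recording how to re-glue. The remaining curves $\mathcal{C} := \alphak \setminus \mathcal{A}_0$ and $\mathcal{C}' := \alphak' \setminus \mathcal{A}_0$ descend to pair-of-pants decompositions of $P$, each consisting of $2g-3$ curves. Any simple closed curve $\gamma \subset P$ splits $\partial P$ into two sets $A \sqcup B$; the corresponding curve in $\Sigma$ is homologically essential if and only if this partition separates at least one matched pair of $\sigma$. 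It therefore suffices to connect $\mathcal{C}$ and $\mathcal{C}'$ inside $P$ by g-flips through pants decompositions all of whose curves enjoy this matched-pair-splitting property.

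Next I would establish (or invoke) the classical fact that the pants graph of a planar surface is connected under flips. This can be proved by induction on the geometric intersection number $i(\mathcal{C},\mathcal{C}')$: when $\mathcal{C} \neq \mathcal{C}'$, an innermost-arc argument produces a curve $\gamma \in \mathcal{C}$ and a four-holed sphere component of $P \setminus (\mathcal{C}\setminus\{\gamma\})$ in which a replacement $\gamma'$ strictly decreases $i(\,\cdot\,, \mathcal{C}')$. Iterating yields a sequence $\mathcal{C}=\mathcal{C}_0,\mathcal{C}_1,\ldots,\mathcal{C}_n=\mathcal{C}'$ of pants decompositions of $P$, with consecutive terms differing by a flip supported in some four-holed subsurface $S_i \subset P$. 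Each four-holed sphere $S_i$ carries, up to isotopy, exactly three simple closed curves, one for each pairing of its four boundary components. A direct case check, based on how $\sigma$ restricts to $\partial S_i$, shows that a curve in $S_i$ can fail the matched-pair-splitting condition only if it separates $\partial S_i$ into two sets each of which is a union of matched pairs of $\sigma$; in particular at least two of the three isotopy classes in $S_i$ are essential, unless every boundary of $S_i$ is matched within $S_i$ in one specific way. Since a g-flip (unlike a strict flip) allows the replacement curve to be any curve in $S_i$, we may always choose an essential one, realizing each planar flip as a g-flip between markings.

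The main obstacle is the remaining degenerate case, in which every partition of $\partial S_i$ into pairs coincides with a pairing of $\sigma$, so that no essential curve exists in $S_i$. Geometrically this corresponds to $S_i$ being a sub-surface whose boundary is entirely re-glued within itself by $\sigma$, which splits $\Sigma$ as a connected sum along the spanning curves sitting on $\partial S_i$. I would handle this by induction on $g$: decompose $P = P_1 \cup_{\partial S_i} P_2$ compatibly with $\sigma$, apply the proposition to the two summands (each of which has strictly smaller genus), and concatenate the resulting g-flip sequences. Organizing this induction so that it interlocks cleanly with the intersection-number reduction of the previous step, and verifying that no additional degenerate $S_i$ can arise inside the subproblems, constitutes the bulk of the technical work.
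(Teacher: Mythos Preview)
Your overall strategy---cut along the common spanning $g$--tuple to a planar surface $P$ and invoke connectivity of its pants complex---is genuinely different from the paper's hands-on induction on the number of common curves, and it is an attractive idea. However, there are two concrete problems.

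First, a minor one: a four-holed sphere does \emph{not} have exactly three isotopy classes of essential simple closed curves; its curve complex is the Farey graph, with infinitely many vertices. What is true is that every such curve realizes one of three \emph{partition types} of the four boundary components, and your essentiality criterion depends only on the type. Your ``at most one inessential type'' analysis survives this correction.

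The serious gap is in how you maintain the marking condition along the flip path. Your innermost-arc reduction produces a \emph{specific} replacement curve $\gamma'$ (chosen to decrease $i(\,\cdot\,,\mathcal C')$), and nothing prevents $\gamma'$ from being the one inessential type in its four-holed sphere $S_i$. Saying ``a g-flip allows any curve in $S_i$, so choose an essential one'' does not help: replacing $\gamma$ by the essential $\gamma''$ of the third type gives a marking, but it need not decrease intersection with $\mathcal C'$, so you have abandoned your path without a mechanism to reach $\mathcal C'$. Moreover, the ``degenerate case'' you isolate in the last paragraph---where \emph{no} curve in $S_i$ is essential---cannot occur once the current decomposition is a marking (your own computation shows at most one type is inessential when each $X_j$ fails to be a union of matched pairs), so your induction on $g$ is aimed at a phantom obstacle. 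The actual obstacle, namely rerouting the flip sequence inside the subgraph of markings when the intersection-reducing flip leaves it, is left unaddressed. The paper sidesteps this entirely by inducting on the size of the maximal common curve set and using a normalization lemma for ``minimal'' curves, so that each step manifestly stays within markings.
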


\begin{proof} 
Let $A=\{ \alpha _1 , \ldots , \alpha _k\}$ and $A'=\{ \alpha _1' ,
\ldots , \alpha _k'\}$ denote the maximal subsets of $\alphak$ and
$\alphak '$, respectively, with the property that $\alpha _i$ and
$\alpha _i'$ are isotopic for $i=1,\ldots , k$. In the following
(after applying the isotopy) we will identify the two sets.  By our
assumption we have that $k\geq g$ and the complement $\Sigma -A$ is the
disjoint union of punctured spheres.

If $k$ is $3g-3$, then all components of $\Sigma -A$ are
pairs-of-pants, hence $\alphak$ and $\alphak '$ are isotopic
decompositions, hence there is nothing to prove. If $k$ is $3g-4$,
then there is a component of $\Sigma -A$ which is a 4-punctured
sphere, the further components are pairs-of-pants.  The 4-punctures sphere
component contains a pair of (nonisotpic)
$\alphak$-- and $\alphak '$--curves. By the definition of g-flip,
these are related by a g-flip move, hence the decompositions are
connected by g-flips. Notice that in the intermediate stages the
appearing curves already were part of $\alphak$ or $\alphak '$, hence
all curves are homologically essential.

Suppose now that the statement is proved for pairs with $\vert A\vert
=k+1$, and consider a pair $\alphak, \alphak '$ which has $k$ as the
size of the corresponding set $A$. Let $F$ be a component of $\Sigma
-A$ which is not a pair-of-pants.  We will concentrate only on those
curves of $\alphak$ and $\alphak '$ which are contained by
$F$. Suppose that $\alpha$ and $\alpha '$ (elements of $\alphak $ and
$\alphak '$, resp.)  are \emph{minimal} curves, in the sense that by
deleting them $F$ falls into two components, one of which is a
pair-of-pants. (By the usual 'innermost circle' argument it is easy to
see that such curves always exist.) Let $a_1, a_2$ denote the two
further boundary circles of the pair-of-pants bounded by $\alpha$ (and
let $a_1', a_2'$ denote the similar two circles for $\alpha '$).

First we would like to present a normalization procedure for these
minimal curves, hence for the coming lemma we only consider the
decomposition $\alphak$ and temporarily forget about $\alphak '$. Let
$a$ be an embedded arc connecting the boundary circle $a_1$ and $a_2$
in the complement $F-\alphak$ in such a way that the boundary of the
tubular neighbourhood of $a\cup a_1\cup a_2$ in $F$ is
$\alpha$. Consider another embedded path $b$ in $F$ joining $a_1$ and
$a_2$ and let $\beta$ denote the boundary of the tubular neighborhood
of $b\cup a_1\cup a_2$. (Notice that now $b$ is not necessarily in the
complement of the $\alphak$--curves.)

\begin{lem}
The marking $\alphak$ is g-flip equivalent to a marking $\betak$
containing all curves of $A$ and $\beta$. The sequence connecting
$\alphak$ and $\betak$ is through markings.
\end{lem}
\begin{proof}
First of all, we can assume that $a$ and $b$ are disjoint: by
considering a curve $c$ which is parallel to $a$ until its first
intersection with $b$, and then parallel with $b$, by chosing the
appropriate side for the parallels we can reduce the number of
intersections of $a$ and $b$ by one, and since being g-flip equivalent
is an equivalence relation, we only need to deal with disjoint $a$ and
$b$.

Consider the surface $F'$ we get by capping off all the boundary
components of $F$ with punctured disks (with punctures $p_i$) except
$a_1$ and $a_2$. In the resulting annulus the two arcs $a$ and $b$ are
obviously isotopic (by allowing to isotope the endpoints of these arcs on the corresponding
boundary components). Suppose that such an  isotopy sweeps through the
marked points $p_1, \ldots , p_n$ of $F'$ (recording the further
boundary components of $F$). Obviously if $n=0$ then $a$ and $b$ were
already isotopic in $F$ and there is nothing to prove. We will show a
g-flip reducing $n$ by one. Indeed, choose an arc $b'$ connecting
$a_1$ and $a_2$ in $F$ such that $b'$ is disjoint from both $a$ and
$b$, and the isotopy in $F'$ from $a$ to $b'$ sweeps through a single
marked point $p$. The boundary of the tubular neighbourhood of $b'\cup
a_1 \cup a_2$ will be denoted by $\beta '$. Let $\gamma$ denote the
boundary component of the tubular neighbourhood of $a\cup b'\cup
a_1\cup a_2$ with the property that its complement in $F$ has a
4-punctured sphere component. The other component of $F-\gamma$ will
be denoted by $G$.  Let $\gammak$ denote a pair-of-pants decomposition
of $G$ containing curves homologically essential in $\Sigma$. This
decomposition $\gammak$ gives rise to two decompositions of $F$: we
add to it $\{\gamma , \alpha \}$ or $\{ \gamma , \beta '\}$.  Now
these two decompositions differ by a g-flip (changing $\alpha$ to
$\beta '$), but $\gammak \cup \{ \gamma , \alpha \}$ is g-flip
equivalent to $\alphak$ by induction (since they share one more common
curve, namely $\alpha$) while $\gammak \cup \{ \gamma , \beta '\}$ is
g-flip equivalent to any decomposition containing $\beta '$ (for the
same reason). By induction on the distance $n$ of $a$ and $b$ (i.e.
the number of $p_i$'s an isotopy in $F'$ sweeps accross), the proof of
the lemma is complete.
\end{proof}

Returning to the proof of Proposition~\ref{prop:common}, therefore we can assume
that $a$ connects the two boundary components in any way we like.  We
will distinguish three cases according to the number $C$ of common
circles of $\{ a_1, a_2\}$ and $\{ a_1', a_2'\}$. If $C=2$, then by
the above lemma we can assume that after a sequence of g-flips
$\alpha$ coincides with $\beta$, hence by induction the two
pairs-of-pants decompositions are g-flip equivalent.  If $C=1$ (i.e.
say $a_2=a_2'$), we can again assume that $a$ and $b$ are disjoint,
and then the curve $\delta$, which is the boundary of $a\cup b\cup
a_1\cup a_1'\cup a_2$ separates a 4-puntured sphere in which a g-flip
moves $\alpha $ to $\beta$ and any extension of it will produce (by
induction) a decomposition which is g-flip equivalent (with $\{ \delta
, \alpha\}$) to $\alphak$ and (with $\{ \delta , \beta \}$) to
$\betak$. Finally if $C=0$ then again first we assume that $a$ and $b$
are disjoint, and consider a curve $\delta$ in $F$ which splits off
$a_1, a_2, a_1', a_2'$ (and the curves $\alpha, \beta $) from $F$. Any
extension of these three curves will produce a decomposition which is
g-flip equivalent to both $\alphak$ and $\betak$ by induction, hence
the proof of Proposition~\ref{prop:common} is complete.
\end{proof}

With the above special case in place, we can now turn to the 
\begin{proof}[Proof of Theorem~\ref{thm:luohely}]
Suppose now that $\alphak$ and $\alphak'$ are given pair-of-pants
decompositions, together with the chosen spanning $g$--tuples. If the
spanning $g$--tuples coincide, then Proposition~\ref{prop:common}
applies and finishes the proof.

Suppose now that $\alphak$ and $\alphak '$ admit spanning $g$--tuples
differing by a single handle slide.  In this case there is a
pair-of-pants decomposition $\alphak _1$ containing both spanning
$g$--tuples: the handle slide $\alpha_1$ on $\alpha _2$ determines a
pair-of-pants bounded by $\alpha _1, \alpha _2$ and $\alpha _1'$ (the
result of the handle slide), and refining this triple (together with
$\alpha _3, \ldots , \alpha _g$) to a pair-of-pants, we get the
desired pair-of-pants decomposition $\alphak _1$.  The application of
Proposition~\ref{prop:common} for the pairs $(\alphak , \alphak _1)$
and for $(\alphak ' , \alphak _1)$ and the fact that being g-flip
equivalent is transitive now shows that $\alphak $ and $\alphak '$ are
g-flip equivalent.

Since (by a classical result) two $g$--tuples determining the same
handlebody can be transformed into each other by a sequence of handle
slides and isotopies, the repeated application of the above argument
completes the proof.
\end{proof}

\end{document}